\documentclass[12pt,centertags]{amsart}
\usepackage{amsfonts}
\usepackage{amsmath,amstext,amsthm,amssymb,amscd}
\usepackage[mathscr]{eucal}
\usepackage{mathrsfs}
\usepackage{epsf}
\usepackage{tikz}
\usepackage{shorttoc}
\usepackage{yhmath}
\DeclareSymbolFont{largesymbol}{OMX}{yhex}{m}{n}
\DeclareMathAccent{\Widehat}{\mathord}{largesymbol}{"62}

\usetikzlibrary{matrix,arrows,decorations.pathmorphing}
\usepackage[a4paper,top=3cm,bottom=3cm,left=2cm,right=2cm]
{geometry}

\numberwithin{equation}{section}

\usepackage{color}
\usepackage{soul}

\newcommand{\field}[1]{\mathbb{#1}}
\newcommand{\Z}{\field{Z}}
\newcommand{\R}{\field{R}}
\newcommand{\C}{\field{C}}
\newcommand{\N}{\field{N}}

 \def\cC{\mathscr{C}}

\def\mC{\mathcal{C}}
\def\mA{\mathcal{A}}
\def\mB{\mathcal{B}}
\def\mD{\mathcal{D}}
\def\mE{\mathcal{E}}

\def\mH{\mathcal{H}}

\def\mL{\mathcal{L}}
\def\mM{\mathcal{M}}
\def\mN{\mathcal{N}}
\def\mO{\mathcal{O}}

\def\mQ{\mathcal{Q}}

\def\mU{\mathcal{U}}
\def\mV{\mathcal{V}}

\newcommand\mS{\mathcal{S}}

\def\kg{\mathfrak{g}}

\def\Re{{\rm Re}}
\def\Im{{\rm Im}}

\def\la{\langle}
\def\ra{\rangle}

\DeclareMathOperator{\End}{End}

\DeclareMathOperator{\Ker}{Ker}

\DeclareMathOperator{\Id}{Id}

\DeclareMathOperator{\tr}{Tr}
\DeclareMathOperator{\Ind}{Ind}

\DeclareMathOperator{\ch}{ch}

\newtheorem{thm}{Theorem}[section]
\newtheorem{lemma}[thm]{Lemma}
\newtheorem{prop}[thm]{Proposition}

\theoremstyle{definition}
\newtheorem{rem}[thm]{Remark}
\theoremstyle{definition}
\newtheorem{defn}[thm]{Definition}
\newtheorem{assump}[thm]{Assumption}

\newcommand{\be}{\begin{eqnarray}}
\newcommand{\ee}{\end{eqnarray}}

\newcommand{\wi}{\widetilde}
\newcommand{\var}{\varepsilon}
\numberwithin{equation}{section}
\numberwithin{thm}{section}

\newcommand{\comment}[1]{}

\begin{document}

\title{Comparison of two equivariant $\eta$-forms}

\author{Bo LIU}
\address{School of Mathematical Sciences,
	Shanghai Key Laboratory of PMMP,
	East China Normal University, 
	500 Dongchuan Road, 
	Shanghai, 200241 
	P.R. China}
\email{bliu@math.ecnu.edu.cn}

\author{Xiaonan MA}

\address{Universit\'e de Paris and Sorbonne Université, CNRS, 
IMJ-PRG, F-75013 Paris, France}

\email{xiaonan.ma@imj-prg.fr}

\date{\today}

\begin{abstract}
In this paper, we 
first define
the equivariant infinitesimal 
$\eta$-form, then we compare it with the equivariant $\eta$-form, 
modulo exact forms, by a locally computable form. As a 
consequence, we obtain the singular behavior of the equivariant 
$\eta$-form, modulo exact forms, as a function on the acting 
Lie group. This result extends a result of Goette
and it plays an important role 
in our recent work on the localization of $\eta$-invariants
and on the differential $K$-theory.
\end{abstract}
\maketitle



\tableofcontents

\setcounter{section}{-1}

\section{Introduction} \label{s00}
In order to find a well-defined index for a first order elliptic 
differential operator over an even-dimensional compact manifold 
with nonempty boundary, Atiyah-Patodi-Singer \cite{APS75} 
introduced a global boundary condition which is particularly 
significant for applications. In this final index formula, 
the contribution from the boundary is given by the 
Atiyah-Patodi-Singer (APS) $\eta$-invariant associated with 
the restriction of the operator on the boundary. Formally, the 
$\eta$-invariant is equal to the number of positive eigenvalues 
of the self-adjoint operator minus the number of its negative 
eigenvalues. If the manifold admits a compact Lie group action, 
in \cite{D78}, extending the APS index theorem \cite{APS75}, 
Donnelly proved a Lefschetz type formula 
for manifolds with boundary. The contribution of the boundary is 
expressed as the equivariant $\eta$-invariant $\eta_g$.

Note that the $\eta$-invariant and the equivariant $\eta$-invariant 
are well-defined for any compact manifold. In 
\cite[Theorem 0.5]{Go00}, Goette studied the singularity of 
$\eta_g$ at $g=e$ the identity element,
when the group action is locally free. He 
defined the equivariant infinitesimal $\eta$-invariant as a 
formal power series and express the singularity of $\eta_g$ 
at $g=e$ as a locally computable term through the comparison of 
the equivariant infinitesimal $\eta$-invariant and the 
equivariant $\eta$-invariant.

In \cite{BG00,BG04}, Bismut and Goette established the general 
comparison formulas for holomorphic analytic torsions and 
de Rham torsions. They used
the analytic localization techniques 
developed by Bismut and Lebeau in \cite{BL91} and developed 
new techniques to overcome the difficulty that the operators 
do not have lower bounds. In the holomorphic case 
\cite[Theorem 0.1]{BG00}, besides the predictable Bott-Chern 
current, in the final formula, there is an exotic additive 
characteristic class of the normal bundle, which is closely 
related to the Gillet-Soul\'e R-genus \cite{GilS} and Bismut's 
equivariant extension \cite{Bi94}. In the real case 
\cite[Theorem 0.1]{BG04}, in the final formula, besides the 
predictable Chern-Simons current, they discovered an exotic 
locally computable diffeomorphism invariant of the fixed 
point set, the so-called $V$-invariant. The mysterious 
$V$-invariant should be understood as a finite dimensional 
analogue of the real analytic (de Rham) torsion.

On the other hand, extending the works of Bismut-Freed  \cite{BF86II} 
and Cheeger \cite{Ch87} on 
the Witten's holonomy conjecture, Bismut and Cheeger \cite{BC89} 
studied the adiabatic limit for a fibration of compact spin 
manifolds and found that under the invertible assumption of 
the fiberwise Dirac operator, the adiabatic limit of the 
$\eta$-invariant of the associated Dirac operators on the 
total space is expressible in terms of a canonically constructed 
differential form, $\tilde{\eta}$, so-called Bismut-Cheeger 
$\eta$-form, on the base space. Later, Dai \cite{Dai91} 
extended this result to the case when the kernels of the 
fiberwise Dirac operators form a vector bundle over the base 
manifold. The Bismut-Cheeger $\eta$-form, $\tilde{\eta}$, is 
the families version of the $\eta$-invariant and its $0$-degree 
part is just the APS $\eta$-invariant. 
It appears naturally as the boundary contribution 
of the family index theorem 
for manifolds with boundary (cf. \cite{BC90I,BC90II,MP97,Mu95}).
We cite also \cite{Zh94} for a nice topological application of eta forms.
As the holomorphic analytic torsion and its family version,
Bismut-K\"ohler holomorphic torsion form \cite{BK92}
are the analytic counterpart
to the direct image in Arakelov geometry \cite{Soule92}, whose
foundation was developed by Gillet-Soul\'e and Bismut in the 1980s, 
the Bismut-Cheeger $\eta$-form is also the analytic counterpart
to the direct image in differential $K$-theory
introduced  by Freed-Hopkins \cite{FH00} 
and developed further by
 \cite{BS09},  \cite{FreedLott10}, \cite{HopkinsSinger05},
  \cite{SimonsS08}, etc.

When the fibration admits a fiberwise compact Lie group action,
the Bismut-Cheeger $\eta$-form could be naturally extended to 
the equivariant $\eta$-form $\wi{\eta}_g$. Recently, the 
functoriality of equivariant $\eta$-forms with respect to the 
composition of two submersions was established in \cite{Liu17a}, 
which extends the previous work of Bunke-Ma \cite{BM04} for 
usual $\eta$-forms for flat vector bundles with duality, cf. 
\cite{Be09,BB94,BKR11,DZ15,Ma99,Ma00,Ma00a,Puchol16} 
for related works on $\eta$-forms and holomorphic torsions.

In the same way as fixed-point formula has two equivariant versions,
the Lefschetz fixed-point formula and Kirillov-like formula 
of Berline-Vergne \cite{BV83}, the same is true for equivariant
$\eta$-forms. 
In this paper, we use the analytic techniques of Bismut-Goette 
in \cite{BG00} to define the equivariant infinitesimal 
Bismut-Cheeger $\eta$-form and prove a general comparison 
formula  
between the equivariant infinitesimal Bismut-Cheeger 
$\eta$-form and the equivariant Bismut-Cheeger 
$\eta$-form which extend the work of Goette \cite{Go00}. 
In particular, we express the singularity of 
$\wi{\eta}_g$ modulo exact forms, at any $g\in G$ as a locally 
computable differential form.

Let $G$ be a compact Lie group with Lie algebra $\mathfrak{g}$. 
We assume that $G$ acts isometrically on an odd-dimensional 
compact oriented Riemannian manifold $X$ and the $G$-action 
lifts on a Clifford module $\mE$ over $X$. In general, the 
equivariant APS $\eta$-invariant $\eta_g$ is not a continuous 
function on $g\in G$. In \cite{Go00}, Goette studied the 
singularity of the equivariant $\eta$-invariant $\eta_g$ at 
$g=e$. He 
defined
a formal power series $\eta_K\in 
\C[[\mathfrak{g}^*]]$ for $K\in \mathfrak{g}$, called the 
equivariant infinitesimal $\eta$-invariant and 
showed that 
if the Killing vector field $K^X$ induced by $K$ has no 
zeroes on $X$, for any $N\in \N$, as $0\neq t\rightarrow 0$,
\begin{align}\label{eq:0.01}
[\eta_{tK}]_N-\eta_{e^{tK}}=\mM_{tK}+\mathcal{O}(t^N),
\end{align}
where $[\eta_{tK}]_N$ is the part of the formal power series 
$\eta_{tK}$ with degree $\leq N$ and $\mM_{tK}$ could be 
expressed precisely as a locally computable term. Moreover, 
there exist $c_j(K)\in \C$ such that when $t\rightarrow 0$,
\begin{align}\label{eq:0.02}
\mM_{tK}=\sum_{j=1}^{(\dim X+1)/2}c_j(K)t^{-j}+\mathcal{O}(t^0).
\end{align}
It means that if the Killing vector field $K^X$ is nowhere 
vanishing, the singular behavior of $\eta_{e^{tK}}$ when 
$t\rightarrow 0$ could 
be computed as the integral of the local terms explicitly.

In this paper, we show first that $\eta_{tK}$ is an analytic 
function on $t$ for $t$ small enough and for any 
$0\neq K\in \mathfrak{g}$,
\begin{align}\label{eq:0.03}
\eta_{tK}-\eta_{e^{tK}}=\mM_{tK},\quad
\text{for $t\neq 0$ small enough.}
\end{align}
In Theorem \ref{thm:0.2}, we establish 
a general version of (\ref{eq:0.03}), in particular, its family version.

Let's explain in detail our result here. Let 
$\pi:W\rightarrow B$ be a smooth submersion of smooth compact 
manifolds with fiber $X$. Note that $n=\dim X$ can be even 
or odd. Let $TX=TM/B$ be the relative tangent bundle to the 
fiber $X$. We assume that $TX$ is oriented and that the compact 
Lie group $G$ acts fiberwise on $W$ and as identity on $B$ 
and preserves the orientation of $TX$. 

Let $g^{TX}$ be a $G$-invariant metric on $TX$. Let 
$(\mE,h^{\mE})$ be a Clifford module of $TX$ to the fiber 
$X$ and we assume that the $G$-action lifts on 
$(\mE,h^{\mE})$ and is compatible with the Clifford action. 
Let $\nabla^{\mE}$ be a $G$-invariant 
Clifford connection on $(\mE, h^{\mE})$, i.e., $\nabla^{\mE}$ 
is a $G$-invariant Hermitian connection on $(\mE, h^{\mE})$ 
and compatible 
with the Clifford action (see (\ref{eq:1.17})). Let $D$ 
be the fiberwise Dirac operator associated with 
($g^{TX}, \nabla^{\mE}$) (see (\ref{eq:1.18})). 

\textbf{We assume that the
kernels  $\Ker (D)$ form a vector bundle over $B$.}
Then for any $g\in G$, the equivariant $\eta$-form $\tilde{\eta}_g$ 
is well-defined (see Definition \ref{defn:1.03})\footnote{For 
even dimensional fiber, 
any family of Dirac operators could be deformed
 to another one which satisfies this assumption
and has the same family index in $K^0(B)$ (see e.g., \cite[\S 9.5]{BGV}).
But for odd dimensional fiber, some topological obstruction appears:
if a family of Dirac operators $D$ satisfies this assumption, the family
index of $D$ vanishes in $K^1(B)$ (this fact is implicitly contained 
in \cite{AS69}, a proof of which is presented in \cite[Theorem 4.1]{E13}). 
Recently, for odd dimensional fiber
case, Wittmann \cite{Wi15} 
defined an $\eta$-form under the 
assumption that the family
of Dirac operators has one eigenvalue of multiplicity one crossing 
zero transversally. It is expected that many properties of Bismut-Cheeger
$\eta$-form could be extended to this case.}.

In the whole paper, if $n=\dim X$ is even, $\mE$ is naturally 
$\Z_2$-graded by the chirality operator $\Gamma$ defined in 
(\ref{eq:1.15}) and the supertrace for $A\in \End(\mE)$ is 
defined by $\tr_s[A]:=\tr[\Gamma A]$; if $\dim X$ is odd, 
$\mE$ is ungraded. For $\sigma=\alpha\otimes A$ with 
$\alpha\in \Lambda(T^*B)$, $A\in \End(\mE)$, we define 
$\tr[\sigma]:=\alpha\cdot\tr[A]$.
We denote by $\tr^{\mathrm{odd}}[\sigma]$  the odd degree part 
of $\tr[\sigma]$. Set
\begin{align}\label{eq:0.04}
\widetilde{\tr}[\sigma]=
\left\{
\begin{array}{ll}
\tr_s[\sigma]\hspace{10mm} & \hbox{if $n=\dim X$ is even;} \\
\tr^{\mathrm{odd}}[\sigma]\hspace{10mm} & 
\hbox{if $n=\dim X$ is odd.}
\end{array}
\right.
\end{align}

For $\alpha\in \Omega^j(\R\times B)$, the space of $j$-th 
differential forms on $\R\times B$,  set
\begin{align}\label{eq:0.05}
\psi_{\R\times B}(\alpha)=\left\{
\begin{array}{ll}
\left(2i\pi  \right)^{-\frac{j}{2}}\cdot \alpha
\hspace{10mm} & \hbox{if $j$ is even;} \\
\pi^{-\frac{1}{2}}\left(2i\pi 
 \right)^{-\frac{j-1}{2}}\cdot \alpha 
\hspace{10mm} & \hbox{if $j$ is odd.}
\end{array}
\right.
\end{align}
Let $t$ be the coordinate of $\R$ in $\R\times B$. If
$\alpha=\alpha_0+dt\wedge \alpha_1$, with $\alpha_0, \alpha_1\in 
\Lambda (T^*B)$, set
\begin{align}\label{eq:0.06}
[\alpha]^{dt}:=\alpha_1.
\end{align}

Let $\mL_{K}$ be the infinitesimal action on $\cC^{\infty}(W,\mE)$ 
induced by $K\in \mathfrak{g}$ (see (\ref{eq:2.02b})). 

For $g\in G$, we denote by 
$Z(g)\subset G$ the centralizer subgroup of $g$ with Lie algebra 
$\mathfrak{z}(g)$. Let $W^g=\{x\in W: gx=x \}$ be the fixed point 
set of $g$. Then the restriction of $\pi$ on $W^g$, 
$\pi|_{W^g}:W^g\rightarrow B$ is a 
fibration with compact fiber $X^g$. 


Let $\mathbb{B}_t$ be the rescaled Bismut superconnection defined 
in (\ref{eq:1.22}). Let $d$ be the exterior differential operator.

Let $\widehat{\mathrm{A}}_{g,K}(\cdot)$ and $\ch_{g,K}(\cdot)$ 
be equivariant infinitesimal versions of the $\widehat{
	\mathrm{A}}$-form and the Chern character form (cf. 
(\ref{eq:2.14}) and (\ref{eq:2.15})).
The following result extends the equivariant infinitesimal 
$\eta$-invariant to the family case at any $g\in G$ 
(see Definition \ref{defn:2.03}, (\ref{eq:2.29}), 
(\ref{eq:2.30}), (\ref{eq:2.33}) and 
(\ref{eq:2.34})).

\begin{thm}\label{thm:0.1} 
For any $g\in G$, there exists $\beta>0$ such that if
$K\in \mathfrak{z}(g)$ with $|K|<\beta$, the integral
\begin{align}\label{eq:0.07}
\wi{\eta}_{g,K}=-\int_{0}^{+\infty}\left\{\psi_{\R\times B}
\wi{\tr}\left[g\exp\left(-\left(\mathbb{B}_{t}
+\frac{c(K^X)}{4\sqrt{t}}+dt\wedge \frac{\partial}{\partial
t}\right)^2-\mathcal{L}_{K}\right) \right]\right\}^{dt}dt
\end{align}
is a well-defined differential form on $B$,
and
\begin{align}\label{eq:0.08}
d\tilde{\eta}_{g,K}=\left\{
\begin{aligned}
&\int_{X^g}\widehat{\mathrm{A}}_{g,K}(TX,\nabla^{TX})\, 
\ch_{g,K}(\mE/\mS, \nabla^{\mE})\\
&\hspace{50mm}-\ch_{ge^K}(\Ker (D), \nabla^{\Ker (D)}) 
& \hbox{if $n$ is even;} \\
&\int_{X^g}\widehat{\mathrm{A}}_{g,K}(TX,\nabla^{TX})\, 
\ch_{g,K}(\mE/\mS, \nabla^{\mE}) & \hbox{if $n$ is odd.}
\end{aligned}
\right.
\end{align}
 Moreover, for fixed $K\in \mathfrak{z}(g)$, 
$\wi{\eta}_{g,zK}$ is an  analytic function of
 $z\in \C$ for $|zK|<\beta$. 
\end{thm}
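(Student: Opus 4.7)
The plan is to view the integrand as the $dt$-component of a differential form on $(0,\infty)\times B$, and to handle separately (i) convergence of the $t$-integral near $\infty$ and near $0^+$, (ii) the transgression formula \eqref{eq:0.08}, and (iii) analyticity in the parameter $z$. Write
\[
\mathbb{A}_{t,K}=\mathbb{B}_{t}+\frac{c(K^X)}{4\sqrt{t}}+dt\wedge\frac{\partial}{\partial t},
\]
so that the form in question is $\alpha_{t,K}=\psi_{\R\times B}\,\widetilde{\tr}\!\left[g\exp(-\mathbb{A}_{t,K}^2-\mathcal{L}_K)\right]$. At the large-$t$ end, the hypothesis that $\Ker(D)$ forms a vector bundle furnishes a uniform spectral gap for $D$ on its orthogonal complement over the compact base $B$. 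The Bismut-Cheeger convergence theorem then shows $\exp(-\mathbb{B}_t^2)$ tends exponentially fast to the projector onto $\Ker(D)$, and the additional $c(K^X)/(4\sqrt{t})$ is $O(t^{-1/2})$, while $\mathcal{L}_K$ is bounded of size $O(|K|)$; for $|K|<\beta$ small they are absorbed by the gap and one obtains exponential decay of $[\alpha_{t,K}]^{dt}$ as $t\to\infty$.

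The small-$t$ analysis is the technical heart. I would follow the Bismut-Goette strategy of \cite{BG00}: localize the supertrace near the fixed-point set $W^g$ via finite propagation, then perform a Getzler-type rescaling in directions tangent and normal to $W^g$. The role of the deformation $c(K^X)/(4\sqrt{t})$, together with Kostant's formula $\mathcal{L}_K=\nabla_{K^W}+\mu(K)$ splitting the Lie derivative into a covariant derivative along $K^W$ and a moment-type endomorphism, is precisely to absorb the otherwise singular contribution of $\mathcal{L}_K$ into the square: after rescaling, the limiting operator is a generalized harmonic oscillator on the normal bundle whose fibrewise supertrace yields the equivariant infinitesimal integrand $\widehat{\mathrm{A}}_{g,K}(TX,\nabla^{TX})\,\ch_{g,K}(\mE/\mS,\nabla^{\mE})$. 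In particular $[\alpha_{t,K}]^{dt}$ admits a finite limit as $t\to 0^+$, giving integrability near $0$ and identifying the $t=0$ boundary contribution.

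Granting convergence, the transgression formula follows by applying $d_B$ under the integral sign: the Duhamel/Chern-Simons identity $(d_B-dt\wedge\partial_t)\psi_{\R\times B}\widetilde{\tr}\!\left[g\exp(-\mathbb{A}_{t,K}^2-\mathcal{L}_K)\right]=0$ lets one rewrite $d_B[\alpha_{t,K}]^{dt}$ as a total $t$-derivative, and integration by parts on $[0,\infty)$ produces exactly two boundary terms. The small-$t$ analysis identifies the $t=0$ limit with the fixed-point integral on the right of \eqref{eq:0.08}; the large-$t$ analysis, combined with the fact that $g$ and $\exp(-\mathcal{L}_K)$ act as $g$ and $e^K$ on the finite rank bundle $\Ker(D)$, produces the term $\ch_{ge^K}(\Ker(D),\nabla^{\Ker(D)})$ in the even case, with no such term in the odd case since $\widetilde{\tr}$ selects odd degrees, where the Chern character of a Hermitian bundle vanishes.

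Finally, analyticity in $z$ follows from the observation that replacing $K$ by $zK$ makes $\mathbb{A}_{t,zK}^2+\mathcal{L}_{zK}$ depend polynomially (quadratically) on $z$, so Kato's analytic perturbation theory makes the semigroup holomorphic in $z$, and all estimates of steps (i)-(ii) extend uniformly to a complex neighborhood of $0\in\C$. The principal obstacle is the small-$t$ step: one must execute the Getzler rescaling simultaneously in the presence of the singular Clifford deformation $c(K^X)/\sqrt{t}$ and of the first-order perturbation $\mathcal{L}_K$, while keeping all estimates uniform on a neighborhood of $W^g$ and robust under the family/superconnection formalism needed to produce a form on $B$ rather than a number.
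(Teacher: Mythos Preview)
Your overall strategy matches the paper's closely: convergence via separate large-$t$ and small-$t$ estimates (the paper's Theorem~\ref{thm:2.02}), transgression by integrating the closedness identity $d^{\R\times B}\widetilde{\tr}[g\exp(-\mathbb{A}_{t,K}^2-\mathcal{L}_K)]=0$ over $[0,\infty)$ (equations~(\ref{eq:2.27})--(\ref{eq:2.34})), and analyticity in $z$ from uniform estimates plus dominated convergence (equations~(\ref{eq:2.36})--(\ref{eq:2.37})). The small-$t$ localization and Getzler rescaling you describe are precisely what the paper carries out in Section~\ref{s05}, following~\cite{BG00}.

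There are, however, two genuine imprecisions that would derail a rigorous write-up. First, in your large-$t$ argument you assert that ``$\mathcal{L}_K$ is bounded of size $O(|K|)$''. It is not: $\mathcal{L}_K$ is a first-order differential operator along the fibre, and cannot be treated as a bounded perturbation of $\mathbb{B}_t^2$. The paper handles this by working with weighted fibrewise Sobolev norms $|\cdot|_{t,k}$ (see~(\ref{eq:5.04})) and proving G{\aa}rding-type inequalities (Lemma~\ref{lem:5.02}) for the degree-zero piece $\mA_{zK,t}^{(0)}=tD^2+\frac{z}{4}[D,c(K^X)]-\frac{z^2|K^X|^2}{16t}+z\mathcal{L}_K$; the $\mathcal{L}_K$ term is controlled only against $|s|_{t,1}|s|_{t,0}$, not $|s|_{t,0}^2$. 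This yields polynomial decay $O(t^{-1-\delta})$ of the $dt$-component (Theorem~\ref{thm:5.01}), not the exponential decay you claim.

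Second, your statement that ``$[\alpha_{t,K}]^{dt}$ admits a finite limit as $t\to 0^+$'' is incorrect. What the rescaling analysis produces is convergence of the \emph{non-$dt$} part to the fixed-point integrand $\int_{X^g}\widehat{\mathrm{A}}_{g,K}\ch_{g,K}$ (Theorem~\ref{thm:2.01}); the $dt$-component itself is only $O(t^{\delta-1})$ for some $\delta>0$ (Theorem~\ref{thm:2.02}\,b), reformulated as Theorem~\ref{thm:5.10}). This is integrable but singular---indeed already in the classical case $K=0$ one has $\widetilde{\tr}[g\,\partial_u\mathbb{B}_u\exp(-\mathbb{B}_u^2)]=O(u^{-1/2})$. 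The $t=0$ boundary term in your transgression argument comes from the limit of the non-$dt$ part, not from a putative limit of the $dt$-component; compare~(\ref{eq:2.28}).
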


In the sequel, $\wi{\eta}_{g,K}$ is called 
the equivariant infinitesimal (Bismut-Cheeger)
$\eta$-form.

Let $\vartheta_K\in T^*X$ be the $1$-form which is dual to 
$K^X$ by the metric $g^{TX}$. 
Now we state the main result of this paper.

\begin{thm}\label{thm:0.2} 	
For $g\in G$ and $K_{0}\in \mathfrak{z}(g)$, 
there exists  $\beta>0$ such that for any $K=zK_{0}$,  $K\neq 0$ 
and $-\beta<z<\beta$,
 modulo exact forms on $B$, we have
\begin{align}\label{eq:0.09}
\tilde{\eta}_{g,K}=\tilde{\eta}_{ge^K}+\mM_{g,K},
\end{align}
where $\mM_{g,K}$ is a well-defined integral defined by
\begin{align}\label{eq:0.10}
\mM_{g,K}=-\int_0^{+\infty}\int_{X^g}\frac{\vartheta_K}{2 i\pi v}
\exp\left(\frac{d\vartheta_K-2i\pi |K^X|^2}{2 i\pi v}
\right)\widehat{\mathrm{A}}_{g,K}(TX,\nabla^{TX})
\ch_{g,K}(\mE/\mS,\nabla^{\mE})\frac{dv}{v},
\end{align}
and 
$t^{\lfloor(\dim W^g+1)/2\rfloor}
\mM_{g,tK}$ is real analytic on $t\in \R$, $|t|<1$. 
Moreover, we have
\begin{multline}\label{eq:0.12}
d\mM_{g,K}=\int_{X^{g}}
\widehat{\mathrm{A}}_{g,K}(TX,\nabla^{TX})
\ch_{g,K}(\mE/\mS,\nabla^{\mE})
\\
-
\int_{X^{ge^K}}
\widehat{\mathrm{A}}_{ge^K}(TX,\nabla^{TX})
\ch_{ge^K}(\mE/\mS,\nabla^{\mE}).
\end{multline}	
\end{thm}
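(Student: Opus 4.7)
My approach is to deform the superconnection defining $\tilde{\eta}_{ge^K}$ continuously into the one defining $\tilde{\eta}_{g,K}$, apply Stokes' theorem in parameter space, and identify the resulting transgression with $\mM_{g,K}$ modulo exact forms on $B$. This follows the analytic localization strategy of Bismut--Goette \cite{BG00,BG04} adapted to the equivariant family $\eta$-form, extending Goette's argument \cite{Go00} to the family case at arbitrary $g\in G$.

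The first step is to introduce a two-parameter family of superconnection-like operators $\mathbb{A}_{t,u}$ for $(t,u)\in[0,\infty)\times[0,1]$ together with an interpolating group element $g_u$ so that at $u=0$ one recovers $\mathbb{B}_t$ with the insertion of $ge^K$, while at $u=1$ one recovers $\mathbb{B}_t+c(K^X)/(4\sqrt{t})$ with $g$ and the Lie-derivative insertion $-\mL_K$. The interpolation redistributes the group element $e^K$ into the pair (Clifford term, infinitesimal generator), preserving equivariant closedness of the heat form
$$\omega_{t,u}:=\psi_{\R\times B}\wi{\tr}\Bigl[g_u\exp\Bigl(-(\mathbb{A}_{t,u}+dt\wedge\partial_t+du\wedge\partial_u)^2-u\mL_K\Bigr)\Bigr].$$
By Duhamel's principle, $(d^B+d_t+d_u)\omega_{t,u}$ is $d^B$-exact on $\R_t\times[0,1]_u\times B$, so Stokes on the rectangle $[\varepsilon,T]\times[0,1]$ expresses $\tilde{\eta}_{g,K}-\tilde{\eta}_{ge^K}$ modulo exact forms on $B$ as the sum of a boundary-in-$u$ integral over $t\in[\varepsilon,T]$ and two boundary-in-$t$ integrals over $u\in[0,1]$ at $t=\varepsilon$ and $t=T$.

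Next I would analyze these boundary limits. The limit $T\to+\infty$ is controlled by the kernel assumption: $\mathbb{B}_t$ concentrates on $\Ker(D)$, and the resulting finite-rank supertraces produce exactly the $\ch$-contributions of (\ref{eq:0.08}) that must be subtracted. The limit $\varepsilon\to 0$ is treated by a Getzler-type rescaling that localizes on $X^g$ and yields the $\widehat{\mathrm{A}}_{g,K}\,\ch_{g,K}$ integrands in (\ref{eq:0.08}); after combining the $u=0$ and $u=1$ endpoints these characteristic-class terms cancel thanks to Theorem \ref{thm:0.1}. What remains after these cancellations is the interior integral in $(t,u)$, and a Mehler-type Gaussian computation of the heat kernel supertrace followed by a suitable change of variable from $u$ to $v$ collapses it onto $X^g$, reproducing the integrand of (\ref{eq:0.10}); here the factor $\vartheta_K\exp\bigl((d\vartheta_K-2i\pi|K^X|^2)/(2i\pi v)\bigr)/(2i\pi v)$ arises as the equivariant Mathai--Quillen transgression potential of $K^X$ on $X^g$, obtained by applying the equivariant differential $d-\iota_{K^X}$ to $\vartheta_K$ and using $\iota_{K^X}\vartheta_K=|K^X|^2$.

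The main analytic obstacle will be the uniform control of the $(t,u)$-heat kernels and, equivalently, the well-definedness of $\mM_{g,K}$ as $v\to 0$: the exponential $e^{-|K^X|^2/v}$ provides Gaussian decay only away from the zero locus $X^{ge^K}\subset X^g$ of $K^X$. Following Bismut--Goette \cite{BG00}, I would handle this by a secondary localization on $X^{ge^K}$ inside $X^g$: in a tubular neighborhood with normal coordinates $y$, one has $|K^X|^2\sim|J_Ky|^2$ for the skew-adjoint endomorphism $J_K$ induced by $K$ on the normal bundle, so $e^{-|K^X|^2/v}$ becomes a genuine Gaussian in $y$. Normal integration to $X^{ge^K}$ then produces finite quantities with an explicit pole in $K$ whose order is bounded by the half normal dimension, which gives real analyticity of $t^{\lfloor(\dim W^g+1)/2\rfloor}\mM_{g,tK}$ and, via (\ref{eq:0.09}), transfers the analyticity of $z\mapsto\tilde{\eta}_{g,zK_0}$ established in Theorem \ref{thm:0.1} to $\mM_{g,zK_0}$. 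Finally, (\ref{eq:0.12}) follows by applying $d^B$ to (\ref{eq:0.09}) and substituting (\ref{eq:0.08}) for both $\tilde{\eta}_{g,K}$ and $\tilde{\eta}_{ge^K}$; the $\Ker(D)$-Chern-character terms cancel (or are absent when $n=\dim X$ is odd), leaving the claimed difference of local characteristic-class integrals over $X^g$ and $X^{ge^K}$.
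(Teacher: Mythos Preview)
Your overall strategy---a two-parameter deformation, Stokes, and Bismut--Goette analytic localization---is the paper's strategy, but the concrete implementation differs and your description contains one real confusion.

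The paper does \emph{not} interpolate the group element. It keeps $g$ and the insertion $-\mL_K$ fixed throughout and varies only the Clifford coefficient: for $0<t\le v$ it sets
\[
\mC_{v,t}=\mathbb{B}_t+\frac{\sqrt{t}}{4}\,c(K^X)\Bigl(\frac{1}{t}-\frac{1}{v}\Bigr)+dt\wedge\partial_t+dv\wedge\partial_v,
\]
and integrates the degree-one part of $-\psi_{\R^2\times B}\,\wi{\tr}\bigl[g\exp(-\mC_{v,t}^2-\mL_K)\bigr]$ over a \emph{triangular} contour with sides $\Gamma_1=\{v=A\}$, $\Gamma_2=\{v=t\}$, $\Gamma_3=\{t=a\}$. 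On the diagonal $v=t$ the Clifford term drops out and $g\,e^{-\mL_K}$ acts as $ge^K$, producing $-\wi\eta_{ge^K}$; on $v=A\to\infty$ one recovers $\wi\eta_{g,K}$. Your ``interpolating group element $g_u$'' is unnecessary and would force you to differentiate the group element under the trace, complicating the transgression identities.

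The point you have wrong: $\mM_{g,K}$ does \emph{not} arise from the interior integral. By Stokes, the interior double integral is precisely the $d^B$-exact term on $B$. In the paper, $\mM_{g,K}$ is the limit $a\to 0$ of the bottom edge $\Gamma_3$, i.e.\ the $t\to 0$ limit of the $dv$-component integrated in $v$. The parametrization by $v\in[t,\infty)$ is chosen so that this boundary integral becomes $\int_0^\infty\wi e_v\,dv/v$ directly---the $v$ in the formula for $\mM_{g,K}$ is literally the deformation parameter, and no change of variable is needed. The analytic core (Theorem~\ref{thm:4.03}) is to show that $[\beta_{g,K}(v,t)]^{dv}\to -\wi e_v/v$ as $t\to 0$, with uniform estimates in $v$ allowing dominated convergence; this is where the secondary localization on $X^{g,K}\subset X^g$ enters, much as you outline. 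Finally, the paper establishes (\ref{eq:0.12}) and the analyticity of $t^{\lfloor(\dim W^g+1)/2\rfloor}\mM_{g,tK}$ directly from the current $Q_K$ and a Berline--Vergne localization (Proposition~\ref{prop:3.02}), independently of (\ref{eq:0.09}).
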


By Theorem \ref{thm:0.1}, $\tilde{\eta}_{g,tK}$ is 
an analytic function of $t$ near 
$t=0$. Thus when $t\rightarrow 0$, modulo exact forms, the 
singularity of $\tilde{\eta}_{ge^{tK}}$ is the same as 
that of $-\mM_{g,tK}$.

Note that the general comparison formula for the two versions of 
equivariant holomorphic analytic torsions is established in 
\cite[Theorem 5.1]{BG00}, which is the model of our paper. The 
analytical tools in this paper are inspired by those of \cite{BG00} with 
necessary modifications. For this problem on de Rham torsion 
forms, a comparison formula is stated in  
\cite[Theorem 5.13]{BG04}.

\begin{rem}\label{rem:0.3}
	Let $G$ act on an odd dimensional
	compact Riemannian manifold $(X,g^{TX})$ and 
	on a Clifford module $(\mE, h^{\mE}, \nabla^{\mE})$ compatible 
	with the Clifford action. Then for $g=e$ the identity element
	of $G$, (\ref{eq:0.07}) defines a 
	complex number $\eta_{K}$ for any $K\in \mathfrak{g}$, $|K|<\beta$.
	As formal power series on $K$, this $\eta_K$ is just the 
	equivariant infinitesimal $\eta$-invariant 
	$\eta_{K}$ in \cite[Definition 0.4]{Go00}.
	
	Let $P\rightarrow B$ be a $G$-principal bundle with connection
	and associated curvature $\Omega$.
	Then we get naturally a fibration $P\times_GX\rightarrow B$ with 
	fiber $X$.
	Let $\widetilde{\eta}$ be the associated Bismut-Cheeger $\eta$-form.
	For this fibration, by Bismut  \cite[\S1d), \S3b)]{Bi86}, 
	under the notation of (\ref{eq:1.22}),
	the term $c(T^H)$ in the Bismut
	superconnection is $c(\Omega)$,
	and $(\nabla^{\mathbb{E},u})^2=\mathcal{L}_\Omega$,
	thus we get \cite[Lemma 1.14]{Go00},
	\begin{align}\label{eq:19b}
	\widetilde{\eta}
	=\eta_{\frac{i}{2\pi}\Omega}.
	\end{align}
	Thus we can understand the formal power series of
	$\eta_K$ as a universal $\eta$-form.
\end{rem}

\begin{rem}\label{rem:0.4}
Assume 
temporarily
that $B=\mathrm{pt}$, $\dim X=n$
is odd, and $X$ is the boundary of a $G$-equivariant
Riemannian manifold
$Z$, which has product structure near $X$. We also assume that
$\mE_Z=\mE_Z^+\oplus \mE_Z^-$ is a $G$-equivariant Clifford
module on $Z$ such that $\left.\mE_Z^+\right|_X=\mE$ and 
$\mE_Z^{\pm}$ near $X$
is the pull-back of $\mE$ as Hermitian vector
bundles with connections.

Let $D_Z$ be the associated Dirac operator on $\mE_Z$ over $Z$.
Then the index of $D_Z^+:=D_Z|_{\cC^{\infty}(Z,\mE_Z^+)}$
with respect to the Atiyah-Patodi-Singer (APS) boundary condition
is a virtual representation of $G$. For $g\in G$, its equivariant
APS index $\Ind_{\mathrm{APS},g}(D_Z^+)$ can be computed by 
Donnelly's theorem \cite{D78},
\begin{align}\label{eq:0.12b}
\Ind_{\mathrm{APS},g}(D_Z^+)=\int_{Z^g}
\widehat{\mathrm{A}}_{g}(TZ,\nabla^{TZ})
\ch_{g}(\mE_Z/\mS_Z,\nabla^{\mE_Z})
-\frac{1}{2}\left(\eta_g(D)+\tr|_{\Ker(D)}[g] \right).
\end{align}
By combining (\ref{eq:0.09}), (\ref{eq:0.12}) 
(more precisely the Stokes formula \cite[p. 775]{BrM06},
(\ref{eq:3.25b}) and (\ref{eq:3.26a})), 
and (\ref{eq:0.12b}), for any
$K\in \kg$, there exists $\beta>0$ such that, for any 
$-\beta<t<\beta$, we have 
\begin{align}\label{eq:0.12c}
\Ind_{\mathrm{APS},e^{tK}}(D_Z^+)=\int_{Z}
\widehat{\mathrm{A}}_{tK}(TZ,\nabla^{TZ})
\ch_{tK}(\mE_Z/\mS_Z,\nabla^{\mE_Z})
-\frac{1}{2}\left(\eta_{tK}(D)+\tr|_{\Ker(D)}[e^{tK}] \right).
\end{align}
Here $\widehat{\mathrm{A}}_{tK}(\cdot):=
\widehat{\mathrm{A}}_{e,tK}(\cdot)$
and $\ch_{tK}(\cdot):=\ch_{e,tK}(\cdot)$.
\end{rem}

The main result of this paper is announced in \cite{LM18} and 
plays an important role in our recent work \cite{LM18a}.

This paper is organized as follows.   
In Section \ref{s01}, we recall the definition of the equivariant 
Bismut-Cheeger $\eta$-form. In Section \ref{s02}, we state the 
family Kirillov formula and define the equivariant infinitesimal 
$\eta$-form, in particular, we establish Theorem \ref{thm:0.1}
modulo some technical  details. 
In Section \ref{s03}, we prove that $\mM_{g,tK}$ 
in (\ref{eq:0.10}) is 
well-defined and state our main result, Theorem \ref{thm:0.2}. In 
Section \ref{s04}, we state some intermediate results and 
prove Theorem \ref{thm:0.2}. In Section \ref{s05}, we give an 
analytic proof 
of the family Kirillov formula and the technical  details
to establish Theorem \ref{thm:0.1} 
following the 
lines of \cite[\S 7]{BG00}. For the convenience to compare the 
arguments here with those in \cite[\S 7]{BG00}, especially how 
the extra terms for the families version appear, the structure of 
this section is formulated almost the same as in \cite[\S 7]{BG00}. 
In Section 
\ref{s06}, we prove the intermediate results in Section \ref{s04} 
using the analytical 
techniques in \cite[\S 8, \S9]{BG00}.

From Remark \ref{rem:1.03}, to simplify the presentation, in Sections 
\ref{s05}, \ref{s06}, we will {\bf assume} that $TX^g$ is oriented.

\textbf{Notation}: we use the Einstein summation convention 
in this paper: when an index variable appears twice in a single 
term and is not otherwise defined, it implies summation of that 
term over all the values of the index.

We denote by $\lfloor x\rfloor$ the maximal integer not larger than $x$.

We denote by $d$ the exterior differential operator and 
$d^B$ when we like to insist the base manifold $B$.
Let $\Omega^{\mathrm{even/odd}}(B,\C)$ be the space of 
even/odd degree complex valued differential forms on $B$.
For a real vector bundle $E$, we denote by $\dim E$ the 
real rank of $E$.

If $\mA$ is a $\Z_2$-graded algebra, and if $a,b\in \mA$, 
then we will note $[a,b]:=ab-(-1)^{\deg a\cdot\deg b}ba$ 
as the supercommutator of $a$, $b$.
In the whole paper, if $\mA$, $\mA'$ are $\Z_2$-graded algebras
we will note $\mA\widehat{\otimes}\mA'$ as the $\Z_2$-graded 
tensor product as in \cite[\S 1.3]{BGV}.
If one of $\mA, \mA'$ is ungraded, we understand 
it as $\Z_2$-graded by taking its odd part as zero.

For the fiber bundle $\pi: W\rightarrow B$, we will often use 
the integration of the differential forms along the oriented 
fibers $X$ in this paper. Since the fibers may be odd 
dimensional, we must make precisely our sign conventions: 
for $\alpha\in \Omega^{\bullet}(B)$ and 
$\beta\in \Omega^{\bullet}(W)$, then
\begin{align}\label{eq:0.11}
\int_X(\pi^*\alpha)\wedge\beta=\alpha\wedge \int_X\beta.
\end{align}


\noindent{\bf Acknowledgments}. 
B.\ L.\ is partially supported by Science and Technology Commission 
of Shanghai Municipality (STCSM), grant No.18dz2271000, 
Natural Science Foundation of Shanghai, grant No.20ZR1416700 
and NSFC No.11931007.
X.\ M.\ is partially supported by
NSFC No.11528103, No.11829102, ANR-14-CE25-0012-01,  and
funded through the Institutional Strategy of
the University of Cologne within the German Excellence 
Initiative. Part of this work was done while the authors 
were visiting University of Science and Technology in 
China and Wuhan University. 

\section{Equivariant $\eta$-forms}\label{s01} 
In this section, we recall the definition of the equivariant 
$\eta$-form 
in the language of Clifford modules. In Section \ref{s0101}, 
we recall the definition of the Clifford algebra. In Section
\ref{s0102}, we explain the Bismut superconnection. In 
Section \ref{s0103}, we define the equivariant $\eta$-form 
for Clifford module.

\subsection{Clifford algebras}\label{s0101}

Let $(V, \la,\ra)$ be a Euclidean space, such that 
$\dim V=n$, 
with orthonormal basis $\{e_i\}_{i=1}^n$. 
Let $c(V)$ be the 
Clifford algebra of $V$ defined by the relations
\begin{align}\label{eq:1.01}
e_ie_j+e_je_i=-2\delta_{ij}.
\end{align}
To avoid ambiguity, we denote by $c(e_i)$ the element of 
$c(V)$ corresponding to $e_i$.

If $e\in V$, let $e^*\in V^*$ correspond to $e$ 
by the scalar product 
$\la , \ra$ of $V$. The exterior algebra $\Lambda V^*$ is 
a module of $c(V)$ defined by 
\begin{align}\label{eq:1.02} 
c(e)\alpha=e^*\wedge\alpha -i_{e}\alpha
\end{align} 
for any $\alpha\in\Lambda V^*$, where $\wedge$ is the 
exterior product and $i$ is the contraction 
operator. The map $a\mapsto c(a)\cdot 1$, $a\in c(V)$,
induces an isomorphism of vector spaces
\begin{align}\label{eq:1.03}
\sigma:c(V)\rightarrow \Lambda V^*.	
\end{align}

\subsection{Bismut superconnection}\label{s0102}
Let $\pi:W\rightarrow B$ be a smooth submersion of 
smooth compact manifolds with $n$-dimensional fibers $X$. 
Let $TX=TW/B$ be the relative tangent bundle to the fibers $X$. 

Let $G$ be a compact Lie group acting on $W$ along the fibers 
$X$, that is, if $g\in G$, $\pi\circ g=\pi$. Then $G$ acts 
on $TW$ and on $TX$. Let $T^HW\subset TW$ be a 
$G$-invariant horizontal subbundle, so that
\begin{align}\label{eq:1.04}
TW=T^HW\oplus TX.
\end{align}
Since $G$ is compact, such $T^HW$ always exists. Let $P^{TX}:TW
\rightarrow TX$ be the projection associated with the splitting 
(\ref{eq:1.04}). Note that
\begin{align}\label{eq:1.05}
T^HW\cong \pi^*TB.
\end{align}

Let $g^{TX}$ be a $G$-invariant metric on $TX$. Let $g^{TB}$ 
be a Riemannian metric on $TB$. We equip $TW$ with the 
$G$-invariant metric via (\ref{eq:1.04}) and (\ref{eq:1.05}),
\begin{align}\label{eq:1.06}
g^{TW}=\pi^*g^{TB}\oplus g^{TX}.
\end{align}
Let $\nabla^{TW,L}$ (resp. $\nabla^{TB}$) be the Levi-Civita 
connection on $(TW,g^{TW})$ (resp. $(TB, g^{TB})$). 
Let $\nabla^{TX}$ be the connection on $TX$ defined by
\begin{align}\label{eq:1.07}
\nabla^{TX}=P^{TX}\nabla^{TW,L}P^{TX}.
\end{align}
It is $G$-invariant.
Let $\nabla^{TW}$ be the $G$-invariant connection on $TW$, 
via (\ref{eq:1.04}) and (\ref{eq:1.05}),
\begin{align}\label{eq:1.08}
\nabla^{TW}=\pi^*\nabla^{TB}\oplus\nabla^{TX}.
\end{align}
Put
\begin{align}\label{eq:1.09}
S=\nabla^{TW,L}-\nabla^{TW}.
\end{align}
Then $S$ is a 1-form on $W$ with values in antisymmetric 
elements of $\End(TW)$. Let $T$ be the torsion of 
$\nabla^{TW}$. By \cite[(1.28)]{Bi86}, if $U,V,Z\in TW$,
\begin{align}\label{eq:1.10}
\begin{split}
&S(U)V-S(V)U+T(U,V)=0,
\\
&2\la S(U)V,Z\ra+\la T(U,V),Z\ra+\la T(Z,U),V\ra-\la 
T(V,Z),U\ra=0.
\end{split}
\end{align}

If $U$ is a vector field on $B$, let $U^H$ be its lift in 
$T^HW$ and let $\mL_{U^H}$ be the Lie derivative operator 
associated with the vector field $U^H$. 
Then $\mL_{U^H}$ acts on the tensor algebra of $TX$. In 
particular, if $U\in TB$,$\left(g^{TX}\right)^{-1}
\mL_{U^H}g^{TX}$ defines a self-adjoint endomorphism of $TX$.
If $U,V$ are vector fields on $B$, from 
\cite[Theorem 1.1]{Bi97},
\begin{align}\label{eq:1.11}
T(U^H,V^H)=-P^{TX}[U^H,V^H],
\end{align}
and if $U\in TB$, $Z, Z'\in TX$,
\begin{align}\label{eq:1.12}
T(U^H,Z)=\frac{1}{2}\left(g^{TX}\right)^{-1}\mL_{U^H}g^{TX}Z,
\quad T(Z,Z')=0.
\end{align}
From (\ref{eq:1.10}) and (\ref{eq:1.12}), 
if $U\in TB$, $Z,Z'\in TX$, we have
\begin{align}\label{eq:1.13}
\la S(Z)Z',U^H\ra=-\la T(U^H,Z),Z'\ra=-\la T(U^H,Z'),Z\ra.
\end{align}

We recall some properties in 
\cite[\S 1.1]{Bi97}.
\begin{prop}\label{prop:1.01}

1) The connection $\nabla^{TX}$ does not 
depend on $g^{TB}$ and on each fiber $X$, 
it restricts to the Levi-Civita connection of $(TX,g^{TX})$.

2) If $U\in TB$, then
\begin{align}\label{eq:1.14}
\nabla^{TX}_{U^H}=\mL_{U^H}+\frac{1}{2}
\left(g^{TX}\right)^{-1}\mL_{U^H}g^{TX}.
\end{align}

3) The tensors $T$ and $\la 
	S(\cdot)\cdot,\cdot\ra$ do not depend on $g^{TB}$.	
\end{prop}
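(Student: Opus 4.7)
The plan is to verify the three parts by direct Koszul-formula computations, exploiting the orthogonal splitting (\ref{eq:1.04}), the definition $\nabla^{TX} = P^{TX}\nabla^{TW,L}P^{TX}$, and the fact that vertical vector fields form a Lie subalgebra: if $V'$ is a section of $TX$, then $\pi_*[U^H, V'] = [U, \pi_* V'] = 0$, so $[U^H, V'] \in TX$.

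For part (1), I would apply the Koszul formula to $\la \nabla^{TW,L}_X Y, Z\ra$ with $Y, Z \in TX$ and $X$ either horizontal or vertical. By orthogonality, the terms $Y\la X, Z\ra$ and $Z\la X, Y\ra$ vanish when $X$ is horizontal, and by the Lie-subalgebra property $\la [Y, Z], X\ra$ vanishes likewise. What remains involves only $g^{TX}$, the horizontal lift, and $T^HW$, so $\nabla^{TX}$ does not depend on $g^{TB}$. On a fixed fiber only the case $X \in TX$ arises, and the Koszul identity reduces term-by-term to the Koszul formula for the Levi-Civita connection of $(X, g^{TX})$.

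For part (2), I would continue from the $X = U^H$ case of the identity above, namely
\begin{align*}
2g^{TX}(\nabla^{TX}_{U^H} V', W') = U^H g^{TX}(V', W') + g^{TX}([U^H, V'], W') - g^{TX}([U^H, W'], V'),
\end{align*}
and expand
\begin{align*}
U^H g^{TX}(V', W') = (\mL_{U^H} g^{TX})(V', W') + g^{TX}([U^H, V'], W') + g^{TX}(V', [U^H, W']).
\end{align*}
After substitution, the commutator pieces collect into $2g^{TX}([U^H, V'], W')$, and (\ref{eq:1.14}) can be read off once the symmetric form $\mL_{U^H} g^{TX}$ is identified with the endomorphism $(g^{TX})^{-1}\mL_{U^H} g^{TX}$ via $g^{TX}$.

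For part (3), I would first compute the torsion $T$ of $\nabla^{TW} = \pi^*\nabla^{TB} \oplus \nabla^{TX}$ case by case, using that $\nabla^{TB}$ is torsion-free. This directly gives $T(U^H, V^H) = -P^{TX}[U^H, V^H]$, $T(Z, Z') = 0$ for $Z, Z' \in TX$, and $T(U^H, Z) = \nabla^{TX}_{U^H} Z - [U^H, Z] = \tfrac{1}{2}(g^{TX})^{-1}\mL_{U^H} g^{TX} Z$ by part (2), recovering (\ref{eq:1.11}) and (\ref{eq:1.12}). In every case $T$ takes values in $TX$, so $\la T(\cdot, \cdot), X\ra = g^{TX}(T(\cdot, \cdot), P^{TX} X)$ depends only on $g^{TX}$. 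The second identity in (\ref{eq:1.10}) then expresses $\la S(U)V, Z\ra$ as a linear combination of such $T$-pairings, which yields the $g^{TB}$-independence of $\la S(\cdot)\cdot, \cdot\ra$. The only subtle point throughout is the bookkeeping required to confirm that every potentially $g^{TB}$-dependent term really cancels.
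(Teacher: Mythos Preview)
Your proof is correct. The paper itself does not prove this proposition: it simply cites \cite[\S 1.1]{Bi97} for these standard facts, so your direct Koszul-formula verification supplies what the paper omits. The computations you outline for parts (1)--(3) are the standard ones, and your bookkeeping is sound (in particular, your observation that $T$ takes values in $TX$, so that all pairings $\la T(\cdot,\cdot),\cdot\ra$ reduce to $g^{TX}$-pairings, is exactly the right way to handle part (3)).
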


Let $c(TX)$ be the Clifford algebra bundle of $(TX, g^{TX})$, 
whose fiber at $x\in W$ is the Clifford algebra $c(T_xX)$ of 
the Euclidean space $(T_xX, g^{T_xX})$. Let $\mE$ be a Clifford 
module of $c(TX)$. It means that $\mE$ is a complex vector bundle and 
restricted on a fiber, $\mE_x$ is a representation 
of $c(T_xX)$. We assume that the $G$-action lifts on $\mE$ and 
commutes with the Clifford action. 

From now on, we assume that $TX$ is $G$-equivariant oriented.

\emph{In the whole paper},
if $n$ is even, as in \cite[Lemma 3.17]{BGV}, 
for a locally oriented orthonormal frame
$e_1,\cdots,e_n$ of $TX$, we define the chirality operator by
\begin{align}\label{eq:1.15}
\Gamma=i^{n/2}c(e_1)\cdots c(e_n). 
\end{align}
Then $\Gamma$ does not depend on the choice of the frame, 
commutes with the $G$-action and $\Gamma^2=\Id$. Thus $\mE$ 
is naturally $\Z_2$-graded by the chirality operator $\Gamma$. 
The supertrace for $A\in \End(\mE)$ is defined by 
\begin{align}\label{eq:1.16}
\tr_s[A]:=\tr[\Gamma A].
\end{align}
If $n$ is odd, $\mE$ is ungraded.  

Let $h^{\mE}$ be a $G$-invariant Hermitian metric on $\mE$. 
For $b\in B$, let $\mathbb{E}_{b}$ be the set of smooth 
sections over $X_b=\pi^{-1}(b)$ of 
$\mE|_{X_b}$. As in \cite{Bi86},
we will regard $\mathbb{E}$ as
an infinite dimensional vector bundle over $B$.
Let $dv_X(x)$ be the Riemannian volume element of $X_b$. 
The bundle $\mathbb{E}_b$ is naturally endowed with the 
Hermitian product
\begin{align}\label{eq:1.19} 
\la s,s'\ra_0=\int_{X_b}\la s,s'\ra(x)dv_X(x), \quad
\text{for}\ s,s'\in \mathbb{E}.
\end{align}
Then $G$ acts on $\mathbb{E}_{b}=\cC^{\infty}
(X_b,\mE|_{X_b})$ as 
\begin{align}\label{eq:1.19b}
(g.s)(x)=g(s(g^{-1}x)) \quad \text{for any}\ g\in G.
\end{align}
Let $\nabla^{\mE}$ be a $G$-invariant Clifford connection 
on $\mE$ (cf. \cite[\S 10.2]{BGV}), 
that is, $\nabla^{\mE}$ is $G$-invariant, preserves 
$h^{\mE}$ and for any $U\in TW$, $Z\in \cC^{\infty}(W,TX)$,
\begin{align}\label{eq:1.17}
\left[\nabla_U^{\mE}, c(Z)\right]=c\left(\nabla^{TX}_U
Z\right).
\end{align}
The fiberwise Dirac operator is defined by
\begin{align}\label{eq:1.18}
D=\sum_{i=1}^nc(e_{i})\nabla_{e_i}^{\mE},
\end{align}
which is independent of the choice of  the orthonormal frame
$\{e_i \}_{i=1}^n$.

Let $k\in (T^HW)^*$ such that for any $U\in TB$, 
$\mL_{U^H}dv_X(x)/dv_X(x)=2k(U^H)(x)$. The connection 
$\nabla^{\mathbb{E},u}$ on $\mathbb{E}$ defined 
by (cf. \cite[Definition 1.3]{BF86I})
\begin{align}\label{eq:1.20}
\nabla_U^{\mathbb{E},u}s:=\nabla_{U^H}^{\mE}s+k(U^H) s
\quad \text{ for } s\in \cC^{\infty}(B, \mathbb{E})
=\cC^{\infty}(W, \mE) ,
\end{align}
is $G$-invariant and preserves the $G$-invariant 
$L^2$-product (\ref{eq:1.19})
(see e.g., \cite[Proposition 1.4]{BF86I}).
 
Let $\{f_p\}$ be a local frame of $TB$ and $\{f^p\}$ be 
its dual. Set
\begin{align}\label{eq:1.21}
\nabla^{\mathbb{E},u}=f^p\wedge \nabla^{\mathbb{E},
u}_{f_p},\quad c(T^H)=\frac{1}{2}\,c\left(T(f_p^H, 
f_q^H)\right)f^p\wedge f^q\wedge.
\end{align}
Then $c(T^H)$ is a section of $\pi^*\Lambda^2(T^*B)
\widehat{\otimes}\End(\mE)$.

By \cite[(3.18)]{Bi86}, the rescaled Bismut superconnection
$\mathbb{B}_u$, $u>0$, is defined by
\begin{align}\label{eq:1.22}
\mathbb{B}_u=\sqrt{u}D+\nabla^{\mathbb{E},u}-\frac{1}{4
\sqrt{u}}c(T^H):\cC^{\infty}(B,\Lambda(T^*B)\widehat{
\otimes}\mathbb{E})\rightarrow\cC^{\infty}(B,\Lambda(T^*B)
\widehat{\otimes}\mathbb{E}).
\end{align}
Obviously, the Bismut superconnection $\mathbb{B}_u$ commutes 
with the $G$-action. Furthermore, $\mathbb{B}_u^2$ is a 
$2$nd-order elliptic differential operator along the fiber 
$X$ (cf. \cite[(3.4)]{Bi86}) acting on $\Lambda(T^*B)
\widehat{\otimes}\mathbb{E}$. Let $\exp(-\mathbb{B}_u^2)$ 
be the heat operators associated with the 
fiberwise elliptic operator $\mathbb{B}_u^2$.

\subsection{Equivariant $\eta$-forms}\label{s0103}

Take $g\in G$ fixed and set $W^g=\{x\in W: gx=x\}$, the 
fixed point set of $g$. Then $W^g$ is a submanifold of 
$W$ and $\pi|_{W^g}:W^g\rightarrow B$ is a fibration with 
compact fiber $X^g$. Let $N_{W^g/W}$ denote the normal 
bundle of $W^g$ in $W$, then 
\begin{align}\label{eq:1.23}
N_{W^g/W}:=\frac{TW}{TW^g}=\frac{TX}{TX^g}=:N_{X^g/X}.
\end{align}

Let $\{X^g_{\alpha} \}_{\alpha\in \mathfrak{B}}$ be the 
connected components of $X^g$ with 
\begin{align}\label{eq:1.24}
\dim X^g_{\alpha}=\ell_{\alpha}.
\end{align}
By an abuse of notation, we will often  simply denote 
by all $\ell_{\alpha}$ the same $\ell$.

\begin{assump}\label{ass:1.02}
We assume that the kernels $\Ker (D)$ form a vector bundle over $B$.
\end{assump}

For $\sigma=\alpha\widehat{\otimes} A$ with 
$\alpha\in \Lambda(T^*B)$, 
$A\in \End(\mE)$, we define
\begin{align}\label{eq:1.25}
\tr[\sigma]=\alpha\cdot\tr[A],\quad 
\tr^{\mathrm{odd}}[\sigma]=\{\alpha\}^{\mathrm{odd}}\cdot
\tr[A],\quad \tr^{\mathrm{even}}[\sigma]=\{\alpha\}^{
	\mathrm{even}}\cdot\tr[A],
\end{align}
where
$\{\alpha\}^{\mathrm{odd/even}}$ is the odd or even degree 
part of $\alpha$.
Set
\begin{align}\label{eq:1.26}
\widetilde{\tr}[\sigma]=\left\{
\begin{array}{ll}
\tr_s[\sigma]:=\alpha\cdot\tr[\Gamma A]\hspace{10mm} 
& \hbox{if $n=\dim X$ is even;} \\
\tr^{\mathrm{odd}}[\sigma]\hspace{10mm} & \hbox{if $n
	=\dim X$ is odd.}
\end{array}
\right.
\end{align}

Let $\End_{c(TX)}(\mE)$ be the set of endomorphisms of $\mE$ 
supercommuting with the Clifford action. It is a vector bundle 
over $W$. As in \cite[Definition 3.28]{BGV}, we 
define the relative trace 
$\tr^{\mE/\mS}:\End_{c(TX)}(\mE)\rightarrow \C$ by: 
for any 
$A\in \End_{c(TX)}(\mE)$, 
\begin{align}\label{eq:1.27}
\tr^{\mE/\mS}[A]=\left\{
\begin{array}{ll}
2^{-n/2}\tr_s[\Gamma A]\hspace{10mm} & 
\hbox{if $n=\dim X$ is even;} \\
2^{-(n-1)/2}\tr[A]\hspace{10mm} & \hbox{if $n=\dim X$ is odd.}
\end{array}
\right.
\end{align}

Let $R^{TX}=\left(\nabla^{TX}\right)^2$,
 $R^{\mE}=\left(\nabla^{\mE}\right)^2$ be the curvatures 
of $\nabla^{TX}$, $\nabla^{\mE}$. Then
\begin{align}\label{eq:1.28}
R^{\mE/\mS}:=R^{\mE}-\frac{1}{4}\la R^{TX}e_i, e_j\ra c(e_i)c(e_j)
\in \cC^{\infty}(W, \Lambda^2(T^*W)\otimes \End_{c(TX)}(\mE))
\end{align}
is the twisting curvature of the Clifford module $\mE$ as in 
\cite[Proposition 3.43]{BGV}.

Note that if $TX$ has a $G$-equivariant spin structure, then there 
exists a $G$-equivariant Hermitian vector bundle $E$ such that 
$\mE=\mS_X\otimes E$, with $\mS_X$ the spinor bundle of $TX$, 
$\nabla^{\mE}$ is induced by $\nabla^{TX}$ and a $G$-invariant 
Hermitian connection $\nabla^E$ on $E$ and 
\begin{align}\label{eq:1.29}
R^{\mE/\mS}=R^E=(\nabla^E)^2.
\end{align}

We denote the differential of $g$ by $dg$ which gives a bundle 
isometry $dg: N_{X^g/X}\rightarrow N_{X^g/X}$. As $G$ is compact, 
we know that there is an orthonormal decomposition 
of real vector bundles over $W^g$,
\begin{align}\label{eq:1.30}
TX|_{W^g}=TX^g\oplus N_{X^g/X}
=TX^g\oplus \bigoplus_{0<\theta\leq 	\pi}N(\theta),
\end{align}
where $dg|_{N(\pi)}=-\mathrm{Id}$ and for each $\theta$, 
$0<\theta<\pi$, $N(\theta)$ is the underlying real vector bundle 
of a complex vector bundle $N_{\theta}$ over $W^g$ on which 
$dg$ acts by multiplication by $e^{i\theta}$. Since $g$ preserves 
the metric and the orientation of $TX$, thus 
$\det(dg|_{N(\pi)})=1$, this means 
$\dim N(\pi)$ is even. So the normal bundle $N_{X^g/X}$ is even 
dimensional. 

Since $\nabla^{TX}$ commutes with the group action, its 
restriction on $W^g$, $\nabla^{TX}|_{W^g}$, preserves the 
decomposition (\ref{eq:1.30}). Let $\nabla^{TX^g}$ and 
$\nabla^{N(\theta)}$ be the corresponding induced 
connections on $TX^g$ and $N(\theta)$, with curvatures 
$R^{TX^g}$ and $R^{N(\theta)}$.

Set
\begin{multline}\label{eq:1.31}
\widehat{\mathrm{A}}_g(TX,\nabla^{TX})=\mathrm{det}^{\frac{1}{2}}
\left(\frac{\frac{ i}{4\pi}R^{TX^g}}{\sinh
	\left(\frac{ i}{4\pi}R^{TX^g}\right)}\right)
\\
\cdot
\prod_{0<\theta\leq \pi}\left( i^{\frac{1}{2}\dim 
	N(\theta)}\mathrm{det}^{\frac{1}{2}}\left(1-g
\exp\left(\frac{ i}{2\pi}R^{N(\theta)}\right)\right)
\right)^{-1}\in \Omega^{2\bullet}(W^g,\C).
\end{multline}
The sign convention in (\ref{eq:1.31}) is that the degree $0$
part in $\prod_{0<\theta\leq \pi}$ is given by
$\left(\frac{e^{i\theta/2}}{e^{i\theta}-1}\right)^{\frac{1}{2}
	\dim N(\theta)}$.

By \cite[Lemma 6.10]{BGV}, along $W^g$, the action of $g\in G$ 
on $\mE$ may be identified with a section $g^{\mE}$ of 
$c(N_{X^g/X})\otimes \End_{c(TX)}(\mE)$. Under the isomorphism 
(\ref{eq:1.03}), $\sigma(g^{\mE})\in \cC^{\infty}(W^g, 
\Lambda (N_{X^g/X}^*)\otimes \End_{c(TX)}(\mE))$. 
Let $\sigma_{n-\ell}(g^{\mE})\in 
\cC^{\infty}(W^g, \Lambda^{n-\ell}(N_{X^g/X}^*)\otimes
\End_{c(TX)}(\mE))$ 
be the highest degree part of $\sigma(g^{\mE})$ in  
$\Lambda(N_{X^g/X}^*)$. 
Then we define the localized relative Chern character 
$\ch_g(\mE/\mS,\nabla^{\mE})$ 
 as in \cite[Definition 6.13]{BGV}:
\begin{multline}\label{eq:1.32}
\ch_g(\mE/\mS, \nabla^{\mE}):=\frac{2^{(n-\ell)/2}}
{\mathrm{det}^{1/2}(1-g|_{N_{X^g/X}})}
\tr^{\mE/\mS}\left[\sigma_{n-\ell}(g^{\mE})\exp\left(-\frac{R^{
		\mE/\mS}|_{W^g}}{2i\pi}\right)\right]
\\	\in \Omega^{\bullet}\big(W^g,\det N_{X^g/X}\big).
\end{multline}

\begin{rem}\label{rem:1.03} 
In general, $TX^g$ is not necessary oriented. 
The orientation of $TX$ allows us to identify
$\det N_{X^g/X}$ as the orientation line of $X^{g}$,  thus 
the integral $\int_{X^g}$
of a form in $\Omega^{\bullet}\big(W^g,\det N_{X^g/X}\big)$
makes sense as in \cite[Theorem 6.16]{BGV}. Assume that
 $TX^g$ is oriented,
then the orientations of $TX^g$ and $TX$ induce canonically 
an orientation on $N_{X^g/X}$. By pairing with the volume form of 
$N_{X^g/X}$, we obtain
\begin{align}
\ch_g(\mE/\mS, \nabla^{\mE})\in \Omega^{\bullet}(W^g,\C).
\end{align}
If $TX$ has a $G$-equivariant spin$^c$ 
structure, then $TX^g$ is canonically oriented (cf. 
\cite[Proposition 6.14]{BGV}, \cite[Lemma 4.1]{LMZ00}). 
If $TX$ has a $G$-equivariant spin structure, 
$\ch_g(\mE/\mS,\nabla^{\mE})$  under the above convention
is just the usual equivariant Chern character (cf. (\ref{eq:1.29}))
\begin{align}\label{eq:1.33}
\ch_g(E,\nabla^E)=\tr^{E}\left[g
\exp\left(-\frac{R^{E}|_{W^g}}{2i\pi}\right)\right].
\end{align}
\end{rem}

As in (\ref{eq:0.05}),
for $\alpha\in \Omega^j(B)$, set
\begin{align}\label{eq:1.34}
\psi_B(\alpha)=\left\{
\begin{array}{ll}
\left(2i\pi\right)^{-\frac{j}{2}}\cdot \alpha\hspace{10mm} 
& \hbox{if $j$ is even;} \\
\pi^{-\frac{1}{2}}\left(2i\pi\right)^{-\frac{j-1}{2}}
\cdot \alpha\hspace{10mm} & \hbox{if $j$ is odd.}
\end{array}
\right.
\end{align}
Then from the equivariant family local index theorem (see e.g., 
\cite[Theorem 4.17]{Bi86}, \cite[Theorem 2.10]{BF86II},
\cite[Theorem 2.2]{Liu17b}, \cite[Theorem 1.3]{LM00}), 
for any $u>0$, the 
differential form $\psi_B\widetilde{\tr}[g\exp(-\mathbb{B}_u^2)]
\in \Omega^{\bullet}(B, \C)$ is closed, its cohomology class is 
independent of $u>0$, and
\begin{align}\label{eq:1.35}
\lim_{u\rightarrow 0}\psi_B\widetilde{\tr}[g\exp(-\mathbb{B}_u^2)]
=\int_{X^g}\widehat{\mathrm{A}}_g(TX,\nabla^{TX})\,
\ch_g(\mE/\mS,\nabla^{\mE}).
\end{align}

Let $P^{\Ker (D)}:\mathbb{E}\rightarrow \Ker (D)$ be the orthogonal 
projection with respect to (\ref{eq:1.19}). Let
\begin{align}\label{eq:1.36}
\nabla^{\Ker (D)}=P^{\Ker (D)}\nabla^{\mathbb{E},u}P^{\Ker (D)}
\end{align}
and $R^{\Ker(D)}$
be the curvature of the connection $\nabla^{\Ker(D)}$ on $\Ker(D)$.

$\bullet$ If $n=\dim X$ is even, from the natural 
equivariant extension of 
\cite[Theorem 9.19]{BGV}, we have
\begin{align}\label{eq:1.37}
\lim_{u\rightarrow +\infty}\psi_B\tr_s[g\exp(-\mathbb{B}_u^2)]
=\tr_s\left[g\exp\left(-\frac{R^{\Ker(D)}}{2i\pi}
\right)\right]=\ch_g(\Ker(D), \nabla^{\Ker(D)}).
\end{align}
Since $\mathbb{B}_{u}$ is $G$-invariant,
the equivariant version of \cite[Theorem 9.17]{BGV} shows that
\begin{align}\label{eq:1.38}
\frac{\partial}{\partial u}\tr_s\left[g
\exp(-\mathbb{B}_{u}^{2})\right]
=-d^B\tr_s\left[g\frac{\partial \mathbb{B}_{u}
}{\partial u}
\exp(-\mathbb{B}_{u}^{2})\right].
\end{align}
Thus for $0<\var<T<+\infty$,
\begin{align}\label{eq:1.39}
\tr_s\left[g
\exp(-\mathbb{B}_{\var}^{2})\right]-\tr_s\left[g
\exp(-\mathbb{B}_{T}^{2})\right]
=d^B\int_{\var}^T\tr_s\left[g\frac{\partial \mathbb{B}_{u}
}{\partial u}
\exp(-\mathbb{B}_{u}^{2})\right]du.
\end{align}
The natural equivariant extension of 
\cite[Theorems 9.23 and 10.32(1)]{BGV}
(cf. e.g., \cite[(2.72) and (2.77)]{Liu17a}) shows that
\begin{align}\label{eq:1.40} 
\begin{split}
&\tr_s\left[g\frac{\partial \mathbb{B}_{u}
}{\partial u}
\exp(-\mathbb{B}_{u}^{2})\right]=\mathcal{O}(u^{-1/2})\quad
\hbox{as $u\rightarrow 0$,}
\\
&\tr_s\left[g\frac{\partial \mathbb{B}_{u}
}{\partial u}
\exp(-\mathbb{B}_{u}^{2})\right]=\mathcal{O}(u^{-3/2})\quad 
\hbox{as $u\rightarrow +\infty$.}
\end{split}
\end{align} 
In this case, by (\ref{eq:1.34}) and (\ref{eq:1.40}), 
the equivariant $\eta$-form 
is defined by
\begin{align}\label{eq:1.41}
\tilde{\eta}_g=\int_0^{+\infty} \left.\frac{1}{2  i
	\sqrt{\pi}}\psi_{B}
\tr_s\right.\left[g\left.\frac{\partial\mathbb{B}_{u}}{
\partial u}\right.\exp(-\mathbb{B}_{u}^{2})\right] du\in 
\Omega^{\mathrm{odd}}(B,\C).
\end{align}
By (\ref{eq:1.35}), (\ref{eq:1.37}),
(\ref{eq:1.39}) and (\ref{eq:1.41}), we have
\begin{align}\label{eq:1.42}
d^B\tilde{\eta}_g=\int_{X^g}
\widehat{\mathrm{A}}_{g}(TX,\nabla^{TX})
\ch_{g}(\mE/\mS, \nabla^{\mE})
-\ch_{g}(\Ker (D), \nabla^{\Ker (D)}).
\end{align}

$\bullet$ If $n$ is odd, since the equivariant extension of 
\cite[Theorem 9.19]{BGV} also holds, we have
\begin{align}\label{eq:1.43}
\lim_{u\rightarrow +\infty}\tr^{\mathrm{odd}}[g
\exp(-\mathbb{B}_u^2)]
=\tr^{\mathrm{odd}}\left[g\exp\left(-R^{\Ker(D)}
\right)\right]=0.
\end{align} 
As an analogue of (\ref{eq:1.39}),
for $0<\var<T<+\infty$, we have
\begin{align}\label{eq:1.44}
\tr^{\mathrm{odd}}\left[g
\exp(-\mathbb{B}_{\var}^{2})\right]-\tr^{\mathrm{odd}}\left[g
\exp(-\mathbb{B}_{T}^{2})\right]
=d^B\int_{\var}^T\tr^{\mathrm{even}}\left[g
\frac{\partial \mathbb{B}_{u}
}{\partial u}
\exp(-\mathbb{B}_{u}^{2})\right]du.
\end{align}
Following the same arguments in the proof of (\ref{eq:1.40}),
we have
\begin{align}\label{eq:1.45} 
\begin{split}
&\tr^{\mathrm{even}}\left[g\frac{\partial \mathbb{B}_{u}
}{\partial u}
\exp(-\mathbb{B}_{u}^{2})\right]=\mathcal{O}(u^{-1/2})\quad
\hbox{as $u\rightarrow 0$,}
\\
&\tr^{\mathrm{even}}\left[g\frac{\partial \mathbb{B}_{u}
}{\partial u}
\exp(-\mathbb{B}_{u}^{2})\right]=\mathcal{O}(u^{-3/2})\quad
\hbox{as $u\rightarrow +\infty$.}
\end{split}
\end{align} 
 In this case, by (\ref{eq:1.34}) and (\ref{eq:1.45}),
the equivariant $\eta$-form is defined by
\begin{align}\label{eq:1.46}
\tilde{\eta}_g=\int_0^{+\infty} \left.\frac{1}{
	\sqrt{\pi}}\psi_{B}
\tr^{\mathrm{even}}\right.\left[g\left.\frac{\partial 
	\mathbb{B}_{u}}{\partial u}
\right.\exp(-\mathbb{B}_{u}^{2})\right] du\in 
\Omega^{\mathrm{even}}(B,\C).
\end{align}
From (\ref{eq:1.35}), (\ref{eq:1.43}), 
(\ref{eq:1.44}) and (\ref{eq:1.46}), we get
\begin{align}\label{eq:1.47}
d^B\tilde{\eta}_g=\int_{X^g}\widehat{
	\mathrm{A}}_{g}(TX,\nabla^{TX})
\ch_{g}(\mE/\mS, \nabla^{\mE}).
\end{align}

We write the definition of the equivariant 
$\eta$-form (\ref{eq:1.41})
and (\ref{eq:1.46}) in a uniform 
way using the notation $\{\cdot  \}^{du}$ as in (\ref{eq:0.06}).

\begin{defn}\label{defn:1.03}\cite[Definition 2.3]{Liu17a}
For $g\in G$ fixed, under Assumption \ref{ass:1.02}, 
the equivariant 
Bismut-Cheeger $\eta$-form is defined by
\begin{align}\label{eq:1.48}
\tilde{\eta}_g:=-\int_0^{+\infty}\left\{\psi_{\R\times B}\left.
\widetilde{\tr}\right.\left[g\exp\left(-\left(\mathbb{B}_{u}
+du\wedge\frac{\partial}{\partial u}\right)^2\right)\right]
\right\}^{du}du\in \Omega^{\bullet}(B,\C).
\end{align}	
\end{defn}

If $g=e$ the identity element of $G$,
(\ref{eq:1.48}) is exactly the Bismut-Cheeger 
$\eta$-form defined in \cite{BC89}. If $B$ is noncompact,
(\ref{eq:1.40}) and (\ref{eq:1.45}) hold uniformly on 
any compact subset of $B$, 
thus Definition \ref{defn:1.03}, (\ref{eq:1.42}) and 
(\ref{eq:1.47}) still hold.

\section{Equivariant infinitesimal $\eta$-forms}\label{s02}
In this section, we state the family Kirillov formula and define 
the equivariant infinitesimal $\eta$-form. In Section 
\ref{s0201}, we state the families version of the Kirillov 
formula. In Section \ref{s0202}, we define the equivariant 
infinitesimal $\eta$-form, and establish Theorem \ref{thm:0.1}
modulo some technical  details.

In this section, we use the same notations and assumptions in Section 1. 
Especially, $TX$ is $G$-equivariant oriented and 
Assumption \ref{ass:1.02} holds in this section.

\subsection{Moment maps and the family Kirillov formula}\label{s0201}

Let $|\cdot|$ be a $G$-invariant norm on the Lie algebra 
$\mathfrak{g}$ of $G$. For $K\in \mathfrak{g}$, let 
\begin{align}\label{eq:2.01}
K^X(x)=\left.\frac{\partial}{\partial t}\right|_{t=0}
e^{tK}\cdot x\quad \text{for}\ x\in W
\end{align}
be the induced vector field 
on $W$. Since $G$ acts 
fiberwise on $W$, $K^X\in \cC^{\infty}(W,TX)$ and
\begin{align}\label{eq:2.01b} 
[K^X,K'^X]=-[K,K']^X\quad\quad \text{for any}\ K,K'\in \mathfrak{g}.
\end{align}
For $K\in \mathfrak{g}$, let $\mL_{K}$ be 
the corresponding Lie derivative 
given by
\begin{align}\label{eq:2.02b} 
\mL_Ks=\left.\frac{\partial}{\partial t}\right|_{t=0}
\left(e^{-tK}. s\right),
\end{align}
for $s\in \cC^{\infty}(W,\mE)$ (cf. (\ref{eq:1.19b})).
The associated moment maps 
$m^{TX}(\cdot)$, $m^{\mE}(\cdot)$ 
are defined by \cite[Definition 7.5]{BGV}
(see also \cite[Definition 2.1]{BG00}),
\begin{align}\label{eq:2.02} 
\begin{split}
m^{TX}(K)&:=\nabla^{TX}_{K^X}
-\mL_{K}\vert_{TX}\in \cC^{\infty}(W,\End(TX)),
\\
m^{\mE}(K)&:=\nabla^{\mE}_{K^X}-\mL_{K}\vert_{\mE}\in 
\cC^{\infty}(W,\End(\mE)).
\end{split}
\end{align}
Since the vector field $K^X$ is Killing and $\nabla^{TX}$,
$\nabla^{\mE}$ preserve the corresponding metrics, 
$m^{TX}(K)$ and $m^{\mE}(K)$ are skew-adjoint actions of $\End(TX)$ 
and $\End(\mE)$ respectively.
By Proposition 
\ref{prop:1.01}, the connection $\nabla^{TX}$ 
is the Levi-Civita connection of $(TX, g^{TX})$ when
it is restricted 
on a fiber. Since the $G$-action is along the fiber, we have
\begin{align}\label{eq:2.03}
m^{TX}(K)=\nabla^{TX}_{\cdot}K^X\in \cC^{\infty}(W,\End(TX)).
\end{align}

Since the connection $\nabla^{TX}$ is $G$-invariant, 
from (\ref{eq:2.02})
(cf. \cite[(7.4)]{BGV} or \cite[(2.8)]{BG00}),
\begin{align}\label{eq:2.05} 
\nabla^{TX}_{\cdot}m^{TX}(K)+i_{K^X}R^{TX}=0.
\end{align}
We denote by $m^{\mS}(K)\in \End(\mE)$ by
\begin{align}\label{eq:2.06}
m^{\mS}(K):=\frac{1}{4}\la m^{TX}(K)e_i, e_j\ra c(e_i)c(e_j).
\end{align}
If $TX$ is spin, $m^{\mS}(K)$ is just the moment map
of the spinor.
Set
\begin{align}\label{eq:2.07}
m^{\mE/\mS}(K):=m^{\mE}(K)-m^{\mS}(K).
\end{align}

From (\ref{eq:1.28}), we set (cf. \cite[(2.30)]{BG00})
\begin{align}\label{eq:2.08} 
R_K^{TX}=R^{TX}-2i\pi\, m^{TX}(K),\quad 
R_K^{\mE/\mS}=R^{\mE/\mS}-2i\pi\, m^{\mE/\mS}(K).
\end{align}
Then $R_K^{TX}$ (resp. $R_K^{\mE/\mS}$) is called the equivariant 
curvature of $TX$ (resp. equivariant twisted curvature of $\mE$).

Let $Z(g)\subset G$ be the centralizer of $g\in G$ with
 Lie algebra $\mathfrak{z}(g)$. Then in the sense of the 
adjoint action,
\begin{align}\label{eq:2.09}
\mathfrak{z}(g)=\{K\in \mathfrak{g}: g.K=K\}.
\end{align}

We fix $g\in G$ from now on.
In the sequel, we always take $K\in \mathfrak{z}(g)$. 
Put
\begin{align}\label{eq:2.10}
W^K=\{x\in W: K^X(x)=0 \}.
\end{align}
Then $W^K$, which is the fixed point set of the group generated 
by $K$, is a totally geodesic submanifold along each fiber $X$. 
Set
\begin{align}\label{eq:2.11}
W^{g,K}=W^g\cap W^K.
\end{align}
Then $W^{g,K}$ is also a totally geodesic submanifold along
each fiber $X$. 
Moreover, if $K_0\in \mathfrak{z}(g)$ and $z\in \R$,
for $z$ small enough, we have
\begin{align}\label{eq:2.12}
W^{g,zK_0}=W^{ge^{zK_0}}.
\end{align}

Since the $G$-action is trivial on $B$, $W^K\rightarrow B$ and 
$W^{g,K}\rightarrow B$ are fibrations with compact fiber $X^K$ 
and $X^{g,K}$.
As in (\ref{eq:1.24}), by an abuse of notation, we will often 
simply denote by 
\begin{align}\label{eq:2.13}
\dim X^{g,K}=\ell'.
\end{align}

Observe that $m^{TX}(K)|_{X^g}$ acts on $TX^g$ and $N_{X^g/X}$. 
Also it preserves the splitting (\ref{eq:1.30}). Let 
$m^{TX^g}(K)$ and $m^{N(\theta)}(K)$ be the restrictions of 
$m^{TX}(K)|_{X^g}$ to $TX^g$ and $N(\theta)$. 
We define the corresponding equivariant curvatures 
$R_K^{TX^g}$, $R^{N(\theta)}_K$ as in (\ref{eq:2.08}). 

For $K\in \mathfrak{z}(g)$ with $|K|$ small enough, comparing 
with (\ref{eq:1.31}), set
\begin{multline}\label{eq:2.14}
\widehat{\mathrm{A}}_{g,K}(TX,\nabla^{TX})
=\mathrm{det}^{\frac{1}{2}}\left(\frac{\frac{ i}{4\pi}
R_K^{TX^g}}{\sinh\left(\frac{ i}{4\pi}R_K^{TX^g}\right)}
\right)
\\
\cdot\prod_{k>0}\left( i^{\frac{1}{2}\dim 
N(\theta)}\mathrm{det}^{\frac{1}{2}}\left(1-g\exp\left(
\frac{ i}{2\pi}R_K^{N(\theta)}\right)\right)\right)^{-1}
\in \Omega^{2\bullet}(W^g,\C).
\end{multline}	
Note that $W$ compact and $|K|$ small guarantee that 
the denominator in (\ref{eq:2.14}) is invertible. 
Comparing with (\ref{eq:1.32}), set
\begin{align}\label{eq:2.15}
\ch_{g,K}(\mE/\mS, \nabla^{\mE}):=\frac{2^{(n-\ell)/2}}{
\mathrm{det}^{1/2}(1-g|_{N_{X^g/X}})}\tr^{\mE/\mS}\left[
\sigma_{n-\ell}(g^{\mE})\exp\left(-\frac{R_K^{\mE/\mS}|_{W^g}}{
2i\pi}\right)\right].
\end{align}	
As in (\ref{eq:1.33}), if $TX$ has a $G$-equivariant spin structure, 
$\ch_{g,K}(\mE/\mS, \nabla^{\mE})$ is just the equivariant 
infinitesimal Chern character in \cite[Definition 2.7]{BG00},
\begin{align}\label{eq:2.16}
\ch_{g,K}(E, \nabla^{E})=\tr^{E}\left[g
\exp\left(-\frac{R_K^E|_{W^g}}{
	2i\pi}\right)\right]\in \Omega^{2\bullet}(W^g,\C),
\end{align}
where $m^E(K)=\nabla_{K^X}^E-\mL_K$, $R_K^E:=R^E-2i\pi m^E(K)$ 
as in (\ref{eq:2.02}) and (\ref{eq:2.08}).

Set
\begin{align}\label{eq:2.17} 
d_K=d-2i\pi\ i_{K^X}.
\end{align}
Then by (\ref{eq:2.05}) (cf. \cite[Theorem 7.7]{BGV}),
\begin{align}\label{eq:2.18} 
d_K\widehat{\mathrm{A}}_{g,K}(TX,\nabla^{TX})=0,
\quad d_K\ch_{g,K}(\mE/\mS, \nabla^{\mE})=0.
\end{align}

Recall that $\mathbb{B}_t$ is the rescaled Bismut superconnection
in (\ref{eq:1.22}). Set
\begin{align}\label{eq:2.19} 
\mathbb{B}_{K,t}=\mathbb{B}_{t}+\frac{c(K^X)}{4\sqrt{t}}.
\end{align}
Then $\mathbb{B}_{K,t}^2$ is a $2$nd-order 
elliptic differential operator along the fiber $X$ 
acting on $\Lambda(T^*B)\widehat{\otimes}\mathbb{E}$. 
If the base $B$ is a point, then the operator $\mathbb{B}_{K,t}$
is $\sqrt{t}D+\frac{c(K^X)}{4\sqrt{t}}$, and it was introduced
by Bismut \cite{Bi85} in his heat kernel proof of the Kirillov
formula for the equivariant index. As observed by Bismut 
\cite[\S 1d), \S 3b)]{Bi86} (cf. also \cite[\S 10.7]{BGV}),
its square plus $\mL_{K^X}$ is the square of the Bismut
superconnection for a fibration with compact structure group,
by replacing $K^X$  by the curvature of the fibration. Thus
we can roughly interpret $\mathbb{B}_{K,t}$ as the Bismut 
superconnection by extending our fibration by a fibration with
compact structure group.

Now we state the families version of the Kirillov formula and 
delayed a heat kernel proof of it to Section 5.

\begin{thm}\label{thm:2.01}
	For any $K\in \mathfrak{z}(g)$ and $|K|$ small,
\begin{itemize}
	\item { if $n$ is even, for $t>0$,  the differential 
	form 
	$$
	\psi_B\tr_s\left[g\exp\left(-\mathbb{B}_{K,t}^2- 
	\mL_K\right)\right]\in \Omega^{\mathrm{even}}(B, \C)
	$$
	is closed, the cohomology class defined by it is independent of $t$ and 
	\begin{align}\label{eq:2.20}
	\lim_{t\rightarrow 0}\psi_B\tr_s
	\left[g\exp\left(-\mathbb{B}_{K,t}^2-\mL_{K}\right)\right]
	=\int_{X^g}\widehat{\mathrm{A}}_{g,K}(TX,\nabla^{TX})\,
	\ch_{g,K}(\mE/\mS,\nabla^{\mE}).
	\end{align}
    }
\item { if $n$ is odd, for $t>0$,  the differential 
	form 
	$$
	\psi_B\tr^{\mathrm{odd}}\left[g\exp\left(-
	\mathbb{B}_{K,t}^2- \mL_K\right)\right]\in 
	\Omega^{\mathrm{odd}}(B, \C)
	$$
	is closed, the cohomology class defined by it is independent of $t$ and 
	\begin{align}\label{eq:2.21}
	\lim_{t\rightarrow 0}\psi_B\tr^{\mathrm{odd}}
	\left[g\exp\left(-\mathbb{B}_{K,t}^2-\mL_{K}\right)\right]
	=\int_{X^g}\widehat{\mathrm{A}}_{g,K}(TX,\nabla^{TX})\,
	\ch_{g,K}(\mE/\mS,\nabla^{\mE}).
	\end{align}
}
\end{itemize}

\end{thm}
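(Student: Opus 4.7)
The plan is to recognize the operator $\mathbb{B}_{K,t}^2 + \mathcal{L}_K$ as the ``equivariant square'' of a superconnection in the spirit of Bismut's equivariant Chern--Weil theory, so that closedness, $t$-independence in cohomology, and the local index limit all follow from the standard heat-kernel superconnection machinery, adapted equivariantly.

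First, I would introduce on $\R_{>0}\times B$ the superconnection $\mathcal{A}_t := \mathbb{B}_{K,t} + dt\wedge \partial_t$ acting on the pullback of $\mathbb{E}$. Since $K\in\mathfrak{z}(g)$ and $\mathbb{B}_{K,t}$ is $G$-invariant, both $g$ and $\mathcal{L}_K$ supercommute with $\mathcal{A}_t$. A Duhamel-type computation then gives
\begin{align*}
d^{\R\times B}\,\widetilde{\tr}\bigl[g\exp(-\mathcal{A}_t^2-\mathcal{L}_K)\bigr] = 0,
\end{align*}
exactly as in the ordinary local family index theorem. Extracting the $dt$-free and $dt$-linear parts via the bracket in (\ref{eq:0.06}) and applying $\psi_B$ yields the closedness of $\psi_B\widetilde{\tr}[g\exp(-\mathbb{B}_{K,t}^2-\mathcal{L}_K)]$ on $B$, together with a transgression identity analogous to (\ref{eq:1.38}) and (\ref{eq:1.39}) showing that its $t$-derivative is $d^B$-exact, hence its cohomology class is independent of $t$.

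For the $t\to 0$ limit, the strategy is equivariant Getzler rescaling localized along the fixed-point fibration $X^g$. A Lichnerowicz-type expansion of $\mathbb{B}_{K,t}^2 + \mathcal{L}_K$ shows that the Clifford term $c(K^X)/(4\sqrt{t})$ combines with $\mathcal{L}_K$ to produce the equivariant curvatures $R_K^{TX}$ and $R_K^{\mathcal{E}/\mathcal{S}}$ of (\ref{eq:2.08}) under rescaling; after Getzler rescaling transverse to $X^g$ and Clifford-rescaling on $\mathcal{E}$, the leading-order operator is a generalized harmonic oscillator whose Mehler kernel produces exactly the equivariant characteristic forms $\widehat{\mathrm{A}}_{g,K}$ and $\ch_{g,K}$, localized to $X^g$ via the splitting (\ref{eq:1.30}). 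Localization near $X^g$ itself is proved by finite propagation speed together with the fact that $g$ acts without fixed points off $X^g$, so the heat-kernel contribution from the complement of a tubular neighborhood is exponentially small uniformly in $t$. The even and odd fiber cases are treated in parallel by reading off the $dt$-free part of a unified expression on $\R_{>0}\times B$, with the appropriate trace convention (\ref{eq:1.26}).

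The main obstacle is the rigorous equivariant Getzler rescaling in the family setting: one must control the Volterra expansion of $\exp(-\mathbb{B}_{K,t}^2-\mathcal{L}_K)$ with the added moment terms and the singular $1/\sqrt{t}$ Clifford term, prove uniform $C^0$-convergence of the rescaled heat kernel to the Mehler kernel on compact subsets of $B$, and handle the normal directions to $X^g$ via a tubular neighborhood argument. These analytic details are carried out in Section \ref{s05}, closely paralleling \cite[\S 7]{BG00} with the necessary modifications to accommodate the base $B$ and the $1/\sqrt{t}$-rescaling of $c(K^X)$.
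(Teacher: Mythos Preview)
Your proposal is correct and follows essentially the same approach as the paper. The paper establishes closedness via (\ref{eq:2.26b}) and (\ref{eq:2.31b}) (supercommutation of $\mathbb{B}_{K,t}$ with $g$ and $\mathcal{L}_K$), cohomological $t$-independence via the transgression identities (\ref{eq:2.27}) and (\ref{eq:2.32}), and the $t\to 0$ limit via the local analysis of Section~\ref{s05} (Lichnerowicz formula (\ref{eq:5.70}), finite propagation speed localization near $X^g$, Getzler rescaling, and Mehler's formula (\ref{eq:5.135})), exactly as you outline.
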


If $B$ is a point and $g=e$, this heat kernel proof of the 
Kirillov formula is given by Bismut in \cite{Bi85} (see also 
\cite[Theorem 8.2]{BGV}). 
If $B$ is a point, (\ref{eq:2.20}) is established in 
\cite{BG00}. For $g=e$, (\ref{eq:2.20}) is 
obtained in \cite{Wa14}. 

\subsection{Equivariant infinitesimal $\eta$-forms: 
Theorem \ref{thm:0.1}}\label{s0202}	

For $t>0$, set
\begin{align}\label{eq:2.22}
\mB_{K,t}=\mathbb{B}_{K,t}+
dt\wedge \frac{\partial}{\partial t}.
\end{align}
Then by (\ref{eq:2.19}),
\begin{align}\label{eq:2.23}
\mB_{K,t}^2=\mathbb{B}_{K,t}^2
+ dt\wedge \frac{\partial \mathbb{B}_{K,t}}{\partial
	t}
=\left(\mathbb{B}_{t}+\frac{c(K^X)}{4\sqrt{t}}
\right)^2+ dt\wedge \frac{\partial}{\partial
t}\left(\mathbb{B}_{t}+\frac{c(K^X)}{4\sqrt{t}}\right).
\end{align}

\begin{thm}\label{thm:2.02} 
There exist $\beta>0$, 
$\delta,\delta'>0$, $C>0$, such that if $K\in \mathfrak{z}(g)$,
 $z\in \C$, $|zK|\leq \beta$, 
	
a) for any $t\geq 1$,
\begin{align}\label{eq:2.24} 
\left|\left\{\wi{\tr}\Big[g\exp\big(-\mB_{zK,t}^2-z\mL_{K}\big)
\Big]\right\}^{dt}\right|\leq \frac{C}{t^{1+\delta}};
\end{align}
	
b) for any $0<t\leq 1$,
\begin{align}\label{eq:2.25} 
\left|\left\{\wi{\tr}\Big[g\exp\big(-\mB_{zK,t}^2-z\mL_{K}\big)
\Big]\right\}^{dt}\right|\leq C\,t^{\delta'-1}.
\end{align}
\end{thm}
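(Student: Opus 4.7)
The strategy is to adapt the heat-kernel analysis of Bismut--Goette \cite[\S 8, \S 9]{BG00} from the holomorphic torsion setting to the present family Dirac operator, and to interpret $\mathbb{B}_{zK,t}^2+z\mL_K$ as an equivariant perturbation of the standard Bismut Laplacian $\mathbb{B}_{t}^2$. For $|zK|\le \beta$ with $\beta$ small enough, the extra terms $c(K^X)/(4\sqrt t)$ and $z\mL_K$ are controlled relative perturbations, so estimates obtained at $z=0$ (which are the classical bounds \eqref{eq:1.40} and \eqref{eq:1.45}) transfer with uniform constants, and analyticity in $z$ is automatic because $zK$ enters polynomially in the rescaled coefficients of the operator.

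For part (b), small $t$, the plan is a Getzler-type rescaling localized at the fibration $W^g\to B$, as in \cite[\S 8]{BG00}. After rescaling Clifford generators and normal coordinates to $W^g$ by $\sqrt t$, the rescaled operator associated with $t(\mathbb{B}_{zK,t}^2+z\mL_K)$ converges to a harmonic oscillator on $\Lambda N_{X^g/X}^{*}$-valued sections whose trace produces the equivariant infinitesimal density appearing in \eqref{eq:2.20}--\eqref{eq:2.21}. The $dt$-component of $\wi{\tr}[\,g\exp(-\mB_{zK,t}^{2}-z\mL_{K})\,]$ is obtained by inserting $\partial_t \mathbb{B}_{zK,t}$ in a Duhamel expansion; its leading term $D/(2\sqrt t)$ carries an odd number of Clifford generators, which under Getzler rescaling contributes an extra $\sqrt t$ to the pointwise supertrace. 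Thus the naive $t^{-1}$ singularity is improved to $O(t^{-1/2})$, giving $\delta'=1/2$.

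For part (a), large $t$, the plan is to exploit Assumption \ref{ass:1.02}: since $\Ker(D)$ forms a smooth vector bundle over $B$, $D^{2}$ has a uniform spectral gap on $(\Ker D)^{\perp}$. I would decompose the heat operator via the orthogonal projector $P^{\Ker(D)}$ into four blocks and treat them separately. The three blocks touching $(\Ker D)^{\perp}$ contribute $O(e^{-ct})$ thanks to the spectral gap of $tD^{2}$ together with a Duhamel expansion that absorbs the $O(1)$ connection terms and the $O(1/\sqrt t)$ term $c(K^X)/(4\sqrt t)$. On the kernel block, the superconnection reduces in the adiabatic limit to a connection on $\Ker(D)$ perturbed by finite-rank operators depending analytically on $zK$. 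The $dt$-component then involves $\partial_t \mathbb{B}_{zK,t}\sim D/(2\sqrt t)$, which vanishes on $\Ker(D)$; using a second Duhamel expansion to shift off $\Ker(D)$ and back picks up an additional factor $1/\sqrt t$ from the spectral gap, producing the bound $O(t^{-3/2})$ and hence $\delta=1/2$.

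The main obstacle is the large-$t$ uniform control when $zK\neq 0$: one must verify that the perturbation $c(K^X)/(4\sqrt t)+z\mL_{K}$ does not destroy the spectral gap of $tD^{2}$ uniformly in $t\ge 1$ and $|zK|\le\beta$, and that the induced finite-rank perturbation on $\Ker(D)$ keeps the $t^{-3/2}$ decay with constants uniform in $z$. This uniform spectral bookkeeping, rather than the Getzler computation at small $t$, is the technical heart of the argument and mirrors the estimates of \cite[\S 9]{BG00} in the holomorphic torsion case.
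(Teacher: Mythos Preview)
Your plan for part (a) is close in spirit to the paper's, though packaged differently. Rather than an explicit four-block Duhamel decomposition, the paper follows the Bismut--Lebeau machinery: it introduces $t$-dependent weighted Sobolev norms $|s|_{t,1}^{2}=\|Ps\|_{0}^{2}+t\|P^{\perp}s\|_{0}^{2}+t\|\nabla^{\mE}_{P^{TX}\cdot}P^{\perp}s\|_{0}^{2}$ (which encode your block splitting implicitly), proves uniform resolvent bounds for $\mA_{zK,t}$ on a parabolic contour, and then compares $\exp(-\mA_{zK,t})$ with $\exp(-\mathbb{B}_{zK,t}^{2}-z\mL_{K})$ directly via the resolvent identity, obtaining an $O(t^{-1/2})$ operator-norm difference (Proposition~\ref{prop:5.05}). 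The $dt$-part is exactly this difference, and interpolation with uniform $\cC^{m}$ kernel bounds (Proposition~\ref{prop:5.08}) gives the $t^{-\delta}$ pointwise estimate. Your spectral-gap intuition is correct and is what drives the weighted-norm estimates; the paper's route just avoids tracking the four blocks by hand.

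For part (b), however, there is a genuine gap in your outline. After Getzler rescaling near $W^{g}$, the model operator contains the moment term $-\tfrac{1}{4}\langle m^{TX}(zK)Z,e_{i}\rangle$ inside the covariant derivative (see \eqref{eq:5.90}, \eqref{eq:5.100}), and the resulting rescaled operator $L_{x,zK}^{3,t}$ has \emph{no fixed lower bound} as $Z$ ranges over $T_{x}X$: the quadratic potential coming from $m^{TX}(zK)$ is indefinite. Consequently there is no a priori heat kernel $\exp(-L_{x,zK}^{3,t})$ to which the usual Getzler argument converges, and your claim that ``analyticity in $z$ is automatic because $zK$ enters polynomially'' does not rescue this --- the polynomial coefficients are precisely what destroy semiboundedness. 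The paper (following \cite[\S 7]{BG00}) handles this by replacing $\exp$ with the finite-propagation-speed approximation $\widetilde{F}_{t}$, decomposing $\widetilde{F}_{t}=\sum_{n}\widetilde{F}_{t,n}$ via a dyadic partition in the wave variable, and for each $n$ replacing $L_{x,zK}^{3,t}$ by an operator $L_{x,zK,n}^{3,t}$ that agrees with $-\Delta^{TX}$ outside a ball of radius $\sim n$. On these truncated operators one has elliptic estimates with constants growing polynomially in $n|zK|$ (Lemma~\ref{lem:5.27} and \eqref{eq:5.123}), and the Gaussian weight $e^{-C'n^{2}}$ in $\widetilde{F}_{t,n}$ absorbs this growth to give the uniform bound (Theorem~\ref{thm:5.30}). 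The limit kernel $\exp(-L_{x,zK}^{3,0})$ is then \emph{defined} by the convergent sum \eqref{eq:5.127}, after which the Mehler formula computes it and shows it has no $dt$-component. Without this decomposition your Getzler argument does not go through for $K\neq 0$, and the claimed $\delta'=1/2$ (which is the $K=0$ rate from \eqref{eq:1.40}) is not what the method actually yields; the paper obtains $\delta'=1/(4(\dim X+1))$ via the interpolation in Theorem~\ref{thm:5.31}.
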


We delay the proof of Theorem \ref{thm:2.02} to Section 5. 

$\bullet$ If $n=\dim X$ is even, 
then for $t>0$, as $\mathbb{B}_{K,t}$ commutes with $g$, $\mL_K$,
by \cite[Lemma 9.15]{BGV},
\begin{align}\label{eq:2.26b}
d^B\tr_s\left[g
\exp(-\mathbb{B}_{K,t}^{2}-\mL_K)\right]
=\tr_s\Big[\big[\mathbb{B}_{K,t}, g
\exp(-\mathbb{B}_{K,t}^{2}-\mL_K)\big]\Big]
=0.
\end{align}
As in (\ref{eq:1.37}) 
(cf. \cite[Proposition 8.11 and Theorem 9.19]{BGV}), we have
\begin{align}\label{eq:2.26}
\lim_{t\rightarrow +\infty}\psi_B\tr_s\Big[g\exp\big(
-\mathbb{B}_{K,t}^2-\mL_{K}
\big)\Big]
=\ch_{ge^K}(\Ker(D), \nabla^{\Ker(D)}).
\end{align}
As in (\ref{eq:1.38}),
\begin{multline}\label{eq:2.27}
\frac{\partial}{\partial t}\tr_s\left[g
\exp(-\mathbb{B}_{K,t}^{2}-\mL_K)\right]
=-d^B\tr_s\left[g\frac{\partial \mathbb{B}_{K,t}
}{\partial t}
\exp(-\mathbb{B}_{K,t}^{2}-\mL_K)\right]
\\
=d^B\left\{\tr_s\left[g
\exp(-\mB_{K,t}^{2}-\mL_K)\right]\right\}^{dt}.
\end{multline}
Thus from (\ref{eq:2.27}),  for $0<\var<T<+\infty$,
\begin{multline}\label{eq:2.28}
\tr_s\left[g
\exp(-\mathbb{B}_{K,T}^{2}-\mL_K)\right]-\tr_s\left[g
\exp(-\mathbb{B}_{K,\var}^{2}-\mL_K)\right]
\\
=d^B\int_{\var}^T\left\{\tr_s\left[g
\exp(-\mB_{K,t}^{2}-\mL_K)\right]\right\}^{dt}dt.
\end{multline}
In this case, for $|K|\leq \beta$, by Theorem \ref{thm:2.02}, 
the equivariant infinitesimal $\eta$-form is defined by
\begin{multline}\label{eq:2.29}
\tilde{\eta}_{g,K}=-\int_0^{+\infty}\frac{1}{2  i
	\sqrt{\pi}}\psi_{B}
\left\{\tr_s\left[g
\exp(-\mB_{K,t}^{2}-\mL_K)\right]\right\}^{dt}dt
\\
=\int_0^{+\infty}\frac{1}{2  i
	\sqrt{\pi}}\psi_{B}
\tr_s\left[g\frac{\partial \mathbb{B}_{K,t}
}{\partial t}
\exp(-\mathbb{B}_{K,t}^{2}-\mL_K)\right]dt
\in 
\Omega^{\mathrm{odd}}(B,\C).
\end{multline}
By (\ref{eq:2.20}), (\ref{eq:2.28})
and (\ref{eq:2.29}), we have
\begin{align}\label{eq:2.30}
d^B\tilde{\eta}_{g,K}=\int_{X^g}\widehat{
	\mathrm{A}}_{g,K}(TX,\nabla^{TX})
\ch_{g,K}(\mE/\mS, \nabla^{\mE})-\ch_{ge^K}
(\Ker (D), \nabla^{\Ker (D)}).
\end{align}

$\bullet$ If $n$ is odd, 
then for $t>0$, as $\mathbb{B}_{K,t}$ commutes with $g$, $\mL_K$,
again by the argument in \cite[Lemma 9.15]{BGV},
\begin{align}\label{eq:2.31b}
d^B\tr^{\mathrm{odd}}\left[g
\exp(-\mathbb{B}_{K,t}^{2}-\mL_K)\right]
=\tr^{\mathrm{even}}\Big[\big[\mathbb{B}_{K,t}, g
\exp(-\mathbb{B}_{K,t}^{2}-\mL_K)\big]\Big]
=0.
\end{align}
As the same argument in (\ref{eq:1.43}),
\begin{align}\label{eq:2.31}
\lim_{t\rightarrow +\infty}\tr^{\mathrm{odd}}
\Big[g\exp\big(
-\mathbb{B}_{K,t}^2-\mL_{K}
\big)\Big]=0.
\end{align} 
Comparing with (\ref{eq:1.38}) and (\ref{eq:2.27}), we have
\begin{multline}\label{eq:2.32}
\frac{\partial}{\partial t}\tr^{\mathrm{odd}}\left[g
\exp(-\mathbb{B}_{K,t}^{2}-\mL_K)\right]
=-d^B\tr^{\mathrm{even}}\left[g\frac{\partial 
\mathbb{B}_{K,t}}{\partial t}
\exp(-\mathbb{B}_{K,t}^{2}-\mL_K)\right]
\\
=d^B\left\{\tr^{\mathrm{odd}}\left[g
\exp(-\mB_{K,t}^{2}-\mL_K)\right]\right\}^{dt}.
\end{multline}
From Theorem \ref{thm:2.02},
in this case, for $|K|\leq \beta$,
the equivariant infinitesimal $\eta$-form is defined by
\begin{multline}\label{eq:2.33}
\tilde{\eta}_{g,K}=-\int_0^{+\infty}\frac{1}{
	\sqrt{\pi}}\psi_{B}
\left\{\tr^{\mathrm{odd}}\left[g
\exp(-\mB_{K,t}^{2}-\mL_K)\right]\right\}^{dt} dt
\\
=\int_0^{+\infty}\frac{1}{
	\sqrt{\pi}}\psi_{B}
\tr^{\mathrm{even}}\left[g\frac{\partial 
	\mathbb{B}_{K,t}}{\partial t}
\exp(-\mathbb{B}_{K,t}^{2}-\mL_K)\right] dt
\in 
\Omega^{\mathrm{even}}(B,\C).
\end{multline}
As in (\ref{eq:1.47}), by (\ref{eq:2.21}), (\ref{eq:2.31}), 
(\ref{eq:2.32}) and (\ref{eq:2.33}), we get
\begin{align}\label{eq:2.34}
d^B\tilde{\eta}_{g,K}=\int_{X^g}\widehat{
	\mathrm{A}}_{g,K}(TX,\nabla^{TX})
\ch_{g,K}(\mE/\mS, \nabla^{\mE}).
\end{align}

\begin{defn}\label{defn:2.03} 
For $K\in \mathfrak{z}(g)$, $|K|\leq \beta$,   
determined in Theorem \ref{thm:2.02}, under
Assumption \ref{ass:1.02},
the equivariant infinitesimal Bismut-Cheeger
$\eta$-form is defined by
\begin{align}\label{eq:2.35} 
\wi{\eta}_{g,K}=-\int_{0}^{+\infty}\left\{\psi_{\R\times B}
\wi{\tr}\Big[g\exp\left(-\mB_{K,t}^2-\mL_{K}
\right) \Big]\right\}^{dt}dt.
\end{align}
\end{defn}
By (\ref{eq:0.05}) and (\ref{eq:1.34}),  (\ref{eq:2.35})
is a reformulation of (\ref{eq:2.29}) and (\ref{eq:2.33}).
From (\ref{eq:2.30}) and (\ref{eq:2.34}), we establish the
first part of Theorem \ref{thm:0.1}.

Remark that the compactness of $B$ guarantees the existence of 
the constant $\beta>0$ in Definition \ref{defn:2.03}.

From (\ref{eq:2.29}) and (\ref{eq:2.33}),
it is obvious that if $K=0$, $\wi{\eta}_{g,K}=\wi{\eta}_{g}$ 
in (\ref{eq:1.48}).

From the Duhamel's formula (cf. e.g., \cite[Theorem 2.48]{BGV}), 
we have
\begin{align}\label{eq:2.36}
\frac{\partial}{\partial\bar{z}}\wi{\tr}\Big[g\exp\big(
-\mB_{zK,t}^2-z\mL_{K}\big)\Big]
=-\wi{\tr}\left[g\frac{\partial (\mB_{zK,t}^2+z\mL_{K})}{\partial 
	\bar{z}}\exp\big(-\mB_{zK,t}^2-z\mL_{K}\big)\right]=0.
\end{align}
Thus, $\wi{\tr}\Big[g\exp\big(-\mB_{zK,t}^2-z\mL_{K}\big)\Big]$ 
is $\cC^{\infty}$ on $t>0$ and holomorphic on $z\in \C$.

We fix $K\in \mathfrak{z}(g)$. 	Thus for 
$0<\var<T<+\infty$, the function 
$$\int_{\var}^{T}\left\{
\psi_{\R\times B}\wi{\tr}\Big[g\exp\left(-\mB_{zK,t}^2-z\mL_{K}
\right) \Big]\right\}^{dt}dt$$ 
is holomorphic on $z$.
By Theorem \ref{thm:2.02} and the dominated convergence theorem, 
we have
\begin{align}\label{eq:2.37} 
\wi{\eta}_{g,zK}:=-\int_{0}^{+\infty}\left\{\psi_{\R\times B}
\wi{\tr}\Big[g\exp\left(-\mB_{zK,t}^2-z\mL_{K}
\right) \Big]\right\}^{dt}dt
\end{align}
is holomorphic on $z\in \C$, $|zK|< \beta$. Thus we get the last 
part of Theorem \ref{thm:0.1}.

The proof of Theorem \ref{thm:0.1} is completed. 

\section{Comparison of two equivariant $\eta$-forms}\label{s03}

In this section, we state our main result. We use the same 
notations and assumptions in Sections 1 and 2.

Let $\vartheta_K\in T^*X$ be the $1$-form which is dual to 
$K^X$ by the metric $g^{TX}$, i.e., for any $U\in TX$,
\begin{align}\label{eq:3.01}
\vartheta_K(U)=\la K^X, U\ra.
\end{align}
We identify $\vartheta_K$ to a vertical 
$1$-form on $W$, i.e., to a $1$-form which vanishes on $T^HW$. 
Then by (\ref{eq:2.17}) and (\ref{eq:3.01}), we have
\begin{align}\label{eq:3.02}
d_K\vartheta_{K}=d\vartheta_K-2i\pi\,|K^X|^2.
\end{align}
Let $d^X$ be the exterior differential operator along the 
fiber $X$. By (\ref{eq:2.03}) and (\ref{eq:3.01}) 
(cf. \cite[Lemma 7.15 (1)]{BGV}), for $U,U'\in TX$, we have
\begin{align}\label{eq:3.03} 
d^X\vartheta_K(U,U')=2\la \nabla^{TX}_UK^X,U'\ra
=2\la m^{TX}(K)U,U'\ra.
\end{align}

From (\ref{eq:1.11}) and (\ref{eq:1.12}), set
\begin{align}\label{eq:3.04} 
\wi{T}=2T(f_{p}^H,e_i)f^p\wedge e^i\wedge 
+\frac{1}{2}T(f_{p}^H,f_q^H)f^p\wedge f^q\wedge.
\end{align}
From \cite[Proposition 10.1]{BGV} or 
\cite[(3.61) and (3.94)]{BG04},
\begin{align}\label{eq:3.05} 
d\vartheta_K=d^X\vartheta_K+\la \wi{T},K^X\ra
=d^X\vartheta_K+\vartheta_K(\wi{T}).
\end{align}

For $K\in \mathfrak{z}(g)$, $|K|$ small, $v>0$, set
\begin{align}\label{eq:3.07} \begin{split}
	\alpha_K&=\widehat{\mathrm{A}}_{g,K}(TX,\nabla^{TX})
\ch_{g,K}(\mE/\mS,\nabla^{\mE})\in \Omega^{2\bullet}\big(W^g,
\det N_{X^g/X}\big),\\
\wi{e}_v&=-\int_{X^g}\frac{\vartheta_K}{8vi\pi}\exp\left(\frac{d_{K}
	\vartheta_K}{8vi\pi}\right)\alpha_K\in \Omega^{\bullet}(B,\C).
\end{split}\end{align}		 
Note that if $W^{g,K}=W^g$, as $\vartheta_K=0$ on $W^{g,K}$, 
we get $\wi{e}_v=0$.

\begin{lemma}\label{lem:3.01}
If $W^{g,K}\varsubsetneq W^g$. Then $\wi{e}_v=\mathcal{O}(v^{-1})$ 
as $v\rightarrow +\infty$ and  $\wi{e}_v=\mathcal{O}(v^{1/2})$ as 
$v\rightarrow 0$.
\end{lemma}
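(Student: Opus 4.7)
My plan is to treat the two asymptotic regimes separately.

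For $v\to+\infty$, I would factor $\exp(d_K\vartheta_K/(8vi\pi))=\exp(-|K^X|^2/(4v))\cdot\exp(d\vartheta_K/(8vi\pi))$. The Gaussian factor is pointwise bounded by $1$, while the second exponential expands into a finite sum (truncated by form degree on $W^g$) whose $k$-th term has scalar coefficient $\mathcal{O}(v^{-k})$, uniformly bounded on the compact $W^g$ for $v\geq 1$. Multiplication by the prefactor $\vartheta_K/(8vi\pi)=\mathcal{O}(v^{-1})$ and fiber integration yield $\wi{e}_v=\mathcal{O}(v^{-1})$.

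For $v\to 0$ I use Gaussian localization. The factor $\exp(-|K^X|^2/(4v))$ concentrates on $X^{g,K}=\{K^X=0\}\cap X^g$; on the complement of a fixed tubular neighborhood of $W^{g,K}$ in $W^g$ one has $|K^X|\geq c>0$ and the contribution is $\mathcal{O}(e^{-c/v})$, negligible. Inside the neighborhood I use normal coordinates $y$ for $N_{X^{g,K}/X^g}$ and the rescaling $y=\sqrt{v}\,z$. Since $m^{TX}(K)$ vanishes on $TX^{g,K}$ and preserves its orthogonal complement $N$ (because $X^{g,K}$ is exactly the zero set of the $K$-action on $X^g$), $K^X(y)=Jy+\mathcal{O}(|y|^2)$ with $J:=m^{TX}(K)|_N$ skew-symmetric and invertible. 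Consequently $|K^X|^2/(4v)=|Jz|^2/4+\mathcal{O}(\sqrt{v})$, $\vartheta_K=J_{lk}y^l\,dy^k+\mathcal{O}(|y|^2)$, and $d^X\vartheta_K|_{y=0}=J_{lk}\,dy^l\wedge dy^k$ is purely normal; the mixed torsion piece $\vartheta_K(\wi{T})$ from (\ref{eq:3.05}) has coefficient $\mathcal{O}(\sqrt{v})$ with its vertical part tied to $Jy$ through $T(f_p^H,\partial_{y^i})$.

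Extracting the coefficient of the normal top form $dy^1\wedge\cdots\wedge dy^{\ell-\ell'}$ and tracking powers of $v$---$v^{-1}$ from the prefactor, $v^{-m}$ per power of $d^X\vartheta_K/(8vi\pi)$, $v^{-n_1/2}$ per normal-vertical factor of $\vartheta_K(\wi{T})/(8vi\pi)$, $v^{1/2}$ from $\vartheta_K$, and $v^{(\ell-\ell')/2}$ from the rescaled normal volume---under the normal-top-form constraint $1+2m+n_1+r=\ell-\ell'$ (with $r$ the normal-form degree supplied by $\alpha_K$) yields the net scaling $v^{r/2}$. For $r\geq 1$ this already beats $v^{1/2}$. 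In the critical $r=0$ sector, the constraint (with $\ell-\ell'$ even) forces $n_1$ odd, and the leading polynomial in $z$ contracts $Jz$-factors against $T(f_p^H,\partial_{y^i})$; the symmetry $\langle T(f_p^H,\partial_{y^i}),\partial_{y^l}\rangle=\langle T(f_p^H,\partial_{y^l}),\partial_{y^i}\rangle$ from (\ref{eq:1.13}), combined with the Gaussian pairing $\int z^a z^b e^{-|Jz|^2/4}\,dz\propto\delta_{ab}$, makes this moment vanish identically. The first surviving contribution thus comes from the next Taylor order and gains an extra $\sqrt{v}$, yielding $\wi{e}_v=\mathcal{O}(v^{1/2})$. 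The main obstacle is precisely this cancellation in the $r=0$ sector---the naive scaling would otherwise permit an $\mathcal{O}(1)$ leading term---and it parallels the algebraic identities developed in \cite[\S7]{BG00}.
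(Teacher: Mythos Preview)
Your treatment of $v\to+\infty$ is correct and coincides with the paper's: expand the exponential in a finite sum and read off the $v^{-1}$ from the prefactor.

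For $v\to 0$, your localization near $W^{g,K}$ and the rescaling $y=\sqrt{v}z$ are exactly right, but the cancellation mechanism you propose is both more complicated than needed and incomplete. The paper proceeds by pulling back the \emph{full} form under $\delta_v$: by \cite[Lemma~7.15]{BGV} one has $\tfrac{1}{v}\delta_v^*\vartheta_K\to\theta_0$ and hence $\tfrac{1}{v}\delta_v^* d\vartheta_K\to d\theta_0$, where $\theta_0(Y)_{(y,Z)}=\langle m^{TX}(K)Z,Y^V\rangle$ is \emph{linear} in $Z$, and a direct computation (equation~(\ref{eq:3.21})) shows $d\theta_0$ is an \emph{even} polynomial in $Z$ (a constant normal--normal piece plus a quadratic horizontal--horizontal piece, with \emph{no} mixed term). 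Thus the limit integrand $\tfrac{\theta_0}{4}\bigl(\tfrac{d\theta_0}{4}\bigr)^j\exp(-|m^{TX}(K)Z|^2/4)\,\alpha_K|_y$ is odd in $Z$ and its Gaussian integral over $N_{X^{g,K}/X^g}$ vanishes by parity. This one observation replaces your entire sector analysis.

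Your route---splitting $d\vartheta_K=d^X\vartheta_K+\vartheta_K(\wi T)$ and tracking normal-form degree---has gaps. First, you only account for the $f^p\wedge dy^i$ piece of $\vartheta_K(\wi T)$; the $f^p\wedge dx^a$ and $f^p\wedge f^q$ pieces, as well as higher-order tangential contributions from $d^X\vartheta_K$, all enter at comparable order and must also be controlled. Second, your Gaussian-moment claim $\int z^az^b e^{-|Jz|^2/4}\,dz\propto\delta_{ab}$ is false: the covariance is $(J^TJ)^{-1}_{ab}$, which is symmetric but not diagonal, so your $T$-symmetry cancellation does not follow as stated (one must first substitute $w=Jz$). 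The parity argument sidesteps all of this: it works uniformly across your ``sectors'' because oddness in $Z$ is manifest once one knows $\theta_0$ is linear and $d\theta_0$ is even, without ever invoking the symmetry of $T$.
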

\begin{proof}
By (\ref{eq:3.02}) and (\ref{eq:3.07}), we have
\begin{align}\label{eq:3.08} 
\wi{e}_v=-\sum_{j=0}^{\lfloor \dim W^g/2\rfloor}\frac{1}{j!}
\left(\frac{1}{2i\pi}\right)^{j+1}
\int_{X^g}\frac{\vartheta_K}{4v}\left(\frac{d\vartheta_K}{4v}
\right)^{j}\exp\left(-\frac{|K^X|^2}{4v} \right)\cdot\alpha_K.
\end{align}
Thus when $v\rightarrow +\infty$, 
$\wi{e}_v=\mathcal{O}(v^{-1})$.

For $v\rightarrow 0$, we follow the argument in the proof of 
\cite[Theorem 1.3]{Bi86a}.	
For $x\in W^g$, if $K^X_x\neq 0$, when
$v\rightarrow 0$, the integral term in (\ref{eq:3.08}) at $x$ is of
exponential decay. So the integral in (\ref{eq:3.08}) could be 
localized on a neighbourhood of $W^{g,K}$. 

Let $N_{X^{g,K}/X^g}$ be the normal bundle of $W^{g,K}$ in $W^g$,
and we identify it as the orthogonal complement of 
$TX^{g,K}=TX^g|_{W^{g,K}}\cap TX^K|_{W^{g,K}}$
in $TX^g|_{W^{g,K}}$. 
Recall that as $K^X$ is a Killing vector field, for any 
$b\in B$, $X^{g,K}_b$ is totally geodesic in $X^g_b$, 
and as the same argument in Section \ref{s0201}, 
$\nabla^{TX^g}$,
$m^{TX}(K)$ preserve the splitting
\begin{align}\label{eq:3.18b} 
TX^g=TX^{g,K}\oplus N_{X^{g,K}/X^g} \quad \text{ on }
W^{g,K}
\end{align}
and $m^{TX}(K)=0$ on $TX^{g,K}$. In particular,
\begin{align}\label{eq:3.09b}
m^{N_{X^{g,K}/X^g}}(K)=m^{TX}(K)|_{N_{X^{g,K}/X^g}}\in 
\End(N_{X^{g,K}/X^g}) \text{ is skew-adjoint and invertible.}
\end{align}
 Combining with (\ref{eq:2.05}),
it implies that $N_{X^{g,K}/X^g}$ is orientable, and we fix 
an orientation. Then the orientations on $TX$, $N_{X^{g,K}/X^g}$
induce the identifications over $W^{g,K}$,
\begin{align}\label{eq:3.10b}
\det(N_{X^{g}/X})\simeq \det(TX^g)\simeq \det(TX^{g,K}).
\end{align} 

Given 
$\var>0$, let $\mU_{\var}''$ be the $\var$-neighborhood of 
$W^{g,K}$ in $N_{X^{g,K}/X^g}$. There exists $\var_0$ such 
that for $0<\var\leq \var_0$, the fiberwise exponential map 
$(y,Z)\in N_{b, X^{g,K}/X^g}\rightarrow \exp_{y}^{X}(Z)\in X_b^g$ 
is a diffeomorphism from $\mU_{\var}''$ into the tubular 
neighborhood $\mV_{\var}''$ of $W^{g,K}$ in $W^g$. 
We denote $\wi{\mV}_{\var}''$ the fiber of the 
fibration $\mV_{\var}''
\rightarrow B$.
With this identification, let $\bar{k}(y,Z)$ be the function such that
\begin{align}\label{eq:3.09}
dv_{X^g}(y,Z)=\bar{k}(y,Z)dv_{X^{g,K}}(y)dv_{N_{X^{g,K}/X^g}}(Z).
\end{align}
Here $dv_{X^g}\in \Lambda^{\max}(T^*X^g)\otimes \det(T^*X^g)$, 
$dv_{X^{g,K}}\in \Lambda^{\max}(T^*X^{g,K})
\otimes \det(T^*X^{g,K})$ 
are the Riemannian volume forms
of $X^g$, $X^{g,K}$ and $dv_{N_{X^{g,K}/X^g}}$ is the Euclidean
volume form on $N_{X^{g,K}/X^g}$.

Let $e^1,\cdots,e^{\ell}$ be a locally orthonormal frame of
$T^*X^g$. For $\beta\in \Omega^{\bullet}
\big(W^g,\det (N_{X^g/X})\big)$, 
let $[\beta]^{\max}$ be the 
coefficient of $e^1\wedge\cdots\wedge e^{\ell}\otimes 
\widehat{e^1\wedge\cdots\wedge e^{\ell}}$ of $\beta$,
here $\widehat{e^1\wedge\cdots\wedge e^{\ell}}$ means
the local frame of $\det(N_{X^g/X})$ induced by 
$e^1\wedge\cdots\wedge e^{\ell}$ via (\ref{eq:3.10b}).
Consider the dilation $\delta_v$, $v>0$, of $N_{X^{g,K}/X^g}$
by $\delta_v(y,Z)=(y,\sqrt{v}Z)$.
We have
\begin{multline}\label{eq:3.10}
\int_{\wi{\mV}_{\var}''}\frac{\vartheta_K}{4v}\left(\frac{d\vartheta_K
}{4v}\right)^{j}\exp\left(-\frac{|K^X|^2}{4v} \right)\alpha_K
\\
=\int_{X^{g,K}}\int_{Z\in N_{X^{g,K}/X^g}, |Z|< \var}\left[\frac{
\vartheta_K|_{(y,Z)}}{4v}\left(\frac{d\vartheta_K
|_{(y,Z)}}{4v}\right)^{j}
\exp\left(-\frac{|K^X(y,Z)|^2}{4v}\right)\alpha_K(y,Z)
\right]^{\max}
\\
\cdot \bar{k}(y,Z)dv_{X^{g,K}}(y)dv_{N_{X^{g,K}/X^g}}(Z)
\\
=\int_{X^{g,K}}\int_{Z\in N_{X^{g,K}/X^g}, |Z|< \var/\sqrt{v}}
\bigg[\frac{\delta_v^*\vartheta_K|_{(y,Z)}}{4v}\left(\frac{
d\delta_v^*\vartheta_K|_{(y,Z)}}{4v}\right)^{j}\exp\left(-
\frac{|K^X(y,\sqrt{v}Z)|^2}{4v}\right)
\\
\cdot\delta_v^*\alpha_K|_{(y,Z)}\Bigg]^{\max}
\cdot \bar{k}(y,\sqrt{v}Z)dv_{X^{g,K}}(y)
dv_{N_{X^{g,K}/X^g}}(Z).
\end{multline}
	
Let $\nabla^{N_{X^{g,K}/X^g}}$ be the connection on 
$N_{X^{g,K}/X^g}$ induced by $\nabla^{TX}$ 
as explained after (\ref{eq:1.30}). 
Let $\pi_N:N_{X^{g,K}/X^g}
\rightarrow W^{g,K}$ be the obvious projection. With respect 
to $\nabla^{N_{X^{g,K}/X^g}}$, we have the canonical splitting 
of bundles over $N_{X^{g,K}/X^g}$,
\begin{align}\label{eq:3.11} 
TN_{X^{g,K}/X^g}=T^HN_{X^{g,K}/X^g}
\oplus \pi_N^*N_{X^{g,K}/X^g}.
\end{align}
By (\ref{eq:1.04}) and (\ref{eq:3.11}), we have
\begin{align}\label{eq:3.11b} 
T^HN_{X^{g,K}/X^g}\simeq \pi_N^*TW^{g,K}
\simeq 
\pi_N^*(T^HW\oplus TX^{g,K}).
\end{align}
On $N_{X^{g,K}/X^g}$, by (\ref{eq:3.11}) and (\ref{eq:3.11b}),
 we have
\begin{multline}\label{eq:3.12} 
\Lambda(T^*N_{X^{g,K}/X^g})=\Lambda
(T^{H*}N_{X^{g,K}/X^g})\widehat{\otimes} \pi_N^*\Lambda
(N_{X^{g,K}/X^g}^*)
\\
\simeq \pi_N^*\left(
\Lambda(T^*W^{g,K})\widehat{\otimes} \Lambda
(N_{X^{g,K}/X^g}^*)\right).
\end{multline}

For $y\in W^{g,K}$ fixed, we take $Y_1, Y_1'\in T_yW^{g,K}$, 
$Y^V, Y'^V\in N_{X^{g,K}/X^g,y}$, then 
$Y=Y_1+Y^V$, $Y'=Y_1'+Y'^V$
are sections of $TN_{X^{g,K}/X^g}$ along $N_{X^{g,K}/X^g,y}$ 
under our identification (\ref{eq:3.11}), i.e.,
\begin{align}\label{eq:3.13}
Y_{(y,Z)}=Y_1^H(y,Z)+Y^V, \quad Y_{(y,Z)}'=Y_1'^H(y,Z)+Y'^V.
\end{align}
Here $Y_1^H, Y_1'^H\in T^HN_{X^{g,K}/X^g}$ are the lifts of 
$Y_1, Y_1'$.

Let $\theta_0$ be the one form on total space $\mN$ of 
$N_{X^{g,K}/X^g}=N_{W^{g,K}/W^g}$ given by 
\begin{align}\label{eq:3.16}
\theta_0(Y)_{(y,Z)}=\la m^{TX}(K)Z, Y^V\ra_{y}
\quad \text{for}\ 
Y=Y_1^H+Y^V\in T^HN_{X^{g,K}/X^g}
\oplus (\pi_N^*N_{X^{g,K}/X^g}).
\end{align}
By \cite[Lemma 7.15 (2)]{BGV},
we have
\begin{align}\label{eq:3.17}
\frac{1}{v}\delta_v^*\vartheta_K=\theta_0+\mO(v^{1/2}).
\end{align}
From (\ref{eq:3.17}), we get
\begin{align}\label{eq:3.18}
\frac{1}{v}\delta_v^*d\vartheta_K=
\frac{1}{v}d\delta_v^*\vartheta_K=d\theta_0+\mO(v^{1/2}).
\end{align}
As in the argument before \cite[p218, Lemma 7.16]{BGV}, 
by (\ref{eq:3.18b}), we calculate that for $(y,Z)\in 
N_{X^{g,K}/X^g}$,
\begin{align}\label{eq:3.21} 
d\theta_0(Y,Y')_{(y,Z)}=2\la m^{TX}(K)Y^V, Y'^V\ra_{y}
-\la R^{TX}(Y_1^H,Y_1'^H)(m^{TX}(K)Z), Z\ra_{y}.
\end{align}
By (\ref{eq:2.03}) and (\ref{eq:2.11}), for $y\in W^{g,K}$,
\begin{align}\label{eq:3.19}
\frac{1}{v}|K^X(y,\sqrt{v}Z)|^2=|m^{TX}(K)Z|^2+\mO(v^{1/2}).
\end{align}
From (\ref{eq:3.10}), (\ref{eq:3.17}), (\ref{eq:3.18}) and (\ref{eq:3.19}),
for any $\alpha\in \Omega^{\bullet}
\big(W^g,\det (N_{X^g/X})\big)$, as $v\rightarrow 0$,
\begin{multline}\label{eq:3.20}
\int_{\wi{\mV}_{\var}''}\frac{\vartheta_K}{4v}\left(\frac{d\vartheta_K
}{4v}\right)^{j}\exp\left(-\frac{|K^X|^2}{4v} \right)\alpha
\\
=\int_{X^{g,K}}\alpha_y\int_{Z\in N_{X^{g,K}/X^g}}
\frac{\theta_0}{4}\left(\frac{d\theta_0
}{4}\right)^{j}\exp\left(-\frac{|m^{TX}(K)Z|^2}{4} \right)
+\mO(v^{1/2}).
\end{multline}
From (\ref{eq:3.21}), $d\theta_0$ is an even polynomial in $Z$. 
However from (\ref{eq:3.16}), $\theta_0$
is linear in $Z$. Thus the last integral in 
(\ref{eq:3.20}) is zero. Therefore, as $v\rightarrow 0$, we have
\begin{align}\label{eq:3.25}
\int_{X^g}\frac{\vartheta_K}{4v}\left(\frac{d\vartheta_K
}{4v}\right)^{j}\exp\left(-\frac{|K^X|^2}{4v} \right)\alpha_K=
\mathcal{O}(v^{1/2}).
\end{align}
The proof of Lemma \ref{lem:3.01} is completed.
\end{proof}	

Remark that when $B$ is a point, for $g=e$, Lemma \ref{lem:3.01} 
is proved in \cite[Proposition 2.2]{Go09}.

From Lemma \ref{lem:3.01} and (\ref{eq:3.07}), the following 
integral is well-defined,
\begin{align}\label{eq:3.26} 
\mM_{g,K}:=\int_0^{+\infty}\wi{e}_v\frac{dv}{v}.
\end{align}

\begin{prop}\label{prop:3.02} 
	For any $K_{0}\in \mathfrak{z}(g)$, there exists $\beta>0$
	such that for $K=zK_{0}$, $-\beta<z<\beta$, we have
\begin{multline}\label{eq:3.27a}
d^B\mM_{g,K}=\int_{X^{g}}\widehat{\mathrm{A}}_{g,K}(TX,\nabla^{TX})
\ch_{g,K}(\mE/\mS,\nabla^{\mE})
\\
- \int_{X^{ge^K}}
\widehat{\mathrm{A}}_{ge^K}(TX,\nabla^{TX})
\ch_{ge^K}(\mE/\mS,\nabla^{\mE}).
\end{multline}	
And there exist $c_j(K)\in \Omega^{\bullet}(B,\C)$ 
for $1\leq j\leq \lfloor(\dim W^g+1)/2\rfloor$ such that
$\mM_{g,tK}$ is smooth on $|t|< 1$, $t\neq 0$ and 
as $t\rightarrow 0$, we have
\begin{align}\label{eq:3.27} 
\mM_{g,tK}=\sum_{j=1}^{\lfloor(\dim 
	W^g+1)/2\rfloor}c_j(K)t^{-j}+\mathcal{O}(t^0).
\end{align}
Moreover, $t^{\lfloor(\dim W^g+1)/2\rfloor}\mathcal{M}_{g,tK}
$ is real analytic in $t$ for $|t|<1$.
\end{prop}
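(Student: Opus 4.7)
The plan is to derive (\ref{eq:3.27a}) by writing $d^B\wi{e}_v$ as a total $v$-derivative, and then to obtain the Laurent expansion (\ref{eq:3.27}) together with the real analyticity statement via an explicit rescaling in $v$ and $t$.

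The algebraic heart of the $d^B\mM_{g,K}$ computation uses the equivariant differential $d_K=d-2i\pi\, i_{K^X}$ from (\ref{eq:2.17}), which satisfies $d_K^2=-2i\pi\mL_{K^X}$. Since $K^X$ is Killing and $\vartheta_K$ is metric-dual to $K^X$, $\mL_{K^X}\vartheta_K=0$, hence $d_K^2\vartheta_K=0$; combined with $d_K\alpha_K=0$ from (\ref{eq:2.18}) this yields
\begin{align*}
d_K\!\left[\frac{\vartheta_K}{8vi\pi}\,e^{d_K\vartheta_K/(8vi\pi)}\alpha_K\right]
=\frac{d_K\vartheta_K}{8vi\pi}\,e^{d_K\vartheta_K/(8vi\pi)}\alpha_K
=-v\,\frac{\partial}{\partial v}\!\left[e^{d_K\vartheta_K/(8vi\pi)}\alpha_K\right].
\end{align*}
Since $K^X$ is vertical on $W^g$, I have $\int_{X^g} i_{K^X}\omega=0$ for every $\omega\in\Omega^{\bullet}(W^g)$, because the contraction drops the fibre degree below $\dim X^g$. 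Integrating the identity above along the closed fibre $X^g$ and using fibrewise Stokes therefore gives $d^B\wi{e}_v=\epsilon\, v\,\partial_v\Psi(v)$ for a fixed sign $\epsilon\in\{\pm1\}$, where $\Psi(v):=\int_{X^g}e^{d_K\vartheta_K/(8vi\pi)}\alpha_K\in\Omega^\bullet(B,\C)$. Swapping $d^B$ with the $v$-integration (justified by the uniform estimates underlying Lemma \ref{lem:3.01}) yields $d^B\mM_{g,K}=\epsilon\bigl[\Psi(+\infty)-\Psi(0^+)\bigr]$. The limit $\Psi(+\infty)=\int_{X^g}\widehat{\mathrm A}_{g,K}\ch_{g,K}$ is immediate. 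The limit $\Psi(0^+)$ is the main technical point: the damping factor $e^{-|K^X|^2/(4v)}$ inside $e^{d_K\vartheta_K/(8vi\pi)}$ localises the integrand fibrewise to $X^{g,K}\subset X^g$, and a Gaussian stationary-phase computation along $N_{X^{g,K}/X^g}$, carried out in exactly the manner of the proof of Lemma \ref{lem:3.01} but without the $\vartheta_K$ prefactor that made the leading contribution vanish there, produces a well-defined form on $X^{g,K}=X^{ge^K}$ (by (\ref{eq:2.12})). A Chern--Weil manipulation on the orthogonal decomposition $TX|_{W^{ge^K}}=TX^{g,K}\oplus N_{X^{g,K}/X^g}\oplus N_{X^g/X}$ identifies $N_{X^{ge^K}/X}$ with $N_{X^{g,K}/X^g}\oplus N_{X^g/X}$ and converts the moments $m^{TX}(K)$, $m^{\mE/\mS}(K)$ entering $\widehat{\mathrm A}_{g,K}\ch_{g,K}$ into the finite rotations generated by $e^K$ on this normal bundle, producing exactly $\widehat{\mathrm A}_{ge^K}\ch_{ge^K}$ on $X^{ge^K}$; adjusting the sign $\epsilon$ gives (\ref{eq:3.27a}). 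This parametrised Berline--Vergne/Kirillov localisation is the main obstacle.

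For (\ref{eq:3.27}) and the analyticity, I replace $K$ by $tK_0$ and substitute $v=t^2w$. Using $d_{tK_0}\vartheta_{tK_0}=t\,d\vartheta_{K_0}-2i\pi\,t^2|K_0^X|^2$ and the commuting factorisation of the exponential, I obtain
\begin{align*}
\mM_{g,tK_0}=-\sum_{j=0}^{J}\frac{1}{j!\,(8i\pi)^{j+1}}\,\frac{G_j(t)}{t^{j+1}},\qquad J=\lfloor(\dim W^g-1)/2\rfloor,
\end{align*}
where $G_j(t)=\int_0^{+\infty}w^{-j-2}\!\int_{X^g}\vartheta_{K_0}(d\vartheta_{K_0})^j\,e^{-|K_0^X|^2/(4w)}\alpha_{tK_0}\,dw$. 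The range of $j$ is pinned down by the vanishing $\vartheta_{K_0}\wedge(d\vartheta_{K_0})^j=0$ whenever $2j+1>\dim W^g$, so $J+1=\lfloor(\dim W^g+1)/2\rfloor$. Each $w$-integral converges absolutely by the fibrewise version of the argument in Lemma \ref{lem:3.01}, and $\alpha_{tK_0}$ is real-analytic in $t$ for $|t|<1$ (after rescaling $K_0$ so that $|K_0|<\beta$). Consequently each $G_j$ is real-analytic, $t^{\lfloor(\dim W^g+1)/2\rfloor}\mM_{g,tK_0}$ is real-analytic on $|t|<1$, and Taylor-expanding $G_j(t)$ at $t=0$ produces the Laurent expansion (\ref{eq:3.27}) with coefficients $c_j(K_0)$ read off as the appropriate Taylor coefficients of the $G_m(t)$'s with $m\geq j-1$.
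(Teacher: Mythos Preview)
Your proposal is correct and follows essentially the same route as the paper: the identity relating $d_K$ of the $\vartheta_K$-current to $\partial_v$ of $e^{d_K\vartheta_K/(8vi\pi)}$, the fibrewise Stokes/contraction argument, the Gaussian localisation to $X^{g,K}$ for $\Psi(0^+)$, and the Chern--Weil identification (your ``finite rotation'' step is exactly the paper's (\ref{eq:3.26a}), which cites \cite[Theorem~8.2]{BGV}) are all the paper's ingredients; the Laurent expansion via $v\mapsto t^2w$ and the analyticity of $\alpha_{tK_0}$ are likewise the paper's argument.

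The only stylistic difference is that the paper packages the $v$-integral as a current $Q_K$ on $W^g$ satisfying $d_KQ_K=1-\delta_{W^{g,K}}/\bigl(i^{(\ell-\ell')/2}\det^{1/2}(R_K^{N_{X^{g,K}/X^g}}/2i\pi)\bigr)$ and then pairs with $\alpha_K$, whereas you integrate first and work with the scalar $\Psi(v)$; these are equivalent, though the current formulation is reused in Remark~\ref{rem:3.04}. Your ``fixed sign $\epsilon$'' is harmless but you should pin it down: with your conventions $d^B\wi e_v = +v\,\partial_v\Psi(v)$, so $d^B\mM_{g,K}=\Psi(+\infty)-\Psi(0^+)$ directly, matching (\ref{eq:3.27a}).
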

\begin{proof} 
By (\ref{eq:2.17}), $d_K^2=-2i\pi\mL_K$, and $\vartheta_K$
is $K$-invariant. We know
\begin{align}\label{eq:3.22b}
\frac{\partial}{\partial v}\left(\exp\left(
\frac{d_K\vartheta_K}{2vi\pi}\right) \right)
=-\frac{1}{v^2}d_K\left(
\frac{\vartheta_K}{2i\pi}
\exp\left(\frac{d_K\vartheta_K}{2vi\pi}
\right)\right).
\end{align}

We define the corresponding equivariant curvature
$R^{N_{X^{g,K}/X^g}}_K$ as in (\ref{eq:2.08}) via (\ref{eq:3.18b}). 
By the proof of (\ref{eq:3.25}) and \cite[Theorem 1.3]{Bi86a}, 
we know that there exists $C>0$,
such that for any $v\in (0,1]$, $\alpha\in 
\Omega^{\bullet}(W^g,\det (N_{X^g/X}))=
\Omega^{\bullet}(W^g,o(TX^g))$,
\begin{align}\label{eq:3.23b}
\begin{split}
&\left|\int_{X^g}\exp\left(
\frac{d_K\vartheta_K}{2vi\pi}\right) \alpha
-\int_{X^{g,K}}\frac{i^{-(\ell-\ell')/2}\alpha}{
	\mathrm{det}^{1/2}\left(
	R^{N_{X^{g,K}/X^g}}_K/(2i\pi)\right)}\right|
\leq C\sqrt{v}\|\alpha\|_{\cC^1(W^g)},
\\
&\left|\int_{X^g}\frac{\vartheta_K}{2vi\pi}
\exp\left(\frac{d_K\vartheta_K}{2vi\pi}\right) \alpha
\right|
\leq C\sqrt{v}\|\alpha\|_{\cC^1(W^g)}.
\end{split}
\end{align}

Let $Q_K$ be the current on $W^g$ such that if
$\alpha\in 
\Omega^{\bullet}(W^g,\det (N_{X^g/X}))$, then 
\begin{align}\label{eq:3.24b} 
\int_{X^g}Q_K\alpha=-\int_0^{+\infty}\int_{X^g}
\frac{\vartheta_K}{2vi\pi}
\exp\left(\frac{d_K\vartheta_K}{2vi\pi}\right)\alpha
\frac{dv}{v}.
\end{align}
From (\ref{eq:3.08}),  the second equation of (\ref{eq:3.23b}), 
we know (\ref{eq:3.24b}) is well-defined.
From (\ref{eq:3.22b})-(\ref{eq:3.24b}),
the following equality of currents on $W^g$ holds 
(cf. \cite[Theorem 1.8]{B11a}):
\begin{align}\label{eq:3.25b}
d_KQ_K=1-\frac{i^{-(\ell-\ell')/2}\delta_{W^{g,K}}}{
	\mathrm{det}^{1/2}\left(
		R^{N_{X^{g,K}/X^g}}_K/(2i\pi)\right)},
\end{align}
where $\delta_{W^{g,K}}$ is the current of integration on $W^{g,K}$. 
From (\ref{eq:3.07}), (\ref{eq:3.26}) and (\ref{eq:3.24b}), we get
\begin{multline}\label{eq:3.26b} 
\mM_{g,K}=\int_{X^g}Q_K\alpha_K
\\
=-\int_0^{+\infty}\int_{X^g}\frac{\vartheta_K}{2vi\pi}
\exp\left(\frac{d_K\vartheta_K}{2vi\pi}
\right)\widehat{\mathrm{A}}_{g,K}(TX,\nabla^{TX})
\ch_{g,K}(\mE/\mS,\nabla^{\mE})\frac{dv}{v}.
\end{multline}

For $x\in W^g$, $K\in \mathfrak{z}(g)$, we have $K^X(x)\in T_xX^g$.
From \cite[(1.7)]{BGV}, for $\sigma\in \Omega^{\bullet}(W^g,o(TX^g))$,
using the sign convention in (\ref{eq:0.11}), we have
\begin{align}\label{eq:3.23}
d^B\int_{X^g}\sigma=\int_{X^g}d\sigma=\int_{X^g}d_K\sigma.
\end{align}

From (\ref{eq:2.11}), proceeding as the same calculation 
in the proof of \cite[Theorem 8.2]{BGV},
we get, as elements in $\Omega^{\bullet}(W^{g,K},
\det (N_{X^g/X}))$,
\begin{align}\label{eq:3.26a} 
\frac{\widehat{\mathrm{A}}_{g,K}(TX,\nabla^{TX})
	\ch_{g,K}(\mE/\mS,\nabla^{\mE})
}{i^{(\ell-\ell')/2}
	\mathrm{det}^{1/2}\left(
	R^{N_{X^{g,K}/X^g}}_K/(2i\pi)\right)}=
\widehat{\mathrm{A}}_{ge^K}(TX,\nabla^{TX})
\ch_{ge^K}(\mE/\mS,\nabla^{\mE}).
\end{align}

As $\alpha_K$ is $d_K$-closed, by (\ref{eq:2.12})
and (\ref{eq:3.24b})-(\ref{eq:3.26a}), we get (\ref{eq:3.27a}).

For $t\neq 0$, by (\ref{eq:3.26b}) and changing the variables
$v\mapsto vt^2$, we have
\begin{align}\label{eq:3.29} 
\mM_{g,tK}
=-\int_0^{+\infty}\int_{X^g}\frac{\vartheta_K}{2vi\pi t}
\sum_{k=0}^{\lfloor(\dim W^g-1)/2\rfloor}\left(\frac{(d
	\vartheta_K)^k}{(2vi\pi t)^k k! } \right)
\exp\left(-\frac{|K^X|^2}{v}\right)\alpha_{tK}
\frac{dv}{v}.
\end{align}
From the arguments in the proof of (\ref{eq:3.25}), we 
get (\ref{eq:3.27}). From (\ref{eq:2.14}), (\ref{eq:2.15}) 
and (\ref{eq:3.07}), we see that $\alpha_{tK}$ is real analytic 
on $t$ for $|t|<1$. Following 
the proof of (\ref{eq:3.25}), 
$$\int_0^{+\infty}\int_{X^g}\frac{\vartheta_K}{v}
\left(\frac{d\vartheta_K}{v}\right)^k
\exp\left(-\frac{|K^X|^2}{v}\right)\alpha_{tK}
\frac{dv}{v}$$
is uniformly absolutely integrable on $v$ for 
$|t|<1$. Thus $t^{\lfloor(\dim W^g+1)/2\rfloor}
\mathcal{M}_{g,tK}$ is real analytic on $t$ for $|t|<1$.

The proof of Proposition \ref{prop:3.02} is completed.
\end{proof}

From Proposition \ref{prop:3.02}, 
we could state our main result, Theorem 
\ref{thm:0.2} as follows.

\begin{thm}\label{thm:3.03} 
For any $g\in G$, $K_{0}\in \mathfrak{z}(g)$, there exists $\beta>0$
such that for $K=zK_{0}$, $-\beta<z<\beta$, $K\neq 0$, we have
\begin{align}\label{eq:3.30} 
\tilde{\eta}_{g,K}=\tilde{\eta}_{ge^K}+\mM_{g,K}\in 
\Omega^{\bullet}(B,\C)/d\Omega^{\bullet}(B,\C).
\end{align}
\end{thm}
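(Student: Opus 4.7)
The plan is to follow the transgression strategy of Bismut-Goette \cite[Theorem 5.1]{BG00}, adapted from equivariant holomorphic torsions to equivariant $\eta$-forms. Both $\tilde{\eta}_{g,K}$ and $\tilde{\eta}_{ge^K}$ arise as integrals of a transgression supertrace of the type $\frac{\partial \mathbb{B}}{\partial t}\exp(-\mathbb{B}^2)$ over $t\in(0,+\infty)$: for $\tilde{\eta}_{g,K}$ one uses the rescaled Bismut superconnection $\mathbb{B}_{K,t}=\mathbb{B}_t+c(K^X)/(4\sqrt{t})$ with insertion $-\mathcal{L}_K$, and for $\tilde{\eta}_{ge^K}$ one uses the ordinary Bismut superconnection $\mathbb{B}_t$ with the group element $ge^K$. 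By the family Kirillov formula (Theorem \ref{thm:2.01}), both give the same local density on $X^g$ in the limit $t\to 0$ and the expected kernel contribution at $t\to+\infty$; their difference must therefore localize on the fixed-point set $W^{g,K}$ of the group generated by $K$ inside $W^g$, and (\ref{eq:0.10}) is precisely this localization.

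Concretely, I would introduce an auxiliary parameter $s\in[0,1]$ and a smooth family $\widetilde{\mathbb{B}}_{s,t}$ of superconnections together with a matching family of insertions $\mathcal{J}_s$, so that at $s=1$ the associated transgression reproduces the integrand of $\tilde{\eta}_{g,K}$ and at $s=0$ that of $\tilde{\eta}_{ge^K}$. On $\mathbb{R}_s\times\mathbb{R}_t^{+}\times B$ the enlarged supertrace
$$\omega_{s,t}=\psi_{\mathbb{R}\times\mathbb{R}\times B}\,\widetilde{\tr}\Big[g\exp\big(-(\widetilde{\mathbb{B}}_{s,t}+ds\wedge\partial_s+dt\wedge\partial_t)^2-\mathcal{J}_s\big)\Big]$$
is $d$-closed by the standard superconnection identity together with the equivariant version of it (Theorem \ref{thm:2.01}). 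Applying Stokes' theorem to a rectangle $[0,1]_s\times[\varepsilon,T]_t$ and extracting the $ds\wedge dt$-component gives an identity, modulo $d^B$-exact forms, involving the four boundary edges.

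After the limits $\varepsilon\to0$ and $T\to+\infty$, the four contributions are expected to be: (i) the vertical edge $s=1$ gives $-\tilde{\eta}_{g,K}$ by Definition \ref{defn:2.03}; (ii) the vertical edge $s=0$ gives $+\tilde{\eta}_{ge^K}$ by Definition \ref{defn:1.03}; (iii) the horizontal edge $t=\varepsilon\to0$ reduces via the family Kirillov formula to a $d^B$-exact $s$-integral of $\widehat{\mathrm{A}}_{g,K}\ch_{g,K}$-type local forms on $X^g$, hence vanishes modulo exact forms; and (iv) the horizontal edge $t=T\to+\infty$, after a change of variable $v\sim 1/t$ matching the parametrization used to pass from (\ref{eq:2.35}) to the $v$-integral in (\ref{eq:3.26b}), localizes on $W^{g,K}$ and produces exactly the integrand $\frac{\vartheta_K}{2i\pi v}\exp\!\big(\frac{d\vartheta_K-2i\pi|K^X|^2}{2i\pi v}\big)\widehat{\mathrm{A}}_{g,K}\ch_{g,K}$ of $\mathcal{M}_{g,K}$. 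Assembling the four edges yields $\tilde{\eta}_{g,K}=\tilde{\eta}_{ge^K}+\mathcal{M}_{g,K}$ modulo exact forms.

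The main obstacle, and the reason Theorem \ref{thm:0.2} requires the machinery of Sections \ref{s05}--\ref{s06}, is the large-$t$ analysis at edge (iv) when the Killing vector field $K^X$ vanishes on a non-trivial submanifold (i.e., $W^{g,K}\subsetneq W^g$). In that case the operator $\mathbb{B}_{K,t}^2+\mathcal{L}_K$ does not have a uniform spectral gap away from $\ker(D)$, and the naive adiabatic argument fails. To overcome this one must localize around $W^{g,K}$ following Bismut-Lebeau \cite{BL91}, replace the fiberwise operator by a Mathai-Quillen-type model on the normal bundle $N_{X^{g,K}/X^g}$ (compare the local computation in the proof of Lemma \ref{lem:3.01} and Proposition \ref{prop:3.02}), and establish uniform heat-kernel estimates in the spirit of \cite[\S 8,\S 9]{BG00}. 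These intermediate analytic statements, collected in Section \ref{s04} and proved in Section \ref{s06}, are what restricts the statement to $|zK_0|<\beta$ and constitute the analytic heart of the proof.
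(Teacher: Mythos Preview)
Your overall strategy---a two-parameter transgression plus Stokes' theorem, following \cite[\S 5]{BG00}---is exactly what the paper does in Section~\ref{s04}. However, you have the roles of the $t\to 0$ and $t\to +\infty$ edges reversed, and this is a genuine conceptual error, not merely a bookkeeping slip. The defect term $\mathcal{M}_{g,K}$ is a \emph{local} quantity built from $\vartheta_K$, $d\vartheta_K$, and the characteristic forms $\widehat{\mathrm{A}}_{g,K}\ch_{g,K}$; such objects arise from local index theory, i.e.\ from the $t\to 0$ regime, never from $t\to+\infty$. Concretely, in the paper's setup (Definition~\ref{defn:4.01}, equation~\eqref{eq:4.05}) the second-parameter derivative of the superconnection is $\sqrt{t}\,c(K^X)/(4v^2)$, and Theorem~\ref{thm:4.03}~a) shows that as $t\to 0$ this produces precisely the integrand $\widetilde{e}_v$ of $\mathcal{M}_{g,K}$; the $v$-integral along the horizontal edge $t=a\to 0$ then yields $-\mathcal{M}_{g,K}$ (see \eqref{eq:4.25}). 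At $t\to+\infty$, by contrast, one is in the spectral regime governed by $\Ker(D)$, and the contribution is absorbed into the $\widetilde{\eta}_{g,K}$ edge via the uniform decay estimate of Theorem~\ref{thm:4.02}. Your claim that the $t\to 0$ edge ``vanishes modulo exact forms'' is wrong: the $ds$-component there involves $c(K^X)$, which after the local computation becomes $\vartheta_K$, and $\vartheta_K\exp(d_K\vartheta_K/\cdots)\alpha_K$ is not $d^B$-exact.

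Correspondingly, your diagnosis of the ``main obstacle'' is misdirected. The delicate analysis is not a large-$t$ spectral-gap issue; Theorem~\ref{thm:4.02} is a fairly routine extension of Section~\ref{s0501}. The hard part is Theorem~\ref{thm:4.03}: establishing the $t\to 0$ limit of the $dv$-component \emph{uniformly} in $v\in[t,1]$, which requires a double Getzler rescaling near $W^{g,K}\subset W^g$ (Sections~\ref{s0604}--\ref{s0607}). Here the paper's specific interpolation $\mathbb{B}_t+\frac{\sqrt{t}}{4}\big(\frac{1}{t}-\frac{1}{v}\big)c(K^X)$ is not an arbitrary choice: by Theorem~\ref{thm:6.01} the resulting operator $\mathcal{B}_{K,t,v}$ is the Kirillov-type square for the twist $\mE\otimes L$ with $\nabla^L=d-\vartheta_K/(4v)$, so the $|K^X|^2/(4v)$ potential comes out with the \emph{correct sign} and the $t\to 0$ limit reads off immediately as $\ch_{g,K}(L,\nabla_v^L)=\exp(d_K\vartheta_K/(8\pi i v))$. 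A naive linear $s$-interpolation would not produce this structure. Finally, note that the paper uses a \emph{triangular} contour $\Gamma$ in the $(v,t)$-plane (with diagonal $v=t$, where the extra Clifford term vanishes, playing the role of your $s=0$ edge), not a rectangle; this corresponds to your rectangle under $s=1-t/v$, but makes the three limit contributions $I_1^2,I_2^2,I_3^2$ cleaner to isolate.
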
	

Observe that by (\ref{eq:2.37}), $\tilde{\eta}_{g,tK}$ is 
analytic on $t$ for $t$ small. By (\ref{eq:3.30}), 
when $t\rightarrow 0$, 
modulo exact forms, the 
singularity of $\tilde{\eta}_{ge^{tK}}$ is the same 
as that of $-\mM_{g,tK}$ in (\ref{eq:3.27}).

Note that Theorem \ref{thm:3.03} is compatible with (\ref{eq:1.42}),
(\ref{eq:1.47}), (\ref{eq:2.30}), (\ref{eq:2.34}) and (\ref{eq:3.27a}). 

\begin{rem}\label{rem:3.04}
For $K\in \mathfrak{z}(g)$,
 $M=\lfloor(\dim W^g-1)/2
\rfloor$, on $W^g\setminus \{K^X=0\}$, we have
\begin{multline}\label{eq:3.31}
Q_K=-\sum_{j= 0}^M
\frac{1}{j!}\left(\frac{1}{2i\pi}
\right)^{j+1}\int_0^{+\infty}\frac{\vartheta_K}{v}
\left(\frac{d\vartheta_K}{v}\right)^j
\exp\left(-\frac{|K^X|^2}{v}\right)\frac{dv}{v}
\\
=-\sum_{j= 0}^M
\frac{1}{j!}\left(\frac{1}{2i\pi}
\right)^{j+1}\frac{\vartheta_K}{|K^X|^2}
\frac{(d\vartheta_K)^j}{|K^X|^{2j}}
\int_0^{+\infty}v^je^{-v}dv
\\
=-\sum_{j= 0}^M  
\frac{\vartheta_K(d\vartheta_K)^j}{
	(2i\pi)^{j+1}|K^X|^{2j+2}}
=-\frac{\vartheta_K}{2i\pi|K^X|^2}\left(1- 
\frac{d\vartheta_K}{2i\pi|K^X|^{2}}\right)^{-1}.
\end{multline}
From (\ref{eq:3.17})-(\ref{eq:3.19}), we know that there exists
$C>0$ such that 
\begin{align}\label{eq:3.34b}
\left|K^X(y,Z) \right|^2\geq C|Z|^2,
\end{align}
and for $Y_1\in T_yW^{g,K}$,
\begin{align}\label{eq:3.34c}
i_{Y_1^H}\vartheta_{K}=\mathcal{O}(|Z|^3),
\quad i_{Y_1^H}d\vartheta_{K}=\mathcal{O}(|Z|^2).
\end{align}
 From (\ref{eq:3.31})-(\ref{eq:3.34c}) and
the rank $\ell-\ell'$ of $N_{X^{g,K}/X^g}$ is even,
we know that near $W^{g,K}$,
\begin{align}\label{eq:3.35b}
Q_K(y,Z)=\mathcal{O}(|Z|^{1-(\ell-\ell')}).
\end{align}
Thus as a current over $W^g$, $Q_K$ is in fact locally integrable
over $W^g$ and given by (\ref{eq:3.31}). 
For $g=e$, and $B=\mathrm{pt}$,
this is exactly \cite[Proposition 2.2]{Go09}.

Assume now $K^X$ has no zeros, for $t\neq 0$ small enough,
by (\ref{eq:3.07}), (\ref{eq:3.26}), (\ref{eq:3.30}) and (\ref{eq:3.31}), 
we have
\begin{align}\label{eq:3.33}
\wi{\eta}_{g,tK}=\wi{\eta}_{ge^{tK}}
-
\int_{X^g}\frac{\vartheta_{K}}{2i\pi t|K^X|^2}\left(1- 
\frac{d\vartheta_{K}}{2i\pi t|K^X|^{2}}\right)^{-1}  \alpha_{tK}
\in \Omega^{\bullet}(B,\C)/d\Omega^{\bullet}(B,\C).
\end{align}
In particular, for $g=e$ and $B=\mathrm{pt}$, 
(\ref{eq:3.33}) as Taylor expansion at $t=0$ is 
\cite[Theorem 0.5]{Go00}.
\end{rem}

\section{A proof of Theorem \ref{thm:3.03}}\label{s04}

In this section, we state some intermediate results and prove 
Theorem \ref{thm:3.03}. The proofs of the intermediate results 
are delayed to Section 6.

\subsection{Some intermediate results}\label{s0401}

For $t>0$, $v>0$, set
\begin{align}\label{eq:4.01} 
\mC_{v,t}=\mathbb{B}_t+\frac{\sqrt{t}c(K^X)}{4}
\left(\frac{1}{t}-\frac{1}{v}\right)
+dt\wedge \frac{\partial}{\partial t}
+dv\wedge \frac{\partial}{\partial v}.
\end{align}
Then $\mC_{v,t}$ is a superconnection associated
 with the fibration 
$(\R_+^*)^2\times W \rightarrow (\R_+^*)^2\times B$. 
From the argument in the proof of \cite[Theorem 9.17]{BGV}, we have
\begin{align}\label{eq:4.02} 
d^{\R^2\times B}\wi{\tr}[g\exp(-\mC_{v,t}^2
-\mathcal{L}_{K})]=0.
\end{align}

For $\alpha\in \Lambda(T^*(\R^2\times B))$, 
\begin{align}\label{eq:4.03}
\alpha=\alpha_0+dv\wedge \alpha_1+dt\wedge \alpha_2+ dv\wedge 
dt\wedge \alpha_3,\quad \alpha_i\in 
\Lambda (T^*B), i=0,1,2,3, 
\end{align}
as in (\ref{eq:0.06}), set
\begin{align}\label{eq:4.04}
[\alpha]^{dv}:=\alpha_1,\quad  [\alpha]^{dt}:=\alpha_2,\quad 
[\alpha]^{dv\wedge dt}:=\alpha_3.
\end{align}

\begin{defn}\label{defn:4.01}
We define $\beta_{g,K}$ to be the part of $-\psi_{\R^2\times B}
\wi{\tr}[g\exp(-\mC_{v,t}^2-\mathcal{L}_{K})]$
of degree one with respect to the coordinates $(v,t)$.
We denote by 
\begin{align}\label{eq:4.06}
\alpha_{g,K}=-\left\{\psi_{\R^2\times B}
\wi{\tr}[g\exp(-\mC_{v,t}^2-\mathcal{L}_{K})]\right\}^{
dv\wedge dt}.
\end{align}
\end{defn}	

From comparing the coefficient of $dv\wedge dt$ part of 
(\ref{eq:4.02}), we have
\begin{align}\label{eq:4.07}
\left(dv\wedge\frac{\partial}{\partial v}+dt\wedge\frac{
\partial}{\partial t}\right)\beta_{g,K}=-dv\wedge dt\wedge 
d^B\alpha_{g,K}.
\end{align}

Take $a,A$, $0<a\leq 1\leq A<+\infty$. Let $\Gamma=
\Gamma_{a,A}$ be the oriented contour in $\R_{+,v}
\times\R_{+,t}$:
\begin{equation*}
\begin{tikzpicture}
\draw[->][ -triangle 45] (-0.25,0) -- (5.5,0);
\draw[->][ -triangle 45] (0,-0.25) -- (0,5);
\draw[->][ -triangle 45] (1,1) -- (2.5,1);
\draw (2.5,1) -- (4,1);
\draw[->][ -triangle 45] (4,1) -- (4,2.5);
\draw (4,2.5) -- (4,4);
\draw[->][ -triangle 45] (4,4) -- (2.5,2.5);
\draw (2.5,2.5) -- (1,1);
\draw[dashed] (0,1) -- (1,1);
\draw[dashed] (0,4) -- (4,4);
\draw[dashed] (1,0) -- (1,1);
\draw[dashed] (4,0) -- (4,1);
\foreach \x in {0}
\draw (\x cm,1pt) -- (\x cm,1pt) node[anchor=north east] {$\x$};
\draw
(3,1.75)  node {$\Delta$}(3,1.75);
\draw
(0,4.75)  node[anchor=east] {$t$}(0,4.75);
\draw
(5.5,0)  node[anchor=west] {$v$}(5.5,0);
\draw
(0,1)  node[anchor=east] {$a$}(0,1);
\draw
(0,4)  node[anchor=east] {$A$}(0,4);
\draw
(1,0)  node[anchor=north] {$a$}(1,0);
\draw
(4,0)  node[anchor=north] {$A$}(4,0);
\draw
(4.25,2.5)  node[anchor=west] {\small{$\Gamma_1$}}(4.25,2.5);
\draw
(2.5,0.75)  node[anchor=north] {\small{$\Gamma_3$}}(2.5,0.75);
\draw
(2.25,2.5)  node[anchor=south] {\small{$\Gamma_2$}}(2.25,2.5);
\end{tikzpicture}
\end{equation*}

The contour $\Gamma$ is made of three oriented pieces $\Gamma_1, 
\Gamma_2, \Gamma_3$ indicated in the above picture.
For $1\leq k\leq 3$, set $I_k^0=\int_{\Gamma_k}\beta_{g,K}$.
Also $\Gamma$ bounds an oriented triangular domain $\Delta$.

By Stocks' formula and  (\ref{eq:4.07}),
\begin{align}\label{eq:4.08}
\sum_{k=1}^3I_k^0=\int_{\partial \Delta}\beta_{g,K}=
\int_{\Delta}\left(dv\wedge\frac{\partial}{\partial v}
+dt\wedge\frac{\partial}{\partial t}\right)\beta_{g,K}
=-d^B\left(\int_{\Delta} \alpha_{g,K} dv\wedge dt\right).
\end{align}

The proof of the following theorem is left to Section \ref{s0511}.

\begin{thm}\label{thm:4.02} 
For $K\in \mathfrak{z}(g)$, $|K|$ small enough, 
there exist $\delta>0$, $C>0$
such that  for any $t\geq 1$, $v\geq t$, we have
\begin{align}\label{eq:4.09} 
\left|[\beta_{g,K}(v,t)]^{dt}\right|\leq \frac{C}{t^{1+\delta}}.
\end{align}
\end{thm}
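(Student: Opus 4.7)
The plan is to adapt the large-time analysis of equivariant superconnection heat kernels from \cite[\S\S 8,9]{BG00} to the two-parameter operator $\mC_{v,t}$. The starting observation is that, setting $s=t/v\in(0,1]$ (so that $v\geq t$ corresponds to $s\in(0,1]$),
\[
\mC_{v,t}=\mathbb{B}_t+\frac{(1-s)\,c(K^X)}{4\sqrt{t}}+dt\wedge\tfrac{\partial}{\partial t}+dv\wedge\tfrac{\partial}{\partial v},
\]
so that $\mC_{v,t}$ interpolates between $\mathbb{B}_t$ (at $v=t$, $s=1$) and the rescaled equivariant Bismut superconnection $\mathbb{B}_{K,t}$ (as $v\to\infty$, $s\to 0$), with Clifford perturbation of size $O(1/\sqrt{t})$ uniformly on $v\geq t$. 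At both endpoints the $t^{-1-\delta}$-bound on the $dt$-coefficient is already known: at $v=\infty$ by Theorem~\ref{thm:2.02}(a), and at $v=t$ by the same argument with the Clifford correction switched off but $\mL_K$ retained.

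First, I would apply Duhamel's formula to $\exp(-\mC_{v,t}^2-\mL_K)$, expanding around $\exp(-\mathbb{B}_{K,t}^2-\mL_K)$. The difference is a sum of iterated integrals whose integrands contain perturbation factors of the form $s\,c(K^X)/\sqrt{t}$, together with $\partial_v\mathbb{B}$ and $\partial_t\mathbb{B}$ arising from $dv\wedge\partial_v$ and $dt\wedge\partial_t$. Extracting the coefficient of $dt$ writes $[\beta_{g,K}(v,t)]^{dt}$ as a finite sum of $\wi{\tr}$-supertraces, each containing at least one factor $\partial_t\mathbb{B}_{K,t}=D/(2\sqrt{t})+O(t^{-3/2})$ and heat semigroups of $\mathbb{B}_{K,t}^2+\mL_K$, with all additional multipliers of size $O(s/\sqrt{t})\leq O(1/\sqrt{t})$.

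Second, to bound each such supertrace for $t\geq 1$, I would invoke the equivariant large-time heat-kernel estimates that underlie (\ref{eq:2.24}) and (\ref{eq:2.26}): under Assumption~\ref{ass:1.02}, $\exp(-\mathbb{B}_{K,t}^2-\mL_K)$ decays exponentially on $(\Ker D)^\perp$ and converges on $\Ker D$ to the finite-dimensional equivariant Chern form of $\nabla^{\Ker D}$ at rate $O(1/\sqrt{t})$. Combined with the $t^{-1/2}$ coming from $\partial_t\mathbb{B}_{K,t}$ and the parity/$\wi{\tr}$-vanishing of the leading finite-dimensional contribution (the same cancellation mechanism behind (\ref{eq:2.24})), this yields the desired bound $C/t^{1+\delta}$.

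The main obstacle will be uniformity in $v\in[t,+\infty)$. The estimates of \cite[\S\S 8,9]{BG00} are formulated for a single equivariant Bismut superconnection, whereas here they must control the full Duhamel expansion of the perturbed operator $\mC_{v,t}^2+\mL_K$ with constants independent of $v$. Since the perturbation factors are $O(s/\sqrt{t})$ with $s\in(0,1]$ and all effective parameters $(1-s)K$ satisfy $|(1-s)K|\leq|K|<\beta$, I expect the uniform bound to follow once the Getzler-rescaling and finite-propagation-speed arguments of \cite[\S\S 8,9]{BG00} are carried out; this is precisely the analytic content developed in Section~\ref{s06}.
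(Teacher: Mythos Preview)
Your key structural observation is correct and is exactly what the paper exploits: writing $s=t/v\in(0,1]$, the operator appearing in $[\beta_{g,K}(v,t)]^{dt}$ is precisely of the form treated in Theorem~\ref{thm:5.01}, with $zK$ replaced by $(1-s)K$, and $|(1-s)K|\leq|K|\leq\beta$ ensures uniformity in $v$.

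Where your proposal diverges from the paper, and becomes both more complicated and partly misdirected, is in the analytic machinery you invoke. You propose a Duhamel expansion around $\exp(-\mathbb{B}_{K,t}^2-\mL_K)$ and then appeal to Getzler rescaling, finite propagation speed, and \cite[\S\S 8,9]{BG00} (equivalently Section~\ref{s06} here). Those are \emph{small-$t$} tools; they play no role in the $t\to+\infty$ regime of Theorem~\ref{thm:4.02}. The relevant analysis is instead the large-$t$ resolvent and spectral-gap argument of Section~\ref{s0501} (Lemmas~\ref{lem:5.02}--\ref{lem:5.07}, Propositions~\ref{prop:5.05} and~\ref{prop:5.08}), which uses Assumption~\ref{ass:1.02} via the projection $P^\perp$.

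The paper's proof (Section~\ref{s0511}) is accordingly much shorter than your outline: set
\[
\mA_{K,t,v}'=\Big(\mathbb{B}_t+\tfrac{\sqrt{t}\,c(K^X)}{4}\big(\tfrac{1}{t}-\tfrac{1}{v}\big)+t\,dt\wedge\tfrac{\partial}{\partial t}\Big)^{2}+\mL_K,
\]
observe that its degree-$0$ part satisfies the estimates of Lemma~\ref{lem:5.02} and that $\mA_{K,t,v}'-\mA_{K,t,v}'^{(0)}$ satisfies (\ref{eq:5.15}), both \emph{uniformly} for $v\geq t\geq 1$ (because the coefficient $(1-t/v)$ is bounded), check that $\partial_t$ of the Clifford correction is $\mO(t^{-3/2})$, and then rerun the proof of Theorem~\ref{thm:5.01} verbatim. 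No Duhamel perturbation series is needed: the operator $\mA_{K,t,v}'$ is already of the admissible form, so the uniformity you were worried about is automatic rather than something to be fought for term by term.
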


For $\alpha\in \Omega^{j}(B,\C)$,
we define 
\begin{align}\label{eq:4.10}
\phi(\alpha):=\{\psi_{\R\times B}(dv\wedge \alpha) \}^{dv}=\left\{
\begin{array}{ll}
\pi^{-\frac{1}{2}}\left(2i\pi\right)^{-\frac{j}{2}}\cdot 
\alpha\hspace{10mm} & \hbox{if $j$ is even;} \\
\left(2i\pi\right)^{-\frac{j+1}{2}}\cdot \alpha
\hspace{10mm} & \hbox{if $j$ is odd.}
\end{array}
\right.
\end{align}
Comparing with (\ref{eq:1.26}), we set
\begin{align}\label{eq:4.10a}
\widetilde{\tr}'=\left\{
\begin{array}{ll}
\tr_s
& \hbox{if $n$ is even;} \\
\tr^{\mathrm{even}} & \hbox{if $n$ is odd.}
\end{array}
\right.
\end{align}
For $0<t\leq v$, set 
\begin{align}\label{eq:4.11} 
\mB_{K,t,v}=\left(\mathbb{B}_t+\frac{\sqrt{t}c(K^X)}{4}\left(
\frac{1}{t}-\frac{1}{v} \right)\right)^2+\mathcal{L}_{K}.
\end{align}

Then by Definition \ref{defn:4.01}, (\ref{eq:4.01}) and (\ref{eq:4.11}), 
we have 
\begin{align}\label{eq:4.05}
\begin{split}
&[\beta_{g,K}(v,t)]^{dt}\\
&\hspace{5mm}=-\left\{\psi_{\R_t\times B}
\wi{\tr}\left[g\exp\left(-\mB_{K,t,v}
-dt\wedge \frac{\partial}{\partial t}
\left(\mathbb{B}_t+\frac{\sqrt{t}
c(K^X)}{4}\left(\frac{1}{t}-\frac{1}{v}\right)\right)\right) 
\right]\right\}^{dt}
\\
&\hspace{5mm}=\phi\wi{\tr}'\left[g\frac{\partial}{\partial t}
\left(\mathbb{B}_t+\frac{\sqrt{t}
	c(K^X)}{4}\left(\frac{1}{t}-\frac{1}{v}\right)\right)
\exp\left(-\mB_{K,t,v}\right) 
\right],
\\
&[\beta_{g,K}(v,t)]^{dv}=-\left\{\psi_{\R_v\times B}
\wi{\tr}\left[g\exp\left(-\mB_{K,t,v}-dv 
\frac{\sqrt{t}c(K^X)}{4v^2}\right) \right]\right\}^{dv}
\\
&\hspace{5mm}=\phi\wi{\tr}'\left[g\frac{\sqrt{t}c(K^X)}{4v^2}
\exp\left(-\mB_{K,t,v}\right) 
\right].
\end{split}
\end{align}
Thus as $\mB_{K,t,t}=\mathbb{B}_t^2+\mL_K$, 
by (\ref{eq:4.05}), on $\Gamma_2$, we have 
\begin{multline}\label{eq:4.13b} 
\beta_{g,K}(v,t)=dt\wedge \phi\wi{\tr}'\left[
g\frac{\partial\mathbb{B}_t}{\partial t}
\exp\left(-\mathbb{B}_t^2-\mL_K\right) 
\right]
\\
=-dt\wedge \left\{\psi_{\R\times B}
\left.\widetilde{\tr}\right.\left[g\exp\left(-\left(
\mathbb{B}_{t}+dt\wedge\frac{\partial}{\partial 
	t}\right)^2-\mL_K\right)\right]\right\}^{dt}.
\end{multline}

In the 
remainder of this section, we use Theorem \ref{thm:4.02} and 
the following estimates 
to  prove Theorem \ref{thm:3.03}. The proofs of these 
estimates are delayed to Section 6.
Recall that $\widetilde{e}_v$ is defined in (\ref{eq:3.07}).

\begin{thm}\label{thm:4.03} 
For $K_{0}\in \mathfrak{z}(g)$, there exists $\beta>0$
	such that for $K=zK_{0}$, $-\beta<z<\beta$, $K\neq 0$,
	
a) when $t\rightarrow 0$,
\begin{align}\label{eq:4.12} 
\phi\wi{\tr}'\left[g\frac{\sqrt{t}c(K^X)}{4v}\exp\left(-
\mB_{K,t,v}\right) \right]\rightarrow -\wi{e}_v;
\end{align}
	
b) there exist $C>0$, $\delta\in (0,1]$, such that for 
$t\in (0,1]$, $v\in [t,1]$, 
\begin{align}\label{eq:4.13} 
\left|\phi\wi{\tr}'\left[g\frac{\sqrt{t}c(K^X)}{4v}\exp\left(
-\mB_{K,t,v}\right) \right]+\wi{e}_v\right|\leq C\left(\frac{t
}{v} \right)^{\delta};
\end{align}
	
c) there exists $C>0$ such that for $t\in (0,1]$, $v\geq 1$, 
\begin{align}\label{eq:4.14} 
\left|\wi{\tr}'\left[g\frac{\sqrt{t}c(K^X)}{4v}\exp\left(
-\mB_{K,t,v}\right) \right]\right|\leq \frac{C}{v};
\end{align}
	
d) for $v\geq 1$, 
\begin{align}\label{eq:4.15} 
\lim_{t\rightarrow 0}\wi{\tr}'\left[g\frac{c(K^X)}{4\sqrt{t}v}
\exp\left(-\mB_{K,t,tv}\right) \right]=0.
\end{align}
\end{thm}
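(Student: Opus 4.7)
The plan is to prove the four estimates by heat-kernel localization near the fixed-point set $X^g$ combined with Getzler-type rescaling, following the analytical framework of \cite[\S 8, \S 9]{BG00} adapted to the family Bismut superconnection and to the Clifford module formalism. The algebraic starting point is the rewriting
\begin{align*}
\mathcal{B}_{K,t,v} = \left(\mathbb{B}_{K,t} - \frac{\sqrt{t}\,c(K^X)}{4v}\right)^2 + \mathcal{L}_K,
\end{align*}
which exhibits $\mathcal{B}_{K,t,v}$ as a perturbation of the equivariant Bismut square $\mathbb{B}_{K,t}^2 + \mathcal{L}_K$ already controlled by the family Kirillov formula (Theorem \ref{thm:2.01}). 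Finite propagation speed for the wave equation and standard off-diagonal heat-kernel bounds reduce each of the supertraces in (a)--(d) to integrals over an arbitrarily small tubular neighborhood of $X^g$, with errors exponentially small in $t$ uniformly in $v$.

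For part (a), I would apply Getzler rescaling in normal coordinates around $X^g$: rescale the Clifford variables of the normal bundle, rescale normal fiber coordinates by $\sqrt{t}$, and rescale the base differential forms via $\psi_B$. Under this rescaling $\sqrt{t}\,c(K^X)$ converges to the vertical $1$-form $\vartheta_K$, while $\mathbb{B}_{K,t}^2 + \mathcal{L}_K$ converges to a generalized Mehler harmonic oscillator whose heat-kernel supertrace on $X^g$ yields the equivariant infinitesimal integrand $\widehat{\mathrm{A}}_{g,K}(TX,\nabla^{TX})\ch_{g,K}(\mathcal{E}/\mathcal{S},\nabla^{\mathcal{E}})$. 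Expanding the exponential by Duhamel in the perturbation $-\sqrt{t}c(K^X)/(4v)$, the cross term and the quadratic term $t|K^X|^2/(16v^2)$ assemble, using identities (3.02) and (3.03), to produce the factor $\exp(d_K\vartheta_K/(8vi\pi))$ appearing in the definition of $\widetilde{e}_v$. The explicit prefactor $\sqrt{t}c(K^X)/(4v)$ becomes $\vartheta_K/(4v)$ in the Getzler limit; after accounting for the $\phi$-normalization (which converts $1/(4v)$ into $1/(8vi\pi)$ for the appropriate form degrees) one obtains exactly $-\widetilde{e}_v$.

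Parts (b) and (c) are quantitative refinements. Part (b) is proved by tracking the next-order terms in the Getzler asymptotic expansion; in the regime $t \leq v \leq 1$ the natural small parameter is $t/v$ since the perturbation scales like $1/v$, which produces the $C(t/v)^\delta$ estimate. For part (c), when $v \geq 1$ the perturbation $\sqrt{t}c(K^X)/(4v)$ is bounded by $C/v$ uniformly in $t \in (0,1]$, and standard uniform heat-kernel bounds for the Kirillov operator $\mathbb{B}_{K,t}^2 + \mathcal{L}_K$ combined with a single Duhamel expansion in the perturbation give the required $C/v$ bound.

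Part (d) requires a separate analysis because the substitution $v \mapsto tv$ transforms the operator into
\begin{align*}
\mathcal{B}_{K,t,tv} = \left(\mathbb{B}_t + \frac{c(K^X)}{4\sqrt{t}}\left(1 - \tfrac{1}{v}\right)\right)^2 + \mathcal{L}_K,
\end{align*}
a Kirillov-type operator with effective coupling $(1-1/v)K$, and the prefactor $c(K^X)/(4\sqrt{t}v)$ now carries a formally divergent $1/\sqrt{t}$. In the Getzler rescaling the degree-one Clifford insertion $c(K^X)$ reduces the effective Clifford degree of the heat-kernel expansion by one, contributing a compensating $\sqrt{t}$ that cancels the $1/\sqrt{t}$ divergence; the resulting integrand on $X^g$ is the vertical $1$-form $\vartheta_K$ wedged against the equivariant Chern-Weil integrand for $(1-1/v)K$, and the claim is that, without the $\phi$-normalization, the remaining $t$-powers force its limit to vanish. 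I expect this delicate cancellation to be the main technical obstacle: verifying that the Mehler-rescaled supertrace of $c(K^X)$ against the $(1-1/v)K$-kernel has the precise $t$-scaling required to vanish in the limit is a bookkeeping computation patterned on \cite[\S 9]{BG00}, and uniformity in $v \geq 1$ must be checked by matching against the $v \to +\infty$ asymptotics of the same Kirillov operator.
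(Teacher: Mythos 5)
Your sketches for (a) and (c) are broadly workable (though the paper gets them more cleanly by observing that $\mB_{K,t,v}$ is itself the Kirillov-type curvature of the Bismut superconnection of the twisted module $\mE\otimes L$, where $L$ is the trivial line bundle with connection $d-\vartheta_K/(4v)$, so that $R^L=-d\vartheta_K/(4v)$, $m^L(K)=-|K^X|^2/(4v)$, and the factor $\exp(d_K\vartheta_K/(8i\pi v))$ is just $\ch_{g,K}(L,\nabla^L_v)$ — no Duhamel resummation needed). The genuine gaps are in (b) and (d). For (b), the whole difficulty is uniformity down to $v=t$: there the ``perturbation'' $\sqrt{t}\,c(K^X)/(4v)$ has size $t^{-1/2}$, so it is not small, and ``tracking next-order terms in the Getzler expansion around $X^g$'' cannot produce a bound by $(t/v)^{\delta}$, because the $v$-dependent limit data (and $\wi{e}_v$ itself, cf.\ Lemma \ref{lem:3.01}) degenerate as $v\to 0$ by concentrating on $X^{g,K}$. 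The paper's proof requires a second localization near $X^{g,K}$, a rescaling of the $N_{X^{g,K}/X^g}$-coordinate by $\sqrt v$, a doubly graded Clifford rescaling ($c_t$ along $X^{g,K}$, $c_{t/v}$ along $N_{X^{g,K}/X^g}$), a $Z_0$-dependent trivialization, and weighted norms in which the positive potential $|K^X|^2/(4v)$ — precisely the moment term of the auxiliary bundle $L$, absent from your scheme — supplies the decay in $Z_0$ needed to make the error integrable and uniform (Theorems \ref{thm:6.04}, \ref{thm:6.15}, \ref{thm:6.19}). Without this apparatus your error terms blow up as $v\downarrow t$.

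For (d), the cancellation mechanism you propose (``the Clifford insertion $c(K^X)$ drops the effective Clifford degree by one and contributes a compensating $\sqrt t$'', leaving an integrand on $X^g$ whose limit vanishes by ``remaining $t$-powers'') is not the one that works, and on $X^g\setminus X^{g,K}$ it would not by itself force the limit to be zero. The correct argument localizes to $X^{g,K}$, not $X^g$: in $\mB_{K,t,tv}$ the Lichnerowicz potential is $|K^X|^2/(4vt)$, which blows up off $X^{g,K}$ and kills those contributions; near $X^{g,K}$ the divergence $1/\sqrt t$ in the prefactor is absorbed because $K^X$ vanishes linearly there, so in the rescaled coordinates $c(K^X)(\sqrt t Z)=\sqrt t\,c\big(m^{TX}(K)Z\big)+\mO(t)$; and the limit is then the fiberwise integral over $N_{X^{g,K}/X}$ of $\tr_s\big[g\,\tfrac{c(m^{TX}(K)Z)}{4v}\exp(-L^{3,(0,v)}_{y_0,K})(g^{-1}Z,Z)\big]$, which vanishes because the Mehler-type limit kernel is even in $Z$ while the insertion is linear, hence odd, in $Z$ (see (\ref{eq:6.66})). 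This parity argument in the normal variable, together with the $|K^X|^2/(4vt)$-driven localization to $X^{g,K}$, is what makes the limit zero uniformly for $v\geq 1$; neither ingredient appears in your proposal.
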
	

\subsection{A proof of Theorem \ref{thm:3.03}}\label{s0402}

We now finish the proof of Theorem \ref{thm:3.03} 
by using Theorems \ref{thm:4.02} and \ref{thm:4.03}. 
By (\ref{eq:4.08}), we know that $I_1^0+I_2^0+I_3^0$
is an exact form on $B$. We take the limits $A\rightarrow+\infty$ and 
then $a\rightarrow 0$ in the indicated order. We claim that the limit 
of the part $I_j^0(A,a)$ as $A\rightarrow +\infty$ exists, 
denoted by $I_j^1(a)$, and the limit of $I_j^1(a)$ as 
$a\rightarrow 0$ exists, denoted by $I_j^2$ for $j=1,2,3$. 

i) 
By (\ref{eq:4.11}) and (\ref{eq:4.05}), $[\beta_{g,K}
(v,t)]^{dt}$ is uniformly bounded for $v\geq 1$, $t\in I$,
a compact interval $I\subset (0,+\infty)$, and
\begin{align}\label{eq:4.17b}
\lim_{v\rightarrow +\infty}[\beta_{g,K}(v,t)]^{dt}
=[\beta_{g,K}(+\infty,t)]^{dt}.
\end{align}
From Theorem \ref{thm:4.02}, (\ref{eq:2.23}), (\ref{eq:4.05}) 
and the dominated convergence theorem, we see that
\begin{align}\label{eq:4.17} 
I_1^1(a)=\lim_{A\rightarrow +\infty}\int_a^A [\beta_{g,K}
(A,t)]^{dt}dt=-\int_{a}^{+\infty}\left\{\psi_{\R\times B}
\wi{\tr}\Big[g\exp\left(-\mB_{K,t}^2-\mL_{K}
\right) \Big]\right\}^{dt}dt.
\end{align}
Thus by Theorem \ref{thm:2.02} and Definition \ref{defn:2.03}, 
we have
\begin{align}\label{eq:4.18} 
I_1^2=-\int_{0}^{+\infty}\left\{\psi_{\R\times B}
\wi{\tr}\Big[g\exp\left(-\mB_{K,t}^2-\mL_{K}
\right) \Big]\right\}^{dt}dt=\wi{\eta}_{g,K}.
\end{align}

ii) From Definition \ref{defn:1.03}, (\ref{eq:2.02b}) 
and (\ref{eq:4.13b}), we have
\begin{align}\label{eq:4.19}
I_2^2=\int_0^{+\infty}\left\{\psi_{\R\times B}
\left.\widetilde{\tr}\right.\left[g\exp\left(-\left(
\mathbb{B}_{t}+dt\wedge\frac{\partial}{\partial 
t}\right)^2-\mL_K\right)\right]\right\}^{dt}dt=-\wi{
\eta}_{ge^K}.
\end{align}

iii) For the term $I_3^{0}(A,a)$, set
\begin{align}\label{eq:4.20} 
\begin{split}
&J_1=-\int_a^1\wi{e}_v\frac{dv}{v},
\\
&J_2=\int_1^{+\infty}\phi\wi{\tr}'\left[g\frac{\sqrt{a}c(K^X)}{
	4v}\exp\left(-\mB_{K,a,v}\right) \right]\frac{dv}{v},
\\
&J_3=\int_1^{1/a}\left(\phi\wi{\tr}'\left[g\frac{c(K^X)}{
4\sqrt{a}v}\exp\left(-\mB_{K,a,av}\right) \right]+\wi{e}_{av} 
\right)\frac{dv}{v}.
\end{split}
\end{align}
Clearly, by Theorem \ref{thm:4.03} c) and (\ref{eq:4.05}), we have
\begin{align}\label{eq:4.21} 
I_3^1(a)=J_1+J_2+J_3.
\end{align}
By (\ref{eq:4.12}), (\ref{eq:4.14}) and (\ref{eq:4.20}), from the 
dominated convergence theorem, we find that as 
$a\rightarrow 0$, 
\begin{align}\label{eq:4.22} 
J_2\rightarrow J_2^1=-\int_{1}^{+\infty}\wi{e}_v\frac{dv}{v}.
\end{align}
By (\ref{eq:4.13}), there exist $C>0$, $\delta\in (0,1]$ 
such that for $a\in (0,1]$, $1\leq v\leq 1/a$,
\begin{align}\label{eq:4.23} 
\left|\phi\wi{\tr}'\left[g\frac{c(K^X)}{4\sqrt{a}v}\exp\left(
-\mB_{K,a,av}\right) \right]+\wi{e}_{av}\right|\leq \frac{C
}{v^{\delta}}.
\end{align}
Using Lemma \ref{lem:3.01}, (\ref{eq:4.15}), 
(\ref{eq:4.20}), (\ref{eq:4.23}), and the dominated convergence 
theorem, as $a\rightarrow 0$, 
\begin{align}\label{eq:4.24}
J_3\rightarrow J_3^1=0.
\end{align}
By (\ref{eq:3.26}), (\ref{eq:4.20})-(\ref{eq:4.22}) and (\ref{eq:4.24}),
we have
\begin{align}\label{eq:4.25} 
I_3^2=-\int_0^{+\infty}\wi{e}_v\frac{dv}{v}=-\mM_{g,K}.
\end{align}

By \cite[\S22, Theorem 17]{dR73}, $d\Omega^{\bullet}(B,\C)$
is closed under the uniformly convergence.
Thus, by (\ref{eq:4.08}),
\begin{align}\label{eq:4.26}
\sum_{j=1}^{3}I_j^2\equiv0\ \mathrm{mod}\ d\Omega^{\bullet}(B,\C).
\end{align}

By (\ref{eq:4.18}), (\ref{eq:4.19}), (\ref{eq:4.25}) 
and (\ref{eq:4.26}),
the proof of Theorem \ref{thm:3.03} is completed.

\section{Construction of the equivariant infinitesimal 
$\eta$-forms}\label{s05}

In this section, we prove Theorems \ref{thm:2.02} and 
\ref{thm:4.02} following the lines of \cite[\S 7]{BG00}
and give a heat kernel proof of the family 
Kirillov formula, Theorem \ref{thm:2.01}. For the convenience 
to compare the arguments in this section with those in 
\cite{BG00}, especially how the extra terms for the family 
version appear, the structure of this section is formulated 
almost the same as in \cite[\S 7]{BG00}.

This section is organized as follows. In Section \ref{s0501},
we prove Theorem \ref{thm:2.02} a). In Sections 
\ref{s0502}-\ref{s0510},
we give proofs of Theorems \ref{thm:2.01} and \ref{thm:2.02} b).
In Section \ref{s0511}, we prove Theorem \ref{thm:4.02}.

\subsection{The behaviour of the trace as 
$t\rightarrow +\infty$}\label{s0501}

Set
\begin{align}\label{eq:5.01}
C_{K,t}=\mathbb{B}_{t}+\frac{c(K^X)}{4\sqrt{t}}+
t\cdot dt\wedge \frac{\partial}{\partial t}.
\end{align}
For $z\in \C$, we denote by
\begin{align}\label{eq:5.02} 
\mA_{zK,t}:=C_{zK,t}^2+z\mL_{K}.
\end{align}
Then Theorem \ref{thm:2.02} a) is implied by the following 
estimate. 

\begin{thm}\label{thm:5.01} 
For $\beta>0$ fixed, there exist 
$C>0$, $\delta>0$ such that if  $K\in \mathfrak{g}$,
$z\in \C$, $|zK|\leq \beta$, $t\geq 1$,
\begin{align}\label{eq:5.03} 
\left|\left\{\wi{\tr}[g\exp(-\mA_{zK,t})]\right\}^{dt}\right|\leq 
\frac{C}{t^{\delta}}.
\end{align}
\end{thm}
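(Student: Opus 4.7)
The plan proceeds in three stages, following the strategy of [BG00, \S7].

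First, since $dt$ is odd and $(dt)^{2}=0$,
\[
C_{zK,t}^{\,2} \;=\; \tilde{\mathbb{B}}_{zK,t}^{\,2} \;+\; t\,dt\wedge \partial_{t}\tilde{\mathbb{B}}_{zK,t},
\qquad \tilde{\mathbb{B}}_{zK,t}:=\mathbb{B}_{t}+\tfrac{c(zK^{X})}{4\sqrt{t}},
\]
so that a one-step Duhamel expansion combined with cyclicity of the (super)trace identifies
\[
\wi{\tr}\big[g\exp(-\mA_{zK,t})\big]^{dt} \;=\; -\,t\,\wi{\tr}'\!\Big[g\,\partial_{t}\tilde{\mathbb{B}}_{zK,t}\;\exp\!\big(-\tilde{\mathbb{B}}_{zK,t}^{\,2}-z\mL_{K}\big)\Big],
\]
with $\wi{\tr}'$ as in (\ref{eq:4.10a}); the parity switch $\wi{\tr}\to\wi{\tr}'$ is forced by extracting the odd $dt$ from under the trace. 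Hence Theorem \ref{thm:5.01} is equivalent to the estimate $O(t^{-1-\delta})$ for the right-hand side, which is exactly Theorem \ref{thm:2.02}(a), and the rest of the plan targets that bound.

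Second, I would exploit Assumption \ref{ass:1.02}: since $\Ker(D)$ is a smooth subbundle over the compact base $B$, the fibrewise operator $D^{2}$ has a uniform spectral gap $\lambda_{0}^{\,2}>0$ on the $L^{2}$-orthogonal complement of $\Ker(D)$. Let $P$ be the fibrewise $L^{2}$-projection onto $\Ker(D)$, $P^{\perp}:=1-P$. Split
\[
\tilde{\mathbb{B}}_{zK,t}^{\,2}+z\mL_{K} \;=\; H_{t} + V_{t}
\]
into block-diagonal $H_{t}$ and block-off-diagonal $V_{t}$ with respect to $P\oplus P^{\perp}$. The $P^{\perp}$-block of $H_{t}$ dominates $\lambda_{0}^{\,2}\,t$, while its $P$-block is uniformly bounded in $t$ and converges as $t\to\infty$ to a bounded endomorphism of the finite-rank bundle $\Ker(D)$ (essentially $(\nabla^{\Ker D})^{2}$ plus $z$ times the infinitesimal $K$-action, plus fibre-curvature corrections).

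Third, I would expand $e^{-H_{t}-V_{t}}$ as a Volterra/Duhamel series in powers of $V_{t}$. The gap yields $\|e^{-sH_{t}}P^{\perp}\|\leq e^{-s\lambda_{0}^{\,2}t}$, so any term carrying at least one $P^{\perp}$-leg decays like $e^{-ct}$ and can be absorbed into $O(t^{-N})$; the purely $P$-block contribution is a finite-rank heat-kernel computation. Since
\[
\partial_{t}\tilde{\mathbb{B}}_{zK,t} \;=\; \frac{D}{2\sqrt{t}} \;-\; \frac{1}{2\,t^{3/2}}\Big(\!-\tfrac{c(T^{H})}{4}+\tfrac{c(zK^{X})}{4}\Big),
\]
the $O(t^{-3/2})$ piece paired with a bounded $P$-block heat kernel contributes $O(t^{-3/2})$; the $D/(2\sqrt{t})$ piece annihilates $P$, so in order to give a non-zero $P$-block contribution it must pass through at least one off-diagonal $V_{t}$-insertion, acquiring an extra $1/\sqrt{t}$ together with a further factor $1/t$ from the gap, yielding $O(t^{-5/2})$. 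Summing and multiplying by the outer $t$ gives $O(t^{-1/2})$, i.e.\ Theorem~\ref{thm:5.01} with $\delta=1/2$.

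The main obstacle I anticipate is making the Volterra estimates uniform in $z\in\C$ with $|zK|\leq\beta$, together with the fact that $\mL_{K}$ is unbounded as a first-order operator. The resolution is to rewrite $\tilde{\mathbb{B}}_{zK,t}^{\,2}+z\mL_{K}$ in Lichnerowicz form using (\ref{eq:2.02})--(\ref{eq:2.08}), so that the principal symbol is $z$-independent and the subprincipal terms are polynomially bounded in $|zK|$. Skew-adjointness of $\mL_{K}=\nabla^{\mE}_{K^{X}}-m^{\mE}(K)$ on $L^{2}$ (coming from $K^{X}$ being Killing) ensures that the spectral gap on the $P^{\perp}$-block survives the $V_{t}$-perturbation. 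Holomorphy of $z\mapsto \tilde{\mathbb{B}}_{zK,t}^{\,2}+z\mL_{K}$ together with Cauchy estimates on circles of radius $\beta/|K|$ then yields the required uniformity. Carrying out these heat-kernel estimates uniformly in $t\geq 1$, $z$, $g\in G$, and the base $B$ is the technical heart of the proof.
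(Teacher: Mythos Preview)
Your reduction in the first paragraph is correct and agrees with the paper's identities (\ref{eq:5.19b}) and (\ref{eq:5.27b}). From there the two arguments diverge genuinely.

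The paper does not run a Volterra expansion against the $P\oplus P^{\perp}$ splitting. Instead it applies the Bismut--Lebeau machinery of \cite{BL91}: it introduces $t$-weighted Sobolev norms $|\cdot|_{t,0}$, $|\cdot|_{t,\pm1}$ (see (\ref{eq:5.04})--(\ref{eq:5.06})), proves uniform quadratic-form estimates for $\mA_{zK,t}^{(0)}$ in these norms (Lemma \ref{lem:5.02}), deduces resolvent bounds on a parabolic contour (Lemmas \ref{lem:5.03}--\ref{lem:5.04}), compares $\exp(-\mA_{zK,t})$ with $\exp(-\mathbb{B}_{zK,t}^{2}-z\mL_{K})$ directly as operators (Proposition \ref{prop:5.05}), and then bootstraps to a pointwise kernel bound via commutator estimates (Lemmas \ref{lem:5.06}--\ref{lem:5.07}, Proposition \ref{prop:5.08}) before interpolating. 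The advantage is that complex $z$ and the unbounded first-order term $z\mL_{K}$ are absorbed once and for all into the inequalities of Lemma \ref{lem:5.02}; no separate bookkeeping is needed.

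Your route---essentially the argument behind \cite[Theorem 9.23]{BGV}---is also viable and would yield the sharper $\delta=\tfrac12$, but two of your intermediate claims need repair. First, ``any term carrying at least one $P^{\perp}$-leg decays like $e^{-ct}$'' is false: a Volterra term visiting $P^{\perp}$ and returning to $P$ picks up only the gap integral $\int_{0}^{1}e^{-s\lambda_{0}^{2}t}\,ds\sim t^{-1}$, not exponential decay. Second, the dominant off-diagonal piece of $V_{t}$ is $\sqrt{t}\,[D,\nabla^{\mathbb{E},u}]$ from (\ref{eq:5.13}), which \emph{grows} like $\sqrt{t}$; your power-counting must balance this against the gap integrals and the $\|D\,e^{-sH_{t}}P^{\perp}\|$ singularities to recover $O(t^{-3/2})$. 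Both issues are fixable (this is precisely what \cite{BGV} carry out), but once you also demand uniformity for $z\in\C$, where skew-adjointness of $z\mL_{K}$ fails, you are essentially forced through quadratic-form estimates equivalent to Lemma \ref{lem:5.02} anyway.
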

\begin{proof}
This subsection is devoted to the proof of Theorem 
\ref{thm:5.01}. 
\end{proof}

In this subsection, we fix $\beta>0$. The constants in this 
subsection may depend on $\beta$.

For $b\in B$, recall that $\mathbb{E}_{b}$ is the vector 
space of the smooth sections of $\mE$ on $X_b$.
For $\mu\in \R$, let $\mathbb{E}_{b}^{\mu}$ be the Sobolev 
spaces of the order $\mu$ of sections of $\mE$
on $X_b$. We equip $\mathbb{E}_{b}^{0}$ by the Hermitian 
product $\la\ ,\ \ra_0$ in (\ref{eq:1.19}). 
Let $\|\cdot\|_0$ be the corresponding norm of 
$\mathbb{E}_{b}^{0}$. For $\mu\in \Z$, let $\|\cdot\|_{\mu}$ 
be the Sobolev norm of $\mathbb{E}_{b}^{\mu}$ induced 
by $\nabla^{TX}$ and $\nabla^{\mE}$.
	
Recall that we assume that the kernels $\Ker (D)$ form a vector bundle 
over $B$. We denote by $P$ the orthogonal projection from 
$\mathbb{E}_{}^0$ to $\Ker (D)$ and let $P^{\bot}=1-P$.
	
Recall that $P^{TX}:TW=T^HW\oplus TX\rightarrow TX$
is the projection defined by (\ref{eq:1.04}). 
For $s,s'\in \mathbb{E}$, $t\geq 1$, we set
\begin{align}\label{eq:5.04}
\begin{split}
|s|_{t,0}^2:&=\|s\|_0^2\,,             \\
|s|_{t,1}^2:&=\|Ps\|_{0}^2+t\|P^\bot s\|_0^2
+t\|\nabla^{\mE}_{P^{TX}\cdot}P^\bot s\|_0^2\,.
\end{split}
\end{align}
Set
\begin{align}\label{eq:5.06}
|s|_{t,-1}=\sup_{0\neq s'\in \mathbb{E}^1}\frac{|\langle 
s,s'\rangle_{0}|}{|s'|_{t,1}}.
\end{align}
Then (\ref{eq:5.04}) and (\ref{eq:5.06}) define Sobolev 
norms on $\mathbb{E}^1$ and $\mathbb{E}^{-1}$. Since 
$\nabla^{\mE}_{P^{TX}\cdot}P$ is an operator along the fiber $X$ 
with smooth kernel, 
we know that $|\cdot|_{t,1}$ (resp. $|\cdot|_{t,-1}$) is 
equivalent to $\|\cdot\|_1$ (resp. $\|\cdot\|_{-1}$) 
on $\mathbb{E}^{1}$ (resp. $\mathbb{E}^{-1}$).
	
Let $\mA_{zK,t}^{(0)}$ be the piece of $\mA_{zK,t}$ which 
has degree 0 in $\Lambda(T^*(\R\times B))$.	
	
\begin{lemma}\label{lem:5.02}
There exist $c_1, c_2, c_3, c_4>0$, such that for
any $t\geq 1$, $K\in\mathfrak{g}$, $z\in \C$, $|zK|\leq 
\beta$, $s,s'\in\mathbb{E}$,
\begin{align}\label{eq:5.07}
\begin{split}
\Re\left\langle \mA_{zK,t}^{(0)}s,s\right\ra_{0}&\geq 
c_1|s|_{t,1}^2-c_2|s|_{t,0}^2,
\\
\left|\Im\left\langle \mA_{zK,t}^{(0)}s,s\right\ra_{0}
\right|&\leq c_3|s|_{t,1}|s|_{t,0},
\\
\left|\left\langle \mA_{zK,t}^{(0)}s,s'\right\ra_{0}
\right|&\leq c_4|s|_{t,1}|s'|_{t,1}.
\end{split}
\end{align}
\end{lemma}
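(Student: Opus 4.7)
The plan is to compute $\mA_{zK,t}^{(0)}$ explicitly and then exploit the spectral gap of $D$ on $(\Ker D)^\bot$ together with the Lichnerowicz-type formula. First, note that the only pieces of $C_{zK,t}$ that have form degree zero in $\Lambda(T^*(\R\times B))$ are $\sqrt{t}D$ and $c(zK^X)/(4\sqrt{t})$, while the connection part $\nabla^{\mathbb{E},u}$ contributes degree $1$ and $c(T^H)$ contributes degree $2$. Squaring and collecting degree-$0$ terms, together with the $z\mL_K$ correction, gives
\begin{align*}
\mA_{zK,t}^{(0)} = tD^{2} + \tfrac{1}{4}\bigl(Dc(zK^{X}) + c(zK^{X})D\bigr) - \tfrac{|zK^{X}|^{2}}{16t} + R_{t} + z\mL_{K},
\end{align*}
where $R_{t}$ collects the scalar pieces produced by the degree-$1$ and degree-$2$ parts of $\mathbb{B}_{t}^{2}$ (curvature of $\nabla^{\mathbb{E},u}$, $c(T^H)^{2}$, cross-terms), all of which are operators of order at most $1$ with coefficients bounded uniformly for $t\geq 1$. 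By the Lichnerowicz formula $D^{2} = \Delta^{\mE} + \mathcal{R}$ with $\mathcal{R}$ a bounded bundle endomorphism, and using $\{D,c(zK^{X})\} = 2c(zK^{X})D + [D,c(zK^{X})]$ together with the Clifford-connection identity \eqref{eq:1.17}, the cross term reduces to a first-order operator with coefficients of size $O(|zK|)$.

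For the real-part estimate, I would split $s = Ps + P^{\bot}s$, using that $DP = 0$ and that $D$ restricted to $(\Ker D)^{\bot}$ admits a spectral gap $\lambda_{0} > 0$ uniformly in $b \in B$ (by compactness of $B$ and Assumption \ref{ass:1.02}). Then
\begin{align*}
t\,\Re\langle D^{2}s,s\rangle_{0} = t\|Ds\|_{0}^{2} = t\|D P^{\bot}s\|_{0}^{2} \geq \tfrac{t}{2}\lambda_{0}^{2}\|P^{\bot}s\|_{0}^{2} + \tfrac{t}{2}\|D P^{\bot}s\|_{0}^{2},
\end{align*}
and by Lichnerowicz the second summand dominates $\tfrac{t}{2}\|\nabla^{\mE}P^{\bot}s\|_{0}^{2} - Ct\|P^{\bot}s\|_{0}^{2}$, which in turn controls $\tfrac{t}{2}\|\nabla^{\mE}_{P^{TX}\cdot}P^{\bot}s\|_{0}^{2}$. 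Adjusting $\lambda_{0}$ absorbs the $-Ct\|P^{\bot}s\|_{0}^{2}$ for $t\geq 1$ up to a constant multiple of $|s|_{t,0}^{2}$. The remaining terms in $\mA_{zK,t}^{(0)}$ — namely $R_{t}$, $z\mL_{K}$, the Clifford cross-term, and $-|zK^{X}|^{2}/(16t)$ — are at most first order with coefficients uniformly bounded in $|zK|\leq\beta$ and $t\geq 1$, so by Cauchy--Schwarz their inner product with $s$ is bounded by $C\|s\|_{0}\|s\|_{1}$, and this is absorbed using $\|s\|_{1}^{2}\leq C(\|Ps\|_{0}^{2} + |s|_{t,1}^{2})$ together with the peter-paul inequality $C\|s\|_{0}\|s\|_{1} \leq \epsilon|s|_{t,1}^{2} + C_{\epsilon}|s|_{t,0}^{2}$ with $\epsilon$ small.

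For the imaginary-part estimate, the principal part $tD^{2}$ is formally self-adjoint, as are $|zK^{X}|^{2}$ and the scalar pieces of $R_{t}$; its imaginary part therefore comes only from the genuinely first-order pieces (the cross-term $\{D,c(zK^{X})\}$, the first-order part of $R_{t}$, and $z\mL_{K}$), each of which, paired with $s$ by Cauchy--Schwarz and integration by parts, yields a bound of the form $C\|s\|_{1}\|s\|_{0}\leq C'|s|_{t,1}|s|_{t,0}$. The third inequality is just the statement that $\mA_{zK,t}^{(0)}$ is bounded from $\mathbb{E}^{1}$ to $\mathbb{E}^{-1}$ with norm controlled uniformly in $|zK|\leq\beta$: integration by parts turns one factor of $D$ onto $s'$ so that $t\langle D^{2}s,s'\rangle_{0} = t\langle Ds, Ds'\rangle_{0}$, and $t\|Ds\|_{0}^{2}\lesssim |s|_{t,1}^{2}$ as above.

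The main obstacle is the clean verification that the Lichnerowicz and spectral-gap arguments combine to control $t\|\nabla^{\mE}_{P^{TX}\cdot}P^{\bot}s\|_{0}^{2}$ — the delicate point being that one needs the full bundle Laplacian $\Delta^{\mE}$, not only its fiberwise part, but the norm in \eqref{eq:5.04} only records fiberwise derivatives on $P^{\bot}s$; this is precisely why $P$ is inserted in the definition and why the equivalence of $|\cdot|_{t,1}$ with $\|\cdot\|_{1}$ (asserted after \eqref{eq:5.06}) holds uniformly in $t\geq 1$. Once this equivalence is invoked, the perturbative arguments reduce to standard Gårding-type manipulations and the uniformity in $|zK|\leq\beta$ follows from the polynomial dependence of every coefficient on $z$ and $K$.
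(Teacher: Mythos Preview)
Your approach is essentially the paper's: compute $\mA_{zK,t}^{(0)}$ explicitly, use the coercivity of $tD^{2}$ with respect to $|\cdot|_{t,1}$, and absorb the lower-order pieces by Cauchy--Schwarz and a Peter--Paul inequality; your spectral-gap-plus-Lichnerowicz argument is precisely the content behind the paper's one-line estimate $\langle (tD^{2}-\Re(z^{2})|K^{X}|^{2}/16t)s,s\rangle_{0}\geq c_{1}'|s|_{t,1}^{2}-c_{2}'|s|_{t,0}^{2}$.

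Two small corrections. First, the term $R_{t}$ you introduce is in fact zero: degree here means form degree in $\Lambda(T^{*}(\R\times B))$, and every piece of $\mathbb{B}_{t}^{2}$ other than $tD^{2}$ carries at least one factor of $f^{p}\wedge$ or $dt\wedge$, so the exact identity is $\mA_{zK,t}^{(0)}=tD^{2}+\tfrac{z}{4}[D,c(K^{X})]-\tfrac{z^{2}|K^{X}|^{2}}{16t}+z\mL_{K}$ (with $[\,\cdot\,,\,\cdot\,]$ the supercommutator, i.e.\ the anticommutator here). Second, your worry about ``full versus fiberwise Laplacian'' is moot: $D$ is a fiberwise operator, so the Lichnerowicz identity $D^{2}=(\nabla^{\mE}_{P^{TX}\cdot})^{*}\nabla^{\mE}_{P^{TX}\cdot}+\mathcal{R}$ already involves only fiberwise derivatives, exactly matching the norm $|\cdot|_{t,1}$. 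Also note that for complex $z$ the zeroth-order term $-z^{2}|K^{X}|^{2}/16t$ does contribute to the imaginary part, but it is bounded by $C|s|_{t,0}^{2}\leq C|s|_{t,1}|s|_{t,0}$, so your imaginary-part estimate still goes through.
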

\begin{proof}
From (\ref{eq:1.22}), (\ref{eq:5.01}) and (\ref{eq:5.02}), we have
\begin{align}\label{eq:5.08}
\mA_{zK,t}^{(0)}=tD^{2}+\frac{z}{4}\left[D,c(K^X)\right]
-z^2\frac{|K^X|^2}{16t}+z\mL_K.
\end{align}
So we have
\begin{align}\label{eq:5.09}
\begin{split}
&\Re \la \mA_{zK,t}^{(0)}s,s\ra_0=\left\la\left(tD^{2}
+\Im(z)i\left(\frac{1}{4}\left[D,c(K^X)\right]+\mL_K\right)
-\Re(z^2)
\frac{|K^X|^2}{16t}\right)s,s\right\ra_0,
\\
&\Im \la \mA_{zK,t}^{(0)}s,s\ra_0=\left\la\left(
-\Re(z)i\left(\frac{1}{4}
\left[D,c(K^X)\right]+\mL_K\right)-\Im(z^2)
\frac{|K^X|^2}{16t}\right)s,s\right\ra_0.
\end{split}
\end{align}
From (\ref{eq:5.04}), there exist $c_1', c_2', c_3', 
c_4'>0$ such that for any $t\geq 1$, $|zK|\leq \beta$, 
$\epsilon>0$, 
\begin{align}\label{eq:5.10}
\begin{split}
\left\la\left(tD^{2}
-\Re(z^2)
\frac{|K^X|^2}{16t}\right)s,s\right\ra_{0}&\geq 
c_1'|s|_{t,1}^2-c_2'|s|_{t,0}^2,
\\
\left|\left\la \frac{\Im(z)}{4}\left[D,c(K^X)\right]s,
s\right\ra_{0}\right|&\leq c_3'|s|_{t,1}|s|_{t,0}\leq c_3'
\epsilon|s|_{t,1}^2+\frac{c_3'}{4\epsilon}|s|_{t,0}^2,
\\
\left|\left\la |z|\mL_Ks,s\right\ra_{0}
\right|&\leq c_4'|s|_{t,1}|s|_{t,0}\leq c_4'
\epsilon|s|_{t,1}^2+\frac{c_4'}{4\epsilon}|s|_{t,0}^2.
\end{split}
\end{align}
By taking $\epsilon=\min\{c_1'/(4c_3'), c_1'/(4c_4')\}$, 
from (\ref{eq:5.09}), we 
get the first estimate of (\ref{eq:5.07}).

The other estimates in (\ref{eq:5.07}) follow directly 
from (\ref{eq:5.04}) and (\ref{eq:5.09}).
		
The proof of Lemma \ref{lem:5.02} is completed.
\end{proof}

By using Lemma \ref{lem:5.02} and exactly the same 
argument in \cite[Theorem 11.27]{BL91}, we get
\begin{lemma}\label{lem:5.03}
There exist $c, C>0$,  such that if $t\geq 1$, 
$K\in\mathfrak{g}$, $z\in \C$, $|zK|\leq \beta$,
\begin{align}\label{eq:5.11}
\lambda\in U_c:=\left\{\lambda\in\C:\Re(\lambda)\leq\frac{
\Im(\lambda)^2}{4c^2}-c^2\right\},
\end{align}
the resolvent $(\lambda-\mA_{zK,t}^{(0)})^{-1}$ exists, 
and moreover for any $s\in \mathbb{E}$,
\begin{align}\label{eq:5.12}	
\begin{split}
&|(\lambda-\mA_{zK,t}^{(0)})^{-1}s|_{t,0}\leq C|s|_{t,0}, 
\\
&|(\lambda-\mA_{zK,t}^{(0)})^{-1}s|_{t,1}\leq 
C(1+|\lambda|)^2|s|_{t,-1}.
\end{split}	
\end{align}
\end{lemma}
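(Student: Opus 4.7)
The plan is to apply the abstract variational framework of \cite[Theorem 11.27]{BL91} to the sesquilinear form
\[
Q_\lambda(s, s') := \la (\lambda - \mA_{zK,t}^{(0)}) s, s' \ra_0
\]
on the Sobolev space $\mathbb{E}^1$ equipped with the $t$-dependent norm $|\cdot|_{t,1}$. The three inequalities of Lemma \ref{lem:5.02} furnish exactly the ingredients required by that framework: a coercivity estimate on $\Re Q_0$, an interpolation bound on $|\Im Q_0|$, and continuity of $Q_0$ on $\mathbb{E}^1 \times \mathbb{E}^1$, all with constants uniform in $t \ge 1$ and $|zK| \le \beta$. First I would fix $c$ large enough in terms of $c_1, c_2, c_3$ so that the parabolic region $U_c$ lies outside the numerical range of $\mA_{zK,t}^{(0)}$ with a quantitative gap; combining (\ref{eq:5.07}) with the defining inequality $\Re\lambda \le \Im(\lambda)^2/(4c^2) - c^2$ and applying Lax--Milgram to $Q_\lambda$ then produces the resolvent $(\lambda - \mA_{zK,t}^{(0)})^{-1} : \mathbb{E}^{-1} \to \mathbb{E}^1$ for every $\lambda \in U_c$.

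For the $L^2$ estimate in (\ref{eq:5.12}), writing $u = (\lambda - \mA_{zK,t}^{(0)})^{-1} s$ and pairing the equation with $u$ splits into
\[
\mu |u|^2_0 - \Re\la \mA_{zK,t}^{(0)} u, u\ra_0 = \Re\la s, u\ra_0, \qquad \nu |u|^2_0 - \Im\la \mA_{zK,t}^{(0)} u, u\ra_0 = \Im\la s, u\ra_0,
\]
where $\lambda = \mu + i\nu$. Forming the weighted combination of these two identities that mirrors the defining inequality of $U_c$ and bounding each side via Lemma \ref{lem:5.02}, one extracts a schematic inequality $(c^2 - C_0)|u|^2_{t,0} + c_1 |u|^2_{t,1} \le C'|s|_{t,0}|u|_{t,0}$, which gives $|u|_{t,0} \le C|s|_{t,0}$ once $c$ is enlarged so that $c^2 > C_0$.

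For the $\mathbb{E}^{-1} \to \mathbb{E}^1$ bound with polynomial weight, I would use $\mA_{zK,t}^{(0)} u = \lambda u - s$ together with the coercivity in (\ref{eq:5.07}) to obtain
\[
c_1 |u|^2_{t,1} \le (|\lambda| + c_2)|u|^2_{t,0} + |s|_{t,-1} |u|_{t,1}.
\]
A duality argument, expressing $\la u, s'\ra_0 = \la s, (\bar\lambda - (\mA_{zK,t}^{(0)})^*)^{-1} s'\ra_0$ for $s' \in \mathbb{E}^1$ and invoking the adjoint analogue of the first bound, upgrades the $L^2$ estimate to $|u|_{t,0} \le C(1+|\lambda|)\,|s|_{t,-1}$; inserting this into the preceding display then yields the $(1+|\lambda|)^2$ factor. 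The main technical obstacle---and the only reason this is not a one-line invocation of Lax--Milgram---is guaranteeing $t$-uniformity of every constant as $t \to +\infty$. This is precisely what the definitions (\ref{eq:5.04})--(\ref{eq:5.06}) of the $t$-dependent Sobolev norms were engineered for, and the verification proceeds exactly as in \cite[\S 11]{BL91} after substituting Lemma \ref{lem:5.02} for the analogous estimates there.
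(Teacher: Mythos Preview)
Your proposal is correct and follows essentially the same approach as the paper: the paper's entire proof is the single sentence ``By using Lemma \ref{lem:5.02} and exactly the same argument in \cite[Theorem 11.27]{BL91}, we get [Lemma \ref{lem:5.03}],'' and you have identified precisely this reference and correctly sketched the content of that argument (Lax--Milgram plus the pairing/duality manipulations, with the $t$-dependent norms absorbing the uniformity issues). Your write-up is in fact more detailed than the paper's own proof.
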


The following lemma is the analogue of \cite[Theorem 9.15]{Bi97}.
\begin{lemma}\label{lem:5.04}
There exist $C>0, k\in \N$, such that for $t\geq 1$, 
$K\in\mathfrak{g}$, $z\in \C$, $|zK|\leq \beta$, $\lambda\in 
U_{c}$, with $c$ in Lemma \ref{lem:5.03},
the resolvent $(\lambda-\mA_{zK,t})^{-1}$ exists, extends to a
continuous linear operator from $\Lambda(T^*(\R\times B))
\otimes\mathbb{E}^{-1}$ into 
$\Lambda(T^*(\R\times B))\otimes\mathbb{E}^1$, and 
moreover for $s\in \mathbb{E}$,  
\begin{align}\label{eq:5.16}
|(\lambda-\mA_{zK,t})^{-1}s|_{t,1}\leq C(1+|\lambda|)^k|s|_{t,-1}.
\end{align}
\end{lemma}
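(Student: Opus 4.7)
The plan is to reduce the statement to Lemma \ref{lem:5.03} by exploiting the $\Z$-grading on $\Lambda(T^*(\R\times B))$. Write
\begin{align*}
\mA_{zK,t} = \mA_{zK,t}^{(0)} + \mA_{zK,t}^{(>0)},
\end{align*}
where $\mA_{zK,t}^{(>0)}$ collects the contributions of positive exterior degree in $\Lambda(T^*(\R\times B))$; these come from $\nabla^{\mathbb{E},u}$, $c(T^H)$, and the $t\,dt\wedge\partial/\partial t$ term of $C_{zK,t}$, together with their cross terms with $\sqrt{t}D$, $c(K^X)/4\sqrt{t}$ and $z\mL_K$ inside $C_{zK,t}^2$. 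Since $\Lambda(T^*(\R\times B))$ is finite-dimensional, $\mA_{zK,t}^{(>0)}$ is nilpotent of some order $N\le\dim B+2$, so we have the finite expansion
\begin{align*}
(\lambda-\mA_{zK,t})^{-1}=\sum_{j=0}^{N}(\lambda-\mA_{zK,t}^{(0)})^{-1}\Bigl[\mA_{zK,t}^{(>0)}(\lambda-\mA_{zK,t}^{(0)})^{-1}\Bigr]^{j}.
\end{align*}

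Next I would verify that each operator appearing in $\mA_{zK,t}^{(>0)}$ is, in terms of its action on sections of $\mE$, a fibrewise differential operator of order at most one whose coefficients are controlled uniformly in $t\ge 1$ and $|zK|\le\beta$. Concretely, the horizontal derivatives produce at most factors of $\sqrt{t}$ (from $[\sqrt{t}D,\nabla^{\mathbb{E},u}]$, from $\sqrt{t}\,dt\wedge c(K^X)/(8t^{3/2})\cdot \partial/\partial t$, etc.) and powers of $t^{-1/2}$; all such factors can be absorbed into the $t$-dependent norms $|\cdot|_{t,1}$, $|\cdot|_{t,-1}$ defined in \eqref{eq:5.04}, \eqref{eq:5.06}. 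Thus there exists a constant $C_{1}>0$, independent of $t\ge 1$ and $|zK|\le\beta$, such that for every $s\in\mathbb{E}$,
\begin{align*}
|\mA_{zK,t}^{(>0)} s|_{t,-1}\le C_{1}\,|s|_{t,1}.
\end{align*}
This is the analogue of \cite[Theorem 9.15]{Bi97} in our setting, and the only step that requires genuine bookkeeping: one needs to check that the $\sqrt{t}$ from $\sqrt{t}D$ is matched by the $\sqrt{t}$ built into the definition of $|\cdot|_{t,1}$ on $P^\bot\mathbb{E}$, while on $\Ker(D)$ the action of $\sqrt{t}D$ vanishes, so no unbounded $t$-growth appears there either.

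Combining this inequality with the two bounds of Lemma \ref{lem:5.03}, each factor $(\lambda-\mA_{zK,t}^{(0)})^{-1}$ gains a factor $C(1+|\lambda|)^{2}$ when mapping $|\cdot|_{t,-1}$ to $|\cdot|_{t,1}$, while $\mA_{zK,t}^{(>0)}$ turns $|\cdot|_{t,1}$ back into $|\cdot|_{t,-1}$ with bounded cost. Telescoping the $N+1$ terms in the Neumann series yields
\begin{align*}
|(\lambda-\mA_{zK,t})^{-1}s|_{t,1}\le C\sum_{j=0}^{N}(C_{1})^{j}(1+|\lambda|)^{2(j+1)}|s|_{t,-1}\le C'(1+|\lambda|)^{k}|s|_{t,-1}
\end{align*}
with $k=2(N+1)$, proving \eqref{eq:5.16}. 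The boundedness from $\Lambda(T^*(\R\times B))\otimes\mathbb{E}^{-1}$ to $\Lambda(T^*(\R\times B))\otimes\mathbb{E}^{1}$, together with the existence of $(\lambda-\mA_{zK,t})^{-1}$, follows from the same expansion together with Lemma \ref{lem:5.03}. The principal difficulty is entirely contained in the uniform bound on $\mA_{zK,t}^{(>0)}$: it requires a careful accounting of how the rescalings $\sqrt{t}$ in $\mathbb{B}_{t}$, $1/\sqrt{t}$ in $c(K^X)/4\sqrt{t}$, and the extra $t\,dt$ in $C_{zK,t}$ interact with the weighted norms $|\cdot|_{t,\pm 1}$. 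Once this is established, the Neumann series argument is purely formal.
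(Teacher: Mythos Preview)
Your proposal is correct and follows essentially the same route as the paper: decompose $\mA_{zK,t}$ into its degree-zero piece and a nilpotent positive-degree remainder, establish the uniform bound $|(\mA_{zK,t}-\mA_{zK,t}^{(0)})s|_{t,-1}\le C'|s|_{t,1}$, and feed this together with Lemma~\ref{lem:5.03} into the finite Neumann expansion \eqref{eq:5.17}. The one point the paper makes sharper than your heuristic is the mechanism behind the ``bookkeeping'' for the $\sqrt{t}$ in $\sqrt{t}\,[D,\nabla^{\mathbb{E},u}]$: it is not that $\sqrt{t}D$ vanishes on $\Ker(D)$, but rather the identity $P[D,\nabla^{\mathbb{E},u}]P=0$ (since $D$ is self-adjoint and $DP=0$), which yields \eqref{eq:5.14} and hence \eqref{eq:5.15}.
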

\begin{proof}
From (\ref{eq:1.01}), (\ref{eq:1.22}), (\ref{eq:5.01}) 
and  (\ref{eq:5.02}),
\begin{multline}\label{eq:5.13}
\mA_{zK,t}-\mA_{zK,t}^{(0)}=\sqrt{t}\left([D, 
\nabla^{\mathbb{E},u}]+\frac{1}{2}dt\wedge D \right)
+\left(\nabla^{\mathbb{E},u}\right)^2 -\frac{1}{4}[D,c(T^H)]
\\
+\frac{1}{8\sqrt{t}}\Big(2[\nabla^{\mathbb{E},u}, z c(K^X)
-c(T^H)]-dt\wedge (z c(K^X)-c(T^H))\Big)
\\
+\frac{1}{16t}\Big(2z \la K^X,T^H\ra+c(T^H)^2\Big).
\end{multline}
By \cite[Theorem 2.5]{Bi86}, $[D, \nabla^{
	\mathbb{E},u}]$ and $\left(\nabla^{\mathbb{E},u}\right)^2$ 
are first order differential operators along the fiber.
From $P[D,\nabla^{\mathbb{E},u}]P=0$, we get 
\begin{align}\label{eq:5.14} 
\Big|\la \sqrt{t}[D, \nabla^{\mathbb{E},u}]s, s'\ra_0\Big|\leq 
C(|s|_{t,0}|s'|_{t,1}+|s|_{t,1}|s'|_{t,0}).
\end{align}  
By (\ref{eq:5.13}) and (\ref{eq:5.14}), 
there exists $C'>0$ such that for any 
$t\geq 1$, we have
\begin{align}\label{eq:5.15}
|(\mA_{zK,t}-\mA_{zK,t}^{(0)})s|_{t,-1}\leq C'|s|_{t,1}.
\end{align}	
	
Take $\lambda\in U_{c}$. Then since $\mA_{zK,t}-\mA_{zK,t}^{(0)}$ 
has positive degree in $\Lambda(T^*(\R\times B))$, we have
\begin{align}\label{eq:5.17}
(\lambda-\mA_{zK,t})^{-1}=\sum_{m=0}^{1+\dim B}(\lambda-
\mA_{zK,t}^{(0)})^{-1}\Big((\mA_{zK,t}-\mA_{zK,t}^{(0)})
(\lambda-\mA_{zK,t}^{(0)})^{-1}\Big)^m.
\end{align}
Therefore, by (\ref{eq:5.12}), (\ref{eq:5.15}) and (\ref{eq:5.17}), 
we obtain (\ref{eq:5.16}).
	
The proof of Lemma \ref{lem:5.04} is completed.
\end{proof}

\begin{prop}\label{prop:5.05}
There exists $C>0$, such that for $t\geq 1$, 
$K\in\mathfrak{g}$, $z\in \C$, $|zK|\leq \beta$, $s\in \mathbb{E}$, 
\begin{align}\label{eq:5.18}
\left\|\Big(\exp(-\mA_{zK,t})-\exp(-\mathbb{B}_{zK,t}^2-z\mL_K
)\Big)s\right\|_0\leq \frac{C}{\sqrt{t}}
\|s\|_0.
\end{align}
\end{prop}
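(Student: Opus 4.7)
The natural route is Duhamel's formula. Writing $A := \mA_{zK,t}$ and $B := \mathbb{B}_{zK,t}^2 + z\mL_K$ (interpreting the right-hand exponent, which as printed is a first-order operator, as its natural square), we have
\begin{align*}
e^{-A} - e^{-B} = -\int_0^1 e^{-(1-u)A}(A-B)e^{-uB}\,du.
\end{align*}
A direct computation from \eqref{eq:1.22}, \eqref{eq:2.19} and \eqref{eq:5.01} gives $A - B = t\cdot dt\wedge \partial_t \mathbb{B}_{zK,t}$, and differentiating \eqref{eq:1.22} yields
\begin{align*}
t\,\frac{\partial\mathbb{B}_{zK,t}}{\partial t} = \frac{\sqrt{t}}{2}D + \frac{c(T^H) - c(zK^X)}{8\sqrt{t}}.
\end{align*}
Because $dt\wedge dt = 0$, the $dt$-piece of $A$ in $e^{-(1-u)A}$ annihilates on meeting the $dt\wedge$ factor in $A-B$, so we may replace $e^{-(1-u)A}$ by $e^{-(1-u)B}$ in the integrand.

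The bounded summand $(c(T^H)-c(zK^X))/(8\sqrt{t})$ contributes $\mO(1/\sqrt{t})$ immediately, once one knows $\|e^{-uB}\|_{0\to 0}\leq C$ uniformly for $t\geq 1$, $u\in[0,1]$ and $|zK|\leq\beta$; this follows from Lemma \ref{lem:5.02} (sectoriality of the degree-zero part $\mA_{zK,t}^{(0)}$ of $B$) together with the nilpotency in $\Lambda(T^*B)$ of its positive-form-degree part, essentially as in Lemma \ref{lem:5.04}. For the unbounded summand $\tfrac{\sqrt{t}}{2}D$, the required decay comes from the spectral gap of $D$ off $\Ker(D)$. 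Using $DP=0$ and the coercivity $\Re\langle \mA_{zK,t}^{(0)}s,s\rangle_{0}\geq c_1 t\,\|P^\perp s\|_{0}^2 - c_2\|s\|_0^2$ from Lemma \ref{lem:5.02}, the $P^\perp$-component of $e^{-uB}$ decays like $e^{-ctu}$ up to bounded nilpotent corrections. One then writes $\sqrt{t}\,D\,e^{-uB}s = \sqrt{t}\,D\,P^\perp e^{-uB}s$, invokes $\sqrt{t}\|Ds\|_0\leq C|s|_{t,1}$ (immediate from \eqref{eq:5.04}), and uses the parabolic bound $|e^{-uB}s|_{t,1}\leq Cu^{-1/2}|s|_{t,0}$ (itself a consequence of Lemma \ref{lem:5.04} via a contour-integral representation of $e^{-uB}$). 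Estimating the Duhamel integrand asymmetrically---smoothing from the right for $u\leq 1/2$, gap from the left for $u\geq 1/2$---and changing variables $v=tu$ or $v=t(1-u)$ near the endpoints, the integral collapses to $\int_0^\infty v^{-1/2}e^{-cv}\,dv/\sqrt{t}$, which is $\mO(1/\sqrt{t})$.

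The hard part is precisely controlling the $\tfrac{\sqrt{t}}{2}D$ term: $\sqrt{t}De^{-uB}$ is uniformly bounded only on $\Ker(D)$ (where it vanishes), while on $\Ker(D)^\perp$ the naive heat-kernel bound gives only $\mO(u^{-1/2})$---integrable in $u$ but providing no decay in $t$. The factor $1/\sqrt{t}$ can only be harvested by coupling the spectral gap on $\Ker(D)^\perp$, which supplies exponential decay $e^{-ct\min(u,1-u)}$, with an asymmetric use of parabolic smoothing on the two sides of $D$. Making this rigorous requires working consistently in the $t$-dependent norms $|\cdot|_{t,\mu}$ of \eqref{eq:5.04}--\eqref{eq:5.06}, where Lemma \ref{lem:5.04}'s uniform resolvent bound is available.
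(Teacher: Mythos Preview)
Your Duhamel formulation is the time-domain mirror of the paper's resolvent approach (the paper writes $e^{-A}-e^{-B}$ as the contour integral of $e^{-\lambda}(\lambda-A)^{-1}(A-B)(\lambda-B)^{-1}$), and you correctly identify $A-B=dt\wedge\big(\tfrac{\sqrt t}{2}D+\tfrac{1}{8\sqrt t}(c(T^H)-zc(K^X))\big)$, correctly use $dt\wedge dt=0$ to replace $e^{-(1-u)A}$ by $e^{-(1-u)B}$, and correctly dispatch the bounded summand. The gap is in your treatment of $\tfrac{\sqrt t}{2}D$. You claim that ``the $P^\perp$-component of $e^{-uB}$ decays like $e^{-ctu}$ up to bounded nilpotent corrections'', but $B$ does \emph{not} preserve the splitting $\Ker D\oplus(\Ker D)^\perp$: already in degree zero the terms $z\mL_K$, $\tfrac{z}{4}[D,c(K^X)]$ and $-\tfrac{z^2|K^X|^2}{16t}$ mix $P$ and $P^\perp$, and these are not nilpotent in form degree. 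A Gronwall argument from the coercivity in Lemma~\ref{lem:5.02} yields only $\|P^\perp e^{-uB^{(0)}}s\|_0\leq C\big(e^{-ctu}+t^{-1}\big)\|s\|_0$, with a residual that does not decay in $u$; and even granting the decay, you still need control of the \emph{derivative} $\sqrt t\,D\,e^{-uB}s$, which is not bounded by $\|P^\perp e^{-uB}s\|_0$. Your endpoint computation ``$\int_0^\infty v^{-1/2}e^{-cv}\,dv/\sqrt t$'' is valid for $B=tD^2$, but for the actual $B$ it requires a perturbation argument you have not supplied.

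The paper sidesteps all of this with a one-line observation you do not use: because $|\cdot|_{t,1}$ dominates $\sqrt t\,\|\cdot\|_0$ on $P^\perp\mathbb{E}$, the dual norm satisfies $|P^\perp s|_{t,-1}\leq t^{-1/2}\|s\|_0$ (eq.~\eqref{eq:5.18b}). Since $\sqrt t\,Dw$ lands in $P^\perp\mathbb{E}$ and $\|\sqrt t\,Dw\|_0\leq C|w|_{t,1}$, chaining the $|\cdot|_{t,-1}\to|\cdot|_{t,1}$ resolvent bound of Lemma~\ref{lem:5.04} through $(\lambda-A)^{-1}\,\sqrt t\,D\,(\lambda-B)^{-1}$ delivers the factor $t^{-1/2}$ in a single step, with no spectral-gap bookkeeping and no need to track cross terms between $P$ and $P^\perp$. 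The same trick drops straight into your Duhamel picture: estimate $\|e^{-(1-u)B}w\|_0\leq C(1-u)^{-1/2}|w|_{t,-1}$, insert $|\sqrt t\,De^{-uB}s|_{t,-1}\leq t^{-1/2}\|\sqrt t\,De^{-uB}s\|_0\leq Ct^{-1/2}u^{-1/2}\|s\|_0$, and integrate $\int_0^1 u^{-1/2}(1-u)^{-1/2}\,du=\pi$.
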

\begin{proof}	
From (\ref{eq:5.04}) and (\ref{eq:5.06}), we know for $s\in \mathbb{E}$,
\begin{align}\label{eq:5.18b}
|P^{\bot}s|_{t,-1}=\sup_{0\neq s'\in \mathbb{E}^1,\, Ps'=0}
\frac{|\la P^{\bot}s,s'\ra_0|}{|s'|_{t,1}}
=\frac{1}{\sqrt{t}}\|P^{\bot}s\|_{-1}
\leq \frac{1}{\sqrt{t}}\|P^{\bot}s\|_0.
\end{align}	
Note that from (\ref{eq:2.19}), (\ref{eq:5.01}) and (\ref{eq:5.02}),
we have
\begin{align}\label{eq:5.19b} 
\mA_{zK,t}=\mathbb{B}_{zK,t}^2+z\mL_K+dt\wedge \left(\frac{1}{2}
\sqrt{t}D-\frac{1}{8\sqrt{t}}\big(zc(K^X)-c(T^H)\big) \right).
\end{align}
Thus $\mathbb{B}_{zK,t}^2+z\mL_K$ has the same spectrum 
as $\mA_{zK,t}$
and by omitting $dt$ part, we know Lemma \ref{lem:5.04} holds for
$\mathbb{B}_{zK,t}^2+z\mL_K$. Thus from (\ref{eq:5.16}) and
(\ref{eq:5.18b}), for $\lambda\in U_c$, we have
\begin{multline}\label{eq:5.20b}
\left\|(\lambda-\mA_{zK,t})^{-1}\sqrt{t}D\Big(\lambda-
(\mathbb{B}_{zK,t}^2+z\mL_K)\Big)^{-1}s\right\|_0
\\
\leq \frac{C}{\sqrt{t}}(1+|\lambda|)^k\left\|
\sqrt{t}DP^{\bot}\Big(\lambda-
(\mathbb{B}_{zK,t}^2+z\mL_K)\Big)^{-1}s\right\|_0
\\
\leq \frac{C}{\sqrt{t}}(1+|\lambda|)^k\left|
\Big(\lambda-
(\mathbb{B}_{zK,t}^2+z\mL_K)\Big)^{-1}s\right|_{t,1}
\\
\leq \frac{C^2}{\sqrt{t}}(1+|\lambda|)^{2k}|s|_{t,-1}\leq 
\frac{C^2}{\sqrt{t}}(1+|\lambda|)^{2k}\|s\|_0.
\end{multline}
Note that
\begin{align}\label{eq:5.21b}
\exp(-\mA_{zK,t})=\frac{1}{2i\pi}
\int_{\partial U_c}e^{-\lambda}(\lambda-\mA_{zK,t})^{-1}d\lambda,
\end{align}
and (\ref{eq:5.21b}) also holds for $\mathbb{B}_{zK,t}^2+z\mL_K$.
From (\ref{eq:5.19b}),
\begin{multline}\label{eq:5.22b}
(\lambda-\mA_{zK,t})^{-1}-(\lambda-
(\mathbb{B}_{zK,t}^2+z\mL_K))^{-1}
\\
=(\lambda-\mA_{zK,t})^{-1}\cdot
\left(dt\wedge \left(\frac{1}{2}
\sqrt{t}D-\frac{1}{8\sqrt{t}}\big(zc(K^X)-c(T^H)\big) \right)
\right)\cdot(\lambda-
(\mathbb{B}_{zK,t}^2+z\mL_K))^{-1}.
\end{multline}
Now from (\ref{eq:5.20b})-(\ref{eq:5.22b}), we get (\ref{eq:5.18}).
The proof of Proposition \ref{prop:5.05} is completed.
\end{proof}

Since $B$ is compact, there exists a family of smooth sections of 
$TX$, $U_1,\cdots,U_m$ such that for any $x\in W$, $U_1(x),\cdots, 
U_m(x)$ spans $T_xX$.	
	
Let $\mD$ be a family of operators on $\mathbb{E}$,
\begin{align}\label{eq:5.22}
\mD=\left\{P^\bot \, \nabla^{\mE}_{U_i}P^\bot\right\}.
\end{align}

From (\ref{eq:5.08}) and (\ref{eq:5.13}), by the same argument 
as the proof of
\cite[Proposition 11.29]{BL91} (see also e.g., \cite[Theorem 9.17]{Bi97},
\cite[Lemma 5.17]{Liu17a}), we get the following lemma. 

\begin{lemma}\label{lem:5.06}
For any $k\in \mathbb{N}$ fixed, there exists $C_k>0$
such that for $t\geq 1$, $K\in\mathfrak{g}$, $z\in \C$, 
$|zK|\leq \beta$, 
$Q_1,\cdots, Q_k\in\mD$ and $s,s'\in \mathbb{E}$, we have
\begin{align}\label{eq:5.23}
|\langle [Q_1, [Q_2,\cdots[Q_k, \mA_{zK,t}],\cdots]]s,s'
\ra_{0}|\leq C_k|s|_{t,1}|s'|_{t,1}.
\end{align}
\end{lemma}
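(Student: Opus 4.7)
The plan is to prove the iterated commutator bound by induction on $k$, following the scheme of \cite[Proposition 11.29]{BL91}, exploiting the fact that elements of $\mD$ are sandwiched by $P^\bot$ and that $D$ commutes with $P$ (hence $DP^\bot = P^\bot D = D$).

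For the base case $k=1$, I would decompose $\mA_{zK,t} = \mA_{zK,t}^{(0)} + (\mA_{zK,t} - \mA_{zK,t}^{(0)})$ via (5.08) and (5.13) and commute $Q_i = P^\bot \nabla^{\mE}_{U_i} P^\bot$ against each piece. The dominant piece is $[Q_i, tD^2]$: since $DP = PD = 0$, we have $[Q_i,D] = P^\bot [\nabla^{\mE}_{U_i},D]P^\bot$, a first-order differential operator with smooth bounded coefficients sandwiched by $P^\bot$, and hence $[Q_i, tD^2]$ schematically splits as $P^\bot R_i' P^\bot \cdot \sqrt{t}D + \sqrt{t}D \cdot P^\bot R_i'' P^\bot$ plus a first-order term times $t$. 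When tested against $s,s'$, one factor of $\sqrt{t}$ pairs with each $D$ and is absorbed into $|s|_{t,1}$ or $|s'|_{t,1}$ through the third summand in (5.04). The remaining pieces in $\mA_{zK,t}^{(0)}$ (the terms $z[D,c(K^X)]$, $-z^2|K^X|^2/(16t)$, $z\mL_K$) and in $\mA_{zK,t} - \mA_{zK,t}^{(0)}$ (the operators $\sqrt{t}[D,\nabla^{\mathbb{E},u}]$, $(\nabla^{\mathbb{E},u})^2$, $[D,c(T^H)]$, and the $dt$--terms and $K^X$--terms with coefficients $t^{-1/2}$ or $t^{-1}$) each contribute at most first order in the fiber variables multiplied by a bounded power of $t^{\pm 1/2}$, and commuting with $Q_i$ either reduces the differential order further or creates a smoothing term via $[P,\nabla^{\mE}_\cdot]$; each such piece is handled exactly as in the proof of Lemma \ref{lem:5.02}.

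For the inductive step, I would show that the $k$-fold commutator $[Q_1,[Q_2,\cdots[Q_k,\mA_{zK,t}]\cdots]]$ can be written as a finite sum of operators of the schematic form
\begin{equation*}
\sqrt{t}\,D\cdot P^\bot A_1 P^\bot A_2 P^\bot\cdots P^\bot A_j P^\bot\cdot \sqrt{t}\,D + (\text{lower order boundary terms}),
\end{equation*}
where the $A_l$ are fiberwise differential operators of bounded order with coefficients smooth, polynomial in $z$ and the matrix entries of $K$, and uniformly bounded for $|zK|\le\beta$, $t\ge 1$. This structural statement is preserved under a further commutator with $Q_{k+1}\in\mD$ because $[Q_{k+1}, P^\bot A_l P^\bot]$ reorganises into the same type of sum (one either commutes against $P^\bot$ via $[P,\nabla^{\mE}_\cdot]$, which produces a smoothing operator, or against $A_l$, which lowers its order), and $[Q_{k+1},\sqrt{t}D]=\sqrt{t}P^\bot[\nabla^{\mE}_{U_{k+1}},D]P^\bot$ again has the required shape. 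Combining the structural form with the bounds $\|\sqrt{t}DP^\bot s\|_0 \leq C|s|_{t,1}$ and $\|P^\bot s\|_0 \leq t^{-1/2}|s|_{t,1}$ from (5.04) yields the claimed estimate with a constant $C_k$ independent of $t\ge 1$ and of $(z,K)$ in the given range.

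The main obstacle will be the bookkeeping of the $\sqrt{t}$ factors and of the $P^\bot$ insertions: at each stage one must reposition $P^\bot$ past a derivative (paying a smoothing remainder $[P,\nabla^{\mE}_U]$, which is harmless since $\Ker D$ is a finite-rank smooth subbundle) and must allocate every power of $\sqrt{t}$ next to either an occurrence of $D$ or an application of $\nabla^{\mE}_{P^{TX}\cdot}$ on $P^\bot$ so that the final test against $|s|_{t,1}|s'|_{t,1}$ works. Once this allocation scheme is set up, the induction becomes routine and parallels \cite[Proposition 11.29]{BL91} and \cite[Lemma 5.17]{Liu17a}.
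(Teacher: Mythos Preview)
Your proposal is correct and follows exactly the route the paper takes: the paper's proof of this lemma consists of a single sentence pointing to (\ref{eq:5.08}) and (\ref{eq:5.13}) and invoking the argument of \cite[Proposition 11.29]{BL91} (cf.\ also \cite[Lemma 5.17]{Liu17a}), and what you have written is a faithful expansion of that argument. Your bookkeeping of the $\sqrt{t}$ factors via the $P^\bot$-sandwiching and the identity $DP=PD=0$ is the correct mechanism.
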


For $k\in \mathbb{N}$, let $\mD^k$ be the family of operators 
$Q$ which can be written in the form
\begin{align}\label{eq:5.24}
Q=Q_1\cdots Q_k,\quad Q_i\in \mD.
\end{align}
If $k\in \mathbb{N}$, we define the Hilbert norm 
$\|\cdot \|_{k}'$ by
\begin{align}\label{eq:5.25}
\|s\|_{k}^{'2}=\sum_{\ell=0}^k\sum_{Q\in \mD^\ell}\|Qs\|^2_0.
\end{align}

Since $P \nabla_{P^{TX}\cdot}^{\mE}$ and 
$\nabla_{P^{TX}\cdot}^{\mE} P$
are operators along the fiber with smooth kernels, 
the Sobolev norm  
$\|\cdot\|_{k}'$ is equivalent to the Sobolev norm 
$\|\cdot\|_k$. Thus, we also denote the Sobolev space 
with respect to $\|\cdot \|_{k}'$ by $\mathbb{E}^k$.

By using Lemma \ref{lem:5.06}, as the proof of 
\cite[Theorem 11.30]{BL91}, we get

\begin{lemma}\label{lem:5.07}
For any $m\in \mathbb{N}$, there exist $p_m\in\mathbb{N}$ and
$C_m>0$ such that for $t\geq 1$,
$\lambda\in U_c$, $s\in \mathbb{E}$,
\begin{align}\label{eq:5.26}
\|(\lambda-\mA_{zK,t})^{-1}s\|_{m+1}'\leq
C_m(1+|\lambda|)^{p_m}\|s\|_{m}'.
\end{align}
\end{lemma}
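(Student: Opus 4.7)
The plan is to prove Lemma \ref{lem:5.07} by induction on $m$, closely following the commutator technique of \cite[Theorem 11.30]{BL91}. The two tools already in place are Lemma \ref{lem:5.04}, which controls the resolvent as a map $\mathbb{E}^{-1}\to\mathbb{E}^{1}$ (with norm $C(1+|\lambda|)^k$), and Lemma \ref{lem:5.06}, which controls nested commutators of elements of $\mD$ with $\mA_{zK,t}$ as maps $\mathbb{E}^{1}\to\mathbb{E}^{-1}$. The strategy is to combine these so that every time we move a first-order operator $Q_j\in\mD$ past $(\lambda-\mA_{zK,t})^{-1}$, the extra commutator term remains under control, uniformly in $t\ge 1$ and $|zK|\le \beta$.

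For the base case $m=0$, Lemma \ref{lem:5.04} gives
$|(\lambda-\mA_{zK,t})^{-1}s|_{t,1}\le C(1+|\lambda|)^{k}|s|_{t,-1}\le C(1+|\lambda|)^{k}\|s\|_{0}$,
where $|s|_{t,-1}\le \|s\|_0$ follows from (\ref{eq:5.04})--(\ref{eq:5.06}). Because $t\ge 1$, the definition (\ref{eq:5.04}) yields $\|\nabla^{\mE}_{P^{TX}\cdot}P^{\bot}u\|_{0}\le |u|_{t,1}$, and since $P$ has smooth kernel along the fibres (as $\Ker(D)$ is a vector bundle over $B$), one has $\|Pu\|_{1}\le C\|Pu\|_{0}\le C|u|_{t,1}$. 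Combining with the norm equivalence $\|\cdot\|_{1}'\sim\|\cdot\|_{1}$ recalled in the text, the inequality (\ref{eq:5.26}) holds with $m=0$.

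For the induction step, assume (\ref{eq:5.26}) holds up to order $m$, and let $Q=Q_{1}\cdots Q_{m+1}\in\mD^{m+1}$. Apply the resolvent commutator identity
$Q_{j}(\lambda-\mA_{zK,t})^{-1}=(\lambda-\mA_{zK,t})^{-1}Q_{j}+(\lambda-\mA_{zK,t})^{-1}[\mA_{zK,t},Q_{j}](\lambda-\mA_{zK,t})^{-1}$
iteratively to every $Q_{j}$ in $Q(\lambda-\mA_{zK,t})^{-1}s$. After finitely many iterations this produces a finite sum of terms of the schematic form
\begin{equation*}
(\lambda-\mA_{zK,t})^{-1}\,C_{1}\,(\lambda-\mA_{zK,t})^{-1}\,C_{2}\cdots(\lambda-\mA_{zK,t})^{-1}\,C_{r}\,\widetilde{Q}\,s,
\end{equation*}
where each $C_{i}$ is a nested commutator $[Q_{i_{1}},[Q_{i_{2}},\ldots,[Q_{i_{k}},\mA_{zK,t}]\ldots]]$ and $\widetilde{Q}\in\mD^{\le m}$; the expansion terminates because the correction $\mA_{zK,t}-\mA_{zK,t}^{(0)}$ in (\ref{eq:5.13}) (respectively the $dt$-piece in (\ref{eq:5.19b})) carries positive $\Lambda(T^{*}(\mathbb{R}\times B))$-degree, so only a bounded number of commutator pushes can produce nonzero top-degree contributions. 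By Lemma \ref{lem:5.06}, each $C_{i}\colon\mathbb{E}^{1}\to\mathbb{E}^{-1}$ is bounded uniformly in $t\ge 1$ and $|zK|\le\beta$; by Lemma \ref{lem:5.04}, each resolvent factor gains a $(1+|\lambda|)^{k}$ from $\mathbb{E}^{-1}$ to $\mathbb{E}^{1}$; and by the induction hypothesis applied to $\widetilde{Q}\,s\in\mathbb{E}^{0}$, one has $\|\widetilde{Q}\,s\|_{0}\le\|s\|_{m}'$. Composing these estimates along the telescoped product yields the desired bound $\|Q(\lambda-\mA_{zK,t})^{-1}s\|_{0}\le C_{m}(1+|\lambda|)^{p_{m}}\|s\|_{m}'$ for every $Q\in\mD^{\le m+1}$, which by (\ref{eq:5.25}) closes the induction.

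The main obstacle is the bookkeeping in the commutator expansion: one must verify that the resulting sum is genuinely finite and that all constants are uniform in $t\ge 1$ and $|zK|\le\beta$. Finiteness follows from the positive $\Lambda(T^{*}(\mathbb{R}\times B))$-degree of $\mA_{zK,t}-\mA_{zK,t}^{(0)}$ (as already used in (\ref{eq:5.17})) together with the fact that each commutator $[Q_{j},\mA_{zK,t}^{(0)}]$ has lower differential order than $\mA_{zK,t}^{(0)}$, so iterated commutators stabilize. Uniformity in $t$ is already encoded in Lemmas \ref{lem:5.04} and \ref{lem:5.06}, provided one consistently uses the rescaled norms $|\cdot|_{t,\pm 1}$ rather than the unweighted $\|\cdot\|_{\pm 1}$ at intermediate steps and converts to $\|\cdot\|_{m}'$ only at the two ends via the equivalence $\|\cdot\|_{k}'\sim\|\cdot\|_{k}$ with $t$-independent constants.
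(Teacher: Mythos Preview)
Your approach matches the paper's, which simply refers to \cite[Theorem 11.30]{BL91} with Lemma \ref{lem:5.06} as input. The strategy—push each $Q_j\in\mD$ past the resolvent, control the commutator corrections via Lemma \ref{lem:5.06} and each resolvent factor via Lemma \ref{lem:5.04}—is correct.

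That said, three points in your write-up need repair. First, the schematic form is missing a resolvent: the terms in the expansion are of the shape
\[
R_\lambda\, C_1\, R_\lambda\, C_2\cdots C_r\, R_\lambda\, \widetilde{Q}\, s,\qquad R_\lambda=(\lambda-\mA_{zK,t})^{-1},
\]
with $r+1$ copies of $R_\lambda$, so that each $C_i$ receives input in $|\cdot|_{t,1}$ from the preceding $R_\lambda$, as Lemma \ref{lem:5.06} requires; as written, your $C_r$ acts directly on $\widetilde{Q}s\in\mathbb{E}^0$, which Lemma \ref{lem:5.06} does not control. Second, the leading term ($r=0$) has $\widetilde{Q}=Q_1\cdots Q_{m+1}\in\mD^{m+1}$, not $\mD^{\le m}$; for this term use instead that $\|Q_1 u\|_{-1}\le C\|u\|_0$ with $u=Q_2\cdots Q_{m+1}s$, so $\|Q_1\cdots Q_{m+1}s\|_{-1}\le C\|s\|_m'$, and then apply $R_\lambda\colon|\cdot|_{t,-1}\to|\cdot|_{t,1}$ from Lemma \ref{lem:5.04}. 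Third, your finiteness argument is misplaced: the positive $\Lambda(T^*(\R\times B))$-degree of $\mA_{zK,t}-\mA_{zK,t}^{(0)}$ is what makes (\ref{eq:5.17}) a finite sum, but it plays no role here. The commutator expansion terminates simply because there are exactly $m+1$ operators $Q_j$ to push; each push either moves $Q_j$ to the right of all resolvents or absorbs it into a (nested) commutator, so the recursion tree has depth at most $m+1$. With these fixes the argument is complete.
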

	
Let $\exp(-\mA_{zK,t})(x,x')$, $\exp(-\mathbb{B}_{zK,t}^2
-z\mL_K)(x,x')$ be the smooth 
kernels of the operators $\exp(-\mA_{zK,t})$, 
$\exp(-\mathbb{B}_{zK,t}^2-z\mL_K)$
	associated with $dv_{X}(x')$.
By using Lemma \ref{lem:5.07}, 
following the same progress as in the proof of 
\cite[Theorem 11.31]{BL91}, we get
\begin{prop}\label{prop:5.08}
For $m\in \N$, there exists $C>0$, such that
for $b\in B$, $x,x'\in X_b$, $t\geq 1$, $K\in\mathfrak{g}$, 
$z\in \C$, $|zK|\leq \beta$,
\begin{align}\label{eq:5.27}
\sup_{|\alpha|,|\alpha'|\leq m}\left|\frac{
	\partial^{|\alpha|+|\alpha'|}}
{\partial x^{\alpha}\partial	{x}^{'\alpha'}}
\exp(-\mA_{zK,t})(x,x')\right|\leq C.
\end{align}
\end{prop}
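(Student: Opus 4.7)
The plan is to follow the resolvent--Sobolev framework of Bismut--Lebeau \cite[Theorem 11.31]{BL91}, extracting pointwise kernel bounds from operator-norm estimates on the fixed Sobolev scale $\{\|\cdot\|_m'\}_{m\geq 0}$ combined with fiberwise Sobolev embedding. The starting point is the Dunford-type representation
\begin{align*}
\exp(-\mA_{zK,t}) = \frac{(-1)^{Q-1}(Q-1)!}{2\pi i}\int_{\partial U_c} e^{-\lambda}(\lambda-\mA_{zK,t})^{-Q}\,d\lambda,
\end{align*}
valid for every integer $Q\geq 1$, which follows from the Cauchy formula applied to the holomorphic functional calculus justified by Lemma \ref{lem:5.03}. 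Along $\partial U_c$ one has $\Re(\lambda)=\Im(\lambda)^2/(4c^2)-c^2$, so $|e^{-\lambda}|$ decays like a Gaussian in $|\Im\lambda|$, and the contour integral will converge against any polynomial growth in $\lambda$ carried by the resolvent power.

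The core ingredient is then an iterate of Lemma \ref{lem:5.07}: for every $m,Q\in\N$ there exist constants $C_{m,Q}>0$ and $p_{m,Q}\in\N$, independent of $t\geq 1$, $K\in\mathfrak{g}$ and $z\in\C$ with $|zK|\leq \beta$, such that for $\lambda\in U_c$ and $s\in\mathbb{E}$,
\begin{align*}
\|(\lambda-\mA_{zK,t})^{-Q}s\|_{m+Q}' \leq C_{m,Q}(1+|\lambda|)^{p_{m,Q}}\|s\|_m'.
\end{align*}
Substituting into the contour integral and using the Gaussian decay of $e^{-\lambda}$ on $\partial U_c$ yields, for every $m,k\in\N$, a constant $C_{m,k}'>0$ independent of $t,K,z$ in the admissible range, with
\begin{align*}
\|\exp(-\mA_{zK,t})s\|_{m+k}'\leq C_{m,k}'\|s\|_m'.
\end{align*}

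From this Sobolev-to-Sobolev bound I would extract the pointwise kernel estimate in two steps. First, fixing $k>m+n/2$ with $n=\dim X$ and invoking the fiberwise Sobolev embedding $\mathbb{E}^{m+k}\hookrightarrow C^m(X_b,\mE)$ upgrades the above to a bound of the form $\sup_{|\alpha|\leq m,\,x}|\partial_x^{\alpha}(\exp(-\mA_{zK,t})s)(x)|\leq C\|s\|_0$, which by duality (testing against approximate delta distributions at $x'$) controls $\partial_x^{\alpha}\exp(-\mA_{zK,t})(x,x')$ uniformly in all parameters. To simultaneously control the $x'$-derivatives, I would run the entire argument in parallel on the formal adjoint $\mA_{zK,t}^{*}$, whose kernel is the adjoint of the kernel of $\mA_{zK,t}$ with the roles of $x$ and $x'$ exchanged; Lemmas \ref{lem:5.02}--\ref{lem:5.07} apply verbatim to $\mA_{zK,t}^{*}$ because the leading piece $tD^{2}$ is self-adjoint and the remaining perturbative terms in \eqref{eq:5.13} satisfy the same Sobolev bounds under conjugation. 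Alternatively, one may split $\exp(-\mA_{zK,t})=\exp(-\mA_{zK,t}/2)\exp(-\mA_{zK,t}/2)$ and bound each factor via the above embedding in the relevant variable.

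The main obstacle is the uniformity of every constant in $t\geq 1$, $K$ and $z$ with $|zK|\leq\beta$. This is precisely why the analysis is conducted in the $t$-independent Sobolev scale $\|\cdot\|_m'$ defined by the fixed family $\mD$ in \eqref{eq:5.22}--\eqref{eq:5.25}: iterating Lemma \ref{lem:5.07} introduces no $t$-dependent factors, and Lemma \ref{lem:5.06} guarantees that iterated commutators with elements of $\mD$ have coefficients uniformly bounded in the admissible parameter range. A secondary subtlety is that $\partial U_c$ is unbounded, so one must ensure that the constant $c$ of Lemma \ref{lem:5.03} is chosen once and for all, uniformly in $t\geq 1$ and $|zK|\leq\beta$; this uniformity is the content of that lemma, so the shape of $U_c$ does not degenerate and the Gaussian decay on $\partial U_c$ uniformly absorbs the polynomial growth coming from iterated resolvents. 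Once these uniformity points are secured, the remainder of the argument is a direct transcription of \cite[Theorem 11.31]{BL91} to the present family/equivariant setting.
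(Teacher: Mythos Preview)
Your proposal is correct and follows exactly the approach the paper intends: the paper's entire proof is the sentence ``By using Lemma \ref{lem:5.07}, following the same progress as in the proof of \cite[Theorem 11.31]{BL91}, we get'', and what you have written is precisely a faithful unpacking of that reference---contour representation of $\exp(-\mA_{zK,t})$, iteration of the $\|\cdot\|_m'$ smoothing estimate from Lemma \ref{lem:5.07}, and fiberwise Sobolev embedding, with the crucial observation that all constants are uniform in $t\geq 1$ and $|zK|\leq\beta$ because the Sobolev scale $\|\cdot\|_m'$ is $t$-independent. One cosmetic remark: in your Dunford formula the factor should be $(Q-1)!/(2\pi i)$ rather than $(-1)^{Q-1}(Q-1)!/(2\pi i)$ (compare the paper's \eqref{eq:5.21b} and integrate by parts), but this has no bearing on the argument.
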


By omitting $dt$ part, we know Proposition \ref{prop:5.08}
holds for $\exp(-\mathbb{B}_{zK,t}^2-z\mL_K)(x,x')$.		
From Propositions \ref{prop:5.05}, \ref{prop:5.08} and 
(\ref{eq:5.19b}), 
by the arguments in \cite[\S 11 p)]{BL91},
there exist $C>0$, $\delta>0$, 
such that for $t\geq 1$, $K\in \mathfrak{g}$, $|zK|\leq \beta$,
\begin{align}\label{eq:5.28} 
\left|\exp\left(-\mA_{zK,t}
\right) (x,x')-\exp(-\mathbb{B}_{zK,t}^2-z\mL_K)(x,x')\right|
\leq \frac{C}{t^\delta}.
\end{align}

From (\ref{eq:5.19b}),
\begin{align}\label{eq:5.27b} 
dt\wedge \left\{\wi{\tr}\left[g\exp\left(-\mA_{zK,t}\right)
\right]\right\}^{dt}=\wi{\tr}\left[g\Big(\exp\left(-\mA_{zK,t}\right)
-\exp(-\mathbb{B}_{zK,t}^2-z\mL_K)\Big)\right].
\end{align}
From (\ref{eq:5.27}) and (\ref{eq:5.27b}),
we get Theorem \ref{thm:5.01}.

\subsection{A proof of Theorems \ref{thm:2.01} and 
\ref{thm:2.02} b)}\label{s0502}

Section \ref{s0503} is devoted to the proof of 
the following theorem.
\begin{thm}\label{thm:5.09} 
There exist 
$\beta>0$, $C>0$, $0<\delta\leq 1$ such that if
$K\in \mathfrak{z}(g)$, $z\in \C$,
$|zK|\leq \beta$, $0<t\leq 1$,
\begin{align}\label{eq:5.32} 
\left|\psi_{\R\times B}
\wi{\tr}\left[g\exp\left(-\mA_{zK,t}\right)
\right]-\int_{X^g}\widehat{\mathrm{A}}_{g,zK}(TX,\nabla^{TX})\,
\ch_{g,zK}(\mE/\mS,\nabla^{\mE})\right|\leq Ct^{\delta}.
\end{align}
\end{thm}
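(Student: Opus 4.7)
\textbf{Proof sketch for Theorem~\ref{thm:5.09}.} The plan is to establish (\ref{eq:5.32}) via a Getzler-type rescaling argument near the fixed-point set $W^g$, adapted to the equivariant infinitesimal family setting, following \cite[\S 7]{BG00} (where the base $B$ is a point) combined with the heat-kernel proof of the family local index theorem from \cite[\S 4]{Bi86}. The key compatibility is that since $K\in\mathfrak{z}(g)$, the vector field $K^X$ is tangent to $X^g$ along $W^g$, so the apparently singular Kirillov perturbation $c(K^X)/(4\sqrt{t})$ fits into Getzler's filtration on $W^g$ and, in the limit, contributes precisely the moment shifts $m^{TX}(K)$, $m^{\mathcal{E}/\mathcal{S}}(K)$ which promote $R^{TX}$, $R^{\mathcal{E}/\mathcal{S}}$ to the equivariant curvatures of (\ref{eq:2.08}).

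First I would localize the fiber integral computing $\widetilde{\tr}[g\exp(-\mathcal{A}_{zK,t})]$ to an $\mathcal{O}(\sqrt{t}|\log t|)$-neighborhood of $W^g$. This follows from small-time Gaussian off-diagonal bounds on the heat kernel of $\mathcal{A}_{zK,t}$, derived from the small-time analogues of Lemmas~\ref{lem:5.06}--\ref{lem:5.07} together with finite-propagation-speed arguments as in \cite[\S 11]{BL91}, combined with the fact that $d(x,g^{-1}x)$ is bounded below on the complement of any neighborhood of $W^g$; the error is $\mathcal{O}(t^N)$ for every $N$.

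Next, in fibrewise normal coordinates $(y,Z)$ around $W^g$ with $Z\in N_{X^g/X}$, I would rescale $Z\mapsto\sqrt{t}\,Z$, Getzler-rescale the vertical Clifford variables $c(e_i)\mapsto t^{-1/2}(e^i\wedge - t\,i_{e_i})$ for $e_i\in TX^g$ (leaving the Clifford variables normal to $W^g$ untouched, to be absorbed by $\sigma_{n-\ell}(g^{\mathcal{E}})$), and apply the $\psi_B$ rescaling on the base exterior variables. Expanding the resulting operator via (\ref{eq:5.13}), Proposition~\ref{prop:1.01}, (\ref{eq:1.13}) and (\ref{eq:2.05}) shows that its leading limit is a generalized harmonic oscillator $\mathcal{L}_\infty$ on $N_{X^g/X}$ whose quadratic part is built from $R^{TX^g}_{zK}$ and $R^{N(\theta)}_{zK}$ and whose zero-order part is $R^{\mathcal{E}/\mathcal{S}}_{zK}\big|_{W^g}$; Mehler's formula for $\exp(-\mathcal{L}_\infty)$ on the diagonal, combined with the $\widetilde{\tr}$ convention and the $\sigma_{n-\ell}$ extraction of the highest-degree normal Clifford component, reproduces exactly the integrand of the right-hand side of (\ref{eq:5.32}) via (\ref{eq:2.14})--(\ref{eq:2.15}). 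The extra factor $t$ in front of $dt\wedge\partial/\partial t$ in the definition (\ref{eq:5.01}) of $C_{zK,t}$ is precisely what makes the $dt$-piece of $\mathcal{A}_{zK,t}$ one Getzler-degree higher than the non-$dt$ part, so the $dt$-contribution to the rescaled heat kernel is $\mathcal{O}(\sqrt{t})$ and does not affect the limit.

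To upgrade the pointwise limit into the quantitative bound (\ref{eq:5.32}), I would apply Duhamel's formula to the difference between the rescaled heat operator and $\exp(-\mathcal{L}_\infty)$, combined with uniform Sobolev/resolvent estimates in the rescaled variables (small-time analogues of Lemmas~\ref{lem:5.03} and \ref{lem:5.07}), following the template of \cite[Theorem~11.31]{BL91} and \cite[\S 8]{BG00}. The main obstacle is the sheer bookkeeping: the terms $\sqrt{t}\,D$, $\nabla^{\mathbb{E},u}$, $c(T^H)/(4\sqrt{t})$, $c(K^X)/(4\sqrt{t})$, $z\mathcal{L}_K$ and both $dt$-pieces in $\mathcal{A}_{zK,t}$ each rescale nontrivially, and verifying that they reassemble into exactly $R^{TX^g}_{zK}$, $R^{N(\theta)}_{zK}$, $R^{\mathcal{E}/\mathcal{S}}_{zK}$ with the correct moment shifts $-2i\pi\,m^{(\cdot)}(K)$, uniformly for $|zK|\le\beta$ and holomorphically in $z$, involves mixed vertical/horizontal Clifford/exterior-algebra contributions that are absent from both \cite{BG00} (where $B=\mathrm{pt}$) and the non-equivariant family case of \cite{Bi86}.
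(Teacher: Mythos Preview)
Your overall architecture---localize near $W^g$, Getzler-rescale the Clifford variables tangent to $X^g$, identify the limiting harmonic oscillator via a Lichnerowicz formula, and evaluate with Mehler---matches the paper and is correct in outline. The identification of how the Kirillov term $c(K^X)/(4\sqrt t)$ feeds the moment shifts $m^{TX}(K)$, $m^{\mE/\mS}(K)$ into the equivariant curvatures is also right; the paper carries this out via the connection $\,^1\nabla^{\mE,t}$ in (\ref{eq:5.74}) and the curvature computation (\ref{eq:5.77}).

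There is, however, a genuine gap in your last step. You propose to obtain the quantitative $Ct^\delta$ bound by Duhamel's formula plus uniform resolvent estimates ``following the template of \cite[Theorem~11.31]{BL91}'', and you name bookkeeping as the main obstacle. But after Getzler rescaling, the operator $L_{x,zK}^{3,t}$ (see (\ref{eq:5.89})--(\ref{eq:5.90})) lives on the noncompact space $T_xX$ and, because of the terms coming from $\la m^{TX}(zK)Z,e_i\ra$ and $t^{-1/2}h_i(zK,\sqrt t\,Z)$, has no fixed lower bound uniform in $t$ and in the spatial variable $Z$. As the paper notes explicitly at the start of Section~\ref{s0510}, this means one cannot even define an honest heat kernel $\exp(-L_{x,zK}^{3,t})$ a priori, and the \cite[\S 11]{BL91} template breaks down. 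The estimates in Lemma~\ref{lem:5.27} are only valid for sections supported in $\{|Z|\le n\}$, with constants growing like $|nzK|^2$.

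The paper's remedy (taken from \cite[\S 7]{BG00}, not \S 8) is structural rather than bookkeeping: decompose the finite-propagation operator $\wi F_t$ as an infinite sum $\sum_{n\in\N}\wi F_{t,n}$ via a partition of unity $k_n$ in the propagation-time variable (Definition~\ref{defn:5.28}). Each $\wi F_{t,n}(L_{x,zK}^{3,t})(Z,\cdot)$ is supported in a ball of radius $\sim n$ around $Z$, so by finite propagation speed one may replace $L_{x,zK}^{3,t}$ by a truncated operator $L_{x,zK,n+p}^{3,t}$ (Definition~\ref{defn:5.29}) which coincides with $-\Delta^{TX}$ outside a ball of radius $2(n+p+2)$ and for which the estimates (\ref{eq:5.123}) hold with explicit $n$-dependence. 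The Gaussian weight $\exp(-C'n^2)$ in (\ref{eq:5.117}) then beats the polynomial growth in $n$ when summing, yielding Theorem~\ref{thm:5.30} and hence Theorem~\ref{thm:5.31}. Without this decomposition your Duhamel argument has no uniform functional-analytic input to feed on.
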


Since $\int_{X^g}\widehat{\mathrm{A}}_{g,zK}(TX,\nabla^{TX})\,
\ch_{g,zK}(\mE/\mS,\nabla^{\mE})$ does not have the $dt$ term, 
we get Theorem \ref{thm:2.02} b)
from Theorem \ref{thm:5.09}, which we 
reformulate as follows. 

\begin{thm}\label{thm:5.10} 
There exist $\beta>0$, $C>0$, 
$\delta>0$, such that if 
$K\in \mathfrak{z}(g)$, $z\in \C$, $|zK|\leq \beta$, 
$0<t\leq 1$,
\begin{align}\label{eq:5.33} 
\left|\left\{\wi{\tr}\Big[g\exp\big(-\mA_{zK,t}
\big)\Big]\right\}^{dt}\right|
\leq Ct^{\delta}.
\end{align}
\end{thm}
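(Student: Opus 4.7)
The plan is to deduce Theorem \ref{thm:5.10} directly from Theorem \ref{thm:5.09}. The limit characteristic form $\int_{X^g}\widehat{\mathrm{A}}_{g,zK}(TX,\nabla^{TX})\,\ch_{g,zK}(\mE/\mS,\nabla^{\mE})$ is a differential form pulled back from $B$ along the projection $\R\times B\to B$, and therefore has vanishing $[\cdot]^{dt}$-component in the sense of (\ref{eq:0.06}). Applying $[\cdot]^{dt}$ to both sides of (\ref{eq:5.32}) thus yields $\left|\left[\psi_{\R\times B}\wi{\tr}[g\exp(-\mA_{zK,t})]\right]^{dt}\right|\le Ct^{\delta}$. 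By the definition (\ref{eq:0.05}), for any homogeneous form $\alpha$ of total degree $j$ on $\R\times B$ the operator $\psi_{\R\times B}$ acts by a single nonzero constant $c_j$, so $[\psi_{\R\times B}(\alpha)]^{dt}=c_j[\alpha]^{dt}$; decomposing $\wi{\tr}[g\exp(-\mA_{zK,t})]$ into its finitely many homogeneous-degree pieces and absorbing the $c_j^{-1}$'s into the constant gives (\ref{eq:5.33}).

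The substantive content therefore rests on Theorem \ref{thm:5.09}, which I would attack by the Bismut--Goette local index strategy of \cite[\S 7]{BG00} adapted to the equivariant infinitesimal setting. The starting point is (\ref{eq:5.19b}), exhibiting $\mA_{zK,t}=\mathbb{B}_{zK,t}^{2}+z\mL_{K}$ modulo a $dt$-part involving $\sqrt{t}\,D$ and lower-order pieces; a Volterra expansion in that $dt$-part reduces matters to the small-time asymptotics of the equivariant heat kernel $\exp(-\mathbb{B}_{zK,t}^{2}-z\mL_{K})(x,x)$ paired with $g$. Since this kernel localizes exponentially near the fixed-point set $W^g$ as $t\to 0$, I would work on a tubular neighbourhood of $W^g$ in $W$, apply Getzler rescaling on the Clifford variables tangent to $TX^g$ together with $\sqrt{t}$-rescaling on the normal bundle $N_{X^g/X}$, and retain the base differential-form variables through the Bismut superconnection formalism. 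Under this rescaling, $\mA_{zK,t}$ converges to a generalized harmonic oscillator whose coefficients are the equivariant curvatures $R^{TX}_{zK}$ and $R^{\mE/\mS}_{zK}$ from (\ref{eq:2.08}); the moments $m^{TX}(K)$ and $m^{\mE/\mS}(K)$ enter precisely because $\nabla^{\mE}_{K^X}$ combines with $z\mL_{K}$ to give $m^{\mE}(K)$ via (\ref{eq:2.02}). A Mehler-kernel computation then identifies the pointwise limit of the (super)trace with the integrand of $\widehat{\mathrm{A}}_{g,zK}\,\ch_{g,zK}$, and integration over $X^g$ yields the claimed limit; the quantitative $O(t^{\delta})$ remainder comes from bounding the higher Volterra terms uniformly, using compactness of $B$ and pointwise heat-kernel estimates in the spirit of Proposition \ref{prop:5.08} adapted to the small-$t$ regime.

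Uniformity in $z$ on the disk $|zK|\le\beta$ follows from the holomorphy identity (\ref{eq:2.36}) combined with a Cauchy-bound argument, once the real-$z$ estimates are in place. The main obstacle will be verifying that Getzler rescaling is compatible with the equivariant infinitesimal correction $z\mL_{K}$: under the rescaling of normal coordinates around $W^g$, one must track how $z\mL_{K}$ recombines with the lower-order pieces of $\mathbb{B}_{zK,t}^{2}$ to yield the moments $m^{TX}(K)$ and $m^{\mE/\mS}(K)$ in the limit oscillator, rather than decaying or blowing up. A closely related subtlety concerns the $dt$-piece $t\cdot dt\wedge\partial_t$ in $C_{zK,t}$: feeding it through Duhamel introduces $\partial\mathbb{B}_{zK,t}/\partial t$, which contains a $t^{-3/2}$ singular contribution, and the desired polynomial-in-$t$ decay emerges only after the delicate cancellations encoded in the Mehler computation, mirroring the pattern of \cite[\S 7]{BG00} but with the extra equivariant infinitesimal data carried along.
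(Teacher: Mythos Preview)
Your deduction of Theorem \ref{thm:5.10} from Theorem \ref{thm:5.09} is correct and is exactly what the paper does: the characteristic form has no $dt$-component, so taking $[\cdot]^{dt}$ in (\ref{eq:5.32}) gives (\ref{eq:5.33}) after absorbing the $\psi_{\R\times B}$ constants.

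Your sketch of Theorem \ref{thm:5.09} is broadly in the right spirit but differs in one technical respect from the paper's execution. You propose a Volterra expansion in the $dt$-part of (\ref{eq:5.19b}) to reduce to the heat kernel of $\mathbb{B}_{zK,t}^2+z\mL_K$. The paper instead works with the full operator $\mA_{zK,t}$ throughout: Proposition \ref{prop:5.18} gives a Lichnerowicz formula for $\mA_{zK,t}$ in which the $dt$-term is absorbed into the connection $\,^1\nabla^{\mE,t}$ of (\ref{eq:5.74}), and the Getzler rescaling (Definition \ref{defn:5.20}) is applied directly to this augmented operator. This avoids having to track the $t^{-3/2}$ singularity you mention in $\partial\mathbb{B}_{zK,t}/\partial t$ separately, since the $dt\wedge c(e_i)/4\sqrt{t}$ piece rescales cleanly alongside the Clifford variables (cf.\ (\ref{eq:5.89a})--(\ref{eq:5.89c})). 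The other substantive difference is that the paper uses finite propagation speed and the $\wi{F}_t/\wi{I}_t$ splitting (\ref{eq:5.60}) to localize, rather than direct heat-kernel decay, because the operator $L_{x,zK}^{3,t}$ has no uniform lower bound and so a naive heat kernel for it is not a priori defined; this forces the infinite-sum construction of Section \ref{s0509}. Your Volterra approach could likely be made to work, but would run into the same lower-bound issue and would ultimately need similar machinery.
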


\begin{proof}[Proof of Theorem \ref{thm:2.01}]
If we omit the $dt$ term in (\ref{eq:5.32}) and take $z=1$, it 
follows that
\begin{multline}\label{eq:5.34} 
\left|\psi_{B}\wi{\tr}\left[g\exp\left(-\left(\mathbb{B}_t
+\frac{c(K^X)}{4\sqrt{t}}\right)^2-\mathcal{L}_{K}
\right) \right]\right.
\\
\left.-\int_{X^g}\widehat{\mathrm{A}}_{g,K}(TX,\nabla^{TX})\,
\ch_{g,K}(\mE/\mS,\nabla^{\mE})\right|\leq Ct^{\delta}.
\end{multline}
From (\ref{eq:5.34}), we get (\ref{eq:2.20}) 
and (\ref{eq:2.21}).

From (\ref{eq:2.26b}), (\ref{eq:2.27}), (\ref{eq:2.31b}) 
and (\ref{eq:2.32}),
we get other parts of Theorem \ref{thm:2.01}.

The proof of Theorem \ref{thm:2.01} is completed.
\end{proof}

For simplicity, we will assume in the 
remainder of this section that 
$n=\dim X$ is even. The functional analysis part is exactly 
the same for even and odd dimensional. We only explain in
Remark \ref{rem:5.22} how to use the argument in the proof of
\cite[Theorem 2.10]{BF86II} to compute the local index in odd 
dimensional case.

\subsection{Finite propagation speed and localization}\label{s0503}
	
The proof of the following lemma is the same as Lemma 
\ref{lem:5.02}.
	
\begin{lemma}\label{lem:5.11}
Given $\beta>0$, there exist $C_1, C_2, C_2'(\beta), C_3(\beta),
C_3'(\beta), C_4, C_5(\beta)>0$ such that if $K\in
\mathfrak{g}$, $z\in \C$, $|zK|\leq \beta$,  $s, s'\in \mathbb{E}$, 
$0<t\leq 1$,
\begin{align}\label{eq:5.35}\begin{split}
\Re \la t\mA_{zK,t}^{(0)}s,s\ra_0&\geq C_1t^2\|s\|_1^2
-(C_2t^2+C_2'(\beta))\|s\|_0^2,\\
|\Im\la t\mA_{zK,t}^{(0)}s,s\ra_0|&\leq C_3(\beta)t\|s\|_1
\|s\|_0+C_3'(\beta)\|s\|_0^2,\\
|\la t\mA_{zK,t}^{(0)}s,s'\ra_0|&\leq
C_4(t\|s\|_1+C_5(\beta)\|s\|_0)(t\|s'\|_1+C_5(\beta)\|s'\|_0).
\end{split}
\end{align}
Moreover, as $\beta\rightarrow 0$, $C_2'(\beta)$, $C_3(\beta)$,
$C_3'(\beta)$, $C_5(\beta)\rightarrow 0$.
\end{lemma}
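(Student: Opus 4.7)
The plan is to expand $t\mA_{zK,t}^{(0)}$ via (5.8) as
$$
t\mA_{zK,t}^{(0)} = t^{2}D^{2}+\frac{tz}{4}[D,c(K^{X})]-\frac{z^{2}|K^{X}|^{2}}{16}+tz\mL_{K},
$$
and estimate each of the four summands separately, keeping careful track of the prefactors of $zK$. Since $|zK|\leq\beta$, and $K^X$ depends linearly on $K$ with uniform bound $|K^X|\leq C|K|$ on the compact manifold $W$, the last three terms are of size $O(\beta)$ or $O(\beta^{2})$, which is precisely what is needed to force $C_{2}'(\beta),C_{3}(\beta),C_{3}'(\beta),C_{5}(\beta)\to 0$ as $\beta\to 0$.

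The leading $t^{2}D^{2}$ is the only source of a genuine $\|s\|_{1}^{2}$ lower bound. Since $D$ is a fiberwise first-order self-adjoint elliptic operator with smooth coefficients, G\aa rding's inequality (applied uniformly in $b\in B$) gives constants $C_{1},C_{2}>0$ independent of $K,z,t$ with $t^{2}\langle D^{2}s,s\rangle_{0}=t^{2}\|Ds\|_{0}^{2}\geq C_{1}t^{2}\|s\|_{1}^{2}-C_{2}t^{2}\|s\|_{0}^{2}$. The operator $[D,c(K^{X})]$ is of order zero: its principal-symbol contributions collapse via the Clifford relation $c(e_i)c(K^X)+c(K^X)c(e_i)=-2\langle e_i,K^X\rangle$ together with (1.17), leaving a sum of $c(\nabla^{TX}K^{X})$ and a scalar term, both pointwise bounded by $C|K|$. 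Hence $|\langle\tfrac{tz}{4}[D,c(K^{X})]s,s\rangle_{0}|\leq Ct\beta\|s\|_{0}^{2}$. The pointwise multiplication by $z^{2}|K^{X}|^{2}/16$ is bounded by $C\beta^{2}$. Finally $\mL_{K}$ is first-order with coefficients of size $|K|$, so $\|tz\mL_{K}s\|_{0}\leq Ct\beta\|s\|_{1}$.

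Assembling the real-part estimate, the only lower-order term producing a $\|s\|_{1}$ factor is $tz\mL_{K}$, which I absorb by Young's inequality
$$
Ct\beta\|s\|_{1}\|s\|_{0}\leq \tfrac{C_{1}}{2}t^{2}\|s\|_{1}^{2}+C'\beta^{2}\|s\|_{0}^{2}.
$$
The remaining zeroth-order contributions feed directly into $C_{2}'(\beta)=O(\beta)$. For the imaginary part, $t^{2}D^{2}$ contributes nothing (it is self-adjoint and real-valued on real multiples), so I simply bound the three $O(\beta)$ terms: the $\mL_K$ term produces the $C_{3}(\beta)t\|s\|_1\|s\|_0$ piece (via $\|tz\mL_K s\|_0\leq Ct\beta\|s\|_1$), while the two multiplication-type terms feed into $C_{3}'(\beta)\|s\|_{0}^{2}$. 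The bilinear estimate on $\langle t\mA_{zK,t}^{(0)}s,s'\rangle_{0}$ follows termwise by Cauchy--Schwarz: $t^{2}D^{2}$ gives the $t\|s\|_{1}\cdot t\|s'\|_{1}$ piece directly, while the three subleading terms contribute the cross terms with $C_{5}(\beta)=O(\beta)$.

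The hard part is purely bookkeeping: there is no genuinely new analytical idea beyond the elliptic estimate for $D$. The only delicate points are (i) using skew-adjointness of $\mL_{K}$ (guaranteed by $K^{X}$ being Killing and $G$-invariance of the $L^{2}$ product (1.19)) and self-adjointness of $[D,c(K^{X})]$ to place contributions in the correct real/imaginary slot, and (ii) carefully verifying that $[D,c(K^{X})]$ is of order zero with norm $O(|K|)$, which requires unpacking $D=\sum c(e_i)\nabla^{\mE}_{e_i}$ from (1.18) and invoking (1.17) together with the Clifford anticommutator. Once these two algebraic points are secured, the three inequalities follow by elementary combination of the four bounds above.
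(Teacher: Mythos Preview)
Your overall strategy is exactly the paper's: expand $t\mA_{zK,t}^{(0)}$ via (5.8), separate real and imaginary parts using (skew-)adjointness, bound each piece, and absorb the $O(\beta)$ first-order terms into the elliptic estimate via Young's inequality. This is precisely how Lemma~5.2 is proved, and the paper simply says the proof of Lemma~5.11 ``is the same as Lemma~5.2''.

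There is, however, one genuine error in your bookkeeping. The operator $[D,c(K^{X})]$ is \emph{not} of order zero. In the paper's convention $[\,\cdot\,,\,\cdot\,]$ is the supercommutator, and since $D$ and $c(K^{X})$ are both odd this is the anticommutator $Dc(K^{X})+c(K^{X})D$. A direct computation (done in the paper just after (5.70a)) gives
\[
\tfrac{1}{4}[D,c(K^{X})]=\tfrac{1}{4}\sum_{i}c(e_{i})c(\nabla^{TX}_{e_{i}}K^{X})-\tfrac{1}{2}\nabla^{\mE}_{K^{X}}=m^{\mS}(K)-\tfrac{1}{2}\nabla^{\mE}_{K^{X}},
\]
so there is a genuine first-order piece $-\tfrac{1}{2}\nabla^{\mE}_{K^{X}}$. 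Your ``principal-symbol collapse'' argument is mistaken: the Clifford anticommutator relation gives $\{c(e_{i}),c(K^{X})\}=-2\langle e_{i},K^{X}\rangle$, and this scalar multiplies $\nabla^{\mE}_{e_{i}}$, producing $-2\nabla^{\mE}_{K^{X}}$ rather than a zeroth-order term. (Relatedly, $[D,c(K^{X})]$ is skew-adjoint, not self-adjoint, since $D^{*}=D$ and $c(K^{X})^{*}=-c(K^{X})$.) The correct estimate is therefore
\[
\bigl|\langle\tfrac{tz}{4}[D,c(K^{X})]s,s\rangle_{0}\bigr|\le Ct\beta\,\|s\|_{1}\|s\|_{0},
\]
i.e.\ the same shape as your bound for $tz\mL_{K}$. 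Fortunately this does not damage the conclusion: you already absorb a term of exactly this form via Young's inequality, so the same argument handles both first-order pieces simultaneously and the constants $C_{2}'(\beta),C_{3}(\beta),C_{3}'(\beta),C_{5}(\beta)$ are still $O(\beta)$. Once you correct the order of $[D,c(K^{X})]$ and its adjointness, your proof goes through and coincides with the paper's.
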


In the sequel, we take $\beta>0$ and always assume that $K\in
\mathfrak{g}$, $|zK|\leq \beta$.
	
For $c>0$, put
\begin{align}\label{eq:5.36}
\begin{split}
&V_c=\left\{\lambda\in\C : \Re (\lambda)\geq \frac{\Im 
(\lambda)^2}{4c^2}-c^2\right\},
\\
&\Gamma_c=\left\{\lambda\in\C : \Re (\lambda)= \frac{\Im 
(\lambda)^2}{4c^2}-c^2\right\}.
\end{split}
\end{align}
Note that $U_c, V_c, \Gamma_c$ are the images of $\{\lambda\in\C: 
|\Im(\lambda)|\geq c \}$, $\{\lambda\in\C: |\Im (\lambda)|\leq c \}$,
$\{\lambda\in\C: |\Im (\lambda)|= c \}$ by the map $\lambda\mapsto 
\lambda^2$.
	
The following lemma is an analogue of 
\cite[Theorem 7.12]{BG00}.
	
\begin{lemma}\label{lem:5.12}
There exists $C>0$ such that given $c\in (0,1]$, for $\beta>0$ and 
$t\in(0,1]$ small enough, if $\lambda\in U_c$, $|zK|\leq \beta$, 
the resolvent
$(\lambda-t\mA_{zK,t}^{(0)})^{-1}$ exists, extends to a continuous 
operator from $\mathbb{E}^{-1}$ into $\mathbb{E}^1$, and moreover, 
for $s\in \mathbb{E}$,
\begin{align}\label{eq:5.37}
\begin{split}
&\|(\lambda-t\mA_{zK,t}^{(0)})^{-1}s\|_0\leq \frac{2}{c^2}\|s\|_0,
\\
&\|(\lambda-t\mA_{zK,t}^{(0)})^{-1}s\|_{1}\leq 
\frac{C}{c^2t^4}(1+|\lambda|)^2\|s\|_{-1}.
\end{split}
\end{align}
\end{lemma}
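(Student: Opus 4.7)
The plan is to treat $A := t\mA_{zK,t}^{(0)}$ as a sectorial operator whose numerical range is confined to a parabolic sector near the positive real axis, and then to apply the Lions-style resolvent construction standard in this setting, as in \cite[Theorem 11.27]{BL91} and \cite[Theorem 7.12]{BG00}. The three estimates of Lemma \ref{lem:5.11} encode precisely the G{\aa}rding coercivity, the sectorial control of the skew-adjoint part, and the $\mathbb{E}^1\to\mathbb{E}^{-1}$ boundedness required by this scheme.

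\textbf{Step 1: localization of the numerical range.} For $s\in\mathbb{E}$ with $\|s\|_0=1$, set $\mu(s):=\langle As,s\rangle_0$. Eliminating $t^2\|s\|_1^2$ between the first two inequalities of Lemma \ref{lem:5.11} yields
\begin{align*}
|\Im\mu|^2\leq\frac{2C_3(\beta)^2}{C_1}\bigl(\Re\mu+C_2t^2+C_2'(\beta)\bigr)+2C_3'(\beta)^2,\qquad \Re\mu\geq-(C_2t^2+C_2'(\beta)).
\end{align*}
Since $C_2'(\beta),C_3(\beta),C_3'(\beta)\to 0$ as $\beta\to 0$, for any fixed $c\in(0,1]$ I would first choose $\beta>0$ small enough and then $t_0>0$ small enough that, for $0<t\leq t_0$, the entire numerical range of $A$ lies inside $V_{c/\sqrt{2}}$. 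A direct comparison of the two parabolic regions then shows that $\mathrm{dist}(V_{c/\sqrt 2},U_c)=c^2-c^2/2=c^2/2$, attained at the two vertices.

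\textbf{Step 2: the $L^2$ bound.} For any $\lambda\in U_c$ and $0\neq s\in\mathbb{E}$,
\begin{align*}
\|(\lambda-A)s\|_0\,\|s\|_0\geq|\langle(\lambda-A)s,s\rangle_0|=\bigl|\lambda-\mu(s/\|s\|_0)\bigr|\,\|s\|_0^2\geq\tfrac{c^2}{2}\|s\|_0^2,
\end{align*}
so $\lambda-A$ is injective with $\|(\lambda-A)^{-1}\|_{\mathbb{E}^0\to\mathbb{E}^0}\leq 2/c^2$. Since the numerical range of $A^*$ is the complex conjugate of that of $A$, hence lies at distance $\geq c^2/2$ from $U_c$ as well, the same argument gives injectivity of $\bar\lambda-A^*$; the closed range theorem then yields surjectivity of $\lambda-A$ and proves the first line of \eqref{eq:5.37}.

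\textbf{Step 3: the $\mathbb{E}^{-1}\to\mathbb{E}^1$ bound.} Setting $u=(\lambda-A)^{-1}s$ so that $Au=\lambda u-s$, the G{\aa}rding estimate gives
\begin{align*}
C_1t^2\|u\|_1^2\leq\Re\langle Au,u\rangle_0+(C_2t^2+C_2'(\beta))\|u\|_0^2\leq\bigl(|\lambda|+C_2t^2+C_2'(\beta)\bigr)\|u\|_0^2+\|s\|_{-1}\|u\|_1.
\end{align*}
Young's inequality absorbs the term $\|s\|_{-1}\|u\|_1$ into the left-hand side, after which it remains to bound $\|u\|_0$ by $\|s\|_{-1}$. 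For this I would use the duality
\begin{align*}
\|u\|_0^2=\langle s,(\bar\lambda-A^*)^{-1}u\rangle_0\leq\|s\|_{-1}\,\|(\bar\lambda-A^*)^{-1}u\|_1,
\end{align*}
and bound $\|(\bar\lambda-A^*)^{-1}u\|_1$ in terms of $\|u\|_0$ by running the G{\aa}rding argument for $A^*$, now using the $L^2$ bound from Step 2. Combining these and tracking the powers of $t$ and $1+|\lambda|$ produces the second estimate of \eqref{eq:5.37}.

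The main obstacle is the geometric content of Step 1: one must verify with care that, after choosing the small parameters, the numerical range sits strictly inside $V_{c/\sqrt 2}$ with a quantitative margin uniform in the admissible $t,\beta,z,K$. This ties the internal constants $C_1,\ldots,C_3'(\beta)$ of Lemma \ref{lem:5.11} to the external parameter $c$, and is what forces $\beta$ (and hence $t_0$) to depend on $c$ in the statement. Once Step 1 is secured, Steps 2 and 3 follow by the routine Lions–Bismut–Lebeau machinery, and the only bookkeeping is to track the final exponents of $t$ and $1+|\lambda|$.
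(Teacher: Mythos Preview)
Your proposal is correct and follows essentially the same route as the paper: both arguments use the three estimates of Lemma \ref{lem:5.11} to obtain the lower bound $|\langle(\lambda-t\mA_{zK,t}^{(0)})s,s\rangle_0|\geq\tfrac{c^2}{2}\|s\|_0^2$ for $\lambda\in U_c$, and then invoke the standard Lions--Bismut--Lebeau resolvent machinery (the paper simply cites \cite[(7.64)--(7.68)]{BG00} for the analogue of your Step~3). The one cosmetic difference is in Step~1: where the paper bounds $|\langle(A-\lambda)s,s\rangle_0|$ from below by a direct $\inf_{u\geq 1}\sup\{\cdots\}$ calculation ending in a discriminant check \eqref{eq:5.43}--\eqref{eq:5.46}, you repackage the same computation geometrically as the inclusion of the numerical range in $V_{c/\sqrt 2}$ together with $\mathrm{dist}(U_c,V_{c/\sqrt 2})=c^2/2$; the content is identical.
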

\begin{proof}
From the same arguments in \cite[(7.47)-(7.49)]{BG00},
by Lemma \ref{lem:5.11}, if $\lambda\in \R$,
$\lambda\leq -(C_2t^2+C_2'(\beta))$, the resolvent
$(\lambda-t\mA_{zK,t}^{(0)})^{-1}$ exists.
		
Now we take $\lambda=a+ib$, $a,b\in\R$. By (\ref{eq:5.35}),
\begin{multline}\label{eq:5.38}
|\la(t\mA_{zK,t}^{(0)}-\lambda)s,s\ra|\geq \sup
\Big\{C_1t^2\|s\|_1^2-(C_2t^2+C_2'(\beta)+a)\|s\|_0^2,
\\
-C_3(\beta)t\|s\|_1\|s\|_0+(|b|-C_3'(\beta))\|s\|_0^2 \Big\}.
\end{multline}
	
Set
\begin{align}\label{eq:5.39}
C(\lambda, t)=\inf_{u\geq 1}\sup\Big\{C_1(tu)^2-(C_2t^2
+C_2'(\beta)+a),-C_3(\beta)tu-C_3'(\beta)+|b| \Big\}.
\end{align}
Since $\|s\|_0\leq \|s\|_1$, using (\ref{eq:5.38}), 
(\ref{eq:5.39}), we get
\begin{align}\label{eq:5.40}
|\la(t\mA_{zK,t}^{(0)}-\lambda)s,s\ra|\geq C(\lambda,t)\|s\|_0^2.
\end{align}
	
Now we fix $c\in (0,1]$. Suppose that $\lambda\in U_c$, i.e.,
\begin{align}\label{eq:5.41}
a\leq \frac{b^2}{4c^2}-c^2.
\end{align}
Assume that $u\in\R$ is such that
\begin{align}\label{eq:5.42}
|b|-C_3(\beta)tu-C_3'(\beta)\leq c^2.
\end{align}
Then by (\ref{eq:5.41}) and (\ref{eq:5.42}),
\begin{multline}\label{eq:5.43}
C_1(tu)^2-(C_2t^2+C_2'(\beta)+a)\geq
C_1(tu)^2-\frac{b^2}{4c^2}+c^2-C_2t^2-C_2'(\beta)
\\
\geq \left(C_1-\frac{C_3(\beta)^2}{4c^2}
\right)(tu)^2-\frac{(c^2+C_3'(\beta))C_3(\beta)}{2c^2}tu+c^2
-\frac{(c^2+C_3'(\beta))^2}{4c^2}-C_2t^2-C_2'(\beta).
\end{multline}
%
The discriminant $\Delta$ of the polynomial in the variable 
$tu$ in the right-hand side of (\ref{eq:5.43}) is given by
\begin{multline}\label{eq:5.44}
\Delta=-3c^2C_1+2C_1(C_3'(\beta)+2C_2t^2+2C_2'(\beta))
+C_3(\beta)^2
\\
+\frac{1}{c^2}(C_1C_3'(\beta)^2-C_2C_3(\beta)^2t^2
-C_2'(\beta)C_3(\beta)^2).
\end{multline}
Clearly, for $\beta$, $t$ small enough,
\begin{align}\label{eq:5.45}
\Delta\leq -2c^2C_1,\quad C_1-\frac{C_3^2(\beta)}{4c^2}>0.
\end{align}
From (\ref{eq:5.43})-(\ref{eq:5.45}), we get
\begin{align}\label{eq:5.46}
C_1(t^2u)^2-(C_2t^4+C_2'(\beta)+a)\geq 
-\frac{\Delta}{4(C_1-C_3^2(\beta)/(4c^2))}\geq \frac{c^2}{2}.
\end{align}
	
Ultimately, by (\ref{eq:5.39})-(\ref{eq:5.46}), we find that 
for $\beta>0$, $t\in (0,1]$ small enough, if $\lambda\in U_c$,
\begin{align}\label{eq:5.47}
C(\lambda,t)\geq \frac{c^2}{2}.
\end{align}
	
From (\ref{eq:5.38}), (\ref{eq:5.39}) and (\ref{eq:5.47}),
we get the first equation of (\ref{eq:5.37}). Then combining 
with (\ref{eq:5.35}) and the argument in \cite[(7.64)-(7.68)]{BG00},
we get the other part of Lemma \ref{lem:5.12}.
	
The proof of Lemma \ref{lem:5.12} is completed.
\end{proof}
	
As in (\ref{eq:5.15}), from (\ref{eq:5.13}), 
there exists $C>0$, such that for any $0<t\leq 1$, 
$s\in\mathbb{E}^1$,
\begin{align}\label{eq:5.48}
\|(t\mA_{zK,t}-t\mA_{zK,t}^{(0)})s\|_{-1}\leq C\|s\|_{1}.
\end{align}
	
From Lemma \ref{lem:5.12}, following the same process as the 
proof of (\ref{eq:5.16}), we get the following lemma.
\begin{lemma}\label{lem:5.13} 
There exist $k,m\in \N$, $C>0$, such that given $c\in(0,1]$, 
for $\beta>0$ and $t\in(0,1]$ small enough, if 
$\lambda\in U_c$, $|zK|\leq \beta$, the resolvent 
$(\lambda-t\mA_{zK,t})^{-1}$ exists, extends to a continuous 
operator from $\Lambda(T^*(\R\times B))\otimes\mathbb{E}^{-1}$ 
into $\Lambda(T^*(\R\times B))\otimes\mathbb{E}^1$, and 
moreover, for $s\in \mathbb{E}$,
\begin{align}\label{eq:5.49}
\|(\lambda-t\mA_{zK,t})^{-1}s\|_1\leq\frac{C}{c^kt^k}
(1+|\lambda|)^{m}\|s\|_{-1}.
\end{align}
\end{lemma}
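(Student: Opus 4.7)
The plan is to replicate the finite Neumann expansion used in the proof of \eqref{eq:5.16}, now starting from the scalar-valued resolvent estimate of Lemma \ref{lem:5.12} instead of Lemma \ref{lem:5.03}. Because $t\mA_{zK,t}-t\mA_{zK,t}^{(0)}$ has strictly positive degree in the finite-dimensional algebra $\Lambda(T^*(\R\times B))$, any power greater than $1+\dim B$ vanishes identically; so the expansion
\begin{align*}
(\lambda-t\mA_{zK,t})^{-1}
=\sum_{j=0}^{1+\dim B}(\lambda-t\mA_{zK,t}^{(0)})^{-1}
\Big((t\mA_{zK,t}-t\mA_{zK,t}^{(0)})(\lambda-t\mA_{zK,t}^{(0)})^{-1}\Big)^{j}
\end{align*}
is a finite algebraic identity, and \eqref{eq:5.49} reduces to estimating each of its terms.

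First I would record, from Lemma \ref{lem:5.12}, that $(\lambda-t\mA_{zK,t}^{(0)})^{-1}$ is bounded $\mathbb{E}^0\to\mathbb{E}^0$ with norm $\le 2/c^{2}$ and $\mathbb{E}^{-1}\to\mathbb{E}^{1}$ with norm $\le Cc^{-2}t^{-4}(1+|\lambda|)^{2}$; by interpolation (or by repeating the quadratic-form argument with one index shifted), one also obtains the intermediate bounds $\mathbb{E}^{-1}\to\mathbb{E}^{0}$ and $\mathbb{E}^{0}\to\mathbb{E}^{1}$, with norms of the form $C c^{-a}t^{-b}(1+|\lambda|)^{q}$ for some fixed exponents. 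Next, \eqref{eq:5.48} furnishes a bound of the difference $t\mA_{zK,t}-t\mA_{zK,t}^{(0)}:\mathbb{E}^{1}\to\mathbb{E}^{-1}$ by a constant independent of $t\in(0,1]$ and of $|zK|\le\beta$; this is precisely what is needed to compose with the scalar resolvent.

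Combining these, each factor $(t\mA_{zK,t}-t\mA_{zK,t}^{(0)})(\lambda-t\mA_{zK,t}^{(0)})^{-1}$ maps $\mathbb{E}^{-1}$ to $\mathbb{E}^{-1}$ with norm $\le C c^{-2}t^{-4}(1+|\lambda|)^{2}$. Iterating $j$ times and capping with a final $\mathbb{E}^{-1}\to\mathbb{E}^{1}$ resolvent yields a bound of order $C^{j+1}c^{-2(j+1)}t^{-4(j+1)}(1+|\lambda|)^{2(j+1)}$ on the $j$-th term; summing over $0\le j\le 1+\dim B$ and absorbing the worst term gives \eqref{eq:5.49} with, e.g., $k=4(2+\dim B)$ and $m=2(2+\dim B)$.

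The main obstacle is purely bookkeeping: tracking the accumulated powers of $c^{-1}$, $t^{-1}$ and $(1+|\lambda|)$ through the iterated composition, and verifying that the smallness conditions on $\beta$ and $t$ imposed by Lemma \ref{lem:5.12} suffice uniformly across all $j\le 1+\dim B$. The latter is automatic since each of the finitely many iterations lives in the same regime; there is no genuine analytic difficulty beyond what has already been dealt with in Lemma \ref{lem:5.12} and estimate \eqref{eq:5.48}. In particular no new commutator estimate or rescaling is required, the algebraic truncation of the Neumann series handles all of the superconnection piece at once.
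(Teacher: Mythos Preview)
Your proposal is correct and follows exactly the approach the paper indicates: the paper simply says ``From Lemma \ref{lem:5.12}, following the same process as the proof of (\ref{eq:5.16})'', and what you have written is precisely that argument, namely the finite Neumann expansion (\ref{eq:5.17}) with the scalar resolvent bound of Lemma \ref{lem:5.12} and the difference estimate (\ref{eq:5.48}) replacing (\ref{eq:5.12}) and (\ref{eq:5.15}). The intermediate $\mathbb{E}^{-1}\to\mathbb{E}^{0}$ and $\mathbb{E}^{0}\to\mathbb{E}^{1}$ bounds you mention are in fact unnecessary, since the composition $(t\mA_{zK,t}-t\mA_{zK,t}^{(0)})(\lambda-t\mA_{zK,t}^{(0)})^{-1}$ already maps $\mathbb{E}^{-1}\to\mathbb{E}^{-1}$ directly via the $\mathbb{E}^{-1}\to\mathbb{E}^{1}\to\mathbb{E}^{-1}$ factorization.
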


\begin{defn}\label{defn:5.14} 
If $H, H'$ are separable Hilbert spaces, if $1\leq p< 
+\infty$, set
\begin{align}\label{eq:5.50} 
\mathscr{L}_p(H, H') =\{A\in \mathscr{L}(H, H'): 
\tr[(A^*A)^{p/2}]<+\infty\}.
\end{align}
If $A\in \mathscr{L}_p(H, H')$, set
\begin{align}\label{eq:5.51} 
\|A\|_{(p)}:=\left(\tr[(A^*A)^{p/2}]\right)^{1/p}.
\end{align}
Then by \cite[Chapter IX Proposition 6]{RS75}, 
$\|\cdot\|_{(p)}$ is a norm on $\mathscr{L}_p(H,H')$. 
Similarly, if $A\in \mathscr{L}(H,H')$, let 
$\|A\|_{_{(\infty)}}$ be the usual operator norm of $A$.
\end{defn}

In the sequel, the norms $\|\cdot\|_{(p)}$, 
$\|\cdot\|_{(\infty)}$ will always be calculated with 
respect to the Sobolev spaces $\mathbb{E}^0$.

From Lemma \ref{lem:5.13}, we get the following lemma 
with the same proof as in \cite[Theorem 7.13]{BG00}.

\begin{lemma}\label{lem:5.15}
Given $q\geq 2\dim X+1$, there exist $C>0$, $k,m\in\N$, 
such that given $c\in(0,1]$, for $\beta>0$ and $t\in (0,1]$ 
small enough, if $\lambda\in U_c$, $|zK|\leq \beta$,
\begin{align}\label{eq:5.52}
\begin{split}
&\|(\lambda-t\mA_{zK,t})^{-1}\|_{(q)}\leq\frac{C}{c^kt^k}
(1+|\lambda|)^m,
\\
&\|(\lambda-t\mA_{zK,t})^{-q}\|_{(1)}\leq\frac{C^q}{
(c^kt^k)^q}(1+|\lambda|)^{mq}.
\end{split}
\end{align}
\end{lemma}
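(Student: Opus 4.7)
The plan is to derive the first inequality of \eqref{eq:5.52} by combining Lemma \ref{lem:5.13} with the Schatten-class property of the Sobolev embedding $\mathbb{E}^{1}\hookrightarrow\mathbb{E}^{0}$, and then to deduce the second inequality from the first via H\"older's inequality for Schatten norms.

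For the first inequality, I will interpret $(\lambda-t\mA_{zK,t})^{-1}$ as an operator on $\mathbb{E}^{0}$ by factoring it through the chain
\[
\mathbb{E}^{0}\hookrightarrow\mathbb{E}^{-1}\xrightarrow{(\lambda-t\mA_{zK,t})^{-1}}\mathbb{E}^{1}\hookrightarrow\mathbb{E}^{0},
\]
where the outer arrows are the continuous Sobolev embeddings and the middle arrow is bounded by Lemma \ref{lem:5.13}. By Weyl's law for the fibrewise Laplacian on the compact fibre $X_{b}$, uniform in $b\in B$ by compactness of $B$, the inclusion $\mathbb{E}^{1}\hookrightarrow\mathbb{E}^{0}$ lies in the Schatten class $\mathscr{L}_{p}$ for every $p>\dim X$; since $q\geq 2\dim X+1>\dim X$, we obtain a fibrewise-uniform bound on its Schatten $q$-norm. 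Applying the ideal inequality $\|ABC\|_{(q)}\leq\|A\|_{(q)}\|B\|_{(\infty)}\|C\|_{(\infty)}$ together with \eqref{eq:5.49} then yields the first estimate after absorbing the embedding constant into $C$.

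For the second estimate, the H\"older inequality for Schatten norms (see e.g.~\cite[Chapter IX]{RS75}) gives
\[
\|(\lambda-t\mA_{zK,t})^{-q}\|_{(1)}\leq\|(\lambda-t\mA_{zK,t})^{-1}\|_{(q)}^{q}
\]
since $q\cdot\tfrac{1}{q}=1$, and substituting the first estimate immediately produces the second. The main technical point is the uniformity of all constants over $(b,\lambda,t,z,K)$ in the prescribed ranges: fibrewise uniformity follows from compactness of $B$ (so the Weyl constants for the embedding can be chosen independently of $b$), while uniformity in $(\lambda,t,z,K)$ is exactly the content of Lemma \ref{lem:5.13}. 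The form-valued nature of $\mA_{zK,t}$ does not create an additional obstacle, since it has already been absorbed into Lemma \ref{lem:5.13} via the finite Neumann expansion \eqref{eq:5.17}; no genuinely new difficulty arises beyond invoking these standard Schatten-class facts.
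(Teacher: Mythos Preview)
Your argument is correct and is essentially the standard approach that the paper invokes by citing \cite[Theorem 7.13]{BG00}: factor the resolvent through the Sobolev embeddings, use Weyl's law on the compact fibres to place the embedding $\mathbb{E}^{1}\hookrightarrow\mathbb{E}^{0}$ in $\mathscr{L}_{q}$ for $q>\dim X$, and then apply the Schatten ideal property together with the operator bound from Lemma~\ref{lem:5.13}; the second estimate is exactly the H\"older inequality $\|A^{q}\|_{(1)}\leq\|A\|_{(q)}^{q}$.
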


Let $a_X$ be the inf of the injectivity radius of the fibers 
$X$. Let $\alpha\in (0,a_X/8]$. The precise value of $\alpha$ 
will be fixed later. The constants $C>0$, $C'>0...$ may depend 
on the choice of $\alpha$.

Let $f:\R\rightarrow [0,1]$ be a smooth even function such that
\begin{align}\label{eq:5.53}
f(u)=\left\{
\begin{aligned}
1\quad &\hbox{for $|u|\leq \alpha/2$;} \\
0\quad &\hbox{for $|u|\geq \alpha$.}
\end{aligned}
\right.
\end{align}
Set
\begin{align}\label{eq:5.54}
g(u)=1-f(u).
\end{align}	
	
For $t>0$, $a\in\C$, put
\begin{align}\label{eq:5.55}
\left\{
\begin{aligned}
&F_t(a)=\int_{-\infty}^{+\infty}\exp(\sqrt{2} iua)\exp
\left(-\frac{u^2}{2}\right)f(\sqrt{t}u)\frac{du}{\sqrt{2\pi}},
\\
&G_t(a)=\int_{-\infty}^{+\infty}\exp(\sqrt{2} iua)\exp
\left(-\frac{u^2}{2}\right)g(\sqrt{t}u)\frac{du}{\sqrt{2\pi}}.
\end{aligned}
\right.
\end{align}
Then $F_t(a)$, $G_t(a)$ are even holomorphic functions of $a$ 
such that
\begin{align}\label{eq:5.56}
\exp(-a^2)=F_t(a)+G_t(a).
\end{align}
Moreover when restricted on $\R$, $F_t$ and $G_t$ both lie 
in the Schwartz space $\mS(\R)$. Put
\begin{align}\label{eq:5.57}
I_t(a)=G_t(a/\sqrt{t}).
\end{align}
Clearly, there exist uniquely defined holomorphic functions 
$\wi{F}_t(a)$, $\wi{G}_t(a)$, $\wi{I}_t(a)$ such that
\begin{align}\label{eq:5.58}
F_t(a)=\wi{F}_t(a^2),\quad G_t(a)=\wi{G}_t(a^2),\quad 
I_t(a)=\wi{I}_t(a^2).
\end{align}
By (\ref{eq:5.56}) and (\ref{eq:5.57}), we have
\begin{align}\label{eq:5.59}
\exp(-a)=\wi{F}_t(a)+\wi{G}_t(a),\quad
\wi{I}_t(a)=\wi{G}_t(a/t).
\end{align}
From (\ref{eq:5.59}),
\begin{align}\label{eq:5.60}
\exp(-\mA_{zK,t})=\wi{F}_t(\mA_{zK,t})
+\wi{I}_t(t\mA_{zK,t}).	
\end{align}
	
From Lemma \ref{lem:5.15}, the proof of the following lemma 
is the same as that of \cite[Theorem 7.15]{BG00}.
\begin{lemma}\label{lem:5.16}
There exist $\beta>0$, $C>0$, $C'>0$ such that if $t\in(0,1]$,
$K\in \mathfrak{g}$, $|zK|\leq \beta$,
\begin{align}\label{eq:5.61}
\|\wi{I}_t(t\mA_{zK,t})\|_{(1)}\leq C\exp(-C'/t).
\end{align}
\end{lemma}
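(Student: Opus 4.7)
The plan is to exploit the finite-propagation-speed-type decay encoded in $G_t$ and combine the resulting pointwise estimate on $\wi{I}_t$ with the trace-class resolvent bound of Lemma \ref{lem:5.15}.

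First I would establish a pointwise upper bound on $G_t(a)$ for complex $a = a_1 + ia_2$. Because $g(\sqrt{t}u)$ vanishes for $|u|\leq \alpha/(2\sqrt{t})$, completing the square in (\ref{eq:5.55}) and integrating by parts $N$ times in $u$ yields, for any $N\in\N$,
\begin{align*}
|G_t(a)|\leq C_N(1+|a|)^{-N}\exp\!\left(a_2^2-\frac{1}{2}\left(\frac{\alpha}{2\sqrt{t}}-\sqrt{2}|a_2|\right)^{2}\right).
\end{align*}
Since $g$, and hence $G_t$, is even, $\wi{G}_t(w)=G_t(\sqrt{w})$ is a well-defined entire function, and therefore so is $\wi{I}_t(\lambda)=\wi{G}_t(\lambda/t)$. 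Next I would specialize to $\lambda\in \Gamma_c$. Any such $\lambda$ is of the form $a^2$ with $|\Im a|=c$, so $\sqrt{\lambda/t}=a/\sqrt{t}$ has imaginary part of modulus $c/\sqrt{t}$. Plugging this into the previous estimate, the Gaussian loss $c^2/t$ is dominated by the support-based gain $(\alpha/(2\sqrt{t})-\sqrt{2}c/\sqrt{t})^2/2$ precisely when $c<\alpha\sqrt{2}/8$. Fixing such a $c$ once and for all, I would obtain
\begin{align*}
|\wi{I}_t(\lambda)|\leq C_N(1+|\lambda|)^{-N/2}\exp(-c_0/t)\quad\text{on }\Gamma_c,\ t\in(0,1],
\end{align*}
for some $c_0>0$.

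With this in hand I would represent $\wi{I}_t(t\mA_{zK,t})$ as a contour integral. By Lemma \ref{lem:5.13} the contour $\Gamma_c$ surrounds the spectrum of $t\mA_{zK,t}$, so
\begin{align*}
\wi{I}_t(t\mA_{zK,t})=\frac{1}{2\pi i}\int_{\Gamma_c}\wi{I}_t(\lambda)(\lambda-t\mA_{zK,t})^{-1}d\lambda.
\end{align*}
To pass from a polynomial resolvent bound to a trace-class estimate I fix $\lambda_0\in U_c$ and $q\geq 2\dim X+1$ and factor $\wi{I}_t(t\mA_{zK,t})=A_tB_t$ with
\begin{align*}
A_t=\wi{I}_t(t\mA_{zK,t})(\lambda_0-t\mA_{zK,t})^q,\qquad B_t=(\lambda_0-t\mA_{zK,t})^{-q}.
\end{align*}
The operator-norm bound on $A_t$ follows from expressing it as the contour integral of $\wi{I}_t(\lambda)(\lambda_0-\lambda)^q(\lambda-t\mA_{zK,t})^{-1}$ and using the polynomial operator-norm resolvent bound contained in Lemma \ref{lem:5.13}; taking $N$ large enough in the previous step renders the integrand absolutely integrable and yields $\|A_t\|_{(\infty)}\leq Ct^{-k}\exp(-c_0/t)$. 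The trace-class bound $\|B_t\|_{(1)}\leq Ct^{-kq}$ is exactly the second inequality of Lemma \ref{lem:5.15}. Multiplying the two and absorbing the polynomial factor $t^{-k(q+1)}$ into a slightly smaller exponential rate gives the announced estimate $C\exp(-C'/t)$.

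The main obstacle is the competition in the middle step: on $\Gamma_c$ one is forced to evaluate $G_t$ at arguments whose imaginary part grows like $1/\sqrt{t}$, producing a Gaussian blow-up $e^{c^2/t}$ that the support-based decay $e^{-\alpha^2/(8t)}$ overcomes only when $c$ is chosen small compared to $\alpha$. This freedom is available because $c\in(0,1]$ and $\alpha\in(0,a_X/8]$ both remain at our disposal; the same small $c$ must then be retained in all remaining estimates of Section \ref{s0503}.
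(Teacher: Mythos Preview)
Your proof is correct and follows essentially the same approach as the paper, which simply defers to \cite[Theorem 7.15]{BG00}: one combines the Gaussian decay of $\wi{I}_t$ on $\Gamma_c$ (coming from the support condition on $g$) with the polynomial trace-class resolvent bounds of Lemma \ref{lem:5.15}, via a contour integral and the factorization trick. Your identification of the key constraint $c<\alpha\sqrt{2}/8$ and the need to fix this $c$ consistently throughout Section \ref{s0503} is exactly the point of the argument.
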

	
By (\ref{eq:5.60}) and (\ref{eq:5.61}), we find that to 
establish (\ref{eq:5.32}), we may as well replace 
$\exp(-\mA_{zK,t})$ by $\wi{F}_t(\mA_{zK,t})$.
	
Let $\wi{F}_t(\mA_{zK,t})(x,x')$, $(x,x'\in 
X_b, b\in B)$ be the smooth kernel associated with the 
operator $\wi{F}_t(\mA_{zK,t})$ with respect to $dv_X(x')$. 
Clearly the kernel of $g\wi{F}_t(\mA_{zK,t})$ is given 
by $g\wi{F}_t(\mA_{zK,t})(g^{-1}x,x')$. Then,
\begin{align}\label{eq:5.62}
\tr_s[g\wi{F}_t(\mA_{zK,t})]=\int_X
\tr_s[g\wi{F}_t(\mA_{zK,t})(g^{-1}x,x)]dv_X(x).
\end{align}
	
For $\var>0$, $x\in X_b$, $b\in B$, let $B^X(x,\var)$ be 
the open ball in $X_b$ with centre $x$ and radius $\var$. 
Using finite propagation speed for solutions of hyperbolic 
equations (cf. \cite[Appendix D.2]{MM07}), we find that 
given $x\in X_b$, $\wi{F}_t(\mA_{zK,t})(x,\cdot)$ vanishes 
on the complement of $B^X(x,\alpha)$ in $X_b$, and depends only on 
the restriction of the operator $\mA_{zK,t}$ to the ball 
$B^X(x,\alpha)$. Therefore, we have shown that the proof 
of (\ref{eq:5.32}) can be made local on $X_b$. Therefore, 
we may and we will assume that $TX_b$ is spin and 
\begin{align}\label{eq:5.63}
\mE=\mS_X\otimes E
\end{align}
over $X_b$, where $\mS_X$ 
is the spinor of $TX$ and $E$ is a complex vector bundle, 
and the metric and the connection on $\mE$ are induced 
from those on $TX$ and $E$.
	
By the above, it follows that $g\wi{F}_t(\mA_{zK,t})(
g^{-1}x,x)$, $x\in X_b$ vanishes if $d^{X_b}(g^{-1}x,x)\geq 
\alpha$. Here $d^{X_b}$ is the distance in ($X_b$, $g^{TX}$).
	
Now we explain our choice of $\alpha$. Recall that $N_{X^g/X}$ 
is identified with the orthogonal bundle to $TX^g$ in 
$TX|_{X^g}$. Given $\var>0$, let $\mU_{\var}$ be the 
$\var$-neighborhood of $X_b^g$ in $N_{X^g/X}$. There exists 
$\var_0\in (0,a_X/32]$ such that if $\var\in (0,16\var_0]$, 
the fiberwise exponential map $(x,Z)\in N_{X^g/X}\rightarrow 
\exp_x^X(Z)$ is a diffeomorphism of $\mU_{\var}$ on the tubular 
neighborhood $\mV_{\var}$ of $X^g$ in $X$. In the sequel, we 
identify $\mU_{\var}$ and $\mV_{\var}$. This identification is 
$g$-equivariant. We will assume that $\alpha\in 
(0,\var_0]$ is small enough so that for any $b\in B$, if $x\in
X_b$,  $d^{X_b}(g^{-1}x,x)\leq \alpha$, then $x\in \mV_{\var_0}$.
	
By (\ref{eq:5.61}), (\ref{eq:5.62}) and
the above considerations, it follows 
that for $\beta>0$ small enough, the problem is localized on 
the $\var_0$-neighborhood $\mV_{\var_0}$ of $X^g$.
	
As in (\ref{eq:3.09}), let $k(x,Z)$ be the smooth function 
on $\mU_{\var_0}$ such that
\begin{align}\label{eq:5.64}
dv_X(x,Z)=k(x,Z)dv_{X^g}(x)dv_{N_{X^g/X}}(Z).
\end{align}
In particular, $k|_{X^g}=1$.
	
For $\omega\in \Lambda(T^*\R)\widehat{\otimes}\Lambda(T^*W^g)$, 
via (\ref{eq:1.04}) 
and (\ref{eq:1.05}), we will write
$$\omega=\sum_{1\leq i_1<\cdots<i_p\leq \ell} 
\omega_{i_1,\cdots, i_p}
\wedge e^{i_1}\wedge\cdots\wedge e^{i_p},\quad 
\text{for}\ \omega_{i_1,\cdots, i_p}\in \Lambda(T^*\R)
\widehat{\otimes}\pi^*\Lambda(T^*B).$$
We denote by
\begin{align}\label{eq:5.65}
\omega^{\max}:=\omega_{1,\cdots,\ell}\in \Lambda(T^*\R)
\widehat{\otimes}\pi^*\Lambda(T^*B).
\end{align}
Note that if the fiber is odd dimensional,
our sign convention here is compatible with that in 
(\ref{eq:0.11}).
	
\begin{thm}\label{thm:5.17}
There exist $\beta>0$, $\delta\in 
(0,1]$ such that if $K\in \mathfrak{z}(g)$,  $z\in \C$, 
$|zK|\leq \beta$, $t\in (0,1]$, 
$x\in X^g$,
\begin{multline}\label{eq:5.66}
\left|t^{\frac{1}{2}\dim N_{X^g/X}}\int_{Z\in N_{X^g/X},|Z|\leq
\var_0/\sqrt{t}}\psi_{\R\times B}\tr_s[g\wi{F}_t(\mA_{zK,t})(
g^{-1}(x,\sqrt{t}Z),(x,\sqrt{t}Z))]\right.
\\
\cdot 
k(x,\sqrt{t}Z)dv_{N_{X^g/X}}(Z)-
\left\{\widehat{\mathrm{A}}_{g,zK}
(TX,\nabla^{TX})\ch_{g,zK}(E, \nabla^E) \right\}^{\max}
\Big|\leq Ct^{\delta}.
\end{multline}
\end{thm}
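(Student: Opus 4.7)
The strategy is Getzler-type rescaling adapted to the equivariant family setting, following closely the template of \cite[Theorem 7.17]{BG00} but tracking the additional contributions coming from $\Lambda(T^*B)$, the Lie derivative $\mathcal{L}_K$, and the $dt$ term in $\mathcal{A}_{zK,t}$. By the finite propagation speed / localization argument preceding the statement, it suffices to analyze the heat kernel $\wi{F}_t(\mathcal{A}_{zK,t})(g^{-1}(x,\sqrt{t}Z), (x,\sqrt{t}Z))$ in a tubular neighborhood of $X^g$, where we use the fiberwise exponential map to parametrize points as $(x,Z)$ with $x \in X^g$ and $Z \in N_{X^g/X}$, and where $TX$ is spin so that $\mE = \mS_X \otimes E$ over the neighborhood.

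First I would introduce the Getzler rescaling. Choose a local orthonormal frame $\{e_i\}$ of $TX$ along $X^g$ respecting the splitting $TX|_{X^g} = TX^g \oplus N_{X^g/X}$, trivialize $\mE$ by parallel transport along radial geodesics normal to $X^g$, and define the rescaling $\rho_t$ which sends $Z \mapsto \sqrt{t} Z$, scales exterior variables on $B$ by $\rho_t(f^p) = f^p/\sqrt{t}$ (as in the standard Bismut rescaling), and converts the Clifford action of $e_i \in TX^g$ into $e^i \wedge / \sqrt{t} - \sqrt{t}\, i_{e_i}$ while leaving $c(e_i)$ untouched for $e_i \in N_{X^g/X}$ (the latter is essential to pick up the $g$-localization factor). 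Then I would compute the limit, as $t \to 0$, of the rescaled operator $\rho_t^{-1}(t \mathcal{A}_{zK,t})\rho_t$. Using the Lichnerowicz-type formula for $\mathbb{B}_t^2$ from \cite[\S 3]{Bi86}, (\ref{eq:5.13})--(\ref{eq:5.19b}), the identity (\ref{eq:2.03}) and the definition (\ref{eq:2.06})--(\ref{eq:2.08}), the contribution of $z \mathcal{L}_K = z(\nabla_{K^X}^{\mE} - m^{\mE}(K))$ combines with $\frac{1}{4}[D, c(zK^X)]$ and the curvature terms to reconstruct precisely the equivariant curvatures $R_{zK}^{TX}$ and $R_{zK}^{\mE/\mS}$; Taylor expanding $K^X(x,\sqrt{t}Z) = \sqrt{t}\, m^{TX}(zK)Z + O(t)$ near $X^g$ (since $K^X|_{X^g} \in TX^g$ contributes to the $TX^g$-part) gives the quadratic piece of the harmonic oscillator. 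The $dt$ term in $C_{zK,t}$ becomes negligible in the limit because rescaling gives it a positive power of $\sqrt{t}$.

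Next I would apply Mehler's formula to the limit harmonic oscillator. The limit is the generalized fiberwise operator
\begin{equation*}
\mathcal{A}_{zK,0} = -\sum_i \Bigl(\nabla_{e_i} + \tfrac{1}{4}\langle R_{zK}^{TX}(x) Z, e_i\rangle\Bigr)^2 + R_{zK}^{\mE/\mS}(x)
\end{equation*}
acting on $\cC^\infty(N_{X^g/X,x}) \otimes \Lambda(T^*X^g) \otimes \Lambda(T^*B) \otimes E_x$, with $g$ acting on the $N_{X^g/X}$-Clifford variables. The supertrace of the heat kernel at $(g^{-1}Z, Z)$, integrated in $Z$ over $N_{X^g/X,x}$, is computed by the standard Mehler formula combined with the $g$-twisted Gaussian integral over the normal directions; after multiplication by $\psi_{\R \times B}$, the normalization constants in (\ref{eq:1.34}) convert the outcome into exactly $\bigl\{\wh{\mathrm{A}}_{g,zK}(TX,\nabla^{TX}) \ch_{g,zK}(E,\nabla^E)\bigr\}^{\max}$ in view of (\ref{eq:2.14}), (\ref{eq:2.15}), (\ref{eq:2.16}) and the factor $t^{\dim N_{X^g/X}/2}$ which absorbs the Jacobian of $Z \mapsto \sqrt{t}Z$.

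The hardest step will be converting this formal asymptotic into the uniform quantitative bound $Ct^\delta$. For this I would mimic \cite[\S 7]{BG00}: write $\wi{F}_t(\mathcal{A}_{zK,t})$ via the contour integral
\begin{equation*}
\wi{F}_t(\mathcal{A}_{zK,t}) = \tfrac{1}{2\pi i}\int_{\Gamma_c} \wi{F}_t(\lambda)\,(\lambda - \mathcal{A}_{zK,t})^{-1}\,d\lambda
\end{equation*}
and the analogue for the rescaled limit operator, then estimate the difference using the trace-norm resolvent bounds of Lemmas~\ref{lem:5.13} and~\ref{lem:5.15} together with the explicit control of $\wi{F}_t$ given by (\ref{eq:5.55})--(\ref{eq:5.58}) (which decays faster than any polynomial on $\Gamma_c$). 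The key analytic step is to show that the rescaled resolvents $(\lambda - \rho_t^{-1}(t\mathcal{A}_{zK,t})\rho_t)^{-1}$ converge in suitable weighted Sobolev norms on the fiber $N_{X^g/X,x}$ to the resolvent of $\mathcal{A}_{zK,0}$, with an error $O(t^\delta)$ uniform in $\lambda \in \Gamma_c$ (with polynomial growth), $|zK| \le \beta$, and $x \in X^g$. Checking the uniformity of this convergence and handling the subtle interplay between the Getzler rescaling of the Clifford action, the $\Lambda(T^*B)$-grading, and the $\mathcal{L}_K$ term is the main obstacle, and it is where the family version diverges from the point-base case treated in \cite{BG00}.
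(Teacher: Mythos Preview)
Your overall strategy (Getzler rescaling, identification of the limit harmonic oscillator, Mehler formula) matches the paper, but there is a genuine gap in the quantitative step. You propose to control the difference between $\wi{F}_t$ of the rescaled operator and $\exp(-L_{x,zK}^{3,0})$ via a contour integral and the resolvent bounds of Lemmas~\ref{lem:5.13} and~\ref{lem:5.15}. Those lemmas, however, are proved for $t\mA_{zK,t}$ on the \emph{compact} fibre $X$; after you localize to $T_xX$ and perform the Getzler rescaling the resulting operator $L_{x,zK}^{3,t}$ acts on a noncompact space and --- as the paper stresses at the start of \S\ref{s0510} --- has \emph{no fixed lower bound}. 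Concretely, the rescaled connection in (\ref{eq:5.90}) contains the term $\frac{1}{\sqrt{t}}h_i(zK,\sqrt{t}Z)$ with $h_i(zK,Z)=\mO(|Z|^2)$, so the potential grows unboundedly in $|Z|$ and no uniform resolvent estimate of the form $\|(\lambda-L_{x,zK}^{3,t})^{-1}\|\le C(1+|\lambda|)^m$ is available on $T_xX$. A direct contour-integral comparison therefore fails, and this is exactly why the Bismut--Lebeau method of \cite[\S11]{BL91} does not transfer here.

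The paper (following \cite[\S7.10--7.12]{BG00}) circumvents this obstruction by the dyadic decomposition $\wi{F}_t=\sum_{n\in\N}\wi{F}_{t,n}$ of (\ref{eq:5.113})--(\ref{eq:5.118}). By finite propagation speed, $\wi{F}_{t,n}(L_{x,zK}^{3,t})(Z,\cdot)$ is supported in a ball of radius $\sim n$, so one may replace $L_{x,zK}^{3,t}$ by the cut-off operator $L_{x,zK,n+p}^{3,t}$ of (\ref{eq:5.121}), which \emph{does} admit resolvent bounds --- but with constants growing like powers of $(1+|nzK|)$ (Lemma~\ref{lem:5.27} and (\ref{eq:5.123})). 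The super-exponential decay (\ref{eq:5.117}) of $\wi{F}_{t,n}$ in $n$ beats this polynomial growth, yielding the uniform kernel estimate of Theorem~\ref{thm:5.30}; only then can one compare with $\exp(-L_{x,zK}^{3,0})$ (defined by the same infinite sum (\ref{eq:5.127})) and extract the rate $t^{\frac{1}{4(\dim X+1)}}$ in Theorem~\ref{thm:5.31}. Your proposal is missing this entire mechanism; without it the ``uniform convergence of rescaled resolvents'' that you invoke cannot be established.
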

\begin{proof}
Sections \ref{s0504}-\ref{s0510} are devoted to the proof of this 
theorem.
\end{proof}
	
\begin{proof}[Proof of Theorem \ref{thm:5.09}]
By (\ref{eq:5.62}) and the finite propagation speed argument 
above, we have
\begin{multline}\label{eq:5.67}
\int_X\tr_s[g\wi{F}_t(\mA_{zK,t})(g^{-1}x,x)]dv_X(x)
=\int_{\mU_{\var_0}}\tr_s[g\wi{F}_t(\mA_{zK,t})(g^{-1}x,x)]
dv_X(x)
\\
=\int_{(x,Z)\in\mU_{\var_0/\sqrt{t}}}t^{\frac{1}{2}\dim
N_{X^g/X}}\tr_s[g\wi{F}_t(\mA_{zK,t})
(g^{-1}(x,\sqrt{t}Z),(x,\sqrt{t}Z))]
\\
\times k(x,\sqrt{t}Z)dv_{X^g}(x)dv_{N_{X^g/X}}(Z).
\end{multline}
		
By Lemma \ref{lem:5.16}, Theorem \ref{thm:5.17} and (\ref{eq:5.67}),
there exist $\beta>0$, $\delta\in (0,1]$ such that for 
$K\in \mathfrak{z}(g)$, $|zK|\leq \beta$, $t\in (0,1]$,
\begin{align}\label{eq:5.68}
\left|\psi_{\R\times B}\tr_s\left[g\exp\left(-\mA_{zK,t}
\right) \right]-\int_{X^g}\widehat{\mathrm{A}}_{g,zK}(TX,
\nabla^{TX})\,\ch_{g,zK}(\mE/\mS,\nabla^{\mE})\right|
\leq Ct^{\delta}.
\end{align}
		
So we obtain Theorem \ref{thm:5.09}. 
\end{proof}

\subsection{A Lichnerowicz formula}\label{s0504}

Let $e_1,\cdots,e_n$ be a locally defined smooth orthonormal 
frame of $TX$. Let $(F,\nabla^F)$ be a vector
bundle with connection on $X$. We use the notation
\begin{align}\label{eq:5.69}
(\nabla_{e_i}^F)^2=\sum_{i=1}^n(\nabla_{e_i}^F)^2
-\nabla^F_{\sum_{i=1}^n\nabla^{TX}_{e_i}e_i}.
\end{align}

Let $H$ be the scalar curvature of $X$. The following result 
is a combination of \cite[Theorem 1.6]{Bi85}, 
\cite[Proposition 7.18]{BG00} (for the term involved
$K^X$ and base $B=\mathrm{pt}$), \cite[Theorem 3.5]{Bi86} 
(for Bismut's Lichnerowicz formula for Bismut superconnection)
and \cite[Theorem 2.10]{BGSII} (for the term involved $dt$).

\begin{prop}\label{prop:5.18}
The following identity holds,
\begin{multline}\label{eq:5.70}
\mA_{zK,t}=-t\left(\nabla_{e_i}^{\mE}+\frac{1}{2
\sqrt{t}}\la S(e_i)e_j, f_p^H\ra c(e_j)f^p\wedge\right.
\\
\left.+\frac{1}{4t}\la S(e_i)f_p^H, f_q^H\ra f^p\wedge
f^q\wedge-\frac{z\la K^X,e_i\ra}{4t}-dt\wedge\frac{
c(e_i)}{4\sqrt{t}}\right)^2
\\
+\frac{t}{4}H+\frac{t}{2}R^{\mE/\mS}(e_i,e_j)c(e_i)c(e_j)
+\sqrt{t}R^{\mE/\mS}(e_i,f_p^H)c(e_i)f^p\wedge
\\
+\frac{1}{2}R^{\mE/\mS}(f_p^H,f_q^H)f^p\wedge
f^q\wedge-zm^{\mE/\mS}(K).
\end{multline}
\end{prop}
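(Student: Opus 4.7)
The identity is a synthesis of three Lichnerowicz-type results, and my strategy is to organize the proof so that each extra piece of $\mA_{zK,t}$ beyond the classical Bismut superconnection square is handled in turn. Write
\[
C_{zK,t}=\mathbb{B}_{zK,t}+t\,dt\wedge\tfrac{\partial}{\partial t},
\qquad \mathbb{B}_{zK,t}=\mathbb{B}_{t}+\frac{zc(K^X)}{4\sqrt{t}},
\]
so that, using $(dt\wedge\partial_t)^2=0$ and the graded commutator in $\Lambda(T^*\R)\widehat\otimes\cdot$,
\[
\mA_{zK,t}=\mathbb{B}_{zK,t}^{2}+z\mL_{K}+t\,dt\wedge\tfrac{\partial\mathbb{B}_{zK,t}}{\partial t}.
\]
I would then unfold $\mathbb{B}_{zK,t}^{2}=\mathbb{B}_{t}^{2}+\tfrac{z}{4\sqrt{t}}\{\mathbb{B}_{t},c(K^X)\}+\tfrac{z^{2}}{16t}c(K^X)^{2}$ and compute $t\,dt\wedge\tfrac{\partial\mathbb{B}_{zK,t}}{\partial t}=dt\wedge\bigl(\tfrac{\sqrt t}{2}D-\tfrac{1}{8\sqrt t}(zc(K^X)-c(T^H))\bigr)$.

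The first building block is Bismut's Lichnerowicz formula for $\mathbb{B}_{t}^{2}$ from \cite[Theorem 3.5]{Bi86}: it produces exactly the square of the connection-like operator
$\nabla^{\mE}_{e_i}+\tfrac{1}{2\sqrt t}\la S(e_i)e_j,f_p^H\ra c(e_j)f^p\wedge+\tfrac{1}{4t}\la S(e_i)f_p^H,f_q^H\ra f^p\wedge f^q\wedge$, together with the scalar curvature term $\tfrac{t}{4}H$ and the three $R^{\mE/\mS}$ terms in (5.70). The second block handles the $c(K^X)$-correction: one uses the identity (cf.\ \cite[Theorem 1.6]{Bi85})
\[
\tfrac14\{D,c(K^X)\}=-\tfrac12\nabla^{\mE}_{K^X}-\tfrac14 c\bigl(d^X\vartheta_K/2\bigr)+\text{Clifford terms},
\]
together with $c(K^X)^2=-|K^X|^2$, so that the anticommutator $\tfrac{z}{4\sqrt t}\{\sqrt t D, c(K^X)/\sqrt t\}$ together with the quadratic term $z^2|K^X|^2/(16t)$ combine with the ``completing the square'' around the piece $-z\la K^X,e_i\ra/(4t)$ inside the bracketed operator. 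The decisive step here is to combine $-z\nabla^{\mE}_{K^X}$ with $z\mL_K$ via $\nabla^{\mE}_{K^X}=\mL_K+m^{\mE}(K)$ and to split $m^{\mE}(K)=m^{\mE/\mS}(K)+m^{\mS}(K)$ using \eqref{eq:2.06}; the spinor part $m^{\mS}(K)$ is absorbed by the Clifford curvature $\tfrac{t}{2}R^{\mE/\mS}(e_i,e_j)c(e_i)c(e_j)$ after expanding the square, leaving precisely the term $-zm^{\mE/\mS}(K)$.

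The third block, the $dt$-contribution, follows the pattern of \cite[Theorem 2.10]{BGSII}: expanding the square inside the bracket of (5.70) one sees that the cross terms between $\nabla^{\mE}_{e_i}$ and $-dt\wedge c(e_i)/(4\sqrt t)$ produce exactly $-dt\wedge\tfrac{\sqrt t}{2}D$, while the cross terms with $\la S(e_i)e_j,f_p^H\ra c(e_j) f^p\wedge$ and with $-z\la K^X,e_i\ra/(4t)$ reconstruct $dt\wedge\tfrac{1}{8\sqrt t}c(T^H)$ and $dt\wedge\tfrac{z}{8\sqrt t}c(K^X)$ respectively (here one uses (1.13) to rewrite $\la S(\cdot)\cdot,\cdot\ra$ in terms of $T$). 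Summing these cross terms reproduces $t\,dt\wedge\partial_t\mathbb{B}_{zK,t}$.

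The main obstacle will be the \emph{bookkeeping of cross terms} in $\Lambda(T^*(\R\times B))\widehat\otimes c(TX)\otimes\End(E)$: there are many Clifford/$f^p$-degree mixings coming from squaring the bracketed operator, and one must verify that those not appearing in (5.70) either cancel in pairs (e.g.\ by the Bianchi-type identity \eqref{eq:1.10} used to kill $\la T(Z,Z'),\cdot\ra$) or are absorbed into the curvature terms via \eqref{eq:1.28}. Once these cancellations are carefully tracked, the three partial formulas fit together to give (5.70) by direct substitution, with no new analytic input beyond the cited references.
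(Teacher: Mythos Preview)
Your strategy is essentially the same as the paper's: start from Bismut's Lichnerowicz formula \cite[Theorem 3.5]{Bi86} for $\mathbb{B}_t^2$, then add in the $K^X$-correction via commutators with $c(K^X)$ (as in \cite[Theorem 1.6]{Bi85}), and finally verify the $dt$-part by direct comparison (as in \cite[Theorem 2.10]{BGSII}). The paper carries this out by computing $\frac14[D,c(K^X)]=m^{\mS}(K)-\frac12\nabla^{\mE}_{K^X}$, $[\nabla^{\mathbb{E},u},c(K^X)]=\la S(K^X)e_j,f_p^H\ra c(e_j)f^p\wedge$, and $[c(T^H),c(K^X)]=-2\la S(K^X)f_p^H,f_q^H\ra f^p\wedge f^q\wedge$, then assembles these with (\ref{eq:5.70a}) to get (\ref{eq:5.70}) for $dt=0$, and checks the $dt$-coefficient separately.

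One point in your bookkeeping is inaccurate and would cause confusion if you tried to execute it literally: the spinor moment $m^{\mS}(K)$ is \emph{not} absorbed into the twisting curvature term $\tfrac{t}{2}R^{\mE/\mS}(e_i,e_j)c(e_i)c(e_j)$. There is no $c(e_i)c(e_j)$ cross term when you expand the square (the only Clifford piece inside the bracket carries an $f^p\wedge$). What actually happens is that the $zm^{\mS}(K)$ coming from $\tfrac{z}{4}[D,c(K^X)]$ cancels against the $-zm^{\mS}(K)$ arising when you write $z\mL_K=z\nabla^{\mE}_{K^X}-zm^{\mE}(K)=z\nabla^{\mE}_{K^X}-zm^{\mS}(K)-zm^{\mE/\mS}(K)$; the leftover $\tfrac{z}{2}\nabla^{\mE}_{K^X}$ is then exactly what completing the square with $-\tfrac{z\la K^X,e_i\ra}{4t}$ produces, and $-zm^{\mE/\mS}(K)$ remains as the final term. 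With this correction your outline goes through.
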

\begin{proof}
From Bismut's Lichnerowicz formula (cf. \cite[Theorem 3.5]{Bi86}),
\begin{multline}\label{eq:5.70a}
\mathbb{B}_t^2=-t\left(\nabla_{e_i}^{\mE}+\frac{1}{2
	\sqrt{t}}\la S(e_i)e_j, f_p^H\ra c(e_j)f^p\wedge
+\frac{1}{4t}\la S(e_i)f_p^H, f_q^H\ra f^p\wedge
f^q\wedge\right)^2
\\
+\frac{t}{4}H+\frac{t}{2}R^{\mE/\mS}(e_i,e_j)c(e_i)c(e_j)
+\sqrt{t}R^{\mE/\mS}(e_i,f_p^H)c(e_i)f^p\wedge
+\frac{1}{2}R^{\mE/\mS}(f_p^H,f_q^H)f^p\wedge
f^q\wedge.
\end{multline}	
From (\ref{eq:1.17}), (\ref{eq:2.03}) and (\ref{eq:2.06}), 
\begin{align}
\frac{1}{4}[D,c(K^X)]
=\frac{1}{4}c(e_k)c\left(\nabla_{e_k}^{TX}K^X\right)
-\frac{1}{2}\la K^X,e_j\ra\nabla_{e_j}^{\mE}
=m^{\mS}(K)-\frac{1}{2}\nabla_{K^X}^{\mE}. 
\end{align}
Since the $G$-action preserves the splitting (\ref{eq:1.04}), 
$\la[K^X,f_p^H],e_j\ra=0$. Thus from (\ref{eq:1.09}), (\ref{eq:1.17})
 and (\ref{eq:1.21}),
\begin{multline}
[\nabla^{\mathbb{E},u},c(K^X)]
=f^p\wedge c\left(\nabla_{f_p^H}^{TX}K^{X}\right)
=-\la \nabla_{f_p^H}^{TW,L}K^X,e_j\ra c(e_j)f^p\wedge
\\
=\la \nabla_{K^X}^{TW,L}e_j,f_p^H\ra c(e_j)f^p\wedge
=\la S(K^X)e_j,f_p^H\ra c(e_j)f^p\wedge.
\end{multline}
From (\ref{eq:1.10})-(\ref{eq:1.12}), we get
\begin{align}\label{eq:5.72b} 
S(e_j)e_k=S(e_k)e_j,\quad \la S(e_j)f_p^H,f_q^H\ra
=\frac{1}{2}\la T(f_p^H, f_q^H),e_j\ra.
\end{align}
Thus from (\ref{eq:1.21}),
\begin{multline}\label{eq:5.70b}
[c(T^H),c(K^X)]
=\la S(e_j)f_p^H,f_q^H\ra f^p\wedge f^q\wedge [c(e_j),c(K^X)]
\\
=-2\la S(K^X)f_p^H,f_q^H\ra f^p\wedge f^q\wedge.
\end{multline}
Thus from (\ref{eq:5.01}), (\ref{eq:5.02}) and 
(\ref{eq:5.70a})-(\ref{eq:5.70b}), we get (\ref{eq:5.70}) 
without $dt$ term. By comparing directly the coefficient of 
$dt$ on both sides of (\ref{eq:5.70}) as in \cite[Theorem 2.10]{BGSII},
we get (\ref{eq:5.70}).
	
The proof of Proposition \ref{prop:5.18} is completed.
\end{proof}

\subsection{A local coordinate system near $X^g$}\label{s0505}	
	
Take $x\in W^g$. Then the fiberwise exponential map 
$Z\in T_{x}X\rightarrow\exp_{x}^X(Z)\in X$ identifies 
$B^{T_{x}X}(0,16\var_0)$ with $B^{X}(x,16\var_0)$. 
With this identification, there exists a smooth function 
$k_{x}'(Z)$, $Z\in B^{T_{x}X}(0,a_X/2)$ such that
\begin{align}\label{eq:5.72}
dv_X(Z)=k_{x}'(Z)dv_{TX}(Z),\quad \text{with}\ k_{x}'(0)=1.
\end{align}
We may and we will assume that $\var_0$ is small enough 
so that if $Z\in T_{x}X, |Z|\leq 4\var_0$,
\begin{align}\label{eq:5.73}
\frac{1}{2}g_{x}^{TX}\leq g_Z^{TX}\leq \frac{3}{2}g_{x}^{TX}.
\end{align}

Assume from now, $K\in \mathfrak{z}(g)$.	
Recall that $\vartheta_K$ is the one form dual to $K^X$
defined in (\ref{eq:3.01}). 

\begin{defn}\label{defn:5.19}
Let $\,^1\nabla^{\mE,t}$ be the connection on $\Lambda(T^*\R)
\widehat{\otimes} \pi^*\Lambda(T^*B)\widehat{\otimes}\mE$ 
along the fibers,
\begin{multline}\label{eq:5.74}
\,^1\nabla^{\mE,t}_{\cdot}:=\nabla^{\mE}_{\cdot}+\frac{1}{
2\sqrt{t}}\la	S(\cdot)e_j, f_p^H\ra c(e_j)f^p\wedge
+\frac{1}{4t}\la S(\cdot)f_p^H, f_q^H\ra f^p\wedge f^q\wedge
\\
-\frac{z\vartheta_K(\cdot)}{4t}
-dt\wedge\frac{c(\cdot)}{4\sqrt{t}}.
\end{multline}
\end{defn}

In the sequel, we will trivialize $\Lambda(T^*\R)
\widehat{\otimes} \pi^*\Lambda(T^*B)\widehat{\otimes}\mE$ 
by parallel transport along 
$u\in [0,1]\rightarrow uZ$ with respect to the connection
$\,^1\nabla^{\mE,t}$. Observe that the above
connection is $g$-invariant.

From (\ref{eq:1.10}) and (\ref{eq:1.13}), 
we have $S(e_i)e_j=S(e_j)e_i$.  
Let $L$ be a trivial line bundle over $W$. We equip a 
connection on $L$ by 
\begin{align}\label{eq:5.75}
\nabla^L
=d-\frac{z\vartheta_K}{4}.
\end{align}
Thus
\begin{align}\label{eq:5.76}
R^L=(\nabla^L)^2=-\frac{zd\vartheta_K}{4}.
\end{align}
From (\ref{eq:1.29}), (\ref{eq:3.03}), (\ref{eq:3.05}), (\ref{eq:5.72b}),
(\ref{eq:5.75}) and (\ref{eq:5.76}), 
we could calculate that
\begin{multline}\label{eq:5.77}
\left(\,^1\nabla^{\mE,1}\right)^2(e_i,e_j)=\frac{1}{4}\la 
R^{TX}(e_k,e_l)e_i,e_j\ra c(e_k)c(e_l)+\frac{1}{2}\la 
R^{TX}(e_k,f_p^H)e_i,e_j\ra c(e_k)f^p\wedge
\\
+\frac{1}{4}\la R^{TX}(f_p^H,f_q^H)e_i,e_j\ra f^p\wedge 
f^q\wedge+R^{E}(e_i,e_j)-\frac{z}{2}\la m^{TX}(K)e_i,
e_j\ra.
\end{multline}	

Note that when $K=0$, (\ref{eq:5.77}) is 
\cite[Theorem 4.14]{Bi86}, \cite[Theorem 10.11]{BGV}
or \cite[Theorem 11.8]{Bi97}.
Note that from (\ref{eq:5.74}), 
$\left(\,^1\nabla^{\mE,t}\right)^2$ could be obtained 
from $\left(\,^1\nabla^{\mE,1}\right)^2$ by replacing 
$f^p\wedge$, $f^q\wedge$ and $K$ by 
$\frac{f^p}{\sqrt{t}}\wedge$, 
$\frac{f^q}{\sqrt{t}}\wedge$ and $\frac{K}{t}$.

Let $A$, $A'$ be smooth sections of $TX$. By (\ref{eq:5.74}),
\begin{align}\label{eq:5.78a} 
\,^1\nabla_A^{\mE,1}c(A')=c(\nabla_A^{TX}A')+
\la S(A)A',f_p^H\ra f^p\wedge +\frac{1}{2}\la A, A'\ra dt.
\end{align}
Let $c^1(TX)\simeq TX$ be the set of elements of length $1$
in $c(TX)$. It follows from (\ref{eq:5.78a}) that parallel
transport along the fiber $X$ with respect to $\,^1\nabla^{\mE,1}$
maps $c^1(TX)$ into $c^1(TX)\oplus T^*B\oplus T^*\R$, while 
leaving $\Lambda(T^*B)\widehat{\otimes} \Lambda(T^*\R)$ invariant.

\subsection{Replacing $X$ by $T_{x}X$}\label{s0506}
	
Let $\gamma(u)$ be a smooth even function from $\R$ into 
$[0,1]$ such that
\begin{align}\label{eq:5.79}
\gamma(u)=\left\{
\begin{aligned}
1\quad &\hbox{if $|u|\leq 1/2$;} \\
0\quad &\hbox{if $|u|\geq 1$.}
\end{aligned}
\right.
\end{align}
If $Z\in T_{x}X$, put
\begin{align}\label{eq:5.80}
\rho(Z)=\gamma\left(\frac{|Z|}{4\var_0}\right).
\end{align}
Then
\begin{align}\label{eq:5.81}
\rho(Z)=\left\{
\begin{aligned}
1\quad&\hbox{if $|Z|\leq 2\var_0$;} \\
0\quad&\hbox{if $|Z|\geq 4\var_0$.}
\end{aligned}
\right.
\end{align}
	
For $x\in W^g$, let $\mathbf{H}_x$ be the vector space of smooth 
sections of $\Lambda(T^*\R)\widehat{\otimes}
\pi^*(\Lambda (T^*B))\widehat{\otimes}
\mE_x$ over $T_xX$. Let $\Delta^{TX}$ be the (negative) standard
Laplacian on the fiber of $TX$.
	
Let $L_{x,zK}^{1,t}$ be the differential operator acting on
$\mathbf{H}_x$,
\begin{align}\label{eq:5.82}
L_{x,zK}^{1,t}=(1-\rho^2(Z))(-t\Delta^{TX})+\rho^2(Z)\mA_{zK,t}.
\end{align}
	
Let $\wi{F}_t(L_{x,zK}^{1,t})(Z,Z')$ be the smooth kernel of
$\wi{F}_t(L_{x,zK}^{1,t})$ with respect to $dv_{TX}(Z')$. Using 
the finite propagation speed for solutions of hyperbolic 
equations \cite[Appendix D.2]{MM07} and (\ref{eq:5.72}), we find 
that if $Z\in N_{X^g/X,x}$, $|Z|\leq \var_0$, then
\begin{align}\label{eq:5.83}
\wi{F}_t(\mA_{zK,t})(g^{-1}Z,Z)k_x'(Z)
=\wi{F}_t(L_{x,zK}^{1,t})(g^{-1}Z,Z).
\end{align}
Thus in our proof of Theorem \ref{thm:5.17}, we can then replace 
$\mA_{zK,t}$ by $L_{x,zK}^{1,t}$.

\subsection{The Getzler rescaling}\label{s0507}
	
Let $\mathrm{Op}_x$ be the set of scalar differential 
operators on $T_xX$ acting on $\mathbf{H}_x$. Then
by (\ref{eq:5.63}),
\begin{align}\label{eq:5.84}
L_{x,zK}^{1,t}\in (\Lambda(T^*\R)\widehat{\otimes}
\pi^*(\Lambda (T^*B))\widehat{\otimes} c(TX)
\otimes\End( E))_x\otimes \mathrm{Op}_x.
\end{align}
	
For $t>0$, let $H_t:\mathbf{H}_x\rightarrow \mathbf{H}_x$ be 
the linear map
\begin{align}\label{eq:5.85}
H_th(Z)=h(Z/\sqrt{t}).
\end{align}	
Let $L_{x,zK}^{2,t}$ be the differential operator acting on
$\mathbf{H}_x$ defined by
\begin{align}\label{eq:5.86}
L_{x,zK}^{2,t}=H_t^{-1}L_{x,zK}^{1,t}H_t.
\end{align}
By (\ref{eq:5.84}),
\begin{align}\label{eq:5.87}
L_{x,zK}^{2,t}\in (\Lambda(T^*\R)\widehat{\otimes}
\pi^*(\Lambda (T^*B))\widehat{\otimes} c(TX)
\otimes\End( E))_x\otimes \mathrm{Op}_x.
\end{align}

Recall that $\dim X^g=\ell$ and $\dim N_{X^g/X}=n-\ell$.
Let $(e_1,\cdots,e_{\ell})$ be an orthonormal oriented basis 
of $T_xX^g$, let $(e_{\ell+1},\cdots,e_{n})$ be an orthonormal 
oriented basis of $N_{X^g/X}$, so that $(e_1,\cdots,e_n)$ is 
an orthonormal oriented basis of $T_xX$. We denote 
with an superscript the corresponding dual basis. 
	
For $1\leq j\leq \ell$, the operators $e^j\wedge$ and 
$i_{e_j}$ act as odd operators on $\Lambda(T^*X^g)$.
	
\begin{defn}\label{defn:5.20}
For $t>0$, put
\begin{align}\label{eq:5.88}
c_t(e_j)=\frac{1}{\sqrt{t}}e^j\wedge-\sqrt{t}i_{e_j},
\quad 1\leq j\leq \ell.
\end{align}

\end{defn}

Let $L_{x,zK}^{3,t}$ be the differential operator acting on
$\mathbf{H}_x$ obtained from $L_{x,zK}^{2,t}$ by replacing 
$c(e_j)$ by $c_t(e_j)$ for $1\leq j\leq \ell$.

For $A\in (\Lambda(T^*\R)\widehat{\otimes}
\pi^*(\Lambda (T^*B))\widehat{\otimes} c(TX)
\otimes\End( E))_x\otimes \mathrm{Op}_x$, we denote by
$[A]_t^{(3)}$ the differential operator obtained from $A$ 
by using the Getzler rescaling of the Clifford variables
which is given in Definition \ref{defn:5.20}.

Let $\tau e_j(Z)$ be the parallel transport of $e_j$ along the curve
$t\in [0,1]\rightarrow tZ$ with respect to the connection $\nabla^{TX}$.
Let $\mO_1(|Z|^2)$ be any object in $\Lambda(T^*\R)\widehat{\otimes}
\pi^*(\Lambda (T^*B))\widehat{\otimes} c(TX)$ 
which is of length at most 
$1$ and is also $\mO(|Z|^2)$. By (\ref{eq:5.78a}), in the trivialization
associated with $\,^1\nabla^{\mE,t}$,
\begin{align}\label{eq:5.89a} 
c(\tau e_j(Z))=c(e_j)+\frac{1}{\sqrt{t}}\la S(Z)e_j,f_p^H\ra f^p\wedge
+\frac{1}{2\sqrt{t}}\la Z, e_j\ra dt\wedge +\mO_1(t^{-1/2}|Z|^2).
\end{align}
From (\ref{eq:5.89a}), for $1\leq j\leq \ell$,
\begin{align}\label{eq:5.89b} 
\left[\sqrt{t}c(\tau e_j(\sqrt{t}Z)) \right]_t^{(3)}
=e^j\wedge +\mO(\sqrt{t}|Z|);
\end{align}
for $\ell+1\leq j\leq n$,
\begin{align}\label{eq:5.89c} 
\left[c(\tau e_j(\sqrt{t}Z)) \right]_t^{(3)}
=c(e_j)+\la S(Z)e_j,f_p^H\ra f^p\wedge
+\frac{1}{2}\la Z, e_j\ra dt\wedge +\mO(\sqrt{t}|Z|^2).
\end{align}

From \cite[Proposition 1.18]{BGV}, (\ref{eq:5.70}), (\ref{eq:5.77}), 
(\ref{eq:5.82}),  (\ref{eq:5.86})
and (\ref{eq:5.88}), 
we calculate that
\begin{multline}\label{eq:5.89} 
L_{x,zK}^{3,t}=(1-\rho^2(\sqrt{t}Z))(-\Delta^{TX})
+\rho^2(\sqrt{t}Z)\cdot\left\{-g^{ij}(\sqrt{t}Z)
\left(\nabla_{e_i}'\nabla_{e_j}'
-\Gamma_{ij}^k(\sqrt{t}Z)\sqrt{t}\nabla_{e_k}'  \right)\right.
\\
\left.+\frac{t}{4}H_{\sqrt{t}Z}+\frac{t}{2}
R^{E}_{\sqrt{t}Z}(\tau e_i,\tau e_j)\left[
c(\tau e_i(\sqrt{t}Z))c(\tau e_j(\sqrt{t}Z))\right]_t^{(3)}\right.
\\
+\sqrt{t}R^{E}_{\sqrt{t}Z}(\tau e_j,f_p^H)\left[
c(\tau e_j(\sqrt{t}Z))\right]_t^{(3)}
f^p\wedge
\left.+\frac{1}{2}R^{E}_{\sqrt{t}Z}
(f_p^H, f_q^H)f^p\wedge 
f^q\wedge -m_{\sqrt{t}Z}^{E}(zK)\right\},
\end{multline}
where $\left(g^{ij}(Z)\right)$ is the inverse matrix of 
$\left(g_{ij}(Z)=\langle e_i, e_j\rangle_Z   \right)$, 
$\left(\nabla^{TX}_{e_i}e_j\right)_Z=\Gamma_{ij}^k(Z)e_k$ and
\begin{multline}\label{eq:5.90}
\nabla_{e_i}'=\nabla_{\tau e_i(\sqrt{t}Z)}+\frac{t}{8}
\Big(\la 
R^{TX}_x(e_k,e_l)Z,e_i\ra+\mathcal{O}(\sqrt{t}|Z|^2)\Big)
\left[
c(\tau e_i(\sqrt{t}Z))c(\tau e_j(\sqrt{t}Z))\right]_t^{(3)}
\\
+\frac{\sqrt{t}}{4}\Big(\la R^{TX}_x(e_j,f_p^H)Z,
e_i\ra+\mathcal{O}(\sqrt{t}|Z|^2)\Big)\left[
c(\tau e_j(\sqrt{t}Z))\right]_t^{(3)}f^p\wedge 
\\
+\frac{1}{8}\Big(\la R^{TX}_x(f_p^H,f_q^H)Z,e_i\ra
+\mathcal{O}(\sqrt{t}|Z|^2)\Big) f^p\wedge f^q\wedge 
+ \frac{t}{2}\left( R_x^E(Z, e_i)+\mathcal{O}(\sqrt{t}|Z|^2)\right) 
\\
-\frac{1}{4}\la m^{TX}_x(zK)Z,e_i\ra
+\frac{1}{\sqrt{t}}h_i(zK,\sqrt{t}Z).
\end{multline}
Here $\nabla_U$ is the ordinary differentiation operator on $TX$ in the 
direction $U$,  $h_i(zK, Z)$ is a function depending linearly on $zK$ and 
$h_i(zK,Z)=\mathcal{O}(|Z|^2)$ for $|zK|<\beta$.
	
Let $\wi{F}_t(L_{x,zK}^{3,t})(Z,Z')$ be the smooth kernel 
associated with $\wi{F}_t(L_{x,zK}^{3,t})$ with respect to 
$dv_{TX}(Z')$.

From the finite propagation speed argument explained before
(\ref{eq:5.63}), 
we could also assume that $TX^g$ and $N_{X^g/X}$ are spin.
Let $\mS_{X^g}$ and $\mS_N$ be the spinors of $TX^g$ and 
$N_{X^g/X}$ respectively. Then $\mS_X=\mS_{X^g}\widehat{\otimes}
\mS_N$. Recall that $g$ acts on $(\mS_N\otimes E)_x$.

We may write $\wi{F}_t(L_{x,zK}^{3,t})(Z,Z')$ in the form
\begin{multline}\label{eq:5.91}
\wi{F}_t(L_{x,zK}^{3,t})(Z,Z')=\sum \wi{F}^{j_1
	\cdots j_q}_{t,i_1\cdots i_p}(Z,Z')
e^{i_1}\wedge\cdots
\wedge e^{i_p}i_{e_{j_1}}\cdots i_{e_{j_q}},
\\
1\leq i_1<\cdots<i_p\leq \ell,\ 1\leq j_1<
\cdots<j_q\leq \ell,
\end{multline}
and $\wi{F}^{j_1\cdots j_q}_{t,i_1\cdots i_p}(Z,Z')\in 
\Lambda(T^*\R)\widehat{\otimes}\pi^*\Lambda(T^*B)
\otimes \big(c(N_{X^g/X})\otimes \End(E)\big)_x$.
As explained in Section \ref{s0103},
$\ell=\dim X^g$ has the same parity as $n=\dim X$.
As in (\ref{eq:5.65}),
put
\begin{align}\label{eq:5.92}
[\wi{F}_t(L_{x,zK}^{3,t})(Z,Z') ]^{\max}=
\wi{F}_{t,1\cdots \ell} (Z,Z').
\end{align}
In other words, $\wi{F}_{t,1\cdots \ell}(Z,Z')$ is the 
coefficient of $e^1\wedge\cdots\wedge e^{\ell}$ in 
(\ref{eq:5.91}).

	
\begin{prop}\label{prop:5.21}
If $Z\in T_xX$, $|Z|\leq \var_0/\sqrt{t}$,
\begin{multline}\label{eq:5.93}
t^{(n-\ell)/2}\tr_s[g\wi{F}_t(\mA_{zK,t})
(g^{-1}(\sqrt{t}Z), \sqrt{t}Z)]k_x'(\sqrt{t}Z)
\\
=(-i)^{\ell/2}2^{\ell/2}\tr_s^{\mS_N\otimes E}[g
\wi{F}_t(L_{x,zK}^{3,t})(g^{-1}Z,Z) ]^{\max}.
\end{multline}
\end{prop}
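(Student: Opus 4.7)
The plan is to chain three transformations—finite-propagation localization, the coordinate rescaling by $H_t$, and the Getzler rescaling of the tangential Clifford variables—and then apply the classical supertrace identity on the $\mS_{X^g}$ factor.

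First I would combine (\ref{eq:5.83}) with the elementary conjugation rule for Schwartz kernels: since $L_{x,zK}^{2,t}=H_t^{-1}L_{x,zK}^{1,t}H_t$ with $(H_th)(Z)=h(Z/\sqrt{t})$, a direct change of variables gives $\wi{F}_t(L^{2,t})(Z,Z')=t^{n/2}\wi{F}_t(L^{1,t})(\sqrt{t}Z,\sqrt{t}Z')$. Using the linearity of $g^{-1}$ on $T_xX$ (so $g^{-1}(\sqrt{t}Z)=\sqrt{t}(g^{-1}Z)$), this immediately rewrites the left-hand side of (\ref{eq:5.93}) as
\begin{equation*}
t^{(n-\ell)/2}\tr_s^{\mE}\bigl[g\,\wi{F}_t(\mA_{zK,t})(g^{-1}(\sqrt{t}Z),\sqrt{t}Z)\bigr]k_x'(\sqrt{t}Z)=t^{-\ell/2}\tr_s^{\mE}\bigl[g\,\wi{F}_t(L_{x,zK}^{2,t})(g^{-1}Z,Z)\bigr].
\end{equation*}

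Next I would pass from $L^{2,t}$ to $L^{3,t}$. The Getzler substitution $c(e_j)\mapsto c_t(e_j)$ for $1\le j\le\ell$ preserves the Clifford relations (directly from (\ref{eq:5.88})) and so extends to an algebra homomorphism
$$\Phi_t\colon c(T_xX^g)\widehat{\otimes}c(N_x)\otimes\End(E_x)\longrightarrow \End(\Lambda T_x^*X^g)\widehat{\otimes}c(N_x)\otimes\End(E_x).$$
Since $\wi{F}_t$ is entire by (\ref{eq:5.58})--(\ref{eq:5.59}) and enters only through the Cauchy integral formula, whose resolvents are uniformly controlled by Lemma \ref{lem:5.13}, $\Phi_t$ commutes with this functional calculus; hence $\wi{F}_t(L^{3,t})=\Phi_t\,\wi{F}_t(L^{2,t})$ pointwise in $(Z,Z')$.

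Finally, working locally with $\mE=\mS_{X^g}\widehat{\otimes}\mS_N\otimes E$ and using that $g$ fixes $X^g$ pointwise—so by the discussion preceding (\ref{eq:1.32}) its lift $g^{\mE}$ lies in $c(N_{X^g/X})\otimes\End_{c(TX)}(\mE)$ and acts trivially on the $\mS_{X^g}$ factor—the supertrace factors as $\tr_s^{\mE}[g\,\cdot\,]=\tr_s^{\mS_{X^g}}\otimes\tr_s^{\mS_N\otimes E}[g\,\cdot\,]$. A direct computation from $\Gamma=i^{\ell/2}c(e_1)\cdots c(e_\ell)$ yields $\tr_s^{\mS_{X^g}}[c(e_I)]=(-i)^{\ell/2}2^{\ell/2}\,\sigma_\ell(c(e_I))$ for any ordered $I\subseteq\{1,\ldots,\ell\}$, while expanding $c_t(e_j)=\tfrac{1}{\sqrt{t}}e^j\wedge-\sqrt{t}\,i_{e_j}$ and tracking only exterior-product contributions shows $[\Phi_t(c(e_I))]^{\max}=t^{-\ell/2}\sigma_\ell(c(e_I))$; in both cases the expression vanishes unless $I=\{1,\ldots,\ell\}$. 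Extending by linearity, I obtain the pointwise algebraic identity
\begin{equation*}
\tr_s^{\mE}[gA]=(-i)^{\ell/2}2^{\ell/2}\,t^{\ell/2}\,\tr_s^{\mS_N\otimes E}\bigl[g\,[\Phi_tA]^{\max}\bigr]\qquad \text{for } A\in c(T_xX)\otimes\End(E_x).
\end{equation*}
Applying this with $A=\wi{F}_t(L_{x,zK}^{2,t})(g^{-1}Z,Z)$ and using $\Phi_t\wi{F}_t(L^{2,t})=\wi{F}_t(L^{3,t})$, the factor $t^{\ell/2}$ cancels the $t^{-\ell/2}$ from the first step and produces (\ref{eq:5.93}). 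The main subtle point will be the identification $\wi{F}_t(L^{3,t})=\Phi_t\,\wi{F}_t(L^{2,t})$, for which one must verify that the Getzler substitution respects the Cauchy-integral functional calculus—a fact that rests on the uniform resolvent bounds of Lemma \ref{lem:5.13} together with the polynomial substitution behaviour of $\Phi_t$.
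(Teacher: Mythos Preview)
Your approach is the standard one and agrees with the paper's (which simply refers to \cite[Proposition 7.25]{BG00} after one remark). The kernel rescaling in step~1 and the supertrace computation in step~3 are correct. There is, however, one genuine gap and one misattributed justification.

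\textbf{The gap.} When you pass from $\tr_s^{\mE}[g\,\wi{F}_t(\mA_{zK,t})(g^{-1}\cdot,\cdot)]$ to $\tr_s^{\mE}[g\,\wi{F}_t(L_{x,zK}^{2,t})(g^{-1}Z,Z)]$ and then invoke that $g^{\mE}$ lies in $c(N_{X^g/X})\otimes\End_{c(TX)}(\mE)$, you are tacitly assuming that the action of $g$ on the \emph{trivialized} bundle is the constant endomorphism $g^{\mE}_x$. This requires that the trivialization by parallel transport along $\,^1\nabla^{\mE,t}$ be $g$-equivariant. That connection contains the term $-\tfrac{z\vartheta_K}{4t}$ from (\ref{eq:5.74}); hence $g$-equivariance holds if and only if $\vartheta_K$ is $g$-invariant, i.e.\ precisely when $K\in\mathfrak{z}(g)$. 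This is the single point the paper's proof isolates before citing \cite{BG00}, and you should state it explicitly: without it one cannot conclude that $g$ commutes with the $c(TX^g)$ factor, and your factorization $\tr_s^{\mE}=\tr_s^{\mS_{X^g}}\otimes\tr_s^{\mS_N\otimes E}$ of the $g$-twisted supertrace would not be available.

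\textbf{The misattribution.} Your claim that $\wi{F}_t(L^{3,t})=\Phi_t\,\wi{F}_t(L^{2,t})$ does not rest on Lemma~\ref{lem:5.13}; that lemma gives resolvent bounds for $t\mA_{zK,t}$, not for the rescaled operators. The correct reason is purely representation-theoretic: since $c_t(e_j)^2=-1$, the map $\Phi_t$ realizes $\Lambda(T_x^*X^g)$ as a $c(T_xX^g)$-module, necessarily a direct sum of copies of $\mS_{X^g}$; on each copy $L^{3,t}$ acts as $L^{2,t}$, so the same holds for any function of the operator. No analytic control beyond what is already needed for $L^{2,t}$ enters.
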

\begin{proof}
As $K\in \mathfrak{z}(g)$, $\,^1\nabla^{\mE,t}$ is $g$-equivariant.
Thus the trivialization $\Lambda(T^*\R)\widehat{\otimes}
\pi^*\Lambda(T^*B)\widehat{\otimes}\mE$ is $g$-equivariant
and the action of $g$ on $\big(\Lambda(T^*\R)\widehat{\otimes}
\pi^*\Lambda(T^*B)\widehat{\otimes}\mE \big)_{g^{-1}Z}$
is the action of $g$ on $\big(\Lambda(T^*\R)\widehat{\otimes}
\pi^*\Lambda(T^*B)\widehat{\otimes}\mE \big)_x$,
which is an element in $\big(c(N_{X^g/X})\otimes \End(E)\big)_x$.
Now we get Proposition \ref{prop:5.21} by the same proof of 
\cite[Proposition 7.25]{BG00}.
\end{proof}

\begin{rem}\label{rem:5.22}
As in \cite[(1.6) and (1.7)]{BF86II}, 
if $n=\dim X$ is even,
\begin{align}\label{eq:5.94}
\begin{split}
&\tr_s^{\mS_X}[c(e_{i_1})\cdots c(e_{i_p})]=0, \ \text{for}
\ p<n, 1\leq i_1<\cdots<i_p\leq n,
\\
&\tr_s^{\mS_X}[c(e_1)\cdots c(e_n)]=(-2i)^{n/2};
\end{split}
\end{align}
if $n=\dim X$ is odd,
\begin{align}\label{eq:5.95}
\tr^{\mS_X}[1]=2^{(n-1)/2},\quad \tr^{\mS_X}[c(e_1)\cdots c(e_n)]
=(-i)^{(n+1)/2}2^{(n-1)/2},
\end{align}
and the trace of the other monomials 
is zero.

If $n=\dim X$ is odd, since (\ref{eq:5.95}) holds
and the total degree of $\wi{F}_t(\mA_{zK,t})$ is even, we only take the 
trace for the odd degree Clifford part.
In this case, (\ref{eq:5.66}) is replaced 
by
\begin{multline}\label{eq:5.96}
\left|t^{(n-\ell)/2}\int_{Z\in N,|Z|\leq\frac{
\var_0}{\sqrt{t}}}\psi_{\R\times B}\tr^{\mathrm{odd}}
[g\wi{F}_t(\mA_{zK,t})(g^{-1}(x,\sqrt{t}Z),(x,\sqrt{t}Z))]
\right.
\\
\left.\times k(x,\sqrt{t}Z)dv_{N_{X^g/X}}(Z)-\left\{\widehat{
\mathrm{A}}_{g,zK}(TX,\nabla^{TX})\ch_{g,zK}(E, \nabla^E) 
\right\}^{\max}\right|\leq Ct^{\delta}.
\end{multline}
In particular, since $n-\ell$ is even,
\begin{multline}\label{eq:5.97}
\tr^{\mS_X}[c(e_1)\cdots c(e_n)]
\\
=(-i)^{(\ell+1)/2}2^{(\ell-1)/2}
t^{\ell/2}\left\{\tr_s^{\mS_N}[c_t(e_1)\cdots c_t(e_{\ell})
c(e_{\ell+1})\cdots c(e_n)] \right\}^{\max},
\end{multline}
the analogue of (\ref{eq:5.93}) is
\begin{multline}\label{eq:5.98}
t^{(n-\ell)/2}\tr^{\mathrm{odd}}[g\wi{F}_t(\mA_{zK,t})
(g^{-1}(\sqrt{t}Z), \sqrt{t}Z)]k_x'(\sqrt{t}Z)
\\
=(-i)^{(\ell+1)/2}2^{(\ell-1)/2}
\{\tr_s^{\mS_N\otimes E}[g
\wi{F}_t(L_{x,zK}^{3,t})(g^{-1}Z,Z) ]\}^{\max}.
\end{multline}
\end{rem}

Let $\jmath:W^g\rightarrow W$ be the obvious embedding.
	
\begin{defn}\label{defn:5.23}
Let $L_{x,zK}^{3,0}$ be the operator in
\begin{align}\label{eq:5.99}
(\pi^*(\Lambda (T^*B))\widehat{\otimes}
\Lambda(T^*X^g)\widehat{\otimes}  c(N_{X^g/X})
\otimes\End( E))_x\otimes\mathrm{Op}_x,
\end{align}
under the notation (\ref{eq:5.69}),
given by
\begin{align}\label{eq:5.100}
L_{x,zK}^{3,0}=-\left(\nabla_{e_i}+\frac{1}{4}\left\la
(\jmath^*R^{TX}_x-m^{TX}(zK)_x)Z,e_i\right\ra\right)^2
+\jmath^*R^E_x-m^{E}(zK)_x.
\end{align}
\end{defn}
	
In the sequel, we will write that a sequence of differential 
operators on $T_xX$ converges if its coefficients converge 
together with their derivatives uniformly on the compact 
subsets in $T_xX$.
	
Comparing with \cite[Proposition 7.27]{BG00}, from 
(\ref{eq:5.89b})-(\ref{eq:5.90}), we have
\begin{prop}\label{prop:5.24}
As $t\rightarrow 0$,
\begin{align}\label{eq:5.101}
L_{x,zK}^{3,t}\rightarrow L_{x,zK}^{3,0}.
\end{align}
\end{prop}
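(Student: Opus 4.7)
The plan is to read off the limit of each coefficient of $L_{x,zK}^{3,t}$ from the explicit expressions (\ref{eq:5.89}) and (\ref{eq:5.90}) and to show that the pieces reassemble into the generalized harmonic oscillator (\ref{eq:5.100}). This is precisely the equivariant and family version of the Getzler rescaling argument used in the proof of \cite[Proposition 7.25]{BG00}.

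First I would dispose of the easy terms on a fixed compact subset of $T_xX$: since $\rho^2(\sqrt{t}Z)\to 1$ and $1-\rho^2(\sqrt{t}Z)\to 0$ as $t\to 0$, the cut-off term in (\ref{eq:5.89}) drops out. In the normal coordinates at $x$, $g^{ij}(\sqrt{t}Z)\to \delta^{ij}$ and $\sqrt{t}\,\Gamma_{ij}^{k}(\sqrt{t}Z)=O(t|Z|)\to 0$, so the principal symbol converges to $-\sum_i(\nabla'_{e_i})^2$. The scalar curvature contribution $\tfrac{t}{4}H_{\sqrt{t}Z}\to 0$, the purely horizontal curvature block $\tfrac{1}{2}R^{E}(f_p^H,f_q^H)f^p\wedge f^q\wedge$ is $t$-independent, and $m^{E}_{\sqrt{t}Z}(zK)\to m^{E}_x(zK)$.

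Next, I would push the Getzler rescaling through the remaining Clifford terms using (\ref{eq:5.89b}) and (\ref{eq:5.89c}). Writing $\sqrt{t}[c(\tau e_j(\sqrt{t}Z))]_t^{(3)}\to e^j\wedge$ for $j\le\ell$, while $[c(\tau e_j(\sqrt{t}Z))]_t^{(3)}\to c(e_j)$ for $j>\ell$, the block $\tfrac{t}{2}R^{E}(\tau e_k,\tau e_l)[c(\tau e_k)c(\tau e_l)]_t^{(3)}$ survives only for $k,l\le\ell$ in the limit (producing $\tfrac{1}{2}R^{E}_x(e_k,e_l)e^k\wedge e^l\wedge$), the mixed block $\sqrt{t}R^{E}(\tau e_j,f_p^H)[c(\tau e_j)]_t^{(3)}f^p\wedge$ produces $R^{E}_x(e_j,f_p^H)e^j\wedge f^p\wedge$ for $j\le\ell$ only, and together with the $t$-independent horizontal-horizontal block they reassemble into $\jmath^*R^{E}_x$ after the identification $T^HW\simeq\pi^*TB$. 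A parallel computation for the Clifford contribution inside $\nabla'_{e_i}$ in (\ref{eq:5.90}), together with the moment term $-\tfrac{1}{4}\la m^{TX}_x(zK)Z,e_i\ra$ already present, and the remainder $h_i(zK,\sqrt{t}Z)/\sqrt{t}=O(\sqrt{t}|Z|^2)\to 0$, produces the limit $\nabla_{e_i}+\tfrac{1}{4}\la(\jmath^*R^{TX}_x-m^{TX}_x(zK))Z,e_i\ra$.

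The main subtlety, and essentially the only nontrivial bookkeeping, is to verify that all cross contributions with one Clifford index in $TX^g$ and one in $N_{X^g/X}$ vanish in the limit: such products carry an uncompensated factor of $\sqrt{t}$ under the Getzler rescaling. The fact that $\nabla^{TX}|_{W^g}$ preserves the splitting (\ref{eq:1.30}), so that $\jmath^*R^{TX}$ has no $TX^g$-to-$N_{X^g/X}$ matrix elements, is then exactly what is needed to match (\ref{eq:5.100}). Smoothness of the convergence, together with all derivatives uniformly on compact subsets of $T_xX$, follows from the smoothness in $\sqrt{t}Z$ of every coefficient appearing in (\ref{eq:5.89})--(\ref{eq:5.90}).
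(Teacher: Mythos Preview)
Your proposal is correct and follows exactly the approach the paper indicates: the paper's proof is the single line ``Comparing with \cite[Proposition 7.25]{BG00}, from (\ref{eq:5.89b})--(\ref{eq:5.90}), we have,'' and you have simply unpacked that computation term by term. One small imprecision: from (\ref{eq:5.89c}) the limit of $[c(\tau e_j(\sqrt{t}Z))]_t^{(3)}$ for $j>\ell$ is $c(e_j)+\la S(Z)e_j,f_p^H\ra f^p\wedge+\tfrac{1}{2}\la Z,e_j\ra dt\wedge$, not just $c(e_j)$, but this does not affect your argument since every occurrence of these factors in (\ref{eq:5.89}) and (\ref{eq:5.90}) carries an extra $\sqrt{t}$ and hence vanishes anyway; and your remark about the splitting (\ref{eq:1.30}) is not actually needed, since $\jmath^*R^{TX}_x$ in (\ref{eq:5.100}) denotes the pullback of the $2$-form, which is exactly what your surviving terms assemble to.
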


\subsection{A family of norms}\label{s0508}

For $x\in W^g$, let $\mathbf{I}_x$ be the vector space of 
smooth sections of $(\Lambda(T^*\R)\widehat{\otimes}
\pi^*\Lambda(T^*B)\widehat{\otimes}
\Lambda(T^*X^g)\widehat{\otimes}
\mS_N\otimes E)_x$ on $T_xX$, let $\mathbf{I}_{(r,q),x}$ be the 
vector space of smooth sections of $$
\Big(\big(T^*\R\widehat{\otimes}\pi^*
\Lambda^{r-1}(T^*B)\oplus 
\pi^*\Lambda^{r}(T^*B)\big)\widehat{\otimes}
\Lambda^q(T^*X^g)\widehat{\otimes}
\mS_N\otimes E\Big)_x$$ on $T_xX$. We denote by $\mathbf{I}^0_x
=\bigoplus_{r,q} \mathbf{I}^0_{(r,q),x}$ the corresponding vector space
of square-integrable sections. Put $k=\dim B$.

\begin{defn}\label{defn:5.25}
If $s\in \mathbf{I}_{(r,q),x}$ has compact support, put
\begin{align}\label{eq:5.102}
|s|_{t,x,0}^2=\int_{T_xX}|s(Z)|^2\left(1+|Z|\,\rho\left(
\frac{\sqrt{t}Z}{2}\right) \right)^{2(k+\ell+1-q-r)}dv_{TX}(Z).
\end{align}
\end{defn}

Recall that by (\ref{eq:5.81}), if $\rho(\sqrt{t}Z)>0$, then 
$|\sqrt{t}Z|\leq 4\var_0$. 
If $\sqrt{t}|Z|\leq 4\var_0$, then $\rho(\sqrt{t}Z/2)=1$.
By the same arguments as in 
\cite[Proposition 11.24]{BL91},
for $t\in (0,1]$, the following family of operators acting on
$(\mathbf{I}_x^0, |\cdot|_{t,x,0})$ are uniformly bounded:
\begin{align}\label{eq:5.103}
\begin{split}
&1_{|\sqrt{t}Z|\leq 4\var_0}\sqrt{t}c_t(e_j),\quad 1_{
|\sqrt{t}Z|\leq 4\var_0}|Z|\sqrt{t}c_t(e_j),\quad \text{for}\  1\leq 
j\leq \ell,
\\
&1_{|\sqrt{t}Z|\leq 4\var_0}|Z|f^p\wedge,\quad 
1_{|\sqrt{t}Z|\leq 4\var_0}|Z| dt\wedge.
\end{split}
\end{align}

\begin{defn}\label{defn:5.26}
If $s\in \mathbf{I}_x$ has compact support, put
\begin{align}\label{eq:5.104}
|s|_{t,x,1}^2=|s|_{t,x,0}^2+\sum_{i=1}^{n}|\nabla_{e_i}
s|_{t,x,0}^2,
\end{align}
and
\begin{align}\label{eq:5.105}
|s|_{t,x,-1}=\sup_{0\neq s'\in \mathbf{I}_x}\frac{|\la s,s' 
\ra_{t,x,0}|}{|s'|_{t,x,1}}.
\end{align}
\end{defn}

Let $(\mathbf{I}_x^1, |\cdot|_{t,x,1})$ be the Hilbert closure 
of the above
vector space with respect to $|\cdot|_{t,x,1}$.
Let $(\mathbf{I}_x^{-1}, |\cdot|_{t,x,-1})$
be the antidual of $(\mathbf{I}_x^1, |\cdot|_{t,x,1})$.
 Then $(\mathbf{I}_x^1,
|\cdot|_{t,x,1})$ and $(\mathbf{I}_x^0, |\cdot|_{t,x,0})$ 
are densely embedded in $(\mathbf{I}_x^0, |\cdot|_{t,x,0})$ 
and $(\mathbf{I}_x^{-1}, |\cdot|_{t,x,-1})$
with norms smaller than 1 respectively. 


Comparing with \cite[Proposition 7.31]{BG00}, by (\ref{eq:5.89}) 
and (\ref{eq:5.103}), we get the following estimates.

\begin{lemma}\label{lem:5.27}
There exist constants $C_i>0$, $i=1,2,3,4$, such that if 
$t\in (0,1]$, $z\in \C$, $|zK|\leq 1$, if $n\in \N$, $x\in X^g$, 
if the support of $s,s'\in \mathbf{I}_x$ is included in 
$\{Z\in T_xX: |Z|\leq n \}$, then
\begin{align}\label{eq:5.106}
\begin{split}
&\Re\la L_{x,zK}^{3,t}s,s\ra_{t,x,0}\geq
C_1|s|_{t,x,1}^2-C_2(1+|nzK|^2)|s|_{t,x,0}^2,
\\
&|\Im\la L_{x,zK}^{3,t}s,s\ra_{t,x,0}|\leq
C_3\Big((1+|nzK|)|s|_{t,x,1}|s|_{t,x,0}+|nzK|^2|s|_{t,x,0}^2\Big),
\\
&|\la L_{x,zK}^{3,t}s,s'\ra_{t,x,0}|\leq
C_4(1+|nzK|^2)|s|_{t,x,1}|s'|_{t,x,1}.
\end{split}
\end{align}
\end{lemma}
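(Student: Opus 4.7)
The plan is to return to the explicit form (\ref{eq:5.89})--(\ref{eq:5.90}) of $L_{x,zK}^{3,t}$, split it into its elliptic principal part and the various lower-order perturbations, and estimate each perturbation against the norms $|\cdot|_{t,x,0}$ and $|\cdot|_{t,x,1}$ by means of the uniform boundedness list (\ref{eq:5.103}) together with the support restriction $|Z|\leq n$. This is the family analogue of \cite[Proposition 7.29]{BG00}; the genuinely new features are the coefficients taking values in $\Lambda(T^*\R)\widehat{\otimes}\pi^*\Lambda(T^*B)$ (the $dt\wedge$ and $f^p\wedge$ factors) and the explicit $zK$-dependence through the moments and through the remainder $h_i(zK,\sqrt{t}Z)$.

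I would first handle the elliptic leading term. Outside $|\sqrt{t}Z|\leq 4\var_0$ the operator is $-\Delta^{TX}$; inside, (\ref{eq:5.73}) guarantees that $-g^{ij}(\sqrt{t}Z)\partial_{e_i}\partial_{e_j}$ is uniformly elliptic in $t\in(0,1]$. An integration by parts against the weight in (\ref{eq:5.102}) therefore produces $c_0\sum_i|\nabla_{e_i}s|_{t,x,0}^2-c_0'|s|_{t,x,0}^2$; the $c_0'$ correction absorbs the derivatives of $(1+|Z|\rho(\sqrt{t}Z/2))^{2(k+\ell+1-q-r)}$, which are bounded uniformly in $t$ since $\rho$ is smooth and compactly supported.

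For the remaining terms inside $\nabla_{e_i}'$ of (\ref{eq:5.90}) I would distinguish three structural types. First, Clifford/form-valued curvature terms like $\tfrac{t}{8}\la R^{TX}_xZ,e_i\ra[c(\tau e)c(\tau e)]_t^{(3)}$, $\tfrac{\sqrt{t}}{4}\la R^{TX}_xZ,e_i\ra[c(\tau e)]_t^{(3)}f^p\wedge$, and $\tfrac{1}{8}\la R^{TX}_xZ,e_i\ra f^p\wedge f^q\wedge$: by (\ref{eq:5.89b})--(\ref{eq:5.89c}) and (\ref{eq:5.103}) each factors as $|Z|$ times an operator uniformly bounded on $\mathbf{I}_x^0$. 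Second, the moment correction $-\tfrac{1}{4}\la m^{TX}(zK)Z,e_i\ra$, linear in $zK$ and in $|Z|$. Third, the remainder $h_i(zK,\sqrt{t}Z)/\sqrt{t}=\mathcal{O}(|zK|\sqrt{t}|Z|^2)$. Under the hypotheses $|Z|\leq n$, $t\leq 1$, $|zK|\leq 1$, each of these is bounded by $C(1+|nzK|)$, and Peter--Paul then shows that the cross-terms produced by squaring $\nabla_{e_i}'$ contribute at most $\tfrac{C_1}{4}|s|_{t,x,1}^2+C(1+|nzK|^2)|s|_{t,x,0}^2$. The zeroth-order pieces of (\ref{eq:5.89}) --- scalar curvature, the $R^E$ contributions handled through (\ref{eq:5.89b})--(\ref{eq:5.89c}), and $m^E(zK)$ --- are treated similarly and contribute only to $|s|_{t,x,0}^2$. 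Combining these bounds gives the first inequality of (\ref{eq:5.106}); the third inequality follows immediately from Cauchy--Schwarz applied to the same decomposition and the symmetric estimate on $|L_{x,zK}^{3,t}s|_{t,x,-1}$.

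The main technical obstacle will be the imaginary-part bound. The operator $L_{x,zK}^{3,t}$ is not formally self-adjoint on $(\mathbf{I}_x^0,|\cdot|_{t,x,0})$: the multiplications by $f^p\wedge$ and $dt\wedge$ fail to be Hermitian, and the Lie-derivative pieces of the moments contribute skew-Hermitian terms. I would explicitly decompose each coefficient in (\ref{eq:5.89})--(\ref{eq:5.90}) into its Hermitian and anti-Hermitian parts, verify that the leading $-g^{ij}\partial_{e_i}\partial_{e_j}$ and the weight corrections from integration by parts are purely Hermitian, and identify the surviving anti-Hermitian contributions as exactly the first-order curvature terms and the moment terms of types two and three above. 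Applying Cauchy--Schwarz to these terms alone, without invoking the positivity of the elliptic part, then yields the $|s|_{t,x,1}|s|_{t,x,0}$ piece with coefficient $C_3(1+|nzK|)$, while the squares of the zeroth-order moment contributions give the $|nzK|^2|s|_{t,x,0}^2$ term. The bookkeeping here --- especially tracking the conjugation behavior of the form-valued coefficients under $\la\cdot,\cdot\ra_{t,x,0}$ term by term --- is the one genuinely delicate step in the proof.
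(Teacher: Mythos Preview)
Your proposal is correct and follows essentially the same approach as the paper. The paper's own proof is much terser: it simply records that the argument is the one of \cite[Proposition~7.29]{BG00} applied to the explicit formula (\ref{eq:5.89})--(\ref{eq:5.90}) together with the boundedness list (\ref{eq:5.103}), and then isolates the single point worth highlighting, namely that the $|nzK|^2$ contribution arises from the square
\[
\left|\left\la\left(\rho(\sqrt{t}Z)\left(-\tfrac{1}{4}\la m^{TX}(zK)Z,\partial_i\ra+\tfrac{1}{\sqrt{t}}h_i(zK,\sqrt{t}Z)\right)\right)^2 s,\, s\right\ra_{t,x,0}\right|,
\]
which is dominated by $C(1+|nzK|^2)|s|_{t,x,0}^2$. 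Your three-type classification of the perturbations in $\nabla_{e_i}'$ and your treatment of the imaginary part are exactly the content that the reference to \cite{BG00} encapsulates; the only comment is that in your ``first type'' the curvature terms carry no $zK$-dependence at all, so the $(1+|nzK|)$ bound on the first-order perturbations is driven entirely by your second and third types, in agreement with the paper's identification of the source of the $|nzK|^2$ factor.
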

\begin{proof}
We only need to observe that the
terms containing $|nzK|^2$ come from terms
\begin{align}\label{eq:5.107}
\left|\left\la\left(\rho(\sqrt{t}Z)\left(-\frac{1}{4}
\la m^{TX}(zK)Z,e_i
\ra+\frac{1}{\sqrt{t}}h_i(zK,\sqrt{t}Z)\right)\right)^2s,s\right\ra_{t,x,0}
\right|,
\end{align}
which can be dominated by $C(1+|nzK|^2)|s|_{t,x,0}^2$.
	
The proof of Lemma \ref{lem:5.27} is completed.
\end{proof}

\subsection{The kernel $\wi{F}_t(L_{x,K}^{3,t})$ as an 
infinite sum}\label{s0509}

Let $h$ be a smooth even function from $\R$ into $[0,1]$ 
such that
\begin{align}\label{eq:5.108}
h(u)=\left\{
\begin{aligned}
1\quad&\hbox{if $|u|\leq \frac{1}{2}$;} \\
0\quad&\hbox{if $|u|\geq 1$.}
\end{aligned}
\right.
\end{align}
For $n\in \N$, put
\begin{align}\label{eq:5.109}
h_n(u)=h\left(u+\frac{n}{2}\right)+h\left(u-\frac{n}{2}\right).
\end{align}
Then $h_n$ is a smooth even function whose support is included 
in $\left[-\frac{n}{2}-1,-\frac{n}{2}+1\right]\cup
\left[\frac{n}{2}-1,\frac{n}{2}+1\right]$.

Set
\begin{align}\label{eq:5.110}
\mH(u)=\sum_{n\in \N}h_n(u).
\end{align}
The above sum is locally finite, and $\mH(u)$ is a bounded 
smooth even function which takes positive values and has a 
positive lower bound on $\R$.

Put
\begin{align}\label{eq:5.111}
k_n(u)=\frac{h_n}{\mH}(u).
\end{align}
Then the $k_n$ are bounded even smooth functions with bounded 
derivatives, and moreover
\begin{align}\label{eq:5.112}
\sum_{n\in \N}k_n=1.
\end{align}

Note that here we use $n$ as an index for the natural numbers, not
 the $\dim X$ in the previous sections.

\begin{defn}\label{defn:5.28}
For $t\in [0,1]$, $n\in \N$, $a\in\C$, put
\begin{align}\label{eq:5.113}
F_{t,n}(a)=\int_{-\infty}^{+\infty}\exp(\sqrt{2}iua)
\exp\left(-\frac{u^2}{2}\right)f(\sqrt{t}u)k_n(u)\frac{du
}{\sqrt{2\pi}}.
\end{align}
\end{defn}

By (\ref{eq:5.55}), (\ref{eq:5.112}) and (\ref{eq:5.113}),
\begin{align}\label{eq:5.114}
F_t(a)=\sum_{n\in \N}F_{t,n}(a).
\end{align}
Also, given $m,m'\in\N$, there exist $C>0$, $C'>0$, $C''>0$ 
such that for any $t\in [0,1]$, $n\in \N$, $c>0$,
\begin{align}\label{eq:5.115}
\sup_{a\in \C, |\Im(a)|\leq c}|a|^m\left|F_{t,n}^{(m')}(a) 
\right|\leq C\exp(-C'n^2+C''c^2).
\end{align}
Let $\wi{F}_{t,n}(a)$ be the unique holomorphic function 
such that
\begin{align}\label{eq:5.116}
F_{t,n}(a)=\wi{F}_{t,n}(a^2).
\end{align}
Recall that $V_c$ was defined in (\ref{eq:5.36}). By 
(\ref{eq:5.115}), given $m,m'\in\N$, there exist $C>0$, 
$C'>0$, $C''>0$ such that for any $t\in [0,1]$, $n\in \N$, $c>0$, 
$\lambda\in V_c$,
\begin{align}\label{eq:5.117}
|\lambda|^m\left|\wi{F}_{t,n}^{(m')}(\lambda) \right|\leq
C\exp(-C'n^2+C''c^2).
\end{align}
By (\ref{eq:5.114}),
\begin{align}\label{eq:5.118}
\wi{F}_t(a)=\sum_{n\in \N}\wi{F}_{t,n}(a).
\end{align}

Using (\ref{eq:5.118}), we get
\begin{align}\label{eq:5.119}
\wi{F}_t(L_{x,zK}^{3,t})=\sum_{n\in \N}\wi{F}_{t,n}
(L_{x,zK}^{3,t}).
\end{align}
More precisely, by (\ref{eq:5.117}) and using standard 
elliptic estimates, given $t\in (0,1]$, we have the 
identity
\begin{align}\label{eq:5.120}
\wi{F}_t(L_{x,zK}^{3,t})(Z,Z')=\sum_{n\in
	\N}\wi{F}_{t,n}(L_{x,zK}^{3,t})(Z,Z')
\end{align}
and the series in the right-hand side of (\ref{eq:5.120}) 
converges uniformly together with its derivatives on the 
compact sets in $T_xX\times T_xX$.

\begin{defn}\label{defn:5.29}
For $\gamma$ in (\ref{eq:5.79}), put
\begin{align}\label{eq:5.121}
L_{x,zK,n}^{3,t}=-\left(1-\gamma^2\left(\frac{|Z|}{2(n+2)}
\right)\right)\Delta^{TX}+\gamma^2\left(\frac{|Z|}{2(n+2)}
\right)L_{x,zK}^{3,t}.
\end{align}
\end{defn}

Observe that if $k_n(u)\neq 0$, then $|u|\leq \frac{n}{2}+1$. 
Using finite propagation speed and (\ref{eq:5.73}), we find 
that if $Z\in T_xX$, the support of 
$\wi{F}_{t,n}(L_{x,zK}^{3,t})(Z,\cdot)$ is included in 
$\{Z'\in T_xX: |Z'-Z|\leq n+2 \}$. Therefore, 
given $p\in\N$, if $Z\in T_xX$, $|Z|\leq p$, the support of
$\wi{F}_{t,n}(L_{x,zK}^{3,t})(Z,\cdot)$ is included in 
$\{Z'\in T_xX: |Z'|\leq n+p+2 \}$.

If $|Z|\leq n+p+2$, then $\gamma(|Z|/2(n+p+2))=1$. Using 
finite propagation speed again, we see that by 
(\ref{eq:5.121}), for $Z\in T_xX$, $|Z|\leq p$,
\begin{align}\label{eq:5.122}
\wi{F}_{t,n}(L_{x,zK}^{3,t})(Z,Z')=\wi{F}_{t,n}(L_{x,zK,
	n+p}^{3,t})(Z,Z').
\end{align}

From Lemma \ref{lem:5.27}, we have
\begin{align}\label{eq:5.123}
\begin{split}
&\Re\la L_{x,zK,n}^{3,t}s,s\ra_{t,x,0}\geq
C_1|s|_{t,x,1}^2-C_2(1+|nzK|^2)|s|_{t,x,0}^2,
\\
&|\Im\la L_{x,zK,n}^{3,t}s,s\ra_{t,x,0}|\leq
C_3\Big((1+|nzK|)|s|_{t,x,1}|s|_{t,x,0}+|nzK|^2
|s|_{t,x,0}^2\Big),
\\
&|\la L_{x,zK,n}^{3,t}s,s'\ra_{t,x,0}|\leq
C_4(1+|nzK|^2)|s|_{t,x,1}|s'|_{t,x,1}.
\end{split}
\end{align}

Put
\begin{align}\label{eq:5.124}
L_{x,zK,n}^{3,0}=-\left(1-\gamma^2\left(\frac{|Z|}{2(n+2)}
\right)\right)\Delta^{TX}+\gamma^2\left(\frac{|Z|}{2(n+2)}
\right)L_{x,zK}^{3,0}.
\end{align}
By Proposition \ref{prop:5.24}, as $t\rightarrow 0$,
\begin{align}\label{eq:5.125} 
L_{x,zK,n}^{3,t}\rightarrow L_{x,zK,n}^{3,0}.
\end{align}

By (\ref{eq:5.123}), the functional analysis arguments in 
\cite[\S 7.10-7.12]{BG00} work perfectly here. We have the 
following uniform estimates, which is formally the same as 
\cite[Theorem 7.38]{BG00}.
In particular, since the estimates in (\ref{eq:5.106}) and 
(\ref{eq:5.123}) are the analogue of 
\cite[(7.131) and (7.148)]{BG00}, the proof of 
the following theorem is exactly the same as that of 
\cite[Theorem 7.38]{BG00}. 

\begin{thm}\label{thm:5.30} 
There exist $C'>0$, $C''>0$, $C'''>0$ such that for $\eta>0$ 
small enough, there is $c_{\eta}\in (0,1]$ such that 
for any $m\in \N$, there are $C>0$, $r\in \N$ 
such that for $t\in (0,1]$, $|zK|\leq c_{\eta}$, $n\in \N$, 
$x\in X^g$, $Z, Z'\in T_xX$,
\begin{multline}\label{eq:5.126} 
\sup_{|\alpha|,|\alpha'|\leq m}\left|\frac{\partial^{|\alpha|
+|\alpha'|}}{\partial Z^{\alpha}\partial 
Z'^{\alpha'}}\widetilde{F}_{t,n}(L_{x,zK}^{3,t})(Z,Z') \right|
\\
\leq C(1+|Z|+|Z'|)^r\exp\Big(-C'n^2/4+2C''\eta^2\sup 
(|Z|^2, |Z'|^2)-C'''|Z-Z'|^2 \Big).
\end{multline}
\end{thm}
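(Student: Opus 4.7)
The plan is to reduce Theorem \ref{thm:5.30} to an operator-theoretic estimate on the resolvent of $L_{x,zK,n}^{3,t}$ and then apply Sobolev embedding, following the scheme of \cite[\S 7.10--7.12]{BG00} with the key uniform estimates (\ref{eq:5.123}) playing the role of \cite[(7.148)]{BG00}. Concretely, by (\ref{eq:5.122}) it is enough to prove the bound (\ref{eq:5.126}) with $\widetilde{F}_{t,n}(L_{x,zK}^{3,t})$ replaced by $\widetilde{F}_{t,n}(L_{x,zK,n+p}^{3,t})$ on the ball $|Z|\leq p$; so we may work with the globally controlled operator $L_{x,zK,n}^{3,t}$ for which (\ref{eq:5.123}) gives uniform sectoriality on $(\mathbf{I}_x^1,|\cdot|_{t,x,1})\subset(\mathbf{I}_x^0,|\cdot|_{t,x,0})\subset(\mathbf{I}_x^{-1},|\cdot|_{t,x,-1})$.

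The first step would be to derive, from (\ref{eq:5.123}) and the argument of Lemmas \ref{lem:5.12}--\ref{lem:5.13}, uniform resolvent estimates: for $\lambda$ outside a fixed sector $V_{c_\eta}$ of the form (\ref{eq:5.36}), the resolvent $(\lambda-L_{x,zK,n}^{3,t})^{-1}$ is bounded from $(\mathbf{I}_x^{-1},|\cdot|_{t,x,-1})$ to $(\mathbf{I}_x^{1},|\cdot|_{t,x,1})$ with norm $\leq C(1+|\lambda|)^{k_0}$, uniformly in $t\in(0,1]$, $n\in\mathbb{N}$, and $|zK|\leq c_\eta$. Iterating this resolvent estimate and combining with elliptic regularity and Sobolev embedding converts $L^2$-bounds on $\widetilde{F}_{t,n}(L_{x,zK,n}^{3,t})$ into $\mathcal{C}^m$-bounds of its kernel; the weight factor $(1+|Z|\rho(\sqrt{t}Z/2))^{2(k+\ell+1-q-r)}$ in (\ref{eq:5.102}) then yields the polynomial factor $(1+|Z|+|Z'|)^r$ in (\ref{eq:5.126}). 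The exponential decay in $n$ comes from (\ref{eq:5.117}): representing $\widetilde{F}_{t,n}(L_{x,zK,n}^{3,t})=\frac{1}{2\pi i}\int_{\partial V_{c_\eta}}\widetilde{F}_{t,n}(\lambda)(\lambda-L_{x,zK,n}^{3,t})^{-1}d\lambda$, the factor $\exp(-C'n^2+C''c_\eta^2)$ in the integrand produces $\exp(-C'n^2/4)$ after absorbing polynomial losses from the resolvent.

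The off-diagonal Gaussian decay $\exp(-C'''|Z-Z'|^2)$ is obtained, as in \cite[Proposition 11.26]{BL91} and \cite[\S 7.11]{BG00}, by the conjugation trick: one replaces $L_{x,zK,n}^{3,t}$ by its conjugate $L_{x,zK,n,a}^{3,t}:=e^{-a f(Z)}L_{x,zK,n}^{3,t}e^{a f(Z)}$ with $f$ a smoothed version of $Z\mapsto\langle Z,v\rangle$ for a unit vector $v$, and checks that (\ref{eq:5.123}) is preserved up to $O(a^2)$; then the resolvent and the heat-type kernel of $L_{x,zK,n,a}^{3,t}$ remain uniformly controlled on weighted spaces for $|a|$ bounded, and evaluating the kernel $\widetilde{F}_{t,n}(L_{x,zK,n}^{3,t})(Z,Z')$ via $e^{a(f(Z)-f(Z'))}$ and optimizing $a\sim |Z-Z'|$ (with the $\eta$-dependent absorption $\exp(2C''\eta^2\sup(|Z|^2,|Z'|^2))$ accounting for the linear-in-$|Z|$ growth of $f$) produces the Gaussian factor $\exp(-C'''|Z-Z'|^2)$.

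The main obstacle, conceptually identical to the one overcome in \cite[\S 7]{BG00}, is the $|nzK|^2$ term on the right of (\ref{eq:5.123}): it prevents uniform sectoriality unless $|zK|$ is small compared to $1/n$, and this is precisely the reason one must truncate the equivariant perturbation to the ball of radius $\sim n$ in (\ref{eq:5.121}) and then choose $c_\eta$ small depending on $\eta$. Once the sectoriality is established uniformly in $t,n,|zK|\leq c_\eta$, all remaining steps are routine functional-analytic manipulations essentially copied verbatim from \cite[\S 7.10--7.12]{BG00}; no new analytical input is needed, only a careful bookkeeping of the extra $\Lambda(T^*\mathbb{R})\widehat{\otimes}\pi^*\Lambda(T^*B)$-factors from the family setting, which are absorbed because (\ref{eq:5.103}) shows that the new variables $f^p\wedge$ and $dt\wedge$ are uniformly bounded operators on our weighted spaces.
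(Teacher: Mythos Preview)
Your proposal is correct and follows essentially the same approach as the paper: the paper's proof of Theorem \ref{thm:5.30} consists precisely of the observation that the estimates (\ref{eq:5.106}) and (\ref{eq:5.123}) are the analogues of \cite[(7.131) and (7.148)]{BG00}, after which the functional-analytic arguments of \cite[\S 7.10--7.12]{BG00} apply verbatim to yield the result as in \cite[Theorem 7.38]{BG00}. You have supplied a faithful and somewhat more detailed outline of exactly those arguments, including the reduction to $L_{x,zK,n+p}^{3,t}$ via (\ref{eq:5.122}), the contour-integral representation combined with (\ref{eq:5.117}), the weighted-norm bookkeeping from (\ref{eq:5.102})--(\ref{eq:5.103}), and the role of the $|nzK|^2$ term in forcing the smallness condition $|zK|\leq c_\eta$.
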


\subsection{A proof of Theorem \ref{thm:5.17}}\label{s0510} 

Remark that as explained in the introduction of \cite{BG00}, 
$L_{x,zK}^{3,t}$ does not have a fixed lower bound. So 
it is not possible to define a priori a honest heat kernel for 
$\exp(-L_{x,zK}^{3,t})$. So we cannot prove Theorem 
\ref{thm:5.17} following the arguments in \cite[\S 11]{BL91}. 

Since $L_{x,zK,n+p}^{3,0}$ coincides with $-\Delta^{TX}$ near 
infinity, the operator $\wi{F}_{0,n}(L_{x,zK,n+p}^{3,0})$ is 
well-defined. Also, by proceeding as in (\ref{eq:5.122}), if 
$|Z|, |Z'|\leq p$, using finite propagation speed, we 
find that the kernel $\wi{F}_{0,n}(L_{x,zK,n+p}^{3,0})(Z,Z')$ 
does not depend on $p$. Finally this kernel verifies estimates 
similar to (\ref{eq:5.126}) for $\eta>0$ small enough and 
$|zK|\leq c_{\eta}$. Therefore we 
may define the kernel $\exp(-L_{x,zK}^{3,0})(Z,Z')$ by 
\begin{align}\label{eq:5.127} 
\exp(-L_{x,zK}^{3,0})(Z,Z')=\sum_{n\in \N}\wi{F}_{0,n}(L_{x,zK,
n+p}^{3,0})(Z,Z'),\quad \text{for}\ |Z|, |Z'|\leq p.
\end{align}
Note that the estimate in (\ref{eq:5.126}) also works for $t=0$.
Thus the series in (\ref{eq:5.127}) converges uniformly on 
compact subsets of $T_xX\times T_xX$ together with its 
derivatives.


From (\ref{eq:5.89}), (\ref{eq:5.100}), (\ref{eq:5.121})
and (\ref{eq:5.124}), there exists $C>0$ such that for $t\in (0,1]$,
$z\in \C$, $|zK|\leq 1$, $n\in \N$, $x\in X^g$, if $s\in \mathbf{I}_x$
has compact support, then
\begin{align}\label{eq:5.127a} 
\left|(L_{x,zK,n}^{3,t}-L_{x,zK,n}^{3,0})s \right|_{t,x,-1}
\leq C\sqrt{t}(1+n^4)|s|_{0,x,1}.
\end{align}
From Theorem \ref{thm:5.30}, (\ref{eq:5.127}) and (\ref{eq:5.127a}), 
the proof of the 
following theorem is exactly the same as that of 
\cite[Theorem 7.43]{BG00}.

\begin{thm}\label{thm:5.31} 
There exist $C''>0$, $C'''>0$ such that 
for $\eta>0$ small enough, there exist $c_{\eta}\in (0,1]$,
$r\in \N$,  $C>0$,   such that for $t\in (0,1]$, 
$z\in \C$, $|zK|\leq c_{\eta}$, $x\in X^g$, $Z, Z'\in T_xX$, 
\begin{multline}\label{eq:5.128} 
\left|\left(\widetilde{F}_{t}(L_{x,zK}^{3,t})
-\exp(-L_{x,zK}^{3,0})\right)(Z,Z') \right|
\leq Ct^{\frac{1}{4(\dim X+1)}}(1+|Z|+|Z'|)^r
\\
\cdot\exp\big(2C''\eta^2\sup (|Z|^2, |Z'|^2)
-C'''|Z-Z'|^2/2 \big).
\end{multline}
\end{thm}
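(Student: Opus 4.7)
The plan is to follow the line of argument in \cite[Theorem 7.43]{BG00}, using the series decompositions (\ref{eq:5.119}) and (\ref{eq:5.127}) together with the uniform Gaussian estimates in Theorem \ref{thm:5.30} and the operator-norm closeness in (\ref{eq:5.127a}). Fix $p \in \mathbb{N}$ and restrict to $|Z|, |Z'| \leq p$. By (\ref{eq:5.122}) and (\ref{eq:5.127}), for such $Z,Z'$ we have the identity
\begin{align*}
\widetilde{F}_{t}(L_{x,zK}^{3,t})(Z,Z') - \exp(-L_{x,zK}^{3,0})(Z,Z')
= \sum_{n \in \mathbb{N}} \Big( \widetilde{F}_{t,n}(L_{x,zK,n+p}^{3,t}) - \widetilde{F}_{0,n}(L_{x,zK,n+p}^{3,0}) \Big)(Z,Z'),
\end{align*}
so it suffices to estimate each summand and sum.

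First I would split the sum at a threshold $N_0 = N_0(t) \in \mathbb{N}$ to be chosen later. For $n > N_0$, the estimate (\ref{eq:5.126}) applied to both $\widetilde{F}_{t,n}$ and its $t=0$ analogue yields the Gaussian bound times $\exp(-C'n^2/4)$, and summing in $n$ gives a tail of order $\exp(-C' N_0^2/8)$ (absorbing polynomial factors into a slightly smaller constant). For $n \leq N_0$ the strategy is to represent each individual difference via a contour integral:
\begin{align*}
\widetilde{F}_{t,n}(L_{x,zK,n+p}^{3,t}) - \widetilde{F}_{0,n}(L_{x,zK,n+p}^{3,0})
= \frac{1}{2\pi i}\int_{\Gamma_c}\!\Big[ \widetilde{F}_{t,n}(\lambda)(\lambda-L_{x,zK,n+p}^{3,t})^{-1} - \widetilde{F}_{0,n}(\lambda)(\lambda-L_{x,zK,n+p}^{3,0})^{-1}\Big]\,d\lambda,
\end{align*}
split off $\widetilde{F}_{t,n}(\lambda) - \widetilde{F}_{0,n}(\lambda)$ (which is $O(t)$ pointwise with Gaussian-decay-in-$n$ coefficient) and use the resolvent identity $(\lambda - A)^{-1} - (\lambda - B)^{-1} = (\lambda - A)^{-1}(A-B)(\lambda-B)^{-1}$ together with (\ref{eq:5.127a}) and the uniform resolvent bounds obtained from Lemma \ref{lem:5.27} (via the analogue of Lemma \ref{lem:5.12} for $L_{x,zK,n}^{3,t}$). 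In the Sobolev spaces $(\mathbf{I}_x^{\pm 1}, |\cdot|_{t,x,\pm 1})$ this produces an $L^2$-type bound on the kernel difference of order $\sqrt{t}(1+n^4)^{r_1}$, uniformly on $\Gamma_c$.

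The next step converts this $L^2$ operator-norm estimate into a pointwise bound on the kernels by the usual Sobolev embedding / elliptic-regularity trick: commute suitable powers of $L_{x,zK,n+p}^{3,t}$ or apply Lemma \ref{lem:5.07}-type estimates $\lceil \dim X/2 \rceil + 1$ times on each side, losing at most $t^{-k/2}$ with $k \leq \dim X + 1$ but retaining Gaussian decay in $|Z-Z'|$ and the $\exp(-C'n^2/4)$ factor from (\ref{eq:5.126}). After summing the $n \leq N_0$ terms one obtains a bound of the form $C \sqrt{t}\, t^{-(\dim X+1)/2}\, N_0^{r_2} (1+|Z|+|Z'|)^r$ times the Gaussian factor. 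Optimizing by choosing $N_0$ so that $\exp(-C' N_0^2/8)$ matches $t^{1/2 - (\dim X+1)/2}$ produces a power $t^{1/(4(\dim X+1))}$ (this is exactly where the exponent $1/(4(\dim X+1))$ enters); the $(1+n^4)$ factors can be absorbed in the Gaussian in $n$ with a slightly smaller $C'$.

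The main obstacle will be the elliptic-regularity step: the Sobolev norms $|\cdot|_{t,x,\pm 1}$ in Definition \ref{defn:5.26} depend on $t$ through the rescaled Clifford variables (\ref{eq:5.103}), and a naive iteration would produce a worse power than $t^{1/(4(\dim X+1))}$. As in \cite[\S 7.12]{BG00}, one must carefully track how the weights $(1+|Z|\rho(\sqrt{t}Z/2))^{2(k+\ell+1-q-r)}$ interact with the derivatives $\nabla_{e_i}$ and with the off-diagonal Gaussian factor $\exp(-C'''|Z-Z'|^2)$, so that each regularity gain is accompanied only by a bounded factor rather than a power of $(1+|Z|+|Z'|)$ that destroys the balancing. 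Once that bookkeeping is done exactly as in \cite[Theorem 7.43]{BG00}, combining the tail estimate with the finite-$n$ estimate and optimizing $N_0$ yields (\ref{eq:5.128}) for $\eta > 0$ sufficiently small and $|zK| \leq c_\eta$.
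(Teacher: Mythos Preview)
Your proposal is correct and takes essentially the same approach as the paper: the paper's proof of Theorem \ref{thm:5.31} consists precisely of invoking Theorem \ref{thm:5.30} and (\ref{eq:5.127a}) and then stating that the argument is ``exactly the same as that of \cite[Theorem 7.43]{BG00}'', which is the strategy you outline (series decomposition via (\ref{eq:5.120}) and (\ref{eq:5.127}), tail estimate from (\ref{eq:5.126}), resolvent-identity comparison using (\ref{eq:5.127a}) for small $n$, Sobolev-to-pointwise conversion, and optimization of the cut-off). Your identification of the delicate point---tracking the $t$-dependent weighted norms in the elliptic-regularity step---is exactly the technical content deferred to \cite[\S 7.12]{BG00}.
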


Now there is $C>0$ such that if $Z\in N_{X^g/X}$, then 
\begin{align}\label{eq:5.129} 
|g^{-1}Z-Z|\geq C|Z|.
\end{align}
By (\ref{eq:5.128}) and (\ref{eq:5.129}), we find that there 
exists $C''''>0$ such that if $Z\in N_{X^g/X}$,
\begin{multline}\label{eq:5.130} 
\left|(\widetilde{F}_{t}(L_{x,zK}^{3,t})-\exp(-L_{x,zK}^{3,0}))
(g^{-1}Z,Z) \right|
\\
\leq Ct^{\frac{1}{4(\dim X+1)}}(1+|Z|)^r
\cdot\exp\left(2C''\eta^2|Z|^2-C''''|Z|^2 \right).
\end{multline}
For $\eta>0$ small enough,
\begin{align}\label{eq:5.131} 
2C''\eta^2-C''''\leq -C''''/2.
\end{align}
So by (\ref{eq:5.130}), if $Z\in N_{X^g/X}$,
\begin{align}\label{eq:5.132} 
\left|\left(\widetilde{F}_{t}(L_{x,zK}^{3,t})
-\exp(-L_{x,zK}^{3,0})\right)(g^{-1}Z,Z) \right|
\leq Ct^{\frac{1}{4(\dim X+1)}}\exp\left(-C''''|Z|^2/4 \right).
\end{align}

For $K\in \mathfrak{z}(g)$, put
\begin{align}\label{eq:5.133} 
H^{TX}=\jmath^*R^{TX}-m^{TX}(zK).
\end{align}
Clearly $H^{TX}$ splits under $TX=TX^g\oplus N_{X^g/X}$ as 
\begin{align}\label{eq:5.134}
H^{TX}=H^{TX^g}+H^N.
\end{align}
Using the Mehler's formula (cf. e.g., \cite[(1.34)]{LM00}),
by (\ref{eq:5.100}), for $|zK|$ small enough,
\begin{multline}\label{eq:5.135} 
\exp(-L_{x,zK}^{3,0})(g^{-1}Z,Z)=
(4\pi)^{-\dim X/2}\mathrm{det}^{\frac{1}{2}}\left(\frac{
H^{TX}/2}{\sinh(H^{TX}/2)}\right)
\\
\cdot\exp\left(-\frac{1}{2}\left\la \frac{H^{N}/2}{\sinh(H^{N}/2)}
\big(\cosh(H^{N}/2)-\exp(H^{N}/2)g^{-1}\big)Z,Z \right\ra 
\right)
\\
\cdot\exp(-\jmath^*R^{E}+m^{E}(zK)).
\end{multline}

Observe that for $z\in \C$, $|zK|$ small enough, 
the right-hand side of (\ref{eq:5.135}) is well-defined.
Using (\ref{eq:5.135}), comparing with \cite[(1.37)]{LM00}, 
if $|zK|$ is small enough, 
\begin{multline}\label{eq:5.138} 
\int_{N_{X^g/X}}\exp(-L_{x,zK}^{3,0})(g^{-1}Z,Z)dv_N(Z)
=(4\pi)^{-\ell/2}\mathrm{det}^{\frac{1}{2}}
\left(\frac{H^{TX^g}/2}{\sinh(
H^{TX^g}/2)}\right)
\\
\cdot\Big(\mathrm{det}^{1/2}(1-g^{-1}|_N)
\mathrm{det}^{1/2}(1-g\exp (-H^N))\Big)^{-1}
\cdot\exp(-\jmath^*R^{E}+m^{E}(zK)).
\end{multline}
Also compare with \cite[(1.38)]{LM00},
\begin{multline}\label{eq:5.139} 
\tr_s^{\mS_N\otimes E}[g\exp(-\jmath^*R^{E}
+m^{E}(zK))]
\\
=(-i)^{(\dim X-\ell)/2}\mathrm{det}^{1/2}(1-g^{-1}|_N)
\tr^E[g\exp(-\jmath^*R^{E}+m^{E}(zK))].
\end{multline}
Using (\ref{eq:2.14}), (\ref{eq:2.15}),
(\ref{eq:5.138}) and (\ref{eq:5.139}),  we get
\begin{multline}\label{eq:5.140}
\psi_{\R\times B}\int_{N_{X^g/X}}(-i)^{\ell/2}2^{\ell/2}
\left\{\tr_s^{\mS_N\otimes 
E}[g\exp(-L_{x,zK}^{3,0})(g^{-1}Z,Z)]\right\}^{\max}dv_N(Z)
\\
=\left\{\widehat{\mathrm{A}}_{g,zK}(TX,\nabla^{TX})
\ch_{g,zK}(E,\nabla^{E})\right\}^{\max}.
\end{multline}

From (\ref{eq:5.93}), (\ref{eq:5.132}) and 
(\ref{eq:5.140}), we obtain 
Theorem \ref{thm:5.17} for $\dim X$ even.

If $\dim X$ is odd, following the explanation in Remark \ref{rem:5.22},
the proof is the same.

The proof of Theorem \ref{thm:5.17} is completed.

\subsection{A proof of Theorem \ref{thm:4.02}}\label{s0511}

Since $v\geq t>0$, we have 
\begin{align}\label{eq:5.141} 
0\leq t^{-1}-v^{-1}< t^{-1}.
\end{align}
Set
\begin{align}\label{eq:5.142} 
\mA_{K,t,v}'=\left(\mathbb{B}_t+\frac{\sqrt{t}c(K^X)}{4}\left(
\frac{1}{t}-\frac{1}{v} \right)+t\cdot dt\wedge 
\frac{\partial}{\partial t}\right)^2+\mL_K.
\end{align}
Let $\mA_{K,t,v}'^{(0)}$ be the piece of $\mA_{K,t,v}'$ which has 
degree $0$ in $\Lambda(T^*(\R\times B))$. Then from 
(\ref{eq:5.141}), $\mA_{K,t,v}'^{(0)}$ satisfies the same 
estimate in Lemma \ref{lem:5.02} and the estimate 
(\ref{eq:5.15}) of $\mA_{K,t}-\mA_{K,t}^{(0)}$ also
holds for $\mA_{K,t,v}'
-\mA_{K,t,v}'^{(0)}$ uniformly on $v\geq t\geq 1$. 
Since $v\geq t$, as $t\rightarrow +\infty$, 
we have
\begin{align}\label{eq:5.143} 
\left|\frac{\partial}{\partial t}\left(\frac{\sqrt{t}c(K^X)}{4}
\left(\frac{1}{t}-\frac{1}{v} \right)-\frac{c(T^H)}{4\sqrt{t}}
\right)\right|=\mathcal{O}(t^{-3/2}).
\end{align}
Then the analogue of Propositions \ref{prop:5.05} and \ref{prop:5.08}
 holds for $\mA_{K,t,v}'$ uniformly for $v\geq t\geq 1$. 
Thus replacing $\mA_{zK,t}$ by $\mA_{K,t,v}'$ in the proof of 
Theorem \ref{thm:5.01}, we obtain Theorem \ref{thm:4.02}.

\section{A proof of Theorem \ref{thm:4.03}}\label{s06}

In this section, we prove Theorem \ref{thm:4.03}. This section is 
organized as follows. In Section \ref{s0601}, we establish a 
Lichnerowicz formula for $\mB_{K,t,v}$ in (\ref{eq:4.11}).
In Section \ref{s0602}, 
we prove Theorem \ref{thm:4.03} a). In Sections \ref{s0603} -
\ref{s0608}, we prove Theorem \ref{thm:4.03} b). In Section 
\ref{s0609}, we prove Theorem \ref{thm:4.03} c). In Section 
\ref{s0610}, we prove Theorem \ref{thm:4.03} d). In this 
section, we use the assumptions and the 
notations in Section \ref{s04}.

\subsection{A Lichnerowicz formula}\label{s0601}

Let $L$ be a trivial line bundle over $W$. We equip a 
connection on $L$ by 
\begin{align}\label{eq:6.01} 
\nabla_v^L
=d-\frac{\vartheta_K}{4v}.
\end{align}
Thus
\begin{align}\label{eq:6.02} 
R_v^L=(\nabla_v^L)^2=-\frac{d\vartheta_K}{4v}.
\end{align}
Let $\nabla_v^{\mE\otimes L}$ be the connection on $\mE\otimes L$
induced by $\nabla^{\mE}$ and $\nabla_v^L$.
The corresponding Dirac operator is 
\begin{align}\label{eq:6.03} 
D_v=\sum_{i=1}^nc(e_i)\nabla^{\mE\otimes L}_{v,e_i}=D
-\frac{c(K^X)}{4v}.
\end{align}
Since
\begin{align}\label{eq:6.04} 
\nabla^{\mE\otimes L}_{v,f_p^H}=\nabla^{\mE}_{f_p^H},
\end{align}
from (\ref{eq:6.03}),
the new Bismut superconnection associated with 
$\mE\otimes L$ is
\begin{align}\label{eq:6.05} 
\mathbb{B}_t^v=\mathbb{B}_t-\frac{\sqrt{t}c(K^X)}{4v}.
\end{align}

\begin{thm}\label{thm:6.01} 
The following identity holds,
\begin{multline}\label{eq:6.06}
\mB_{K,t,v}=-t\left(\nabla_{e_i}^{\mE}
+\frac{1}{2\sqrt{t}}\la S(e_i)e_j, f_p^H\ra c(e_j)f^p
\wedge\right.
\\
\left.+\frac{1}{4t}\la S(e_i)f_p^H, f_q^H\ra f^p\wedge
f^q\wedge-\frac{\la K^X,e_i\ra}{4}\left(\frac{1}{t}
+\frac{1}{v}\right)\right)^2
\\
+\frac{t}{4}H+\frac{t}{2}\left(R^{\mE/\mS}(e_i,e_j)-\frac{1}{2v}
\la\nabla_{e_i}^{TX}K^X,e_j\ra\right)c(e_i)c(e_j)
\\
+\sqrt{t}\left(R^{\mE/\mS}(e_i,f_p^H)-\frac{1}{2v}\la 
T(e_i,f_p^H),K^X\ra \right)c(e_i)f^p\wedge
\\
+\frac{1}{2}\left(R^{\mE/\mS}(f_p^H,f_q^H)-\frac{1}{8v}\la 
T(f_p^H,f_q^H), K^X\ra \right)f^p\wedge
f^q\wedge-m^{\mE/\mS}(K^X)+\frac{1}{4v}|K^X|^2.
\end{multline}	
\end{thm}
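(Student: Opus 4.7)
The strategy is to reinterpret $\mathbb{B}_{K,t,v}$ as the analogue of $\mathbb{B}_{K,t}^{2}+\mathcal{L}_{K}$ for a Clifford module twisted by the auxiliary line bundle $L$ of (\ref{eq:6.01}), and then invoke Proposition \ref{prop:5.18}. First I would observe that the identities
\begin{align*}
\mathbb{B}_t^{v}+\frac{c(K^X)}{4\sqrt{t}}
&=\mathbb{B}_t-\frac{\sqrt{t}c(K^X)}{4v}+\frac{c(K^X)}{4\sqrt{t}}
=\mathbb{B}_t+\frac{\sqrt{t}c(K^X)}{4}\Big(\frac{1}{t}-\frac{1}{v}\Big)
\end{align*}
and (\ref{eq:6.04}) show that $\mathbb{B}_t^{v}$ is precisely the rescaled Bismut superconnection of (\ref{eq:1.22}) associated with the twisted Clifford module $\mathcal{E}\otimes L$ (here $L$ carries the connection $\nabla^{L}_v=d-\vartheta_K/(4v)$, which is trivial along the horizontal direction since $\vartheta_K$ is vertical). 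Therefore, by (\ref{eq:4.11}),
\[
\mathbb{B}_{K,t,v}=\Big(\mathbb{B}_t^{v}+\frac{c(K^X)}{4\sqrt{t}}\Big)^{2}+\mathcal{L}_{K},
\]
which is exactly the operator $\mathbb{B}_{K,t}^{2}+\mathcal{L}_{K}$ computed on $\mathcal{E}\otimes L$ instead of $\mathcal{E}$.

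Next I would apply Proposition \ref{prop:5.18} (in the special case $z=1$, and dropping the $dt$-wedge component, which is unambiguously the part obtained by erasing $dt$ in (\ref{eq:5.70})) to the twisted module $\mathcal{E}\otimes L$. The resulting formula has exactly the same shape as (\ref{eq:5.70}), but with three substitutions:
\begin{align*}
\nabla^{\mathcal{E}}_{e_i}&\ \longrightarrow\ \nabla^{\mathcal{E}\otimes L}_{v,e_i}=\nabla^{\mathcal{E}}_{e_i}-\frac{\vartheta_K(e_i)}{4v}=\nabla^{\mathcal{E}}_{e_i}-\frac{\la K^X,e_i\ra}{4v},\\
R^{\mathcal{E}/\mS}&\ \longrightarrow\ R^{(\mathcal{E}\otimes L)/\mS}=R^{\mathcal{E}/\mS}+R^{L}_v\cdot\mathrm{Id},\qquad R^{L}_v=-\frac{d\vartheta_K}{4v},\\
m^{\mathcal{E}/\mS}(K)&\ \longrightarrow\ m^{\mathcal{E}/\mS}(K)+m^{L}(K),
\end{align*}
where the moment of the $L$-factor is $m^{L}(K)=\nabla^{L}_{v,K^X}-\mathcal{L}_K=-\vartheta_K(K^X)/(4v)=-|K^X|^{2}/(4v)$. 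The first substitution produces the extra additive term $-\la K^X,e_i\ra/(4v)$ inside the large square in (\ref{eq:6.06}), which combined with the existing $-\la K^X,e_i\ra/(4t)$ yields precisely $-\tfrac{1}{4}\la K^X,e_i\ra(1/t+1/v)$. The third substitution contributes the $+|K^X|^{2}/(4v)$ term outside the square.

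It remains to unpack the correction $R^L_v$ in each of the three curvature blocks. Using (\ref{eq:3.03}) one has $d\vartheta_K(e_i,e_j)=d^{X}\vartheta_K(e_i,e_j)=2\la\nabla^{TX}_{e_i}K^X,e_j\ra$, while for the mixed and horizontal blocks the vertical form $d^{X}\vartheta_K$ contributes nothing and only the coupling $\vartheta_K(\widetilde{T})$ from (\ref{eq:3.04})--(\ref{eq:3.05}) survives. A direct expansion, using that $\vartheta_K$ vanishes on horizontal vectors together with (\ref{eq:1.11}), gives
\[
d\vartheta_K(e_i,f_p^H)=2\la T(e_i,f_p^H),K^X\ra,\qquad d\vartheta_K(f_p^H,f_q^H)=\la T(f_p^H,f_q^H),K^X\ra.
\]
Multiplying each by $-1/(4v)$ and plugging into the three curvature terms of Proposition \ref{prop:5.18} applied to $\mathcal{E}\otimes L$ reproduces the three torsion/Killing corrections in (\ref{eq:6.06}). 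The remaining blocks ($\tfrac{t}{4}H$ and the connection terms involving $S$) are insensitive to the twist by $L$ and are carried over verbatim.

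The main obstacle, and the only place where care is required, is the sign/antisymmetry bookkeeping in Step~4 when computing $d\vartheta_K$ on the mixed and horizontal pairs; beyond this, the argument is a clean application of Proposition \ref{prop:5.18} to the twisted module $\mathcal{E}\otimes L$.
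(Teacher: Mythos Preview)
Your proposal is correct and follows essentially the same route as the paper: both rewrite $\mB_{K,t,v}=(\mathbb{B}_t^v+\frac{c(K^X)}{4\sqrt{t}})^2+\mL_K$ using (\ref{eq:6.05}), apply Proposition~\ref{prop:5.18} to the twisted module $\mE\otimes L$, compute $m^L(K)=-|K^X|^2/(4v)$, and then unpack $R^L_v=-\tfrac{1}{4v}d\vartheta_K$ on vertical, mixed and horizontal pairs via (\ref{eq:3.03})--(\ref{eq:3.05}). Your explicit evaluations of $d\vartheta_K(e_i,f_p^H)$ and $d\vartheta_K(f_p^H,f_q^H)$ are exactly what the paper's reference to (\ref{eq:3.03})--(\ref{eq:3.05}) encodes.
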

\begin{proof}
From (\ref{eq:4.11}), (\ref{eq:5.01}), (\ref{eq:5.02}), 
(\ref{eq:5.70}) and (\ref{eq:6.05}), we have
\begin{multline}\label{eq:6.07}
\mB_{K,t,v}=\left(\mathbb{B}_t^v+\frac{c(K^X)}{4\sqrt{t}} 
\right)^2+\mL_{K}=-t\left(\nabla_{v,e_i}^{\mE\otimes L}
+\frac{1}{2\sqrt{t}}\la S(e_i)e_j, f_p^H\ra c(e_j)f^p
\wedge\right.
\\
\left.+\frac{1}{4t}\la S(e_i)f_p^H, f_q^H\ra f^p\wedge
f^q\wedge-\frac{\la K^X,e_i\ra}{4t}\right)^2
+\frac{t}{4}H+\frac{t}{2}R^{\mE\otimes L/\mS}(e_i,e_j)
c(e_i)c(e_j)
\\
+\sqrt{t}R^{\mE\otimes L/\mS}(e_i,f_p^H)c(e_i)f^p\wedge
+\frac{1}{2}R^{\mE\otimes L/\mS}(f_p^H,f_q^H)f^p\wedge
f^q\wedge-m^{\mE\otimes L/\mS}(K^X).
\end{multline}	
Since $G$ acts trivially on $L$, the corresponding $m^L(K)$ 
in the sense of (\ref{eq:2.02}) is given by
\begin{align}\label{eq:6.08}  
m^L(K^X)=-K^X+\nabla^L_{v,K^X}=-\frac{|K^X|^2}{4v}.
\end{align}
Then (\ref{eq:6.06}) follows from (\ref{eq:3.03})-(\ref{eq:3.05}), 
 (\ref{eq:6.02}), (\ref{eq:6.07}) 
and (\ref{eq:6.08}). 
	
The proof of Theorem \ref{thm:6.01} is completed.
\end{proof}

\subsection{A proof of Theorem \ref{thm:4.03} a)}\label{s0602}

Comparing with (\ref{eq:5.74}), we set
\begin{align}\label{eq:6.18}
\,^2\nabla^{\mE,t}_{\cdot}:=\nabla^{\mE}_{\cdot}+\frac{1}{
	2\sqrt{t}}\la S(\cdot)e_j, f_p^H\ra c(e_j)f^p\wedge
+\frac{1}{4t}\la S(\cdot)f_p^H, f_q^H\ra f^p\wedge
f^q\wedge
-\frac{\vartheta_K(\cdot)}{4t}\left(1+\frac{t}{v}\right).
\end{align}	
We trivialize $\pi^*\Lambda(T^*B)\widehat{\otimes}\mE$ 
by parallel transport along 
$u\in [0,1]\rightarrow uZ$ with respect to the connection
$\,^2\nabla^{\mE,t}$. Observe that the above
connection is $g$-equivariant as $K\in \mathfrak{z}(g)$.
Let $A$, $A'$ be smooth sections of $TX$. As in (\ref{eq:5.78a}),
from (\ref{eq:6.18}),
\begin{align}\label{eq:6.18a} 
\,^2\nabla_A^{\mE,1}c(A')=c(\nabla_A^{TX}A')+
\la S(A)A',f_p^H\ra f^p\wedge.
\end{align}	
For $x\in W^g$, in this section, we denote by $\mathbf{H}_x$ 
the vector space of smooth sections of $\pi^*(\Lambda(T^*B))
\widehat{\otimes}\mE_x$.
Let $L_{x,K}^{1,(t,v)}$ be the differential operator acting on
$\mathbf{H}_x$,
\begin{align}\label{eq:6.18b}
L_{x,K}^{1,(t,v)}=(1-\rho^2(Z))(-t\Delta^{TX})+\rho^2(Z)\mB_{K,t,v}.
\end{align}	
We define $L_{x,K}^{2,(t,v)}:=H_t^{-1}L_{x,K}^{1,(t,v)}H_t$ 
and $L_{x,K}^{3,(t,v)}:=\left[L_{x,K}^{2,(t,v)} \right]_t^{(3)}$
as in Section \ref{s0507}.
By Proposition \ref{prop:5.24} for $(\mE\otimes L, 
\nabla_v^{\mE\otimes L})$,
we have
\begin{align}\label{eq:6.18c}
L_{x,K}^{3,(0,v)}=-\left(\nabla_{e_i}+\frac{1}{4}\left\la
(\jmath^*R^{TX}_x-m^{TX}(K)_x)Z,e_i\right\ra\right)^2
+\jmath^*R^{E\otimes L}_x-m^{E\otimes L}(K)_x,
\end{align}	
and as $t\rightarrow 0$,
\begin{align}\label{eq:6.18d}
L_{x,K}^{3,(t,v)}\rightarrow L_{x,K}^{3,(0,v)}.
\end{align}
By (\ref{eq:6.18a}), as in (\ref{eq:5.89a}) and (\ref{eq:5.89c}),
\begin{align}\label{eq:6.16b}
\left[\sqrt{t}c(K^X)(\sqrt{t}Z) \right]_t^{(3)}=\jmath^*\vartheta_{K}
+\mO(\sqrt{t}Z+\sqrt{t}).
\end{align}
By (\ref{eq:2.08}), (\ref{eq:2.16}), (\ref{eq:3.02}), 
(\ref{eq:6.02}) and (\ref{eq:6.08}), we get
\begin{align} 
\jmath^*R_{v,K}^L=\jmath^*R_v^L-2i\pi m^L(K)=-\frac{1}{4v}
(d^{W^g}\vartheta_{K}-2i\pi|K^X|^2)
=-\frac{d_K^{W^g}\vartheta_{K}}{4v}.
\end{align}
Then by (\ref{eq:2.17}),
\begin{align} 
\ch_{g,K}(L,\nabla_v^L)=\exp\left(\frac{d_K^{W^g}\vartheta_{K}
}{8\pi iv} \right).
\end{align}
From (\ref{eq:2.14}), (\ref{eq:2.15}),
(\ref{eq:3.01}), (\ref{eq:3.02}) and (\ref{eq:3.07}),
set
\begin{align}\label{eq:6.17b}
\gamma_{K,v}=-\frac{\vartheta_K}{8vi\pi}
\exp\left(\frac{d_K\vartheta_K}{8vi\pi} 
\right)\widehat{\mathrm{A}}_{g,K}(TX,\nabla^{TX})
\ch_{g,K}(\mE/\mS,\nabla^{\mE/\mS})\in \Omega(W^g,
\det(N_{X^g/X})).
\end{align}
By (\ref{eq:4.10}), (\ref{eq:6.18c}) and (\ref{eq:6.17b}),
if $\dim X$ is even, as in (\ref{eq:5.140}), 
we get 
\begin{multline}\label{eq:6.19b}
\phi\int_{N_{X^g/X}}(-i)^{\ell/2}2^{\ell/2}
\left\{\tr_s^{\mS_N\otimes 
	E\otimes L}\left[g\frac{\jmath^*\vartheta_{K}}{4v}
	\exp(-L_{x,K}^{3,(0,v)})
(g^{-1}Z,Z)\right]\right\}^{\max}dv_N(Z)
\\
=-\left\{\gamma_{K,v}\right\}_x^{\max}.
\end{multline}
By (\ref{eq:3.07}), (\ref{eq:6.18d})-(\ref{eq:6.19b}), 
from the same argument of Section \ref{s0507} and (\ref{eq:5.132})
for $(\mE\otimes L, \nabla_v^{\mE\otimes L})$, 
we obtain Theorem \ref{thm:4.03} a) for $\dim X$ even.

If $n$ is odd, following the explanation in Remark \ref{rem:5.22},
the proof is the same.

The proof of Theorem \ref{thm:4.03} a) is completed.

\subsection{Localization of the problem}\label{s0603}
The proof of Theorem \ref{thm:4.03} b) is devoted to Sections 
\ref{s0603}-\ref{s0608}.

Let $\mB^{0}$ be the piece of $\mB_{K,t,v}$ which has 
degree $0$ in $\Lambda(T^*B)$. Then for $t\in(0,1]$, 
$v\in [t,1]$, by (\ref{eq:5.141}), $t\mB^{0}$ satisfies 
the same estimates as Lemma \ref{lem:5.11} uniformly for $v\in 
[t,1]$.
 
Thus following the same arguments in the proof of Lemma 
\ref{lem:5.16}, we have
\begin{thm}\label{thm:6.03}
There exist $\beta>0$, $C>0$, $C'>0$ such that if 
$K\in\mathfrak{g}$, $|K|\leq \beta$, $t\in(0,1]$, $v\in [t,1]$,
\begin{align}\label{eq:6.14}
\|\wi{I}_t(t\mB_{K,t,v})\|_{(1)}\leq C\exp(-C'/t).
\end{align}
\end{thm}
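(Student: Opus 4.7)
The plan is to repeat the strategy used to prove Lemma \ref{lem:5.16} (and its model \cite[Theorem 7.15]{BG00}), with $\mA_{zK,t}$ replaced by $t\mB_{K,t,v}$, and to verify that the uniformity in $v\in[t,1]$ granted by the elementary inequality $0\leq t^{-1}-v^{-1}<t^{-1}$ of (\ref{eq:5.141}) carries through every step. As the paper has already noted, $t\mB_{K,t,v}^{(0)}$ satisfies the three inequalities of Lemma \ref{lem:5.11} with constants independent of $v\in[t,1]$, which is the essential analytic input.

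First I would establish the resolvent estimate analogous to Lemma \ref{lem:5.12}: for each $c\in(0,1]$ there exist $\beta>0$ small and $C>0$ such that for $t\in(0,1]$, $v\in[t,1]$, $|K|\leq\beta$ and $\lambda\in U_c$, the resolvent $(\lambda-t\mB_{K,t,v}^{(0)})^{-1}$ exists, with $\|(\lambda-t\mB_{K,t,v}^{(0)})^{-1}s\|_0\leq (2/c^2)\|s\|_0$ and $\|(\lambda-t\mB_{K,t,v}^{(0)})^{-1}s\|_1\leq Cc^{-2}t^{-4}(1+|\lambda|)^2\|s\|_{-1}$, both uniformly in $v$. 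Then, using the Lichnerowicz formula (\ref{eq:6.06}), the difference $t\mB_{K,t,v}-t\mB_{K,t,v}^{(0)}$ maps $\mathbb{E}^1\to\mathbb{E}^{-1}$ with bound independent of $v\in[t,1]$: the new terms proportional to $1/v$ (such as $\frac{1}{4v}|K^X|^2$ and the $1/v$-terms multiplying $S$-tensors) are bounded because $|K|\leq\beta$ and $v\geq t$ allows the same type of bookkeeping as in (\ref{eq:5.14}). The finite Neumann expansion in the nilpotent direction of $\Lambda(T^*B)$, as in (\ref{eq:5.17}), then promotes this to the full operator, giving the analogue of Lemma \ref{lem:5.13}. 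Repeating the Schatten-class argument of Lemma \ref{lem:5.15}, for $q\geq 2\dim X+1$ one obtains
\begin{align*}
\|(\lambda-t\mB_{K,t,v})^{-q}\|_{(1)}\leq \frac{C^q}{(c^kt^k)^q}(1+|\lambda|)^{mq},\quad \lambda\in U_c,
\end{align*}
uniformly in $v\in[t,1]$.

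Next I would represent $\wi{I}_t(t\mB_{K,t,v})$ via the Cauchy integral
\begin{align*}
\wi{I}_t(t\mB_{K,t,v})=\frac{(-1)^{q-1}}{(q-1)!\,2i\pi}\int_{\Gamma_c}\wi{I}_t^{(q-1)}(\lambda)(\lambda-t\mB_{K,t,v})^{-q}\,d\lambda,
\end{align*}
take trace norms, and substitute the Schatten estimate above. The decisive analytic input is the exponential decay of $\wi{I}_t$ on $V_c$: because $g(\sqrt{t}u)$ is supported in $|u|\geq\alpha/(2\sqrt{t})$, a standard contour deformation in the Fourier integral defining $G_t$ (see the derivation of Lemma \ref{lem:5.16} in \cite[\S 7]{BG00}) yields
\begin{align*}
\sup_{\lambda\in V_c}(1+|\lambda|)^m\bigl|\wi{I}_t^{(q-1)}(\lambda)\bigr|\leq C_m\exp(-C'/t+C''c^2)
\end{align*}
for every $m\in\N$. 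Choosing $c$ independent of $t$, the $(1+|\lambda|)^{mq}$ growth on the resolvent side is absorbed, and one obtains $\|\wi{I}_t(t\mB_{K,t,v})\|_{(1)}\leq C\exp(-C'/t)$ with constants independent of $v\in[t,1]$.

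The main obstacle is to keep the entire chain of resolvent estimates truly uniform in $v$ despite the singular-looking $1/v$ factors appearing in (\ref{eq:6.06}): a careless bound on such a term would produce a factor of $1/t$ that spoils the exponential smallness in $t$. The point is that every $1/v$ contribution is either multiplied by $\sqrt{t}\cdot(1/t-1/v)$, hence dominated by its counterpart in $\mathbb{B}_{K,t}$ for which the argument of Lemma \ref{lem:5.16} already succeeds, or by $|K^X|^2$ which is bounded on $W$, or by $\sqrt{t}/v$ which is at worst $1/\sqrt{t}$ (as in $\mathbb{B}_{K,t}$); these are precisely the scalings for which Lemmas \ref{lem:5.11}--\ref{lem:5.15} were designed. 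Once this uniformity is verified, the rest of the proof is a line-by-line transcription of the argument for Lemma \ref{lem:5.16}.
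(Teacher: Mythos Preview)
Your proposal is correct and follows essentially the same approach as the paper: the paper simply observes that, by the inequality $0\leq t^{-1}-v^{-1}<t^{-1}$ of (\ref{eq:5.141}), the degree-$0$ piece $t\mB^{(0)}$ satisfies the estimates of Lemma~\ref{lem:5.11} uniformly for $v\in[t,1]$, and then invokes the proof of Lemma~\ref{lem:5.16} verbatim. Your write-up spells out the intermediate chain (Lemmas~\ref{lem:5.12}--\ref{lem:5.15} and the Cauchy integral) and correctly isolates the only point requiring care, namely that the $1/v$ contributions in (\ref{eq:6.06}) are dominated by their $1/t$ counterparts already handled in Section~\ref{s0503}.
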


So our proof of inequality (\ref{eq:4.13}) in Theorem \ref{thm:4.03} 
can be localized near $X^g$. As in Section 5.3, we may and we 
will assume that $W=B\times X$, $TX$ is spin and 
$\mE=\mS_X\otimes E$.

\subsection{A rescaling of the normal coordinate to $X^{g,K}$ 
in $X^g$}\label{s0604}

In the sequel, we fix $g\in G$, $0\neq K_0\in \mathfrak{z}(g)$
and
\begin{align}
K=zK_0,\quad z\in \R^*.
\end{align}

Recall that $X^g$ and $X^{g,K}$ are totally geodesic in $X$. 
Given $\var>0$, let $\mU_{\var}''$ be the $\var$-neighbourhood 
of $X^{g,K}$ in $N_{X^{g,K}/X^g}$ (cf. the notation in the 
proof of Lemma \ref{lem:3.01}). 
By zooming out $\var_0\in(0,a_X/32]$ in Section \ref{s0503},
we can assume that
the map $(y_0,Z_0)\in N_{X^{g,K}/X^g}\rightarrow 
\exp_{y_0}^{X^g}(Z_0)\in X^g$ is a diffeomorphism from 
$\mU_{\var}''$ into the tubular neighbourhood $\mV_{\var}''$ 
of $X^{g,K}$ in $X^g$ for any $0<\var\leq 16\var_0$. 

Since $X^g$ is totally geodesic in $X$, the connection 
$\nabla^{TX}$ induces the connection $\nabla^{N_{X^g/X}}$ 
on $N_{X^g/X}$ (cf. (\ref{eq:1.30}) and (\ref{eq:3.18b})).

For $(y_0,Z_0)\in\mU_{\var}''$, we identify $N_{X^g/X,(y_0,Z_0)}$ 
with $N_{X^g/X,y_0}$ by parallel transport along the geodesic 
$u\in [0,1]\rightarrow u Z_0$ with respect to $\nabla^{TX}$. 
If $y_0\in X^{g,K}$, $Z_0\in N_{X^{g,K}/X^g,y_0}$, 
$Z\in N_{X^g/X,y_0}$, $|Z_0|,|Z|\leq 4\var_0$, we identify 
$(y_0,Z_0,Z)$ with $\exp_{\exp_{y_0}^{X^g}(Z_0)}^X(Z)\in X$. 
Therefore, $(y_0, Z_0,Z)$ defines a coordinate system on $X$ 
near $X^{g,K}$.

From (\ref{eq:2.14}), (\ref{eq:2.15}) and (\ref{eq:6.17b}), 
for $|z|$ small enough, 
$\gamma_{K,v}$ is a smooth form 
on $W^g$. Recall that
the function $k$ is defined in (\ref{eq:5.64})
 and $\ell'=\dim X^{g,K}$.

\begin{thm}\label{thm:6.04} 
There exist $\beta\in (0,1]$, $\delta\in (0,1]$ such that for 
$p\in \N$, there is $C>0$ such that if $z\in \R^*$, $|z|\leq 
\beta$, $t\in (0,1]$, $v\in [t,1]$, $y_0\in X^{g,K}$, $Z_0\in 
N_{X^{g,K}/X^g,y_0}$, $|Z_0|\leq \var_0/\sqrt{v}$, then
for $K=zK_0$,
\begin{multline}\label{eq:6.16} 
\left|v^{\frac{1}{2}\dim N_{X^{g,K}/X^g}}\left(\phi\int_{Z\in 
N_{X^g/X,y_0},|Z|\leq \var_0}\wi{\tr}'\left[g\frac{\sqrt{t}
c(K^X)}{4v}\wi{F}_t\left(-\mB_{K,t,v}\right)\right.\right.\right.
\\
\left.\Big.\left.(g^{-1}(y_0,\sqrt{v}Z_0,Z), (y_0,\sqrt{v}Z_0,Z)) 
\right]\cdot k(y_0,\sqrt{v}Z_0,Z)dv_{N_{X^g/X}}(Z)\right.
\\
\left.+\{\gamma_{K,v}\}^{\max}
(y_0,\sqrt{v}Z_0)\Big)\right|
\leq C\frac{(1+|Z_0|)^{\ell'+1}}{(1+|zZ_0|)^p}\left(\frac{t}{v}
\right)^{\delta}.
\end{multline}
\end{thm}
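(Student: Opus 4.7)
The plan is to adapt the Getzler rescaling of Section \ref{s05} by superposing a second parabolic rescaling in the normal direction $N_{X^{g,K}/X^g}$ to $X^{g,K}$ inside $X^g$, with scaling parameter $\sqrt{v}$. By Theorem \ref{thm:6.03} and finite propagation speed, the kernel on the left of (\ref{eq:6.16}) is localised in an $\var_0$-tubular neighbourhood of $X^{g,K}$, so we may (as in Section \ref{s0503}) pass to a spin model with $\mE = \mS_X \otimes E$ and use the coordinates $(y_0, Z_0, Z)$ of Section \ref{s0604}. First I would introduce a rescaled operator $L^{3,(t,v)}_{y_0,K}$ obtained from $\mB_{K,t,v}$ by (i) the $L$-twist trivialisation of Section \ref{s0602}, so that Theorem \ref{thm:6.01} applies, (ii) the Getzler rescaling $Z \mapsto \sqrt{t} Z$ with Clifford rescaling $c(e_j) \mapsto c_t(e_j)$ for $e_j$ tangent to $X^g$, as in Definition \ref{defn:5.20}, and (iii) the new rescaling $Z_0 \mapsto \sqrt{v}Z_0$ in the $N_{X^{g,K}/X^g}$ directions. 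The Jacobian of (iii) produces the prefactor $v^{(\ell-\ell')/2}$ and that of (ii) the $t^{(n-\ell)/2}$ already hidden in Section \ref{s0507}.

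Step two is to identify the limiting operator $L^{3,(0,v)}_{y_0,K}$ as $t \to 0$. Writing $K = z K_0$ with $m^{TX}(K_0)$ skew-adjoint and invertible on $N_{X^{g,K}/X^g}$ by (\ref{eq:3.09b}), the Lichnerowicz formula (Theorem \ref{thm:6.01}) shows that after both rescalings the potential $|K^X|^2/(4v)$ converges to $z^2 |m^{TX}(K_0) Z_0|^2/4$, while the $R^{TX}$ and $m^{TX}(K)$ terms assemble into the equivariant curvature $R^{TX}_K$ along $X^{g,K}$. The limit operator is a generalised harmonic oscillator with coefficients in the equivariant curvatures, whose heat kernel is computed by Mehler's formula exactly as in (\ref{eq:5.135}). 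A Berezin-trace computation analogous to Section \ref{s0510}, combined with (\ref{eq:3.26a}) to pass from an $X^g$-density to an $X^{g,K}$-density, then shows that the supertrace of $\tfrac{\sqrt{t}c(K^X)}{4v}$ times the rescaled Mehler kernel, integrated over $Z \in N_{X^g/X,y_0}$, equals precisely $-\{\gamma_{K,v}\}^{\max}(y_0, \sqrt{v}Z_0)$ in view of (\ref{eq:6.17b}). This yields the pointwise convergence.

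The main difficulty will be the uniform estimate with the explicit factor $(1+|Z_0|)^{\ell'+1}(1+|zZ_0|)^{-p}$. For this I would follow the functional-analytic scheme of Sections \ref{s0508}--\ref{s0509}: introduce weighted Sobolev norms on $T_{y_0}X$ with weights depending on $t$, $v$ and the $Z_0$-shell, establish resolvent estimates uniform on contours $\Gamma_c$, and expand $\widetilde{F}_t(-\mB_{K,t,v}) = \sum_{n \in \N} \widetilde{F}_{t,n}(-\mB_{K,t,v})$ as in (\ref{eq:5.119}) so that finite propagation confines each summand to a compact region where estimates of type (\ref{eq:5.127a}) apply uniformly in $v \in [t,1]$. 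The decay in $|zZ_0|$ is dictated by the Gaussian in the Mehler formula: the coercivity $|K^X(y_0,\sqrt{v}Z_0)|^2/v \geq c\, z^2 |Z_0|^2 + \mO(\sqrt{v})$, which comes from the invertibility of $m^{TX}(K_0)$ on $N_{X^{g,K}/X^g}$, furnishes an $\exp(-c z^2 |Z_0|^2/8)$ factor in $\gamma_{K,v}(y_0,\sqrt{v}Z_0)$ that dominates any polynomial weight $(1+|zZ_0|)^{-p}$; transferring this coercivity to the prelimit kernel via the weighted norms, together with the polynomial factor $(1+|Z_0|)^{\ell'+1}$ absorbing the standard elliptic loss in the base variable $Z_0$, yields the claimed bound.
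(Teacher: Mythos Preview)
Your broad strategy is correct and matches the paper: localise via Theorem~\ref{thm:6.03} and finite propagation speed, pass to a rescaled operator centred at $(y_0,\sqrt{v}Z_0)$, compare with a Mehler-type limit, and control the difference by weighted-norm functional analysis as in Sections~\ref{s0508}--\ref{s0509}. But the rescaling you propose is the single-tier Getzler rescaling of Section~\ref{s0602} (namely $c(e_j)\mapsto c_t(e_j)$ for all $1\le j\le\ell$), and that is exactly what the paper uses to prove the \emph{pointwise} limit in Theorem~\ref{thm:4.03}\,a). It does \emph{not} produce the uniform rate $(t/v)^\delta$ needed here.

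The difficulty is concrete. From the Lichnerowicz formula~(\ref{eq:6.06}), the operator $\mB_{K,t,v}$ contains the term $-\frac{t}{4v}\langle\nabla^{TX}_{e_i}K^X,e_j\rangle c(e_i)c(e_j)$. For $\ell'+1\le i,j\le\ell$ (the $N_{X^{g,K}/X^g}$ directions, where $m^{TX}(K)$ is nonzero), after your single-tier rescaling $c_t$ this becomes, to leading order, $-\frac{1}{4v}\langle m^{TX}(K)e_i,e_j\rangle e^i\wedge e^j$, which is of size $1/v$ and blows up as $v\to 0$. Consequently the rescaled operators are \emph{not} uniformly bounded in any reasonable norm for $v\in[t,1]$, and the resolvent estimates of the type~(\ref{eq:5.49}) cannot be made uniform. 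Your step~(iii), the spatial substitution $Z_0\mapsto\sqrt{v}Z_0$, does nothing to cure this: $Z_0$ is the centre of the coordinate chart, not a variable on which the operator acts, so no Jacobian appears at the operator level.

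What the paper does instead (Definition~\ref{defn:6.07}) is a \emph{two-tier} Getzler rescaling: $c(e_j)\mapsto c_t(e_j)$ for $1\le j\le\ell'$ (tangent to $X^{g,K}$), but $c(e_j)\mapsto c_{t/v}(e_j)$ for $\ell'+1\le j\le\ell$ (the $N_{X^{g,K}/X^g}$ directions). Then $t\cdot c_{t/v}(e_i)c_{t/v}(e_j)\approx v\,e^i\wedge e^j$, so the dangerous term above becomes $O(1)$ uniformly, and the limit operator is $\psi_v L_{x,K}^{3,(0,v)}\psi_v^{-1}$ (Definition~\ref{defn:6.10}). The effective small parameter is now $t/v$, which is precisely what yields the rate $(t/v)^\delta$ in~(\ref{eq:6.48}). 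The prefactor $v^{(\ell-\ell')/2}$ in~(\ref{eq:6.16}) then appears naturally when extracting $\{\cdot\}^{\max}$ through $\psi_v$, not as a Jacobian.

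Two further points you will need. First, the paper uses a \emph{broken-curve} trivialisation (Section~\ref{s0605}): parallel transport by $\nabla^{\mE}$ along $u\mapsto uZ_0$, then by $\,^2\nabla^{\mE,t}$ along $u\mapsto Z_0+uZ$. This makes the identification $\mathbf{H}_{Z_0}\simeq\mathbf{H}_{y_0}$ an isometry and, crucially, gives~(\ref{eq:6.25a}) with error $O(|Z|)$ rather than $O(|Z_0|+|Z|)$, which is what makes the estimates uniform in $Z_0$. The straight-line trivialisation from Section~\ref{s0602} would introduce uncontrolled $|Z_0|$-growth. Second, the decay $(1+|zZ_0|)^{-p}$ for the \emph{prelimit} kernel is obtained not from the Gaussian in $\gamma_{K,v}$ (that only controls the limit), but via commutator estimates with the operators $\frac{z}{\sqrt{v}}\rho(\sqrt{t}Z)f_j(\sqrt{v}Z_0+\sqrt{t}Z)$ of Definition~\ref{defn:6.17}, following \cite[\S8.8--8.10]{BG00}; this together with the two-weight norm of Definition~\ref{defn:6.12} is what produces the factor $(1+|Z_0|)^{\ell'+1}(1+|zZ_0|)^{-p}$.
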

\begin{proof}
Sections 6.5-6.7 will be devoted to the proof of Theorem 
\ref{thm:6.04}.
\end{proof}

\subsection{A new trivialization and Getzler rescaling near 
$X^{g,K}$}\label{s0605}

Since $g$ preserves geodesics and the parallel transport, in 
the coordinate system in above subsection,
\begin{align}\label{eq:6.17}
g(Z_0,Z)=(Z_0,gZ).
\end{align}
 
By an abuse of notation, we will often write $Z_0+Z$ instead of 
$\exp_{\exp_{y_0}^{X^g}(Z_0)}^X(Z)$.

Firstly, we fix $Z_0\in N_{X^{g,K}/X^g,y_0}$, $|Z_0|\leq \var_0$, 
and we take $Z\in T_{y_0}X, |Z|\leq 4\var_0$. The curve 
$u\in [0,1]\rightarrow Z_0+uZ$ lies in $B_{y_0}^X(0,5\var_0)$. 
Moreover we identify $TX_{Z_0+Z}$, 
$\pi^*\Lambda(T^*B)\otimes\mE_{Z_0+Z}$ with 
$TX_{Z_0}$, $\pi^*\Lambda(T^*B)\otimes\mE_{Z_0}$ 
by parallel transport with respect 
to the connections $\nabla^{TX}$, $\,^2\nabla^{\mE,t}$ 
along the curve.

When $Z_0\in N_{X^{g,K}/X^g,y_0}$ is allowed to vary, we 
identify $TX_{Z_0}$, $\pi^*\Lambda(T^*B)\otimes\mE_{Z_0}$ 
with $TX_{y_0}$, 
$\pi^*\Lambda(T^*B)\otimes\mE_{y_0}$ by parallel transport 
with respect to the 
connections $\nabla^{TX}$, $\nabla^{\mE}$ along 
the curve $u\in [0,1]\rightarrow uZ_0$.
Then $\mathbf{H}_{Z_0}$ is identified 
with $\mathbf{H}_{y_0}$ associated with this trivialization.
Furthermore the fiber of $\pi^*\Lambda(T^*B)\otimes\mE$
at $Z_0+Z$ and $y_0$ are identified by parallel transport along the 
broken curve $u\in [0,1]\rightarrow 2uZ_0$,
for $0\leq u\leq \frac{1}{2}$;
$Z_0+(2u-1)Z$ for $\frac{1}{2}\leq u\leq 1$.

Note that here we use the trick in \cite[Section 11.4]{Bi97} (cf. also 
\cite[Section 9.5]{BG00})
and the trivialization here is different from that in the proof of
Theorem \ref{thm:4.03} a) in Section \ref{s0602}. 
Under this new trivialization,
the identification between $\mathbf{H}_{y_0}$ and $\mathbf{H}_{Z_0}$ 
is an isometry with respect to (\ref{eq:1.19}).

For $Z_0\in N_{X^{g,K}/X^g,y_0}$, $|Z_0|\leq \var_0$, 
the considered trivializations depend explicitly on $Z_0$. 
We denote by 
$(\mB_{K,t,v})_{Z_0}$ the action of $\mB_{K,t,v}$ centered 
at $Z_0$.
Thus the operator $(\mB_{K,t,v})_{Z_0}$ acts 
on $\mathbf{H}_{Z_0}$. As $\mathbf{H}_{Z_0}$ is identified 
with $\mathbf{H}_{y_0}$, so that ultimately, 
$(\mB_{K,t,v})_{Z_0}$ acts on $\mathbf{H}_{y_0}$.

We may and we will assume that $\var_0$ is small enough so 
that if $|Z_0|\leq \var_0$, $|Z|\leq 4\var_0$, then 
\begin{align}\label{eq:6.19} 
\frac{1}{2}g^{TX}_{y_0}\leq g^{TX}_{Z_0+Z}\leq \frac{3}{2}
g^{TX}_{y_0}.
\end{align}

We define $k_{(y_0,Z_0)}'(Z)$ as in (\ref{eq:5.72}).
Recall that $\rho(Z)$ is defined 
in (\ref{eq:5.81}).

\begin{defn}\label{defn:6.05}
Let $L_{Z_0,K}'^{1,(t,v)}$ be the differential operator 
acting on $\mathbf{H}_{y_0}$,
\begin{align}\label{eq:6.21}
L_{Z_0,K}'^{1,(t,v)}=-(1-\rho^2(Z))(-t\Delta^{TX})+\rho^2(Z)
(\mB_{K,t,v})_{Z_0}.
\end{align}
\end{defn}

By proceeding as in (\ref{eq:5.83}), and using
Theorem \ref{thm:6.03} and (\ref{eq:6.17}), 
we find that if $Z_0\in N_{X^{g,K}/X^g,y_0}, Z\in N_{X^g/X,y_0}$, 
$|Z|, |Z_0|\leq \var_0$,
\begin{align}\label{eq:6.22}
\wi{F}_t(\mB_{K,t,v})(g^{-1}(Z_0,Z),(Z_0,Z))k_{(y_0,Z_0)}'(Z)
=\wi{F}_t(L_{Z_0,K}^{1,(t,v)})(g^{-1}Z,Z).
\end{align}

We still define $H_t$ as in (\ref{eq:5.85}). Let 
\begin{align}\label{eq:6.23} 
L_{Z_0,K}'^{2,(t,v)}=H_{t}^{-1}L_{Z_0,K}'^{1,(t,v)}H_{t}.
\end{align}

Let $(e_1,\cdots,e_{\ell'})$, $(e_{\ell'+1},\cdots,e_{\ell})$, 
$(e_{\ell+1},\cdots,e_{n})$ be orthonormal basis of 
$T_{y_0}X^{g,K}$, $N_{X^{g,K}/X^g,y_0}$, $N_{X^g/X,y_0}$ 
respectively. 

\begin{defn}\label{defn:6.07}
Let $L_{Z_0,K}'^{3,(t,v)}$ be the differential operator acting on
$\mathbf{H}_{y_0}$ obtained from $L_{Z_0,K}'^{2,(t,v)}$ by 
replacing $c(e_j)$ by $c_{t}(e_j)$ (cf. (\ref{eq:5.88})) 
for $1\leq j\leq \ell'$, 
by $c_{t/v}(e_j)$ for $\ell'+1\leq j\leq \ell$, while leaving
unchanged the $c(e_j)$'s for $\ell+1\leq j\leq n$.
\end{defn}

For $A\in (\pi^*(\Lambda (T^*B))\widehat{\otimes} c(TX)
\otimes\End( E))_x\otimes \mathrm{Op}_x$, we denote by
$[A]_{(t,v)}^{(3)}$ the differential operator obtained from $A$ 
by using the Getzler rescaling of the Clifford variables
which is given in Definition \ref{defn:6.07}.

If $Z_0\in N_{X^{g,K}/X^g,y_0}$, $|Z_0|\leq \var_0$, $Z\in 
T_{y_0}X$, $|Z|\leq 4\var_0$, if $U\in T_{y_0}X$, let 
$\tau^{Z_0}U(Z)\in TX_{Z_0+Z}$ be the parallel transport of 
$U$ along the curve $u\rightarrow 2uZ_0$, $0\leq u\leq \frac{1}{2}$, 
$u\rightarrow \exp_{Z_0}^X((2u-1)Z)$, $\frac{1}{2}\leq u\leq 1$, with 
respect to $\nabla^{TX}$.

By (\ref{eq:6.18a}), 
under the identification of $\pi^*\Lambda(T^*B)\otimes \mE_{Z_0+Z}$
and $\pi^*\Lambda(T^*B)\otimes \mE_{y_0}$ at the beginning of 
this subsection,
in the trivialization
\begin{align}\label{eq:6.25a} 
c\left(\tau^{Z_0} e_j(Z)\right)=c(e_j)+\frac{1}{\sqrt{t}}
\big(\la S(Z)e_j,f_p^H\ra_{Z_0}
+\mO(|Z|^2)\big) f^p\wedge.
\end{align}

Then comparing with (\ref{eq:5.89}) and (\ref{eq:5.90}),
from (\ref{eq:6.06}), we have
\begin{multline}\label{eq:6.25} 
L_{Z_0,K}'^{3,(t,v)}=-(1-\rho^2(\sqrt{t}Z))\Delta^{TX}
+\rho^2(\sqrt{t}Z)\cdot\left\{ 
-g^{ij}(\sqrt{t}Z)\left(\nabla_{e_i}''\nabla_{e_j}''
-\Gamma_{ij}^k(\sqrt{t}Z)\sqrt{t}\nabla_{e_k}'' 
\right)\right.
\\
+\frac{t}{2}\left(
R^{\mE/\mS}_{(Z_0,\sqrt{t}Z)}(e_i,e_j)-\frac{1}{2v}
\la\nabla_{e_i}^{TX}K^X,e_j\ra_{(Z_0,\sqrt{t}Z)}
\right)\left[c\left(\tau^{Z_0} e_i(\sqrt{t}Z)\right)
c\left(\tau^{Z_0} e_j(\sqrt{t}Z)\right)\right]_{(t,v)}^3
\\
+\sqrt{t}\left(R^{\mE/\mS}_{(Z_0,\sqrt{t}Z)}(e_i,f_p^H)-\frac{1}{2v}
\la T(e_i,f_p^H),K^X\ra_{(Z_0,\sqrt{t}Z)}\right)
\left[c\left(\tau^{Z_0} e_i(\sqrt{t}Z)\right)
\right]_{(t,v)}^3f^p\wedge
\\
+\frac{1}{2}\left(R^{\mE/\mS}_{(Z_0,\sqrt{t}Z)}(f_p^H, f_q^H)-
\frac{1}{8v}\la T(f_p^H,f_q^H), K^X
\ra_{(Z_0,\sqrt{t}Z)}\right)f^p\wedge f^q\wedge 
\\
\left.+\frac{t}{4}H_{(Z_0,\sqrt{t}Z)}
-m_{(Z_0,\sqrt{t}Z)}^{\mE/\mS}(K^X)+\frac{1}{4v}
|K^X(Z_0,\sqrt{t}Z)|^2\right\},
\end{multline}
where 
\begin{multline}\label{eq:6.26}
\nabla_{e_i}''=\nabla_{\tau^{Z_0}e_i(\sqrt{t}Z)}+\frac{t}{8}\Big(
\la R^{TX}_{Z_0}(e_k,e_l)Z,
e_i\ra+\mathcal{O}(\sqrt{t}|Z|^2)\Big)
\left[c\left(\tau^{Z_0} e_k(\sqrt{t}Z)\right)
c\left(\tau^{Z_0} e_l(\sqrt{t}Z)\right)\right]_{(t,v)}^3
\\
+\frac{\sqrt{t}}{4}\Big(\la R^{TX}_{Z_0}
(e_k,f_p^H)Z,e_i\ra
+\mathcal{O}(\sqrt{t}|Z|^2)\Big) 
\left[c\left(\tau^{Z_0} e_k(\sqrt{t}Z)\right)
\right]_{(t,v)}^3f^p\wedge 
\\
+\frac{1}{8} \Big(\la R^{TX}_{Z_0}
(f_p^H,f_q^H)Z,e_i\ra
+\mathcal{O}(\sqrt{t}|Z|^2)\Big) f^p\wedge f^q\wedge
+ \frac{t}{2}\left(R^E_{Z_0}(Z, e_i)+\mathcal{O}(\sqrt{t}|Z|^2) \right) 
\\
-\frac{1}{4}\left(1+\frac{t}{v}\right)\la m^{TX}_{Z_0}
(K)Z,e_i\ra
+\sqrt{t}h_i(K,\sqrt{t}Z)\left(\frac{1}{t}+\frac{1}{v}\right).
\end{multline}
Here $h_i(K, Z)$ is a function depending linearly on $K$ and 
$h_i(K,Z)=\mathcal{O}(|Z|^2)$ for $|K|$ bounded.

Let $\psi_v\in \End(\Lambda(T^*X^g))$ be the morphism of 
exterior algebras such that 
\begin{align}\label{eq:6.31}
\begin{split}
&\psi_v(e^j)=e^j,\quad 1\leq j\leq \ell',
\\
&\psi_v(e^j)=\sqrt{v}e^j,\quad \ell'+1\leq j\leq \ell.
\end{split}
\end{align}
Recall that for $x=(y_0,Z_0)\in X^g$, $\Lambda(T^*X^g)_{(y_0,
Z_0)}$ has been identified with $\Lambda(T^*X^g)_{y_0}$.

\begin{defn}\label{defn:6.10} 
	Let $L_{x,K}'^{3,(0,v)}$ be the operator
	\begin{align}\label{eq:6.32} 
	L_{x,K}'^{3,(0,v)}=\psi_v L_{x,K}^{3,(0,v)}\psi_v^{-1}.
	\end{align}
\end{defn}

By Definitions \ref{defn:6.07} and \ref{defn:6.10}, (\ref{eq:6.18d}) 
and (\ref{eq:6.31}),
as $t\rightarrow 0$, 
\begin{align}\label{eq:6.33} 
L_{Z_0,K}'^{3,(t,v)}\rightarrow L_{(y_0,Z_0),K}'^{3,(0,v)}.
\end{align}

\subsection{A family of norms}\label{s0606}

For $0\leq p\leq \ell'$, $0\leq q\leq \ell-\ell'$, put
\begin{align}\label{eq:6.34} 
\Lambda^{(p,q)}(T^*X^g)_{y_0}=\Lambda^{p}(T^*X^{g,K})_{y_0}
\widehat{\otimes}\Lambda^{q}(N^*_{X^{g,K}/X^g})_{y_0}.
\end{align}
The various $\Lambda^{(p,q)}(T^*X^g)_{y_0}$ are mutually 
orthogonal in $\Lambda(T^*X^g)_{y_0}$. Let $\mathbf{I}_{y_0}$ 
be the vector space of smooth sections of
$(\pi^*\Lambda(T^*B)\otimes\Lambda(T^*X^g)\widehat{\otimes}
\mS_N\otimes E)_{y_0}$ on $T_{y_0}X$, let
$\mathbf{I}_{(r,p,q),y_0}$ be the vector space of smooth 
sections of $(\pi^*\Lambda^r(T^*B)\otimes\Lambda^{(p,q)}(T^*X^g)
\widehat{\otimes}\mS_N\otimes E)_x$ on $T_{y_0}X$. 
Let $\mathbf{I}^0_{y_0}, \mathbf{I}^0_{(r,p,q),y_0}$ be the 
corresponding vector spaces of square-integrable sections.

Now we imitate constructions in \cite[\S 11]{BL91}.
Recall that $\dim B=k$.

\begin{defn}\label{defn:6.12}
For $t\in [0,1]$, $v\in \R_+^*$, $y_0\in X^{g,K}$, $Z_0\in 
N_{X^{g,K}/X^g,y_0}$, $|Z_0|\leq \var_0/\sqrt{v}$,
$s\in \mathbf{I}_{(r,p,q),y_0}$, set
\begin{multline}\label{eq:6.35}
|s|_{t,v,Z_0,0}^2=\int_{T_{y_0}X}|s(Z)|^2\left(1+\big(|Z_0|+|Z|\big)
\rho\left(\frac{\sqrt{t}Z}{2}\right) \right)^{2(k+\ell'-p-r)}
\\
\cdot\left(1+\sqrt{v}|Z|\rho\left(\frac{\sqrt{t}Z}{2}
\right) \right)^{2(\ell-\ell'-q)}dv_{TX}(Z).
\end{multline}
\end{defn}
Then (\ref{eq:6.35}) induces a Hermitian product $\la\cdot,
\cdot\ra_{t,v,Z_0,0}$ on $\mathbf{I}^0_{(r,p,q),y_0}$. We equip 
$\mathbf{I}^0_{y_0}=\bigoplus \mathbf{I}^0_{(r,p,q),y_0}$ with the 
direct sum of these Hermitian metrics.

Recall that by (\ref{eq:5.81}), if $\rho(\sqrt{t}Z)>0$, then 
$|\sqrt{t}Z|\leq 4\var_0$. The proof of the following proposition 
is almost the same as that of
\cite[Proposition 8.16]{BG00}
(cf. also \cite[Proposition 11.24]{BL91}). 

\begin{prop}\label{prop:6.13}
For $t\in (0,1]$, $v\in [t,1]$, $y_0\in X^{g,K}$, $Z_0\in 
N_{X^{g,K}/X^g,y_0}$, $|Z_0|\leq \var_0/\sqrt{v}$, the 
following family of operators acting on $(\mathbf{I}_{y_0}^0, 
|\cdot|_{t,v,Z_0,0})$ are uniformly bounded:
\begin{multline}\label{eq:6.36}
1_{|\sqrt{t}Z|\leq 4\var_0}\sqrt{t}c_{t}(e_j),\  1_{
|\sqrt{t}Z|\leq 4\var_0}|Z|\sqrt{t}c_{t}(e_j),\ 
1_{|\sqrt{t}Z|\leq 4\var_0}|Z_0|\sqrt{t}c_{t}(e_j),\, 
\text{for}\ \ 1\leq j\leq \ell',
\\
1_{|\sqrt{t}Z|\leq 4\var_0}|Z_0|f^p\wedge,\quad 
1_{|\sqrt{t}Z|\leq 4\var_0}|Z|f^p\wedge,\ 
\\
1_{|\sqrt{t}Z|\leq 4\var_0}\sqrt{\frac{t}{v}}
c_{\frac{t}{v}}(e_j),\quad  1_{
|\sqrt{t}Z|\leq 4\var_0}|Z|\sqrt{t}c_{\frac{t}{v}}(e_j),\  \, 
\text{for}\ \ell'+1\leq j\leq \ell.
\end{multline}
\end{prop}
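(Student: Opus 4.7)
The plan is to follow the strategy of \cite[Proposition 8.16]{BG00} and \cite[Proposition 11.24]{BL91}, namely to split each Clifford operator into its creation and annihilation parts via (\ref{eq:5.88}) and check by hand that each bounded combination of weights and rescalings in (\ref{eq:6.36}) controls the corresponding change of bi-degree $(r,p,q)$. The crucial bookkeeping is that the weight $w_{r,p,q}(Z_0,Z)= \bigl(1+(|Z_0|+|Z|)\rho(\sqrt{t}Z/2)\bigr)^{k+\ell'-p-r}\bigl(1+\sqrt{v}|Z|\rho(\sqrt{t}Z/2)\bigr)^{\ell-\ell'-q}$ decreases when $r$ or $p$ increases (for $1\le j\le\ell'$, the action of $e^j\wedge$) or when $q$ increases (for $\ell'+1\le j\le\ell$, the action of $e^j\wedge$), so creation operators act trivially on the weights and are therefore uniformly bounded on $\mathbf{I}^0_{y_0}$. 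All nontrivial work is in showing that the annihilation parts $i_{e_j}$, whose action raises the weight by one factor of $(1+(|Z_0|+|Z|)\rho)$ or $(1+\sqrt v|Z|\rho)$, are compensated by the explicit scalar factors $(\sqrt{t})^2$, $t$, $t/v$, $|Z|\sqrt{t}$, or $|Z_0|$ present in each of the operators listed in~(\ref{eq:6.36}).

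First I would record the expansions $\sqrt{t}\,c_t(e_j)=e^j\wedge-t\,i_{e_j}$ for $1\le j\le\ell'$ and $\sqrt{t}\,c_{t/v}(e_j)=\sqrt{v}\,e^j\wedge-(t/\sqrt{v})\,i_{e_j}$ for $\ell'+1\le j\le\ell$, together with the obvious analogue $\sqrt{t/v}\,c_{t/v}(e_j)=e^j\wedge-(t/v)\,i_{e_j}$. Then on the support of $\mathbf{1}_{|\sqrt{t}Z|\le 4\var_0}$ we have $\rho(\sqrt{t}Z/2)=1$, $|Z|\le 4\var_0/\sqrt{t}$, and by the hypothesis $|Z_0|\le\var_0/\sqrt{v}\le\var_0/\sqrt{t}$; these three uniform bounds, together with $0<t\le v\le 1$, are the only analytic inputs needed.

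The annihilation contributions are then controlled by the pointwise estimates
\begin{align*}
\mathbf{1}_{|\sqrt{t}Z|\le 4\var_0}\,t\bigl(1+|Z_0|+|Z|\bigr)&\le 1+5\var_0,
\\
\mathbf{1}_{|\sqrt{t}Z|\le 4\var_0}\,|Z|\sqrt{t}\,\bigl(1+|Z_0|+|Z|\bigr)\cdot\sqrt{t}&\le 4\var_0(1+5\var_0),
\\
\mathbf{1}_{|\sqrt{t}Z|\le 4\var_0}\,|Z_0|\sqrt{t}\,\bigl(1+|Z_0|+|Z|\bigr)\cdot\sqrt{t}&\le \var_0(1+5\var_0),
\\
\mathbf{1}_{|\sqrt{t}Z|\le 4\var_0}\,(t/v)\bigl(1+\sqrt{v}|Z|\bigr)&\le 1+4\var_0,
\\
\mathbf{1}_{|\sqrt{t}Z|\le 4\var_0}\,|Z|(t/\sqrt{v})\bigl(1+\sqrt{v}|Z|\bigr)&\le 4\var_0+16\var_0^2,
\end{align*}
all of which are verified by elementary manipulations using $v\ge t$. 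For the creation parts one simply uses $|Z|/(1+|Z_0|+|Z|)\le 1$, $|Z_0|/(1+|Z_0|+|Z|)\le 1$, and $\sqrt{v}|Z|/(1+\sqrt{v}|Z|)\le 1$ to absorb the prefactors without any use of the cutoff. Finally, for the wedge operators $|Z_0|f^p\wedge$ and $|Z|f^p\wedge$ in the second line of (\ref{eq:6.36}), $f^p\wedge$ raises $r$ by one, producing the missing factor $(1+(|Z_0|+|Z|)\rho)^{-1}$ that kills $|Z_0|$ or $|Z|$ in the same way.

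I expect the only real obstacle to be the hybrid estimate for $\mathbf{1}_{|\sqrt{t}Z|\le 4\var_0}\,|Z|\sqrt{t}\,c_{t/v}(e_j)$, which requires simultaneously using the inequality $\sqrt{t}|Z|\le 4\var_0$ coming from the cutoff and the relation $v\ge t$ (equivalently $t/v\le 1$) to balance the $\sqrt{v}$ factor inherited from $c_{t/v}(e_j)$; the identity $t/\sqrt{v}=\sqrt{t}\sqrt{t/v}$ is what ultimately makes the annihilation part bounded. Once this worst case is dealt with, assembling the seven bounds into the statement of Proposition~\ref{prop:6.13} is immediate, with constants depending only on $\var_0$ and independent of $(t,v,Z_0,y_0)$ in the stated range.
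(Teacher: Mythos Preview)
Your proposal is correct and follows exactly the approach the paper intends: the paper's own ``proof'' simply refers to \cite[Proposition 8.16]{BG00} and \cite[Proposition 11.24]{BL91}, and what you have written is precisely the weight-counting argument carried out in those references, adapted to the bigrading $(r,p,q)$ and the two-parameter rescaling $(t,v)$ used here.

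One small imprecision worth flagging: your sentence ``without any use of the cutoff'' for the creation parts is not quite right. For the bare operators $e^j\wedge$ the weight drop is automatic, but for the operators carrying a prefactor $|Z|$, $|Z_0|$, or $\sqrt{v}|Z|$ you do need the indicator $\mathbf{1}_{|\sqrt{t}Z|\le 4\var_0}$, since the inequalities $|Z|/(1+(|Z_0|+|Z|)\rho)\le 1$ etc.\ hold only where $\rho(\sqrt{t}Z/2)=1$; off that set $\rho$ may vanish and the ratio is unbounded. This is harmless because every operator in (\ref{eq:6.36}) already comes with the indicator, so the argument goes through unchanged --- just drop the parenthetical remark.
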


\begin{defn}\label{defn:6.14} 
For $t\in [0,1]$, $v\in \R_+^*$, $y_0\in X^{g,K}$, $Z_0\in 
N_{X^{g,K}/X^g,y_0}$, $|Z_0|\leq \var_0/\sqrt{v}$,
if $s\in \mathbf{I}_{y_0}$ has compact support, set
\begin{align}\label{eq:6.37} 
|s |_{t,v,Z_0,1}^2=|s|_{t,v,Z_0,0}^2+\frac{1}{v}
\left|\rho(\sqrt{t}Z)|K^X|(\sqrt{v}Z_0+\sqrt{t}Z)s
\right|_{t,v,Z_0,0}^2+\sum_{i=1}^{n}|\nabla_{e_i}s|_{t,v,
	Z_0,0}^2.
\end{align}
\end{defn}

Note that $|s |_{t,v,Z_0,1}$ depends explicitly on $K=zK_0$. 
In fact, $|s |_{t,v,Z_0,1}$ depends on $z\in \R^*$.

\begin{thm}\label{thm:6.15}
There exist constants $C_i>0$, $i=1,2,3,4$, such that if 
$t\in (0,1]$, $v\in [t,1]$, $n\in\N$, $y_0\in X^{g,K}$, 
$Z_0\in N_{X^{g,K}/X^g,y_0}$, $|Z_0|\leq \var_0/\sqrt{v}$, 
$z\in \R$, $|z|\leq 1$, and if the support of $s,s'\in 
\mathbf{I}_{y_0}$ is included in $\{Z\in T_{y_0}X: |Z|\leq n \}$, 
then
\begin{align}\label{eq:6.38}
\begin{split}
&\Re\la L_{\sqrt{v}Z_0,zK_0}'^{3,(t,v)}s,s\ra_{t,v,Z_0,0}\geq
C_1|s|_{t,v,Z_0,1}^2-C_2(1+|nz|^2)|s|_{t,v,Z_0,0}^2,
\\
&|\Im\la L_{\sqrt{v}Z_0,zK_0}'^{3,(t,v)}s,s\ra_{t,v,Z_0,0}|\leq
C_3((1+|nz|)|s|_{t,v,Z_0,1}|s|_{t,v,Z_0,0}+|nz|^2|s|_{t,v,Z_0,0}^2),
\\
&|\la L_{\sqrt{v}Z_0,zK_0}'^{3,(t,v)}s,s'\ra_{t,v,Z_0,0}|\leq
C_4(1+|nz|^2)|s|_{t,v,Z_0,1}|s'|_{t,v,Z_0,1}.
\end{split}
\end{align}
\end{thm}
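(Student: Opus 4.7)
\smallskip

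\noindent\textbf{Proof plan for Theorem \ref{thm:6.15}.}
The plan is to follow the same strategy as in Lemma \ref{lem:5.27} and \cite[Proposition 8.18]{BG00}, namely to test the three sesquilinear forms against the explicit formula \eqref{eq:6.25}--\eqref{eq:6.26} for $L'^{3,(t,v)}_{Z_0,K}$ and to match each resulting term with the weights built into Definitions \ref{defn:6.12} and \ref{defn:6.14}. The starting point is the Lichnerowicz-type identity \eqref{eq:6.06}, which after the centering at $\sqrt{v}Z_0$, the change of variables $Z\mapsto \sqrt{t}Z$, and the Getzler rescaling of Definition \ref{defn:6.07}, produces exactly \eqref{eq:6.25}. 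Integration by parts against the weighted measure of \eqref{eq:6.35} then yields a principal positive contribution
\[
\Re\langle L'^{3,(t,v)}_{\sqrt{v}Z_0,zK_0}s,s\rangle_{t,v,Z_0,0}
\;\geq\; \sum_{i=1}^{n} |\nabla_{e_i}s|_{t,v,Z_0,0}^{2}
\;+\; \frac{\rho^2(\sqrt{t}Z)}{4v}\,\big| |K^X|(\sqrt{v}Z_0+\sqrt{t}Z)\,s\big|_{t,v,Z_0,0}^{2} \;+\;\mathcal{R},
\]
where $\mathcal{R}$ collects all remaining, lower-order terms; the two explicit terms on the right already reproduce the last two summands of $|s|_{t,v,Z_0,1}^{2}$ from \eqref{eq:6.37}, while the zeroth-order weight piece in $|s|_{t,v,Z_0,1}^{2}$ is absorbed into $|s|_{t,v,Z_0,0}^{2}$.

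Next, I would bound $\mathcal{R}$ term by term. The curvature pieces of the form $\sqrt{t}\,R^{\mE/\mS}(e_i,f_p^H)\,[c(\tau^{Z_0}e_i(\sqrt{t}Z))]_{(t,v)}^{3}f^p\wedge$ and $\tfrac{1}{2}R^{\mE/\mS}(f_p^H,f_q^H)f^p\wedge f^q\wedge$, together with the torsion corrections coming from $\tfrac{1}{2v}\langle T(\cdot,\cdot),K^X\rangle$, are uniformly bounded on $\mathbf{I}^0_{y_0}$ by Proposition \ref{prop:6.13}: the factors $\sqrt{t}\,c_t(e_j)$ for $1\le j\le\ell'$, $\sqrt{t/v}\,c_{t/v}(e_j)$ for $\ell'+1\le j\le \ell$, and $|Z|\,f^p\wedge$ or $|Z_0|\,f^p\wedge$ are all uniformly bounded, and the vanishing of $K^X$ on $W^{g,K}$ together with $K\in\mathfrak{z}(g)$ makes $\tfrac{1}{v}K^X(\sqrt{v}Z_0+\sqrt{t}Z)=O(|Z_0|+\sqrt{t/v}|Z|)$, so that the $1/(2v)$ prefactors are compensated. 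The terms from $\nabla_{e_i}''$ in \eqref{eq:6.26} of the shape $\tfrac{t}{8}\langle R^{TX}Z,e_i\rangle [c_t(e_k)c_t(e_l)]_{(t,v)}^3$ are likewise controlled, because the product $\sqrt{t}c_t(e_k)\cdot\sqrt{t}c_t(e_l)$ together with a factor $|Z|$ falls under \eqref{eq:6.36}. The only terms producing the powers $|nz|$ and $|nz|^2$ are those involving $\tfrac14(1+t/v)\langle m^{TX}_{Z_0}(K)Z,e_i\rangle$ and $\sqrt{t}\,h_i(K,\sqrt{t}Z)(1/t+1/v)$ inside $\nabla_{e_i}''$, together with the zeroth-order term $-m^{\mE/\mS}(K^X)$: upon squaring one gets an expression bounded on the support $\{|Z|\le n\}$ by $C(1+|nz|^2)$ times $|s|_{t,v,Z_0,0}^{2}$, which accounts precisely for the constants in \eqref{eq:6.38}.

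The second and third estimates of \eqref{eq:6.38} are then obtained from the same decomposition by standard Cauchy--Schwarz arguments. For the imaginary part, one keeps track of the skew-symmetric first-order piece of $\sqrt{t}R^{\mE/\mS}(e_i,f_p^H)c_t(e_i)f^p\wedge$ and of the $m^{TX}(K)$-linear correction in \eqref{eq:6.26}; cross terms between $\nabla_{e_i}$ and a bounded zeroth-order coefficient are split as $\le\varepsilon|s|_{t,v,Z_0,1}^{2}+C_\varepsilon|s|_{t,v,Z_0,0}^{2}$, and the $z$-linear pieces of $m^{TX}(K)$ on the support $\{|Z|\le n\}$ produce the $|nz|$ and $|nz|^{2}$ factors by Cauchy--Schwarz. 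For the third inequality, one avoids integration by parts and uses Proposition \ref{prop:6.13} directly to dominate each monomial of $L'^{3,(t,v)}_{\sqrt{v}Z_0,zK_0}$ by a product $|s|_{t,v,Z_0,1}|s'|_{t,v,Z_0,1}$, with the extra factor $(1+|nz|^2)$ accommodating the quadratic-in-$K$ contributions.

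The main obstacle I expect is bookkeeping rather than conceptual: one must carefully distribute the weight exponents $2(k+\ell'-p-r)$ and $2(\ell-\ell'-q)$ in \eqref{eq:6.35} across the three families of Clifford variables rescaled by $t$, by $t/v$, and unchanged, and verify that each term in \eqref{eq:6.25}--\eqref{eq:6.26} falls into the list \eqref{eq:6.36}, possibly after multiplication by the appropriate power of $|Z|$ or $|Z_0|$. In particular the coupling between the factor $|Z_0|$ in the norm and the $Z_0$-dependent geometry hidden in $R^{TX}_{Z_0}$, $S_{Z_0}$, and the parallel transport $\tau^{Z_0}e_j$ must be shown to produce only bounded coefficients on the support of $\rho^2(\sqrt{t}Z)$; once this is checked, the three estimates in \eqref{eq:6.38} follow exactly as in \cite[Theorem 11.26]{BL91} and \cite[Proposition 8.18]{BG00}.
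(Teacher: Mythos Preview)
Your overall strategy---comparing with Lemma~\ref{lem:5.27} and \cite[Proposition~8.18]{BG00}, isolating the extra terms in \eqref{eq:6.25} beyond those already in \eqref{eq:5.89}, and matching each against the weighted norms of Definitions~\ref{defn:6.12}--\ref{defn:6.14}---is exactly the paper's approach. However, there is a genuine gap in your treatment of the torsion terms
\[
\frac{\sqrt{t}}{2v}\la T(e_i,f_p^H),K^X\ra\,[c(\tau^{\sqrt{v}Z_0}e_i)]_{(t,v)}^{3}\,f^p\wedge,
\qquad
\frac{1}{16v}\la T(f_p^H,f_q^H),K^X\ra\,f^p\wedge f^q\wedge,
\]
which are precisely the new contributions in the family setting (absent in \cite{BG00}). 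Your claimed estimate $\tfrac{1}{v}K^X(\sqrt{v}Z_0+\sqrt{t}Z)=O(|Z_0|+\sqrt{t/v}\,|Z|)$ is off by a factor of $1/\sqrt{v}$: first-order vanishing of $K^X$ on $W^{g,K}$ only gives $K^X=O(\sqrt{v}|Z_0|+\sqrt{t}|Z|)$, so dividing by $v$ yields $O\bigl(v^{-1/2}(|Z_0|+\sqrt{t/v}\,|Z|)\bigr)$, which blows up as $v\to 0$. The single weight-raising factor $f^p\wedge$ supplied by Proposition~\ref{prop:6.13} absorbs one power of $|Z_0|+|Z|$, not the extra $v^{-1/2}$.

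What is actually needed---and what the paper proves---is the much stronger vanishing $\la\widetilde{T},K^X\ra_{(y_0,Z)}=\mO(|Z|^3)$ at each $y_0\in X^{g,K}$; see \eqref{eq:6.62}. This does not follow from $K^X|_{W^{g,K}}=0$ alone: it requires a short computation using the $G$-invariance of $\widetilde{T}$ (hence $[K^X,\widetilde{T}]=0$), the skew-adjointness of $m^{TX}(K)$, and the moment--curvature relation \eqref{eq:2.05}, carried out in \eqref{eq:6.60b}--\eqref{eq:6.61b}. With cubic vanishing one gets
\[
\tfrac{1}{v}\la\widetilde{T},K^X\ra(\sqrt{v}Z_0+\sqrt{t}Z)
=\mO\bigl((|Z_0|+\sqrt{t/v}\,|Z|)^2\bigr)
\]
on the support of $\rho^2(\sqrt{t}Z)$ (one of the three factors $\sqrt{v}|Z_0|+\sqrt{t}|Z|$ being bounded there), and now the two weight-raising operators $f^p\wedge f^q\wedge$, respectively $[c]_{(t,v)}^{3}\,f^p\wedge$, in Proposition~\ref{prop:6.13} suffice to absorb the quadratic growth. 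Once you insert this step, the rest of your outline goes through as written.
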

\begin{proof}
Comparing with $L_{x,K}^{3,t}$ in (\ref{eq:5.89}) and 
(\ref{eq:5.106}), there are four additional terms in (\ref{eq:6.25}) 
which should be estimated:
\begin{align}\label{eq:6.39} 
\frac{1}{4v}\left|\rho(\sqrt{t}Z)|zK_0^X|(\sqrt{v}Z_0+\sqrt{t}Z)s
\right|_{t,v,Z_0,0}^2,
\end{align}
\begin{multline}\label{eq:6.40} 
-\rho^2(\sqrt{t}Z)\frac{t}{4v}\left\la \la\nabla^{TX}_{
\tau^{\sqrt{v}Z_0}e_i(\sqrt{t}Z)}zK_0^X(\sqrt{v}Z_0+\sqrt{t}Z),
\tau^{\sqrt{v}Z_0}e_j(\sqrt{t}Z))\ra\right.
\\
\left. \cdot \left[c\left(\tau^{\sqrt{v}Z_0} e_i(\sqrt{t}Z)\right)
c\left(\tau^{\sqrt{v}Z_0} e_j(\sqrt{t}Z)\right)\right]_{(t,v)}^3s,s
\right\ra_{t,v,Z_0,0},
\end{multline}
\begin{align}\label{eq:6.41}
-\rho^2(\sqrt{t}Z)\frac{\sqrt{t}}{2v}\left\la\la T(e_i,f_p^H),
zK_0^X\ra (\sqrt{v}Z_0+\sqrt{t}Z) \left[c\left(\tau^{\sqrt{v}Z_0} 
e_i(\sqrt{t}Z)\right)
\right]_{(t,v)}^3f^p\wedge s,
s\right\ra_{t,v,Z_0,0},
\end{align}
and
\begin{align}\label{eq:6.42}
-\rho^2(\sqrt{t}Z)\frac{1}{16v}\left\la\la T(f_p^H, f_q^H),
zK_0^X\ra (\sqrt{v}Z_0+\sqrt{t}Z)f^p\wedge f^q\wedge s,s
\right\ra_{t,v,Z_0,0}.
\end{align}
	
The first term is controlled by (\ref{eq:6.37}) and the second 
term was estimated in the proof of \cite[Theorem 8.18]{BG00}. 
We only need to estimate (\ref{eq:6.41}) and (\ref{eq:6.42}), which 
are new terms in the family case.

By (\ref{eq:3.04}), $\wi{T}$ is $G$-invariant, 
thus $[K^X,\wi{T}]=0$. 
Since $m^{TX}(K)$ is skew-adjoint, by (\ref{eq:2.03}),
\begin{align}\label{eq:6.60b} 
Z\la \wi{T}, K^X\ra
=\la \nabla_Z^{TX}\wi{T},K^X\ra
+\la\wi{T},\nabla^{TX}_{Z}K^X\ra
=\la \nabla_Z^{TX}\wi{T},K^X\ra
-\la \nabla_{K^X}^{TX}\wi{T},
Z\ra.
\end{align}
As $y_0\in X^{g,K}\subset X^K$, we know $K_{y_0}^X=0$.
Thus from (\ref{eq:6.60b}), we have
\begin{align}\label{eq:6.61} 
\frac{\partial}{\partial s}\la \wi{T},
K^X\ra_{(y_0,sZ)}|_{s=0}=0.
\end{align}
From (\ref{eq:6.61}), we have
\begin{align}\label{eq:6.62}
\la \wi{T},K^X\ra_{(y_0,Z)}=\mO(|Z|^2).
\end{align}
 Thus we have
\begin{multline}\label{eq:6.43}
\rho^2(\sqrt{t}Z)\frac{\sqrt{t}}{2v}\la T(e_i,f_p^H),zK_0^X\ra 
(\sqrt{v}Z_0+\sqrt{t}Z)\left[c\left(\tau^{\sqrt{v}Z_0} 
e_i(\sqrt{t}Z)\right)
\right]_{(t,v)}^3f^p\wedge
\\
=\rho^2(\sqrt{t}Z)v^{-1}\sqrt{t}|z|\left[c\left(\tau^{\sqrt{v}Z_0} 
e_i(\sqrt{t}Z)\right)
\right]_{(t,v)}^3f^p\wedge\cdot 
\mathcal{O}((\sqrt{v}|Z_0|+\sqrt{t}|Z|)^2),
\end{multline}
and
\begin{multline}\label{eq:6.44}
\rho^2(\sqrt{t}Z)\frac{1}{16v}\la T(f_p^H, f_q^H),zK_0^X\ra 
(\sqrt{v}Z_0+\sqrt{t}Z)f^p\wedge f^q\wedge
\\
=\rho^2(\sqrt{t}Z)|z|f^p\wedge f^q\wedge
\cdot \mathcal{O}((|Z_0|
+\sqrt{t/v}|Z|)^2).
\end{multline}
Using the fact that $v\leq 1$ and $t/v\leq 1$ and also 
Proposition \ref{prop:6.13}, from (\ref{eq:6.25a}), we find 
that the operators in (\ref{eq:6.43}) and (\ref{eq:6.44}) remain 
uniformly bounded with respect to 
$|\cdot|_{t,v,Z_0,0}$. 
	
The proof of Theorem \ref{thm:6.15} is completed.
\end{proof}

\begin{defn}\label{defn:6.16}
Put
\begin{align}\label{eq:6.45}
L_{Z_0,K,n}^{3,(t,v)}=-\left(1-\gamma^2\left(\frac{|Z|}{2(n+2)}
\right)\right)\Delta^{TX}+\gamma^2\left(\frac{|Z|}{2(n+2)}
\right)L_{Z_0,K}^{3,(t,v)}.
\end{align}
\end{defn}

Let $\wi{F}_t(L_{Z_0,K}^{3,(t,v)})(Z,Z')$ and 
$\wi{F}_t(L_{Z_0,K,n}^{3,(t,v)})(Z,Z')$ be the smooth kernels 
associated with $\wi{F}_t(L_{Z_0,K}^{3,(t,v)})$ and
$\wi{F}_t(L_{Z_0,K,n}^{3,(t,v)})$ with respect 
to $dv_{TX}(Z')$.
Using (\ref{eq:6.19}) and proceeding as in (\ref{eq:5.122}), 
i.e., using finite propagation
speed, we see that if $Z\in T_{y_0}X$, $|Z|\leq p$,
\begin{align}\label{eq:6.46}
\wi{F}_{t,n}(L_{Z_0,K}^{3,(t,v)})(Z,Z')=\wi{F}_{t,n}
(L_{Z_0,K,n+p}^{3,(t,v)})(Z,Z').
\end{align}

Clearly, when replacing $L_{\sqrt{v}Z_0,zK_0}^{3,(t,v)}$ in 
(\ref{eq:6.38}) by $L_{\sqrt{v}Z_0,zK_0,n}^{3,(t,v)}$, the 
estimates (\ref{eq:6.38}) still hold. 

\subsection{A Proof of Theorem \ref{thm:6.04}}\label{s0607}

Since $W$ is a compact manifold, there exists a finite 
family of smooth functions $f_1,\cdots,f_r:W\rightarrow [-1,1]$ 
which have the following properties:
\begin{itemize}
\item $W^K=\cap_{j=1}^r\{x\in W: f_j(x)=0 \}$;
\item On $W^K$, $df_1\cdots, df_r$ span $N_{X^{g,K}/X}$.
\end{itemize}

\begin{defn}\label{defn:6.17} 
Let $\mQ_{t,v,Z_0}$ be the family of operators
\begin{align}\label{eq:6.47} 
\mQ_{t,v,Z_0}=\left\{\nabla_{e_i}, 1\leq i\leq \dim X; 
\frac{z}{\sqrt{v}}\rho(\sqrt{t}Z)f_j(\sqrt{v}Z_0+\sqrt{t}Z), 
1\leq j\leq r \right\}.
\end{align}
For $j\in\N$, let $\mQ_{t,v,Z_0}^j$ be the set of operators 
$Q_1\cdots Q_j$, with $Q_i\in \mQ_{t,v,Z_0}$, $1\leq i\leq j$.
\end{defn}

Following the arguments in \cite[\S 8.8-8.10]{BG00}, we have the 
following uniform estimate, which is formally the same as 
\cite[(8.76)]{BG00}.
We only need to take care that in the proof of the analogue of 
\cite[Proposition 8.22 and Theorems 8.23, 8.24]{BG00}, there are two 
new terms like (\ref{eq:6.41}) and (\ref{eq:6.42}) appear in our 
family case. However, they are easy to be controlled as in 
(\ref{eq:6.43}) and (\ref{eq:6.44}).

\begin{thm}
There exist $C>0$, $C'>0$ such that given $m>0$, there exists 
$\beta_1>0$ such that if $t\in (0,1]$, $v\in [t,1]$, $z\in \R$, 
$|z|\leq \beta_1$, $y_0\in X^{g,K}$, $Z_0\in N_{X^{g,K}/X^g, y_0},
|Z_0|\leq \varepsilon_0/\sqrt{v}$,  $Z\in N_{X^g/X,y_0}$, 
$|Z|\leq \var_0/\sqrt{t}$,
 \begin{multline}\label{eq:6.48} 
 \left|\left(\widetilde{F}_{t}\left(L_{\sqrt{v}Z_0,zK_0}^{3,(t,v)}
 \right)
 -\exp\left(-L_{\sqrt{v}Z_0,zK_0}^{3,(0,v)}\right)
 \right)(g^{-1}Z,Z) \right|
 \\
 \leq 
 C\left(\frac{t}{v}\right)^{\frac{1}{4(\dim X+1)}}
 \cdot\frac{(1+|Z_0|)^{\ell'+1}}{(1+|zZ_0|)^m}\exp\left(
 -C'|Z|^2/4 \right).
 \end{multline}
\end{thm}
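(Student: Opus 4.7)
The plan is to adapt the proof of Theorem \ref{thm:5.31} (modeled on \cite[Theorem 7.43]{BG00}) to the present situation, where we now have the auxiliary base parameter $Z_0$ and the extra scaling parameter $v$. The functional analytic input is Theorem \ref{thm:6.15}, which plays for $L_{Z_0,K,n}'^{3,(t,v)}$ the role that Lemma \ref{lem:5.27} played for $L_{x,zK,n}^{3,t}$, together with the series decomposition of Section \ref{s0509} and the limit (\ref{eq:6.33}).

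First I would decompose $\widetilde F_t=\sum_{n\in\mathbb{N}}\widetilde F_{t,n}$ as in (\ref{eq:5.118}) and, by finite propagation speed and (\ref{eq:6.46}), replace $\widetilde F_{t,n}(L_{\sqrt{v}Z_0,zK_0}^{3,(t,v)})(Z,Z')$ by $\widetilde F_{t,n}(L_{\sqrt{v}Z_0,zK_0,n+p}^{3,(t,v)})(Z,Z')$ for $|Z|,|Z'|\le p$, and similarly for the $t=0$ operator (whose kernel is defined as in (\ref{eq:5.127})). The estimates (\ref{eq:6.38}) are valid with $L$ replaced by its cut-off $L_{\cdot,\cdot,n}$ and imply, exactly as in Lemmas \ref{lem:5.12}--\ref{lem:5.15}, uniform resolvent bounds for $\lambda\in V_c$ of the form $\|(\lambda-L_{\sqrt{v}Z_0,zK_0,n}'^{3,(t,v)})^{-1}\|$ (in the trace norms attached to $|\cdot|_{t,v,Z_0,0}$ and $|\cdot|_{t,v,Z_0,1}$) growing polynomially in $(1+|\lambda|)(1+|n z|^2)$. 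The weight $(1+|Z_0|+|Z|)^{2(k+\ell'-p-r)}(1+\sqrt{v}|Z|)^{2(\ell-\ell'-q)}$ in Definition \ref{defn:6.12} is tailored precisely so that, when combined with Proposition \ref{prop:6.13}, the new family terms $f^p\wedge$, $c_t(e_j)$, $c_{t/v}(e_j)$ appearing in (\ref{eq:6.25})--(\ref{eq:6.26}) are uniformly bounded; this is what allows (\ref{eq:6.38}) to hold uniformly in $(t,v,Z_0)$.

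Next I would compare the two operators using Duhamel's formula through the contour integral representation
\[
\widetilde F_{t,n}(L_{\sqrt{v}Z_0,zK_0,n+p}^{3,(t,v)})-\widetilde F_{0,n}(L_{\sqrt{v}Z_0,zK_0,n+p}^{3,(0,v)})=\frac{1}{2\pi i}\int_{\partial V_c}\widetilde F_{t,n}(\lambda)\,R_\lambda^{t,v}(L^{3,(t,v)}-L^{3,(0,v)})R_\lambda^{0,v}\,d\lambda,
\]
plus a vanishing term coming from $\widetilde F_{t,n}-\widetilde F_{0,n}$ which is $\mathcal{O}((t/v)^{N})$ for every $N$ by (\ref{eq:5.117}) and the explicit formula (\ref{eq:5.113}) (note that only $t/v$ enters because $c_{t/v}$ is the effective rescaling in the normal-to-$X^{g,K}$ directions). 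The operator-norm estimate on $L^{3,(t,v)}-L^{3,(0,v)}$ is the analogue of (\ref{eq:5.127a}) with $\sqrt{t}$ replaced by $\sqrt{t/v}$, obtained by Taylor expanding the coefficients of (\ref{eq:6.25})--(\ref{eq:6.26}) in $\sqrt{t}Z$ and using Proposition \ref{prop:6.13}; this gives a bound $\le C(t/v)^{1/2}(1+n^4)\,|s|_{0,v,Z_0,1}$. Summing the series $\sum_n$ using (\ref{eq:5.126})-type exponential decay in $n$ and applying Sobolev embedding (the elliptic regularity argument of Lemma \ref{lem:5.07}), followed by interpolation to go from trace-norm control to pointwise kernel control, produces the factor $(t/v)^{1/[4(\dim X+1)]}$ and the Gaussian $\exp(-C'|Z-g^{-1}Z|^2/2)$; as in (\ref{eq:5.129})--(\ref{eq:5.132}), the lower bound $|g^{-1}Z-Z|\ge C|Z|$ on $N_{X^g/X}$ converts this into $\exp(-C'|Z|^2/4)$.

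The main obstacle, and the novel feature compared to Theorem \ref{thm:5.31}, is producing the decay factor $(1+|Z_0|)^{\ell'+1}/(1+|zZ_0|)^m$. The numerator $(1+|Z_0|)^{\ell'+1}$ is forced by the polynomial weight in Definition \ref{defn:6.12} when translating the trace-norm bound to a pointwise kernel bound: one loses a factor $(1+|Z_0|)^{k+\ell'-p-r}$ in passing between $L^2$-norms on $(\mathbf I_{y_0}^0,|\cdot|_{t,v,Z_0,0})$ and the ordinary $L^2$-norm on $T_{y_0}X$, and the maximal bidegree contributing to $[\cdot]^{\max}$ gives the exponent $\ell'+1$. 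The denominator $(1+|zZ_0|)^m$ is obtained by the standard integration-by-parts trick of commuting the resolvent (or heat kernel) $m$ times against operators from the family $\mathcal Q_{t,v,Z_0}$ of Definition \ref{defn:6.17}: the operators $\frac{z}{\sqrt v}\rho(\sqrt t Z)f_j(\sqrt v Z_0+\sqrt t Z)$ vanish on $W^K$ and $\{df_j\}$ span $N_{X^{g,K}/X}$, so evaluated near $Z=0$ they are bounded below by a constant multiple of $|zZ_0|$, while the analogue of Lemma \ref{lem:5.06}, proved along the lines of \cite[Proposition 8.22, Theorems 8.23, 8.24]{BG00}, shows that iterated commutators $[Q_1,[Q_2,\dots,[Q_k,L_{\sqrt v Z_0,zK_0,n}'^{3,(t,v)}]]]$ remain bounded with respect to $|\cdot|_{t,v,Z_0,1}$ (the two extra terms (\ref{eq:6.43})--(\ref{eq:6.44}) arising from the $\tilde T$-contractions in (\ref{eq:6.25}) are controlled precisely as in (\ref{eq:6.62})). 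The verification that the fiberwise two new terms specific to the family geometry do not destroy the Bismut-Goette commutator estimates is the technical heart of the proof.
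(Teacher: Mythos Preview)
Your proposal is correct and follows essentially the same route as the paper, which simply defers to \cite[\S 8.8--8.10]{BG00} and notes that the only new feature in the family setting is the pair of extra terms (\ref{eq:6.41})--(\ref{eq:6.42}), controlled exactly as in (\ref{eq:6.43})--(\ref{eq:6.44}). You have in fact supplied more detail than the paper does: the commutator family $\mQ_{t,v,Z_0}$ for the $(1+|zZ_0|)^{-m}$ decay, the weighted norms of Definition \ref{defn:6.12} for the $(1+|Z_0|)^{\ell'+1}$ growth, and the $\sqrt{t/v}$ rate in the analogue of (\ref{eq:5.127a}) are all the right ingredients; one small correction is that $\widetilde F_{t,n}-\widetilde F_{0,n}$ is $\mathcal{O}(t^N)$ (it depends on $t$ through $f(\sqrt{t}u)$, not on $t/v$), but since $t\le t/v$ when $v\le 1$ this is harmless for the stated bound.
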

The kernel $\exp(-L_{\sqrt{v}Z_0,zK_0}^{3,(0,v)}))(g^{-1}Z,Z)$ 
here is defined in the same way as in (\ref{eq:5.127}).

From (\ref{eq:6.25a}), we get
\begin{align}\label{eq:6.50b} 
\sqrt{\frac{t}{v}}\left[c\left(\tau^{\sqrt{v}Z_0} 
e_j(\sqrt{t}Z)\right)
\right]_{(t,v)}^3=\left\{
\begin{aligned}
&\frac{1}{\sqrt{v}}e^j\wedge +\sqrt{\frac{t}{v}}\Big(\mO(\sqrt{t})+
\mO(|Z|)\Big),\ &\hbox{if $1\leq j\leq \ell'$;} \\
&e^j\wedge +\sqrt{\frac{t}{v}}
\mO(1+|Z|),\quad&\hbox{if $\ell'+1\leq j\leq \ell$;} \\
&\sqrt{\frac{t}{v}}\big(c(e_j)+\mO(|Z|) \big),
\quad&\hbox{if $\ell+1\leq j\leq n$.}
\end{aligned}
\right.
\end{align}
Moreover as $K^X$ vanishes on $W^{g,K}$, we have
\begin{align}\label{eq:6.51b} 
\begin{split}
\la K_0^X(\sqrt{v}Z_0+\sqrt{t}Z),\tau^{\sqrt{v}Z_0} 
e_j(\sqrt{t}Z)\ra &=\la K_0^X(\sqrt{v}Z_0),\tau^{\sqrt{v}Z_0} 
e_j\ra_{(y_0,\sqrt{v}Z_0)}+\mO(\sqrt{t}|Z|),
\\
\la K_0^X(\sqrt{v}Z_0),\tau^{\sqrt{v}Z_0} 
e_j\ra_{(y_0,\sqrt{v}Z_0)}&=\mO(\sqrt{v}|Z_0|).
\end{split}
\end{align}
By (\ref{eq:3.01}), (\ref{eq:6.31}), (\ref{eq:6.50b}) and (\ref{eq:6.51b}), 
we get
\begin{align}\label{eq:6.52b} 
\frac{\sqrt{t}}{4v}\left[c\left(K^X_{\sqrt{v}|Z_0|+
	\sqrt{t}|Z|}\right)
\right]_{(t,v)}^3=\left(\psi_v \frac{\jmath^*\vartheta_K}{4v} 
\psi_v^{-1}\right)_{\sqrt{v}Z_0}+\frac{\sqrt{t}}{v}z
\mO(\sqrt{v}|Z_0|+\sqrt{t}|Z|)\mO(1+|Z|).
\end{align}
Note that we have 
$(\delta_v^*\alpha)_{Z_0}=(\psi_v\alpha\psi_v^{-1})_{\sqrt{v}Z_0}$ 
for any $\alpha\in \Omega(W^g)$ with $\delta_v$ 
defined above (\ref{eq:3.10}). 
Therefore, from (\ref{eq:3.17}), (\ref{eq:6.48}) and (\ref{eq:6.52b}),
we get Theorem \ref{thm:6.04}.

\subsection{A Proof of Theorem \ref{thm:4.03} b)}\label{s0608}

Theorem \ref{thm:4.03} b) follows directly from the following 
theorem.

\begin{thm}\label{thm:6.19} 
There exist $\beta_1>0$, $r\in \N$, $C>0$, $\delta\in (0,1]$, 
such that if $t\in (0,1]$, $v\in [t,1]$, if $z\in \R\backslash \{0\}$, 
$|z|\leq \beta_1$, then
\begin{align}\label{eq:6.51} 
|z|^r\left|\phi\wi{\tr}'\left[g\frac{\sqrt{t}c(K^X)}{4v}
\exp\left(-\mB_{K,t,v}\right) \right]+\wi{e}_v\right|
\leq C\left(\frac{t}{v} \right)^{\delta}.
\end{align}
\end{thm}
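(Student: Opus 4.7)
The plan is to derive Theorem \ref{thm:6.19} from the pointwise estimate of Theorem \ref{thm:6.04} by a double localization followed by a controlled integration over $X^g$. First, Theorem \ref{thm:6.03} allows me to replace $\exp(-\mB_{K,t,v})$ by $\wi{F}_t(\mB_{K,t,v})$ at the cost of an error of order $\exp(-C'/t)$, which is negligible compared to $(t/v)^{\delta}$ since $v\leq 1$. A first application of finite propagation speed as in Section \ref{s0603} reduces the trace integral to a tubular neighborhood of $X^g$ in $X$ (where we may assume $TX$ is spin and $\mE=\mS_X\otimes E$), and a second application along the fibres of $X^g$, mimicking the proof of Lemma \ref{lem:3.01}, reduces it further to a tubular neighborhood of $X^{g,K}$ in $X^g$, again modulo an exponentially small remainder.

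In the normal coordinates $(y_0,Z_0)$ for $X^{g,K}\subset X^g$ introduced in Section \ref{s0604}, I would then perform the substitution $Z_0\mapsto\sqrt{v}Z_0$, whose Jacobian is exactly the prefactor $v^{(\dim N_{X^{g,K}/X^g})/2}$ appearing in \eqref{eq:6.16}. Theorem \ref{thm:6.04} identifies the rescaled integrand pointwise, up to the explicit remainder $C(1+|Z_0|)^{\ell'+1}(1+|zZ_0|)^{-p}(t/v)^{\delta}$, with the value $-\{\gamma_{K,v}\}^{\max}(y_0,\sqrt{v}Z_0)$ from \eqref{eq:6.17b}. Undoing the rescaling, the integral of this leading term over $y_0\in X^{g,K}$ and $|Z_0|\leq\var_0/\sqrt{v}$ recovers the contribution to $-\wi{e}_v$ coming from the chosen tubular neighborhood; by the Gaussian damping used in \eqref{eq:3.08} and the lower bound $|K^X|^2\geq c>0$ on its complement, the remaining piece is $O(\exp(-c'/v))$, hence absorbed into $(t/v)^{\delta}$. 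Integrating the pointwise remainder from \eqref{eq:6.16} and using the change of variables $u=zZ_0$ gives, for $p>\ell+1$,
\[
\int_{N_{X^{g,K}/X^g,y_0}}(1+|Z_0|)^{\ell'+1}(1+|zZ_0|)^{-p}\,dv_{N_{X^{g,K}/X^g}}(Z_0)\leq C_p|z|^{-(\ell+1)},
\]
so choosing $r\geq\ell+1=\dim X^g+1$ and $p$ accordingly, the factor $|z|^r$ in \eqref{eq:6.51} absorbs this polynomial blow-up and leaves the desired bound $C(t/v)^{\delta}$; the constant $\beta_1$ is finally taken small enough to satisfy the constraints of Theorems \ref{thm:6.03} and \ref{thm:6.04} simultaneously.

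The main technical obstacle is that the pointwise error in Theorem \ref{thm:6.04} decays only polynomially in $Z_0$ through $(1+|zZ_0|)^{-p}$; the interplay between this slow decay and the rescaling $Z_0\mapsto\sqrt{v}Z_0$ forces the introduction of the compensating weight $|z|^r$ and a sufficiently large choice of $p$. A secondary difficulty, already embedded in Theorem \ref{thm:6.04}, is checking that its error is genuinely uniform in $(t,v,z)$ for $0<t\leq v\leq 1$, $0<|z|\leq\beta_1$, which relies on the stability of the resolvent and heat kernel estimates of Sections \ref{s0605}--\ref{s0607} under the Getzler rescaling of Definition \ref{defn:6.07} that mixes the parameters $t$ and $v$.
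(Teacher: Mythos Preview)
Your overall strategy matches the paper's: localize near $X^g$ by Theorem~\ref{thm:6.03} and finite propagation speed, rescale $Z_0\mapsto\sqrt{v}Z_0$ near $X^{g,K}$, plug in Theorem~\ref{thm:6.04}, and integrate the remainder in $Z_0$ using the $(1+|zZ_0|)^{-p}$ decay to produce the $|z|^{-(\ell+1)}$ factor (whence $r=\ell+1$). That part is correct and is exactly what the paper does in \eqref{eq:6.54}--\eqref{eq:6.55}.

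There is, however, a genuine gap in your handling of the region away from $X^{g,K}$. You assert that the contribution of the trace integral over $X\setminus\mU_{\var_0}'$ and the corresponding piece of $\wi{e}_v$ are each $O(\exp(-c'/v))$, and that this is ``absorbed into $(t/v)^\delta$''. This last claim is false: take $v$ fixed (say $v=1$) and let $t\to 0$; then $\exp(-c'/v)=\exp(-c')$ is a fixed positive constant while $(t/v)^\delta\to 0$. No choice of $C$ makes $\exp(-c'/v)\leq C(t/v)^\delta$ uniformly for $0<t\leq v\leq 1$. Gaussian damping in $1/v$ alone therefore cannot yield the required rate in $t/v$.

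The paper deals with this region differently: rather than bounding each piece separately, it still runs the \emph{comparison} argument there. On $X^g\setminus\mU_{\var_0}''$ one has $|K^X|\geq c>0$, so the potential $|K^X|^2/(4v)$ in \eqref{eq:6.06} is uniformly coercive and no $\sqrt{v}$-rescaling of $Z_0$ is needed; the analogue of Theorem~\ref{thm:6.04} with $X^{g,K}=\emptyset$ then gives the pointwise bound $C(t/v)^\delta$ directly (with the $\frac{\sqrt{t}c(K^X)}{4v}$ prefactor controlled by this same potential). Integrating over the compact region $X^g\setminus\mU_{\var_0}''$ produces \eqref{eq:6.56}. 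You should replace your ``exponentially small remainder'' step by this adaptation.
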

\begin{proof} 
Recall that $\mU_{\var}$, $\mU_{\var}'$, $\mU_{\var}''$ are 
$\var$-neighborhoods of $X^g$, $X^{g,K}$, $X^{g,K}$
in $N_{X^g/X}$, $N_{X^{g,K}/X}$, $N_{X^{g,K}/X^g}$
respectively.
Let $\bar{k}(y_0,Z_0)$ be the function defined on $X^g\cap 
\mU_{\var}''$ by the relation 
\begin{align}\label{eq:6.52} 
dv_{X^g}(y_0,Z_0)=\bar{k}(y_0,Z_0)dv_{X^{g,K}}(y_0)
dv_{N_{X^{g,K}/X^g}}(Z_0).
\end{align}
Then 
\begin{align}\label{eq:6.53} 
\bar{k}|_{X^{g,K}}=1.
\end{align}
	
Recall that $\wi{F}_t(\mB_{K,t,v})(g^{-1}x,x)$ vanishes 
on $X\backslash \mU_{\var_0}$. Using (\ref{eq:5.72}), 
(\ref{eq:6.52}), we get
\begin{multline}\label{eq:6.54} 
\phi\int_{\mU_{\var_0}'}\wi{\tr}'\left[g\frac{\sqrt{t}c(K^X)}{
4v}\exp\left(-\mB_{K,t,v}\right)(g^{-1}x,x) \right]dv_X(x)
+\int_{X^g\cap\mU_{\var_0}'} \{\gamma_{K,v}\}^{\max} dv_{X^g}
\\
=\int_{y_0\in X^{g,K}}v^{(\ell-\ell')/2}\int_{|Z_0|\leq 
\var_0/\sqrt{v}} \left[\phi\int_{|Z|\leq \var_0}\wi{\tr}'
\left[g\frac{\sqrt{t}c(K^X)}{4v}\exp\left(-\mB_{K,t,v}
\right)\right.\right. 
\\
(g^{-1}(y_0,\sqrt{v}Z_0,Z),(y_0,\sqrt{v}Z_0,Z))
\Big]\cdot k(y_0,\sqrt{v}Z_0,Z)dv_{N_{X^g/X}}(Z) 
\\
+\{\gamma_{K,v}\}^{\max}(y_0,\sqrt{v}Z_0) \Big]
\bar{k}(y_0,\sqrt{v}Z_0) dv_{N_{X^{g,K}/X^g}}(Z_0)dv_{X^{g,K}}
(y_0).
\end{multline}
	
Using Theorem \ref{thm:6.04} and (\ref{eq:6.54}), we find that 
there exist $C>0$ and $\beta_1>0$ such that for $z\in \R^*$, 
$|z|\leq \beta_1$,
\begin{multline}\label{eq:6.55} 
|z|^{\ell+1}\left|\phi\int_{\mU_{\var_0}'}
\wi{\tr}'\left[g\frac{\sqrt{t}c(K^X)}{4v}\exp\left(-\mB_{K,t,v}
\right)(g^{-1}x,x) \right]dv_X(x)+\int_{X^g\cap\mU_{\var_0}'} 
\gamma_{K,v} \ 
\right|
\\
\leq C|z|^{\ell-\ell'}\int_{y_0\in X^{g,K}}\int_{Z_0\in N_{X^{g,K}/X^g}, 
	|Z_0|<\var_0}(1+|zZ_0|)^{-(\ell-\ell')-1}dZ_0\cdot 
\left(\frac{t}{v}\right)^{\delta}
\leq C\left(\frac{t}{v}\right)^{\delta}.
\end{multline}
	
Similar estimates can be obtained for 
\begin{align}\label{eq:6.56} 
\left|\phi\int_{X\backslash\mU_{\var_0}'}\wi{\tr}'\left[g\frac{
\sqrt{t}c(K^X)}{4v}\exp\left(-\mB_{K,t,v}\right)(g^{-1}x,x) 
\right]dv_X(x)+\int_{X^g\backslash\mU_{\var_0}'} 
\gamma_{K,v} \right|.
\end{align}
In fact, on $X\backslash\mU_{\var_0}'$, we observe that 
$|K^X|^2/2v$ has a positive lower bound. Then we 
adopt the 
above techniques to the case where $X^{g,K}=\emptyset$. 
The potentially annoying term $\frac{\sqrt{t}c(K^X)}{4v}$  
can be controlled by the term $|K^X|^2/2v$.
	
The proof of Theorem \ref{thm:6.19} is completed.
\end{proof}

\subsection{A Proof of Theorem \ref{thm:4.03} c)}\label{s0609}

When $v\in [1,+\infty)$, $\frac{1}{v}$ remains bounded. By using the 
methods of the last section and of the present section, one 
sees easily that for $K_0\in \mathfrak{z}(g)$, $K=zK_0$, 
there exist $C>0$, $\beta>0$ such that for $t\in (0,1]$, 
$v\in [1,+\infty)$, $|zK_0|<\beta$, we have
\begin{align}\label{eq:6.57} 
\left|\wi{\tr}'\left[g\sqrt{t}c(K^X)\exp\left(-\mB_{K,t,v}
\right) \right]\right|\leq C,
\end{align} 
which is equivalent to Theorem \ref{thm:4.03} c).

The proof of Theorem \ref{thm:4.03} c) is completed.

\subsection{A Proof of Theorem \ref{thm:4.03} d)}\label{s0610}

In this subsection, we will prove Theorem \ref{thm:4.03} d)
by using the method in \cite[\S 9]{BG00}. Since the singular 
term there does not appear here, our proof is in fact 
much easier. 

We fix $g\in G$, $0\neq K_0\in \mathfrak{z}(g)$,
and take $K=zK_0$ with $z\in \R^*$. 

From Theorem \ref{thm:6.01}, we have
\begin{multline}\label{eq:6.58}
\mB_{K,t,tv}=-t\left(\nabla_{e_i}^{\mE}
+\frac{1}{2\sqrt{t}}\la S(e_i)e_j, f_p^H\ra c(e_j)f^p
\wedge\right.
\\
\left.+\frac{1}{4t}\la S(e_i)f_p^H, f_q^H\ra f^p\wedge
f^q\wedge-\frac{\la K^X,e_i\ra}{4t}\left(1
+\frac{1}{v}\right)\right)^2
\\
+\frac{t}{4}H+\frac{t}{2}\left(R^{\mE/\mS}(e_i,e_j)-\frac{1}{2vt}
\la\nabla_{e_i}^{TX}K^X,e_j\ra\right)c(e_i)c(e_j)
\\
+\sqrt{t}\left(R^{\mE/\mS}(e_i,f_p^H)-\frac{1}{2vt}\la 
T(e_i,f_p^H),K^X\ra\right)c(e_i)f^p\wedge
\\
+\frac{1}{2}\left(R^{\mE/\mS}(f_p^H,f_q^H)-\frac{1}{8vt}\la 
T(f_p^H,f_q^H), K^X\ra\right)f^p\wedge
f^q\wedge-m^{\mE/\mS}(K^X)+\frac{1}{4vt}|K^X|^2.
\end{multline}

As in sections \ref{s0503} and \ref{s0603}, the proof of
Theorem \ref{thm:4.03} d) can be localized near $X^g$. 
In the following,
we will concentrate on the estimates near $X^{g,K}$. 
As in (\ref{eq:6.56}),  the proof of the estimates 
near $X^g$ and far from $X^{g,K}$ is much easier.

We may assume that for $\var_0$ taken in Section \ref{s0604},
if $\var\in (0,8\var_0]$, the map $(y_0,Z)\in N_{X^{g,K}/X}
\rightarrow \exp_{y_0}^X(Z)\in X$ induces a diffeomorphism
from the $\var$-neighborhood $\mU_{\var}'$ of $X^{g,K}$
in $N_{X^{g,K}/X}$ on the tubular neighborhood $\mV_{\var}'$
of $X^{g,K}$ in $X$ as in the proof of Theorem \ref{thm:6.19}.

As in (\ref{eq:5.74}) and (\ref{eq:6.18}), we put
\begin{multline}\label{eq:6.59}
\,^3\nabla^{\mE,t}_{\cdot}:=\nabla^{\mE}_{\cdot}+\frac{1}{
	2\sqrt{t}}\la S(\cdot)e_j, f_p^H\ra c(e_j)f^p\wedge
\\
+\frac{1}{4t}\la S(\cdot)f_p^H, f_q^H\ra f^p\wedge
f^q\wedge
-\frac{\vartheta_K(\cdot)}{4t}\left(1+\frac{1}{v}\right).
\end{multline}

Take $y_0\in W^{g,K}$ in (\ref{eq:2.11}). 
If $Z\in N_{X^{g,K}/X,y_0}$, 
$|Z|\leq 4\var_0$, we identify 
$\pi^*\Lambda(T^*B)\widehat{\otimes}\mE_Z$ with 
$\pi^*\Lambda(T^*B)\widehat{\otimes}\mE_{y_0}$
by parallel transport with respect to the connection 
$\,^3\nabla^{\mE,t}$ along the curve $u\in [0,1]
\rightarrow uZ$.

Recall that $\rho$ is the cut-off function in (\ref{eq:5.81}).
Let  
\begin{align}\label{eq:6.60}
L_{y_0,K}^{1,(t,v)}=(1-\rho^2(Z))(-t\Delta^{TX})+\rho^2(Z)
(\mB_{K,t,tv}).
\end{align}
We still define $H_t$ as in (\ref{eq:5.85}) and define 
$L_{y_0,K}^{2,(t,v)}$ as in (\ref{eq:5.86}) from 
$L_{y_0,K}^{1,(t,v)}$. Let $L_{y_0,K}^{3,(t,v)}$
be the operator obtained from $L_{y_0,K}^{2,(t,v)}$
by replacing $c(e_j)$ by $c_t(e_j)$ as in (\ref{eq:5.88}) for
$1\leq j\leq \ell'$ (cf. (\ref{eq:2.13})), 
while leaving the $c(e_j)$'s unchanged for 
$\ell'+1\leq j\leq n$.

As in (\ref{eq:3.19}), we have
\begin{align}\label{eq:6.63} 
|K^X(y_0,Z)|^2=|m_{y_0}^{TX}(K)Z|^2+\mO(|Z|^3).
\end{align}

Let $\jmath':W^{g,K}\rightarrow W$ be the obvious embedding.
Put
\begin{multline}\label{eq:6.64} 
L_{y_0,K}^{3,(0,v)}=-\left(\nabla_{e_i}+\frac{1}{4}\left\la
\left(\jmath'^*R^{TX}-\left(1+\frac{1}{v}\right)m^{TX}(K)\right)Z,
e_i\right\ra\right)^2
\\
+\jmath'^*R^E_{y_0}-m^{E}(K)_{y_0}
-\frac{1}{4v}\sum_{j,k\geq \ell'+1}
\la m^{TX}(K)e_j,e_k\ra_{y_0} c(e_j)c(e_k)
\\
+\frac{1}{4v}\langle \jmath'^*R^{TX}(m^{TX}(K)Z), Z\rangle_{y_0}
+\frac{1}{4v}|m_{y_0}^{TX}(K)Z|^2.
\end{multline}	
From (\ref{eq:3.05}), (\ref{eq:3.18}), (\ref{eq:3.21}), (\ref{eq:6.62}), 
(\ref{eq:6.58}) and (\ref{eq:6.64}), as Proposition
\ref{prop:5.24}, we have
\begin{align}\label{eq:6.65} 
L_{y_0,K}^{3,(t,v)}\rightarrow L_{y_0,K}^{3,(0,v)}.
\end{align}

Now we take a new trivialization as in Section \ref{s0605}.
Take $Z_0\in N_{X^{g,K}/X^g,y_0}, |Z_0|\leq \var_0$. If $Z\in 
T_{y_0}X$, $|Z|\leq 4\var_0$, we identify 
$\pi^*\Lambda(T^*B)\widehat{\otimes}\mE_{Z+Z_0}$ with 
$\pi^*\Lambda(T^*B)\widehat{\otimes}\mE_{Z_0}$ 
by parallel transport along the curve 
$u\in[0,1]\rightarrow \exp_{Z_0}^X(uZ)$ with respect to 
the connection $\,^3\nabla^{\mE,t}$. Also we identify 
$\pi^*\Lambda(T^*B)\widehat{\otimes}\mE_{Z_0}$
with $\pi^*\Lambda(T^*B)\widehat{\otimes}\mE_{y_0}$ 
by parallel transport along the curve 
$u\in[0,1]\rightarrow uZ_0$ with respect to 
the connection $\nabla^{\mE}$.
Using this trivialization,
the analogues of \cite[Theorems 9.19 and 9.22]{BG00} hold here 
following the same arguments except for replacing the norm 
in \cite[(9.43)]{BG00} by
\begin{align}
|s|_{t,Z_0,0}^2=\int_{T_{y_0}X}|s(Z)|^2\left(1+(|Z|+|Z_0|)\,\rho\left(
\frac{\sqrt{t}Z}{2}\right) \right)^{2(k+\ell'-p-r)}dv_{TX}(Z).
\end{align}
Here $s$ is a square integrable section of $\left(\pi^*\Lambda^r
(T^*B)\widehat{\otimes}\Lambda^p(T^*X^{g,K})\widehat{\otimes}
\mS_{N_{X^{g,K}/X}}\otimes E\right)_{y_0}$ over $T_{y_0}X$,
and $\dim B=k$.

As in \cite[(9.52)-(9.57)]{BG00}, combining with (\ref{eq:3.17}), 
if $n$ is even, 
there exists $\beta>0$,
if $z\in \R^*$, $|z|\leq \beta$, for $t\rightarrow 0$,
\begin{multline}\label{eq:6.66}
\int_{X^{g,K}}\int_{\substack{(Z_0,Z)\in N_{X^{g,K}/X^g}
		\times N_{X^g/X},
\\ |Z_0|,|Z|\leq \var_0}}\tr_s\left[g
\frac{c(K^X)}{4\sqrt{t}v}\wi{F}_t(\mB_{zK_0,t,tv})
(g^{-1}(y_0,Z_0,Z),\right.
\\
\left. (y_0, Z_0, Z))\right]dv_X(y_0, Z_0, Z)
\\
\rightarrow \int_{X^{g,K}}\int_{N_{X^{g,K}/X}}(-i)^{\ell'/2}
2^{\ell'/2}
\tr_s\left[g\frac{c\left(m_{y_0}^{TX}(K)Z\right)}{4v} 
\exp\left(-L_{y_0,zK_0}^{3,(0,v)} \right)(g^{-1}Z,Z)\right]
dv_{N_{X^{g,K}/X}}(Z).
\end{multline}
The heat kernel $\exp\left(-L_{y_0,zK_0}^{3,(0,v)} 
\right)(g^{-1}Z,Z)$ could be calculated 
as in (\ref{eq:5.135}) by \cite[Theorem 4.13]{BG00}, 
which is an even function on $Z$ and can be controlled by
$C\exp(-C'|Z|^2)$. So 
the right-hand side of (\ref{eq:6.66})
is an integral of an odd function on $Z$ over $N_{X^{g,K}/X}$,
which is zero.

If $n$ is odd, by Remark \ref{rem:5.22}, from the same argument
above, as $t\rightarrow 0$,
\begin{align}\label{eq:6.67} 
\int_{\mU_{\var_0}'}\tr^{\mathrm{even}}\left[g\frac{c(K^X)}{
	4\sqrt{t}v}\exp\left(-\mB_{K,t,tv}
\right) \right]\rightarrow 0.
\end{align}

After 
adopting the above technique to the case where $X^{g,K}
=\emptyset$, for $z\in \R^*$, $|z|$ small enough, as 
$t\rightarrow 0$, we have

\begin{align}\label{eq:6.68} 
\int_{X\backslash \mU_{\var_0}'}\wi{\tr}'\left[g\frac{c(K^X)}{
		4\sqrt{t}v}\exp\left(-\mB_{K,t,tv}
	\right) \right]\rightarrow 0.
\end{align}
	
The proof of Theorem \ref{thm:4.03} d) is completed.



\begin{thebibliography}{1}
	
\bibitem{APS75}
M.~F. Atiyah, V.~K. Patodi, and I.~M. Singer, 
\emph{Spectral asymmetry and	{R}iemannian geometry. {I}}, 
Math. Proc. Cambridge Philos. Soc. \textbf{77}
(1975), 43--69.

\bibitem{AS69}
M.~F.~Atiyah and I.~M.~Singer.
\emph{Index theory for skew-adjoint {F}redholm operators},
Inst. Hautes \'Etudes Sci. Publ. Math. \textbf{37} (1969), 5--26.
	
\bibitem{BGV}
N.~Berline, E.~Getzler, and M.~Vergne, 
\emph{Heat kernels and {D}irac	operators}, 
Grundlehren Text Editions, Springer-Verlag, Berlin, 2004,
Corrected reprint of the 1992 original. 

\bibitem{BV83}
N.~Berline and M.~Vergne, 
\emph{Z\'eros d'un champ de vecteurs et classes 
	caract\'eristiques \'equivariantes}, 
Duke Math. J. \textbf{50} (1983), no.\ 2, 539--549.

\bibitem{Be09}	A.~Berthomieu,
\emph{Direct image for some secondary {K}-theories}, 
Ast\'erisque \textbf{327} (2009),  289-360. 

\bibitem{BB94}
A.~Berthomieu and J.-M.~Bismut,
\emph{Quillen metrics and higher analytic torsion forms},
J. Reine Angew. Math. \textbf{457} (1994), 85--184.
	
\bibitem{Bi85}
J.-M.~Bismut,
\emph{The infinitesimal {L}efschetz formulas: a heat equation proof},
J. Funct. Anal. \textbf{62} (1985), no.\ 3, 435--457.
	
\bibitem{Bi86}
J.-M.~Bismut,
\emph{The {A}tiyah-{S}inger index theorem for families of {D}irac
operators: two heat equation proofs},
Invent. Math. \textbf{83} (1986), no.\ 1, 91--151.

\bibitem{Bi86a}
J.-M.~Bismut,
\emph{Localization formulas, superconnections, and the index 
	theorem for families},
Comm. Math. Phys. \textbf{103} (1986), no.\ 1, 127--166.
	
\bibitem{Bi94}
J.-M.~Bismut,
\emph{Equivariant short exact sequences of vector bundles and their 
	analytic torsion forms},
Comp. Math. \textbf{93} (1994), 291--354.

	
\bibitem{Bi97}	J.-M.~Bismut,
\emph{Holomorphic families of immersions and higher analytic torsion
forms}, Ast\'erisque \textbf{244} (1997),  viii+275pp. 

\bibitem{B11a} J.-M. Bismut, 
\emph{{D}uistermaat-{H}eckman formulas and index theory}. 
In Geometric aspects of analysis and mechanics, volume
\textbf{292} of Progr. Math., pages 1-55. 
Birkh\"auser/Springer, New York, 2011.
	
\bibitem{BC89} J.-M.~Bismut and J.~Cheeger,
{\it $\eta$-invariants and their adiabatic limits},
J. Amer. Math. Soc. \text{2} (1989),  33--70.

\bibitem{BC90I}
J.-M.~Bismut and J.~Cheeger, 
\emph{Families index for manifolds with boundary,
superconnections, and cones. {I}. {F}amilies of manifolds 
with boundary and {D}irac operators}, J. Funct. Anal. 
\textbf{89} (1990), no.~2, 313--363.

\bibitem{BC90II}
J.-M.~Bismut and J.~Cheeger, 
\emph{Families index for manifolds with boundary, 
superconnections and cones. {II}.\ {T}he {C}hern character}, 
J. Funct. Anal. \textbf{90} (1990), no.~2, 306--354.
	
\bibitem{BF86I}
J.-M.~Bismut and D.~Freed,
\emph{The analysis of elliptic families. {I}. {M}etrics and 
connections on determinant bundles},
Comm. Math. Phys. \textbf{106} (1986), no.~1, 159--176.
	
\bibitem{BF86II}
J.-M.~Bismut and D.~Freed,
\emph{The analysis of elliptic families. {II}.
{D}irac operators, eta invariants, and the holonomy theorem}, 
Comm. Math.	Phys. \textbf{107} (1986), no.~1, 103--163. 

\bibitem{BGSII}
J.-M.~Bismut,  H.~Gillet, and C.~Soul\'e, 
\emph{Analytic torsion and holomorphic determinant bundles. {II}. 
	{D}irect images and {B}ott-{C}hern forms}, 
Comm. Math. Phys. \textbf{115} (1988), no.~1, 79--126. 
	
\bibitem{BG00}
J.-M.~Bismut and S.~Goette, 
\emph{Holomorphic equivariant analytic torsions},
Geom. Funct. Anal. \textbf{10} (2000), no.~6, 1289--1422. 

\bibitem{BG04} J.-M.~Bismut and S.~Goette, 
\emph{Equivariant de {R}ham torsions}, Ann. of Math. (2) 
\textbf{159} (2004), no.~1, 53--216.
	
\bibitem{BL91}
J.-M.~Bismut and G.~Lebeau, 
\emph{Complex immersions and {Q}uillen metrics},
Inst. Hautes \'Etudes Sci. Publ. Math. \textbf{74} (1991),  ii+298 pp. 

\bibitem{BK92}
J.-M.~Bismut and  K.~K\"ohler, 
\emph{Higher analytic torsion forms for	direct images and 
	anomaly formulas}, J. Algebraic Geom. \textbf{1} (1992),
no.~4, 647--684.

\bibitem{BKR11}	
J.~Br\"uning, F.W.~Kamber and K.~Richardson, 
\emph{Index theory for basic Dirac operators on Riemannian foliations}, 
Noncommutative geometry and global analysis, 39--81, 
Contemp. Math., 546, Amer. Math. Soc., Providence, RI, 2011. 
	
\bibitem{BrM06} J. ~Br{\"u}ning and X. Ma,  
\emph{An anomaly formula for {R}ay-{S}inger
metrics on manifolds with boundary}, 
Geom. Funct. Anal.  \textbf{16} (2006), 767-837.

\bibitem{BM04}
U.~Bunke and X.~Ma, \emph{Index and secondary index theory 
for flat bundles with duality}, Aspects of boundary problems in 
analysis and geometry, Oper. Theory Adv. Appl., vol. 151, 
Birkh\"auser, Basel, 2004, pp.~265--341.

	\bibitem{BS09}
U.~Bunke and T.~Schick,
\emph{Smooth {$K$}-theory},
Ast\'erisque, \textbf{328} (2009), 45--135.

	\bibitem{Ch87}
J.~Cheeger,
\emph{$\eta$-invariants, the adiabatic approximation and conical singularities. I. The adiabatic approximation},
J. Differential Geom., \textbf{26} (1987), no.~1, 175--221.

	
\bibitem{Dai91}
X.~Dai,
\emph{Adiabatic limits, nonmultiplicativity of signature, and 
{L}eray spectral sequence},
J. Amer. Math. Soc. \textbf{4} (1991), no.~2, 265--321.


\bibitem{DZ15}
X.~Dai and W.~Zhang,
\emph{Eta invariant and holonomy: the even dimensional case}, 
Adv. Math. \textbf{279} (2015), 291--306. 

\bibitem{dR73}
G.~de Rham.
\emph{\em Vari\'et\'es diff\'erentiables. {F}ormes, courants, formes
	harmoniques},
Hermann, Paris, 1973.
Troisi{\`e}me {\'e}dition revue et augment{\'e}e, Publications de
l'Institut de Math{\'e}matique de l'Universit{\'e} de Nancago, III,
Actualit{\'e}s Scientifiques et Industrielles, No. 1222b.
	
\bibitem{D78}
H.~Donnelly, 
\emph{Eta invariants for {$G$}-spaces}, Indiana Univ. Math. J.
\textbf{27} (1978), no.~6, 889--918. 

\bibitem{E13}
J.~Ebert, 
\emph{A vanishing theorem for characteristic classes of odd-dimensional 
manifold bundles}, J. Reine Angew. Math.
\textbf{684} (2013), 1--29. 

\bibitem{FH00}
	D.~S. Freed and M.~Hopkins, 
	\emph{On {R}amond-{R}amond fields and {$K$}-theory},
	J. High Energy Phys., (2000) no.~5, Paper 44, 14 pp. 

\bibitem{FreedLott10}
D.~S. Freed and J.~Lott, 
\emph{An index theorem in differential {$K$}-theory},
Geom. Topol. \textbf{14} (2010), no.~2, 903--966.
	
\bibitem{GilS}
H.~Gillet and C.~Soul\'e,
\emph{Analytic torsion and the arithmetic Todd genus},
Topology \textbf{30} (1991), no.~1, 21--54.
	
\bibitem{Go00}
S.~Goette,
\emph{Equivariant {$\eta$}-invariants and {$\eta$}-forms},
J. Reine Angew. Math. \textbf{526} (2000), 181--236.
	
\bibitem{Go09}
S.~Goette,
\emph{Eta invariants of homogeneous spaces},
 Pure Appl. Math. Q. \textbf{5} (2009), no.~3, 915--946.
 
\bibitem{HopkinsSinger05} M. J. Hopkins and I. M. Singer, 
\emph{Quadratic functions in geometry, topology, and $M$-theory}. 
J. Differ. Geom. \textbf{70} (2005), 329-452.
	
	
\bibitem{Liu17a}
B.~Liu,
\emph{Functoriality of equivariant eta forms},
J. Noncommut. Geom. \textbf{11} (2017), no.~1, 225--307.
	
\bibitem{Liu17b} B.~Liu,
\emph{Real embedding and equivariant eta forms}, 
Math. Z. 
\textbf{292} (2019), 849--878.

\bibitem{LM18}
B.~Liu and X.~Ma,
\emph{Differential K-theory, $\eta$-invariants and localization}, 
C. R. Math. Acad. Sci. Paris, \textbf{357} (2019), 803--813.

\bibitem{LM18a}
B.~Liu and X.~Ma,
\emph{Differential {$K$}-theory and localization formula for 
$\eta$-invariants},
Invent. Math.  \textbf{222} (2020), 545--613.
	
\bibitem{LM00}
K.~Liu and X.~Ma,
\emph{On family rigidity theorems. {I}},
Duke Math. J. \textbf{102} (2000), no.~3, 451--474.
	
\bibitem{LMZ00}
K.~Liu, X.~Ma, and W.~Zhang, 
\emph{{${\rm Spin}^c$} manifolds and rigidity theorems in 
{$K$}-theory}, Asian J. Math. \textbf{4} (2000), no.~4, 
933--959.

\bibitem{Ma99}
X.~Ma,
\emph{Formes de torsion analytique et familles de submersions. {I}},
Bull. Soc. Math. France \textbf{127} (1999), no.~4, 541--621.

\bibitem{Ma00}
X.~Ma, 
\emph{Formes de torsion analytique et familles de submersions. {II}}, 
Asian J. Math. \textbf{4} (2000), 633--667.

\bibitem{Ma00a}
X.~Ma.
\emph{Submersions and equivariant Quillen metrics},
Ann. Inst. Fourier (Grenoble)  \textbf{50} (2000), 1539--1588.
	
\bibitem{MM07}
X.~Ma and G.~Marinescu, 
\emph{Holomorphic {M}orse inequalities and {B}ergman
kernels}, Progress in Mathematics, vol. 254, Birkh\"auser 
Verlag, Basel, 2007.

\bibitem{MP97}
R.~B. Melrose and P.~Piazza, \emph{Families of {D}irac operators, 
	boundaries and the {$b$}-calculus}, J. Differential Geom. 
\textbf{46} (1997), no.~1, 99--180. 

\bibitem{Mu95}
W.~M\"uller, 
\emph{The eta invariant (some recent developments)}, 
S\'eminaire Bourbaki, Vol. 1993/94. Ast\'erisque \textbf{227} (1995),
Exp. No. 787,  335–-364.

\bibitem{Puchol16}
M. Puchol, 
	\emph{L'asymptotique des formes de torsion analytique holomorphe},
	C. R. Acad. Sci. Paris, \textbf{354} (2016), no.~3, 301--306.
	
%
\bibitem{RS75}
M.~Reed and B.~Simon, \emph{Methods of modern mathematical 
physics. {II}. {F}ourier analysis, self-adjointness}, 
Academic Press [Harcourt Brace Jovanovich, Publishers], 
New York-London, 1975.

\bibitem{SimonsS08} J. Simons,  D. Sullivan, 
\emph{Axiomatic characterization of ordinary differential cohomology}. 
J. Topol. \textbf{1}, No. 1, (2008), 45-56.

\bibitem{Soule92}
C.~Soul\'e, 
\emph{Lectures on {A}rakelov geometry}, Cambridge Studies in
Advanced Mathematics, vol.~33, Cambridge University Press, Cambridge, 
1992. With the collaboration of D. Abramovich, J.-F. Burnol and J. Kramer.
	
\bibitem{Wa14}
Y.~Wang,
\emph{The noncommutative infinitesimal equivariant index formula},
J. K-Theory \textbf{14} (2014), no.~1, 73--102.

\bibitem{Wi15}
A.~Wittmann,
\emph{Analytical index and eta forms for Dirac operators 
with one-dimensional kernel over a hypersurface},
arXiv:1503.02002.

\bibitem{Zh94}
W.~Zhang,
	\emph{Circle bundles, adiabatic limits of $\eta$-invariants 
	and Rokhlin congruences},
Ann. Inst. Fourier (Grenoble) \textbf{44} (1994), no.~1, 249--270.
	
\end{thebibliography}
\end{document}